
\documentclass[11pt]{article}%
\usepackage{amsmath}
\usepackage{amsfonts}
\usepackage{amssymb}
\usepackage{graphicx}%
\setcounter{MaxMatrixCols}{30}
\setlength{\textwidth}{16cm}
\setlength{\textheight}{19.5cm}
\setlength{\oddsidemargin}{+0.2cm}
\setlength{\topmargin}{0cm}
\numberwithin{equation}{section}
\newtheorem{theorem}{Theorem}[section]

\newtheorem{corollary}[theorem]{Corollary}

\newtheorem{definition}[theorem]{Definition}

\newtheorem{lemma}[theorem]{Lemma}

\newtheorem{proposition}[theorem]{Proposition}

\newenvironment{proof}[1][Proof]{\noindent\textbf{#1.} }
{\ \rule{0.5em}{0.5em}}
\begin{document}

\title{Dunkl Operators and Related Special Functions}
\author{Charles F. Dunkl\\Department of Mathematics, University of Virginia,\\PO Box 400137, Charlottesville, VA 22904-4137}
\date{10 October 2012}
\maketitle

\section{Introduction}

This is a preliminary version of a chapter for the volume titled
\textquotedblleft Multivariable Special Functions,\textquotedblright\ edited
by T. Koornwinder, in the Askey-Bateman project.

Functions like the exponential, Chebyshev polynomials, and monomial symmetric
polynomials are pre-eminent among all special functions. They have simple
definitions and can be expressed using easily specified integers like $n!$.
Families of functions like Gegenbauer, Jacobi and Jack symmetric polynomials
and Bessel functions are labeled by parameters. These could be unspecified
transcendental numbers or drawn from large sets of real numbers, for example
the complement of $\left\{  -\frac{1}{2},-\frac{3}{2},-\frac{5}{2}%
,\ldots\right\}  $. One aim of this chapter is to provide a harmonic analysis
setting in which parameters play a natural role. The basic objects are finite
reflection (Coxeter) groups and algebras of operators on polynomials which
generalize the algebra of partial differential operators. These algebras have
as many parameters as the number of conjugacy classes of reflections in the
associated groups. The first-order operators have acquired the name
\textquotedblleft Dunkl operators.\textquotedblright

Coxeter groups are related to root systems. This chapter begins with a
presentation of these systems, the definition of a Coxeter group and the
classification of the indecomposable systems. Then the theory of the operators
is developed in detail with an emphasis on inner products and their relation
to Macdonald-Opdam integrals, a generalization of the Fourier transform, the
Laplacian operator and harmonic polynomials, and applications to particular
groups to study Gegenbauer, Jacobi and nonsymmetric Jack polynomials.

For $x,y\in\mathbb{R}^{d}$ the inner product is $\left\langle x,y\right\rangle
=\sum_{j=1}^{d}x_{j}y_{j}$, and the norm is $\left\vert x\right\vert
=\left\langle x,x\right\rangle ^{1/2}.$ A matrix $w=\left(  w_{ij}\right)
_{i,j=1}^{d}$ is called orthogonal if $ww^{T}=I_{d}$. The group of orthogonal
$d\times d$ matrices is denoted by $O\left(  d\right)  $. The standard unit
basis vectors of $\mathbb{R}^{d}$ are denoted by $\varepsilon_{i},1\leq i\leq
d$. The nonnegative integers $\left\{  0,1,2,\ldots\right\}  $ are denoted by
$\mathbb{N}_{0}$, and the cardinality of a set $S$ is denoted by $\#S$.

\begin{definition}
For $a\in\mathbb{R}^{d}\backslash\left\{  0\right\}  $, the reflection along
$a$, denoted by $s_{a}$, is defined by
\[
s_{a}x:=x-2\frac{\left\langle x,a\right\rangle }{\left\vert a\right\vert ^{2}%
}a.
\]

\end{definition}

Writing $s_{a}=I_{d}-2\left(  a^{T}a\right)  ^{-1}aa^{T}$ shows that
$s_{a}=s_{a}^{T}$ and $s_{a}^{2}=I_{d}$, that is, $s_{a}\in O\left(  d\right)
$. The matrix entries of $s_{a}$ are $\left(  s_{a}\right)  _{ij}=\delta
_{ij}-2a_{i}a_{j}/\left\vert a\right\vert ^{2}$. The hyperplane $a^{\bot
}=\left\{  x:\left\langle x,a\right\rangle =0\right\}  $ is the invariant set
for $s_{a}$, Also $s_{a}a=-a$, \ and any nonzero multiple of $a$ determines
the same reflection. The reflection $s_{a}$ has one eigenvector for the
eigenvalue $-1$, and $d-1$ independent eigenvectors for the eigenvalue $+1$
and $\det s_{a}=-1$.

For $a,b\in\mathbb{R}^{d}\backslash\left\{  0\right\}  $, let $\cos
\measuredangle\left(  a,b\right)  :=\left\langle a,b\right\rangle /\left(
\left\vert a\right\vert ~\left\vert b\right\vert \right)  $; if $a,b$ are
linearly independent in $\mathbb{R}^{d}$ then $s_{a}s_{b}$ is a plane rotation
in $\mathrm{span}_{\mathbb{R}}\left\{  a,b\right\}  $ through an angle
$2\measuredangle\left(  a,b\right)  $. Consequently for given $m=1,2,3,\ldots
,\left(  s_{a}s_{b}\right)  ^{m}=I_{d}$ if and only if $\cos\measuredangle
\left(  a,b\right)  =\cos\left(  \frac{\pi j}{m}\right)  $ for some integer
$j$. Two reflections $s_{a}$ and $s_{b}$ commute if and only if $\left\langle
a,b\right\rangle =0$, since $\left(  s_{a}s_{b}\right)  ^{-1}=s_{b}s_{a}$, in
general$.$ The conjugate of a reflection is again a reflection: suppose $w\in
O\left(  d\right)  $ and $a$ $\in\mathbb{R}^{d}\backslash\left\{  0\right\}  $
then $ws_{a}w^{-1}=s_{wa}.$

\section{Root Systems}

\begin{definition}
A \emph{root system }is a finite set $R\subset\mathbb{R}^{d}\backslash\{0\}$
such that $a,b\in R$ implies $s_{b}a\in R$. If, additionally, $\mathbb{R}a\cap
R=\{\pm a\}$ for each $a\in R$ then $R$ is said to be \emph{reduced}. The
\emph{rank} of $R$ is defined to be $\dim(\mathrm{span}_{\mathbb{R}}(R))$.
\end{definition}

Note that $a\in R$ implies $-a=s_{a}a\in R$, for any root system. By choosing
some $a_{0}\in\mathbb{R}^{d}$ such that $\langle a,a_{0}\rangle\neq0$ for all
$a\in R$ one defines the \emph{set of positive roots} to be $R_{+}=\{a\in
R:\langle a,a_{0}\rangle>0\}$. Also set $R_{-}=-R_{+}$ so that $R=R_{+}\cup
R_{-}$, the disjoint union of the sets of positive and negative roots.

\begin{definition}
The \emph{Coxeter}, or \emph{finite reflection}, \emph{group} $W(R)$ is
defined to be the subgroup of $O(d)$ generated by $\{s_{a}:a\in R_{+}\}$.
\end{definition}

The group $W(R)$ is finite because each $w\in W(R)$ fixes the orthogonal
complement of $\mathrm{span}_{\mathbb{R}}(R)$ pointwise, and for each $a\in R$
the orbit $W(R)a\subset R$, hence is finite.

There is a distinguished set $S=\{r_{1},\ldots,r_{n}\}$, $n=\dim
(\mathrm{span}_{\mathbb{R}}(R))$, of positive roots, called the \emph{simple
roots}, such that $S$ is a basis for $\mathrm{span}_{\mathbb{R}}(R)$ and $a\in
R_{+}$ implies $a=\sum_{i=1}^{n}c_{i}r_{i}$ with $c_{i}\geq0$ (see
\cite[Theorem 1.3]{Hum1}). The corresponding reflections $s_{i}=s_{r_{i}}$
($i=1,\ldots,n$) are called \emph{simple reflections}.

\begin{definition}
The \emph{length} of $w\in W(R)$ is $\ell(w):=\#(wR_{+}\cap R_{-})$.
\end{definition}

Equivalently (see \cite[Corollary 1.7]{Hum1}), $\ell(w)$ equals the number of
factors in the shortest product $w=s_{i_{1}}s_{i_{2}}\ldots s_{i_{m}}$ for
expressing $w$ in terms of simple reflections. In particular, $w$ has length
one if and only if it is a simple reflection.

For purpose of studying the group $W(R)$ one can replace $R$ by a reduced root
system (example: $\left\{  \sqrt{2}\,a/\vert a\vert: a\in R\right\}  $ is a
commonly used normalization).

\begin{definition}
The \emph{discriminant}, or \emph{alternating polynomial}, of the reduced root
system $R$ is
\[
a_{R}(x):=\prod\nolimits_{a\in R_{+}}\langle x,a\rangle.
\]

\end{definition}

If $R$ is reduced and $b\in R_{+}$ then $\left\langle s_{b}x,b\right\rangle
=-\left\langle x,b\right\rangle $ and $\#\left\{  a\in R_{+}\backslash
\{b\}:s_{b}a\in-R_{+}\right\}  $ is even. It follows that $a_{R}%
(s_{b}x)=-a_{R}(x)$, and furthermore that $a_{R}(wx)=\det(w)a_{R}(x)$, $w\in
W(R)$.

The set of reflections in $W(R) $ is exactly $\{s_{a}:a\in R_{+}\}$. This is a
consequence of a divisibility property: if $b\in\mathbb{R}^{d}\backslash\{
0\}$ and $p$ is a polynomial such that $p(s_{b}x)=-p(x)$ for all
$x\in\mathbb{R}^{d}$, then $p(x)$ is divisible by $\langle x,b\rangle$
(without loss of generality assume that $b=\varepsilon_{1}$; let
$p(x)=\sum_{j=0}^{n}x_{1}^{j} p_{j}(x_{2},\ldots,x_{d})$, then $p(s_{b}%
x)=\sum_{j=0}^{n}(-x_{1})^{j}p_{j}(x_{2},\ldots,x_{d})$ and $p(s_{b}x)=-p(x)$
implies $p_{j}=0$ unless $j$ is odd). If $w=s_{b}\in W(R)$ is a reflection
then $\det w=-1$ and so $a_{R}(wx)=-a_{R}(x)$ and hence $a_{R}(x)$ is
divisible by $\langle x,b\rangle$. Linear factors are irreducible and the
unique factorization theorem shows that some multiple of $b$ is an element of
$R_{+}$.

\begin{definition}
\label{c4cryst} The root system $R$ is called \emph{crystallographic} if
$2\langle a,b\rangle/|b|^{2}\in\mathbb{Z}$ for all $a,b\in R$.
\end{definition}

In the crystallographic case $R$ is in the $\mathbb{Z}$-lattice generated by
the simple roots and $W(R)$ acts on this lattice (see \cite[\S 10.1]{Hum2}).

If $R$ can be expressed as a disjoint union of non-empty sets $R_{1}\cup
R_{2}$ with $\langle a,b\rangle=0$ for every $a\in R_{1},b\in R_{2}$ then each
$R_{i}$ ($i=1,2)$ is itself a root system and $W(R)=W(R_{1})\times W(R_{2})$,
a direct product. Furthermore, $W(R_{1})$ and $W(R_{2})$ act on the orthogonal
subspaces $\mathrm{span}_{\mathbb{R}}(R_{1})$ and $\mathrm{span}_{\mathbb{R}%
}(R_{2})$, respectively. In this case, the root system $R$ and the reflection
group $W(R)$ are called \emph{decomposable}. Otherwise the system and group
are \emph{indecomposable} (also called \emph{irreducible}). There is a
complete classification of indecomposable finite reflection groups.

Assume that the rank of $R$ is $d$, that is, $\mathrm{span}_{\mathbb{R}%
}(R)=\mathbb{R}^{d}$. The set of hyperplanes $H=\left\{  a^{\bot}:a\in
R_{+}\right\}  $ divides $\mathbb{R}^{d}$ into connected (open) components,
called \emph{chambers}. The order of the group equals the number of chambers
(see \cite[Theorems 1.4, 1.8]{Hum1}). Recall that $R_{+}=\{a\in R:\langle
a,a_{0}\rangle>0\}$ for some $a_{0}\in\mathbb{R}^{d}$. The connected component
of $\mathbb{R}^{d}\backslash H$ which contains $a_{0}$ is called the
\emph{fundamental chamber}. The simple roots correspond to the bounding
hyperplanes of this chamber and they form a basis of $\mathbb{R}^{d}$ (note
that the definitions of chamber and fundamental chamber extend to the
situation where $\mathrm{span}_{\mathbb{R}}(R)$ is embedded in a
higher-dimensional space). The simple reflections $s_{i}$, $i=1,\ldots,d$,
correspond to the simple roots. Let $m_{ij}$ be the order of $s_{i}s_{j}$
(clearly $m_{ii}=1$ and $m_{ij}=2$ if and only if $s_{i}s_{j}=s_{j}s_{i}$, for
$i\neq j$). The group $W(R)$ is isomorphic to the abstract group generated by
$\{s_{i}:1\leq i\leq d\}$ subject to the relations $(s_{i}s_{j})^{m_{ij}}=1$
(see \cite[Theorem 1.9]{Hum1}).

The Coxeter diagram is a graphical way of displaying the relations: it is a
graph with $d$ nodes corresponding to the simple reflections, the nodes $i$
and $j$ are joined with an edge when $m_{ij}>2$; the edge is labeled by
$m_{ij}$ when $m_{ij}>3$. The root system is indecomposable if and only if the
Coxeter diagram is connected. There follow brief descriptions of the
indecomposable root systems and the corresponding groups. The rank is
indicated by the subscript. The systems are crystallographic except for
$H_{3},H_{4}$ and $I_{2}(m)$, $m\notin\{2,3,4,6\}$.

\subsection{\label{c4rootA}Type $A_{d-1}$}

The root system is $R=\{\varepsilon_{i}-\varepsilon_{j}:i\neq j\}\subset
\mathbb{R}^{d}$. The span is $\big(\sum_{i=1}^{d}\varepsilon_{i}\big)^{\bot}$,
thus the rank is $d-1$. The reflection $s_{ij}=s_{\varepsilon_{i}%
-\varepsilon_{j}}$ interchanges the components $x_{i}$ and $x_{j}$ of each
$x\in\mathbb{R}^{d}$ and is called a \emph{transposition}, often denoted by
$(ij)$. Thus $W(R)$ is the symmetric (or permutation) group $\mathcal{S}_{d}$
of $d$ objects. Choose $a_{0}=\sum_{i=1}^{d}(d+1-i)\varepsilon_{i}$ then
$R_{+}=\{\varepsilon_{i}-\varepsilon_{j}:i<j\}$ and the simple roots are
$\{\varepsilon_{i}-\varepsilon_{i+1}:1\leq i\leq d-1\}$. The corresponding
reflections are the adjacent transpositions $s_{i}=(i,i+1)$. The structure
constants satisfy $m_{i,i+1}=3$ and $m_{ij}<3$ otherwise. The Coxeter diagram
is
\[
\circ-\circ-\circ-\cdots-\circ
\]
The alternating polynomial is
\[
a_{R}(x)=\prod\limits_{a\in R_{+}}\langle x,a\rangle=\prod\limits_{1\leq
i<j\leq d}(x_{i}-x_{j}).
\]
The fundamental chamber is $\{x:x_{1}>x_{2}>\cdots>x_{d}\}$.

\subsection{\label{c4rootB}Type $B_{d}$}

The root system is $R=\{\pm\varepsilon_{i}\pm\varepsilon_{j}:1\leq i<j\leq
d\}\cup\{\pm\varepsilon_{i}:1\leq i\leq d\}$. For $d=1$, $R=\{\pm
\varepsilon_{1}\}$ which is essentially the same as $A_{1}$. The group
$W(B_{d})$ is the full symmetry group of the hyperoctahedron $\{\pm
\varepsilon_{1},\pm\varepsilon_{2},\ldots,\pm\varepsilon_{d}\} \subset
\mathbb{R}^{d}$ (also of the hypercube) and is thus called the
\emph{hyperoctahedral group}. Its elements are the $d\times d$ generalized
permutation matrices with entries $\pm1$ (that is, each row and each column
has exactly one nonzero element $\pm1$). With the same $a_{0}$ as used for
$A_{d-1}$, the positive root system is $R_{+}=\{\varepsilon_{i}-\varepsilon
_{j},\varepsilon_{i}+\varepsilon_{j}:i<j\} \cup\{\varepsilon_{i}:1\leq i\leq
d\}$ and the simple roots are $\{\varepsilon_{i}-\varepsilon_{i+1}%
:i<d\}\cup\{\varepsilon_{d}\}$. The order of $s_{\varepsilon_{d-1}%
-\varepsilon_{d}}s_{\varepsilon_{d}}$ is 4. The Coxeter diagram is
$\circ-\circ-\circ-\cdots-\circ\overset{4}{-}\circ$

The alternating polynomial is
\[
a_{R}(x)=\prod_{i=1}^{d}x_{i} \prod\limits_{1\leq i<j\leq d}\left(  x_{i}%
^{2}-x_{j}^{2}\right)  .
\]
The fundamental chamber is $\{x:x_{1}>x_{2}>\cdots>x_{d}>0\}$.

\subsection{Types $C_{d}$ and $BC_{d}$}

The root system $C_{d}$ is $\{\pm\varepsilon_{i}\pm\varepsilon_{j}:1\leq
i<j\leq d\}\cup\{2\varepsilon_{i}:1\leq i\leq d\}$, and the system $BC_{d}$ is
$\{\pm\varepsilon_{i}\pm\varepsilon_{j}:1\leq i<j\leq d\}\cup\{2\varepsilon
_{i},\varepsilon_{i}:1\leq i\leq d\}$. The latter system is not reduced. Both
of these systems generate the same group as $W(B_{d})$. The system $C_{d}$ has
to be distinguished from $B_{d}$ because of the crystallographic condition in
Definition \ref{c4cryst}. For $d=1,2$ there is no essential distinction.

\subsection{Type $D_{d}$}

For $d\geq4$ the root system is $R=\{\pm\varepsilon_{i}\pm\varepsilon
_{j}:1\leq i<j\leq d\}$, a subset of $B_{d}$. The group $W(D_{d})$ is the
subgroup of $W(B_{d})$ fixing the polynomial $\prod_{j=1}^{d}x_{j}$. The
simple roots are $\{\varepsilon_{i}-\varepsilon_{i+1}:1\leq i<d\}\cup
\{\varepsilon_{d-1}+\varepsilon_{d}\}$ and the Coxeter diagram is
\[
\circ-\circ-\circ-\cdots-\circ{<}_{\circ}^{\circ}%
\]
The alternating polynomial is
\[
a_{R}(x)=\prod\limits_{1\leq i<j\leq d}\left(  x_{i}^{2}-x_{j}^{2}\right)
\]
and the fundamental chamber is $\{x:x_{1}>x_{2}>\cdots>|x_{d}|\}$.

\subsection{\label{c4rootF}Type $F_{4}$}

The root system is $R_{1}\cup R_{2}$ where $R_{1}=\{\pm\varepsilon_{i}%
\pm\varepsilon_{j}:1\leq i<j\leq4\}$ and $R_{2}=\{\pm\varepsilon_{i}:1\leq
i\leq4\}\cup\{\tfrac12(\pm\varepsilon_{1}\pm\varepsilon_{2}\pm\varepsilon_{3}
\pm\varepsilon_{4})\}$. Each set contains 24 roots. The group $W(F_{4})$
contains $W(B_{4})$ as a subgroup of index 3. The simple roots are
$\varepsilon_{2}-\varepsilon_{3}$, $\varepsilon_{3}-\varepsilon_{4}$,
$\varepsilon_{4}$, $\tfrac12(\varepsilon_{1}-\varepsilon_{2}-\varepsilon
_{3}-\varepsilon_{4})$ and the Coxeter diagram is
\[
\circ-\circ\overset{4}{-}\circ-\circ.
\]
With the orthogonal coordinates $y_{1}=\frac{1}{\sqrt{2}}(-x_{1}+x_{2})$,
$y_{2}=\frac{1}{\sqrt{2}}(x_{1}+x_{2})$, $y_{3}=\frac{1}{\sqrt{2}}%
(-x_{3}+x_{4})$, $y_{4}=\frac{1}{\sqrt{2}}(x_{3}+x_{4})$, the alternating
polynomial is
\[
a_{R}(x)=2^{-6}\prod\limits_{1\leq i<j\leq4}\left(  x_{i}^{2}-x_{j}%
^{2}\right)  \left(  y_{i}^{2}-y_{j}^{2}\right)  .
\]

\subsection{Type $G_{2}$}

This system is a subset $R_{1}\cup R_{2}$ of $\left\{  \sum_{i=1}^{3}%
x_{i}\varepsilon_{i}:\sum_{i=1}^{3}x_{i}=0\right\}  \subset\mathbb{R}^{3}$,
where $R_{1}=\{\pm(\varepsilon_{i}-\varepsilon_{j}):1\leq i<j\leq3\}$ and
$R_{2}=\{\pm(2\varepsilon_{i}-\varepsilon_{j}-\varepsilon_{k}):
\{i,j,k\}=\{1,2,3\}\}$. The simple roots are $\varepsilon_{1}-\varepsilon_{2}%
$, $-2\varepsilon_{1}+\varepsilon_{2}+\varepsilon_{3}$ and the Coxeter diagram
is
\[
\circ\overset{6}{-}\circ
\]

\subsection{Types $E_{6},E_{7},E_{8}$}

The root system $E_{8}$ equals $R_{1}\cup R_{2}$ where
\begin{align*}
R_{1}  &  =\{\pm\varepsilon_{i}\pm\varepsilon_{j}:1\leq i<j\leq8\},\\
R_{2}  &  =\left\{  \frac{1}{2}\sum_{i=1}^{8}(-1)^{n_{i}}\varepsilon_{i}%
:n_{i}\in\{0,1\},\;\sum_{i=1}^{8}n_{i}=0\operatorname{mod}2\right\}  ,
\end{align*}
with $\#R_{1}=112$ and $\#R_{2}=128$. The simple roots are $r_{1}=\frac{1}%
{2}\left(  \varepsilon_{1}-\sum_{i=2}^{7}\varepsilon_{i}+\varepsilon
_{8}\right)  $, $r_{2}=\varepsilon_{1}+\varepsilon_{2}$, $r_{i}=\varepsilon
_{i-1}-\varepsilon_{i-2}$ for $3\leq i\leq8$. The systems $E_{6}$ and $E_{7}$
consist of the elements of $R$ which lie in $\mathrm{span}\{r_{i}:1\leq
i\leq6\}$ and $\mathrm{span}\{r_{i}:1\leq i\leq7\}$ respectively. (Because
these systems are crystallographic, the spans are over $\mathbb{Z}$%
\thinspace.) For more details on these systems see \cite[pp. 42-43]{Hum1}.

\subsection{Type $I_{2}(m)$}

These are the dihedral systems corresponding to symmetry groups of regular
$m$-gons in $\mathbb{R}^{2}$ for $m\geq3$. Using a complex coordinate system
$z=x_{1}+\mathrm{i}\,x_{2}$ and $\overline{z}=x_{1}-\mathrm{i}\,x_{2}$, the
map $z\mapsto ze^{\mathrm{i}\theta}$ is a rotation through the angle $\theta$,
and the reflection along $(\sin\phi,-\cos\phi)$ is $z\mapsto\overline
{z}e^{2\mathrm{i}\phi}$. The reflection along $v_{(j)}=\left(  \sin\frac{\pi
j}{m},-\cos\frac{\pi j}{m}\right)  $ corresponds to $s_{j}:z\mapsto
\overline{z}e^{2\pi\mathrm{i}j/m}$, for $1\leq j\leq2m$; note that
$v_{(m+j)}=-v_{(j)}$. For $a_{0}=\left(  \cos\frac{\pi}{2m},\sin\frac{\pi}%
{2m}\right)  $, the positive roots are $\left\{  v_{(j)}:1\leq j\leq
m\right\}  $ and the simple roots are $v_{(1)}$, $v_{(m)}$. Then $s_{1}s_{m}$
maps $z$ to $ze^{2\pi\mathrm{i}/m}$, and has period $m$. The Coxeter diagram
is $\circ\overset{m}{-}\circ$ . Since $s_{j}s_{n}s_{j}=s_{2j-n}$ for any
$n,j$, there are two conjugacy classes of reflections $\{s_{2i}\}$,
$\{s_{2i+1}\}$ when $m$ is even, and one class when $m$ is odd. There are 3
special cases: the groups $W(I_{2}(3)),W(I_{2}(4)),W(I_{2}(6))$ are isomorphic
to $W(A_{2}),W(B_{2})$, $W(G_{2})$, respectively. The alternating polynomial
is a multiple of $(z^{m}-\overline{z}^{m})/\mathrm{i}$.

\subsection{Type $H_{3}$}

Let $\tau=\left(  1+\sqrt{5}\,\right)  /2$ (so $\tau^{2}=\tau+1$). Take the
positive root system to be $R_{+}=\{(2,0,0),(0,2,0),\allowbreak(0,0,2),(\tau
,\pm\tau^{-1},\pm1),(\pm1,\tau,\pm\tau^{-1}), (\tau^{-1},\pm1,\tau
),(-\tau^{-1},1,\tau),(\tau^{-1},1,-\tau)\}$, thus $\#R_{+}=15$. The root
system $R=R_{+}\cup(-R_{+})$ as a configuration in $\mathbb{R}^{3}$ is called
the \emph{icosidodecahedron}. The group $W(H_{3})$ is the symmetry group of
the icosahedron $Q_{12}=\{(0,\pm\tau,\pm1),(\pm1,0,\pm\tau),\allowbreak
(\pm\tau,\pm1,0)\}$ (12 vertices, 20 triangular faces) and of the dodecahedron
$Q_{20}=\{(0,\pm\tau^{-1},\pm\tau),\allowbreak(\pm\tau,0,\pm\tau^{-1}%
),(\pm\tau^{-1},\pm\tau,0),(\pm1,\pm1,\pm1)\}$ (20 vertices, 12 pentagonal
faces). The simple roots are $(\tau,-\tau^{-1},-1),(-1,\tau,-\tau^{-1}%
),(\tau^{-1},-1,\tau)$ and the Coxeter diagram is $\circ-\circ\overset{5}%
{-}\circ$ .

\subsection{Type $H_{4}$}

This root system has 60 positive roots, and the Coxeter diagram $\circ
-\circ-\circ\overset{5}{-}\circ$ . The group $W(H_{4})$ is the symmetry group
of the 600-cell and is sometimes called the \emph{hecatonicosahedroidal group}.

\subsection{Miscellaneous Results}

For any root system, the subgroup generated by a subset of simple reflections
(that is, the result of deleting one or more nodes from the Coxeter diagram)
is called a \emph{parabolic subgroup} of $W(R)$. More generally, any conjugate
of such a subgroup is also called parabolic. Therefore, for any $a\in
\mathbb{R}^{d}$ the stabilizer $W_{a}=\{w\in W(R):wa=a\}$ is parabolic. The
number of conjugacy classes of reflections equals the number of connected
components of the Coxeter diagram after all edges with an even label have been removed.

\section{Invariant Polynomials}

For $w\in O(d)$ and $p\in\Pi^{d}$, the space of polynomials on $\mathbb{R}%
^{d}$, let $(wp)(x)=p\left(  w^{-1}x\right)  $ (thus $((w_{1}w_{2}%
)p)(x)=(w_{1}(w_{2}p))(x)$, $w_{1},w_{2}\in O(d)$). For a finite subgroup $G$
of $O(d)$ let $\Pi^{G}$ denote the space of $G$-invariant polynomials
$\{p\in\Pi^{d}:wp=p$ for all $w\in G\}$.

When $G$ is a finite reflection group $W(R)$, $\Pi^{G}$ has an elegant
structure; there is a set of algebraically independent homogeneous generators,
whose degrees are fundamental constants associated with $R$.

\begin{theorem}
\label{c4invts}\textrm{(See \cite[Theorems 3.5, 3.9]{Hum1}.)} Suppose $R$ is a
root system in $\mathbb{R}^{d}$ then there exist $d$ algebraically independent
$W(R)$-invariant homogeneous polynomials $\{q_{j}:1\leq j\leq d\}$ of degrees
$n_{j}$, such that $\Pi^{W(R)}$ is the ring of polynomials generated by
$\{q_{j}\}$. Furthermore, $\#W(R)=n_{1}n_{2}\cdots n_{d}$ and the number of
reflections in $W(R)$ is $\sum_{j=1}^{d}(n_{j}-1)$.
\end{theorem}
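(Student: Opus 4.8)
The plan is to prove this classical Chevalley–Shephard–Todd type theorem in several stages, treating the three assertions (existence of the generators, the product formula for $\#W(R)$, and the sum formula for the number of reflections) in turn. First I would establish that $\Pi^{W(R)}$ is a finitely generated $\mathbb{R}$-algebra: since $W(R)$ is finite, the Reynolds operator $p\mapsto \frac{1}{\#W(R)}\sum_{w\in W(R)} wp$ projects $\Pi^d$ onto $\Pi^{W(R)}$ and is a $\Pi^{W(R)}$-module homomorphism, so by the Hilbert basis argument applied to the ideal of $\Pi^d$ generated by homogeneous invariants of positive degree, one extracts finitely many homogeneous generators $q_1,\dots,q_m$. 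The heart of the existence statement is then to show one can take $m=d$ with the $q_j$ algebraically independent. For this I would invoke the standard criterion: a set of $d$ homogeneous invariants generates $\Pi^{W(R)}$ as an algebra if and only if the only common zero over $\mathbb{C}$ is the origin, equivalently $\Pi^d$ is a finite module over the subalgebra they generate. The key geometric input making this work for reflection groups is that $\Pi^d$ is a \emph{free} module over $\Pi^{W(R)}$ of rank $\#W(R)$; this is where reflections are essential and where I expect the main obstacle to lie.

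For that freeness I would argue as follows. Let $I$ be the ideal of $\Pi^d$ generated by all homogeneous invariants of positive degree (the Hilbert ideal), and choose homogeneous $q_1,\dots,q_d$ whose images form a minimal generating set of $I$. The claim is that these are algebraically independent and generate $\Pi^{W(R)}$. Algebraic independence: if there were a nonzero polynomial relation, take one of minimal degree and differentiate; using the chain rule and the fact that the Jacobian matrix $(\partial q_i/\partial x_j)$ has generically full rank — which itself follows because the Jacobian determinant must (by the alternating/transformation property $a_R(wx)=\det(w)\,a_R(x)$ under $W(R)$, Section 2) be a nonzero scalar multiple of the discriminant $a_R(x)=\prod_{a\in R_+}\langle x,a\rangle$, hence nonzero — one derives a lower-degree relation among the $\partial q_i$ with polynomial coefficients, and a careful Euler-relation argument (this is the Shephard–Todd / Steinberg trick) forces a contradiction. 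Generation of $\Pi^{W(R)}$: once $q_1,\dots,q_d$ generate the Hilbert ideal and are algebraically independent, a downward induction on degree shows every homogeneous invariant lies in $\mathbb{R}[q_1,\dots,q_d]$, because any invariant $p$ of positive degree lies in $I$, so $p=\sum_i p_i q_i$, and replacing each $p_i$ by its invariant projection (Reynolds) keeps the identity while lowering degrees.

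Next I would establish the two numerical identities by a Poincaré-series (Hilbert-series) computation, which is the cleanest route. Since $\Pi^d=\bigoplus_{w}\; h_w\,\Pi^{W(R)}$ is free of rank $N:=\#W(R)$ over the polynomial ring $\mathbb{R}[q_1,\dots,q_d]$ on generators of degrees $n_1,\dots,n_d$, comparing Hilbert series gives
\[
\frac{1}{(1-t)^d}=\frac{\sum_{w\in W(R)} t^{\deg h_w}}{\prod_{j=1}^d (1-t^{n_j})},
\]
so $\sum_w t^{\deg h_w}=\prod_{j=1}^d \frac{1-t^{n_j}}{1-t}=\prod_{j=1}^d (1+t+\cdots+t^{n_j-1})$. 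Setting $t=1$ yields $N=n_1 n_2\cdots n_d$. On the other hand, Molien's formula gives $\sum_w t^{\deg h_w}=\frac{1}{N}\sum_{w\in W(R)} \frac{(1-t)^d}{\det(1-t\,w)}$; isolating the contribution near $t=1$ — the identity contributes $1$, each reflection contributes a simple pole handled by l'Hôpital, and elements with a higher-codimension fixed space contribute $0$ to the relevant order — and matching the coefficient of $(t-1)$ (equivalently, differentiating $\prod_j(1+\cdots+t^{n_j-1})$ at $t=1$ and using $\sum_j \binom{n_j}{2}$) forces the number of reflections to equal $\sum_{j=1}^d (n_j-1)$. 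The main obstacle throughout is the freeness of $\Pi^d$ over $\Pi^{W(R)}$ and the nonvanishing of the Jacobian; both ultimately rest on the divisibility property and the discriminant identity recorded in Section 2, so I would lean on those. Since the statement is quoted from \cite[Theorems 3.5, 3.9]{Hum1}, it is also legitimate simply to cite that source, but the sketch above is the self-contained route.
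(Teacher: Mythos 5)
The paper offers no proof of this theorem --- it is quoted verbatim from Humphreys with a citation --- so the only question is whether your self-contained sketch would actually go through. Most of it would (finite generation via the Reynolds operator, the Molien/Hilbert-series derivation of $\#W(R)=\prod_j n_j$ and of the reflection count are all sound, modulo the small point that indexing a free basis by $w\in W(R)$ presupposes the rank you are trying to compute; Molien's formula alone avoids this). But there is a genuine circularity at the heart of your algebraic-independence step. You argue that the Jacobian matrix $(\partial q_i/\partial x_j)$ has generically full rank ``because the Jacobian determinant must, by the alternating property, be a nonzero scalar multiple of the discriminant $a_R$.'' The alternating property of $\det(\partial q_i/\partial x_j)$ only tells you that \emph{if} this determinant is nonzero then it is divisible by $a_R$; the zero polynomial is also alternating. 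Nonvanishing of the Jacobian is, over a field of characteristic zero, \emph{equivalent} to the algebraic independence of $q_1,\dots,q_d$, so you are assuming exactly what you set out to prove. (That $\det(\partial q_i/\partial x_j)$ equals a nonzero constant times $a_R$ is Steinberg's theorem, proved \emph{after} Chevalley's theorem, using the degree count $\sum_j(n_j-1)=\#R_+$ which is itself one of your conclusions.)

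The non-circular route --- the one behind the ``Shephard--Todd/Steinberg trick'' you allude to --- is Chevalley's key lemma: if $f_1,\dots,f_r$ are homogeneous invariants with $f_1$ not in the ideal of $\Pi^{W(R)}$ generated by $f_2,\dots,f_r$, and $\sum_i h_i f_i=0$ with the $h_i$ homogeneous in $\Pi^d$, then $h_1$ lies in the Hilbert ideal. Its proof is by induction on $\deg h_1$ and uses precisely the divisibility fact recorded in the paper (for a reflection $s_a$, $p-s_ap$ is divisible by $\langle x,a\rangle$) together with the generation of $W(R)$ by reflections. One then takes a putative polynomial relation among the $q_i$ of minimal degree, differentiates with respect to each $x_j$, applies the lemma to the resulting identities $\sum_i (\partial h/\partial y_i)(q)\,\partial q_i/\partial x_j=0$, and reaches a contradiction via Euler's formula. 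With that lemma in place of the Jacobian claim, and with the count of generators pinned down as exactly $d$ by comparing with the transcendence degree of the invariant field, your outline becomes the standard proof.
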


For an indecomposable root system $R$ of rank $d$ the numbers $\{n_{j}:1\leq
j\leq d\}$ are called the \emph{fundamental degrees} of $W(R)$. The
coefficient of $t^{k}$ in the product $\prod_{j=1}^{d}(1+(n_{j}-1)t)$ is the
number of elements of $W(R)$ whose fixed point set is of codimension $k$ (see
\cite[Remark 3.9]{Hum1}). Here are the structural constants (see \cite[Table
3.1]{Hum1}):\newline%

\begin{tabular}
[c]{|l|l|l|l|}\hline
Type & $\#R_{+}$ & $\#W(R)$ & $n_{1},\ldots,n_{d}$\\\hline
$A_{d}$ & $\frac{d(d+1)}{2}$ & $(d+1)!$ & $2,3,\ldots,d+1$\\\hline
$B_{d}$ & $d^{2}$ & $2^{d}d!$ & $2,4,6,\ldots,2d$\\\hline
$D_{d}$ & $d(d-1)$ & $2^{d-1}d!$ & $2,4,6,\ldots,2(d-1),d$\\\hline
$G_{2}$ & $6$ & $12$ & $2,6$\\\hline
$H_{3}$ & $15$ & $120$ & $2,6,10$\\\hline
$F_{4}$ & $24$ & $1152$ & $2,6,8,12$\\\hline
$H_{4}$ & $60$ & $14400$ & $2,12,20,30$\\\hline
$E_{6}$ & $36$ & $2^{7}\times3^{4}\times5$ & $2,5,6,8,9,12$\\\hline
$E_{7}$ & $63$ & $2^{10}\times3^{4}\times5\times7$ & $2,6,8,10,12,14,18$%
\\\hline
$E_{8}$ & $120$ & $2^{14}\times3^{4}\times5^{2}\times7$ &
$2,8,12,14,18,20,24,30$\\\hline
$I_{2}\left(  m\right)  $ & $m$ & $2m$ & $2,m$\\\hline
\end{tabular}

\section{Dunkl operators}

Throughout this section $R$ denotes a reduced root system contained in
$\mathbb{R}^{d}$ and $G=W\left(  R\right)  $, a subgroup of $O\left(
d\right)  $. For $\alpha\in\mathbb{N}_{0}^{d}$ and $x\in\mathbb{R}^{d}$ let
$\left\vert \alpha\right\vert :=\sum_{i=1}^{d}\alpha_{i}$ and $x^{\alpha
}:=\prod_{i=1}^{d}x_{i}^{\alpha_{i}}$, a monomial of degree $\left\vert
\alpha\right\vert $. Let $\Pi_{n}^{d}:=\mathrm{span}_{\mathbb{F}}\left\{
x^{\alpha}:\alpha\in\mathbb{N}^{d},\left\vert \alpha\right\vert =n\right\}  $,
the space of homogeneous polynomials of degree $n$, $n\in\mathbb{N}_{0}$,
where $\mathbb{F}$ is some extension field of $\mathbb{R}$ containing the
parameter values.

\begin{definition}
A \emph{multiplicity function} on $R$ is a $G$-invariant function $\kappa$
with values in $\mathbb{R}$ or a transcendental extension of $\mathbb{Q}$,
that is, $a\in R,w\in G$ implies $\kappa\left(  wa\right)  =\kappa\left(
a\right)  $. Note $\kappa\left(  -a\right)  =\kappa\left(  a\right)  $ since
$s_{a}a=-a$.
\end{definition}

Suppose $a\in\mathbb{R}^{d}\backslash\left\{  0\right\}  $ and $w\in O\left(
d\right)  $ then $ws_{a}w^{-1}=s_{wa}$. Thus, $\kappa$ can be considered as a
function on the reflections $\left\{  s_{a}:a\in R_{+}\right\}  $ which is
constant on conjugacy classes. In the sequel $\kappa$ denotes a multiplicity
function of $G$.

For any reflection $s_{a}$ and polynomial $p\in\Pi^{d}$ the polynomial
$p\left(  x\right)  -s_{a}p\left(  x\right)  $ vanishes on $a^{\bot}$ hence is
divisible by $\left\langle x,a\right\rangle $. The gradient is denoted by
$\nabla$.

\begin{definition}
For $p\in\Pi^{d}$ and $x\in\mathbb{R}^{d}$ let%
\[
\nabla_{\kappa}p\left(  x\right)  :=\nabla p\left(  x\right)  +\sum_{v\in
R_{+}}\kappa\left(  v\right)  \frac{p\left(  x\right)  -s_{v}p\left(
x\right)  }{\left\langle x,v\right\rangle }v,
\]
and for $a\in\mathbb{R}^{d}$ let $\mathcal{D}_{a}p\left(  x\right)
:=\left\langle \nabla_{\kappa}p\left(  x\right)  ,a\right\rangle $.
$\mathcal{D}_{a}$ is a \emph{Dunkl operator}. For $1\leq i\leq d$ denote
$\mathcal{D}_{\varepsilon_{i}}$ by $\mathcal{D}_{i}$.
\end{definition}

Thus each $\mathcal{D}_{a}$ is an operator on polynomials, and maps $\Pi
_{n}^{d}$ into $\Pi_{n-1}^{d}$, $n\in\mathbb{N}_{0}$ (that is, $\mathcal{D}%
_{a}$ is homogeneous of degree $-1$). The important properties of $\left\{
\mathcal{D}_{a}:a\in\mathbb{R}^{d}\right\}  $ are $G$-covariance and commutativity.

\begin{proposition}
\label{c4wDw}Let $w\in G$ and $a\in\mathbb{R}^{d}$, then (as operators on
$\Pi^{d}$) $w^{-1}\mathcal{D}_{a}w=\mathcal{D}_{wa}$.
\end{proposition}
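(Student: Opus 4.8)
The plan is to verify the operator identity $w^{-1}\mathcal{D}_a w = \mathcal{D}_{wa}$ by unpacking the definition of $\mathcal{D}_a$ and using the $G$-invariance of $R_+$ together with the conjugation rule $w s_v w^{-1} = s_{wv}$ established in Section~1. First I would write, for $p\in\Pi^d$, $(\mathcal{D}_a w p)(x) = \langle \nabla_\kappa (wp)(x), a\rangle$, expand $\nabla_\kappa(wp)$ as $\nabla(wp)(x) + \sum_{v\in R_+}\kappa(v)\,\frac{(wp)(x)-s_v(wp)(x)}{\langle x,v\rangle}\,v$, and then apply $w^{-1}$, which sends $x$ to $w^{-1}x$. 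The goal is to show the result equals $(\mathcal{D}_{wa} p)(x) = \langle \nabla p(x), wa\rangle + \sum_{v\in R_+}\kappa(v)\,\frac{p(x)-s_v p(x)}{\langle x,v\rangle}\,\langle v, wa\rangle$.

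For the gradient term, I would use the classical fact that $\nabla(wp)(x) = w\,\nabla p(w^{-1}x)$ for $w\in O(d)$ (chain rule, using orthogonality so that the Jacobian is $w$ itself), hence $\langle \nabla(wp)(w^{-1}x), a\rangle = \langle w\,\nabla p(x), a\rangle = \langle \nabla p(x), w^{-1}a\rangle$... wait — here I must be careful with which direction $w$ acts; since the claim is $w^{-1}\mathcal{D}_a w = \mathcal{D}_{wa}$, I would track the substitutions so that the net effect on the gradient part produces $\langle \nabla p(x), wa\rangle$, which is exactly the gradient part of $\mathcal{D}_{wa}p(x)$. The key algebraic input is $\langle w^{-1}u, a\rangle = \langle u, wa\rangle$ for $w\in O(d)$.

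For the difference-quotient sum, after substituting $x\mapsto w^{-1}x$ I would get terms $\kappa(v)\,\frac{(wp)(w^{-1}x) - (s_v wp)(w^{-1}x)}{\langle w^{-1}x, v\rangle}\,\langle v, a\rangle$. Now $(wp)(w^{-1}x) = p(x)$, and $(s_v w p)(w^{-1}x) = p(w^{-1}s_v^{-1} w x) = p((w^{-1}s_v w)^{-1} x) = ((w^{-1}s_v w)p)(x) = (s_{w^{-1}v}p)(x)$ using $w^{-1}s_v w = s_{w^{-1}v}$; also $\langle w^{-1}x, v\rangle = \langle x, wv\rangle$ and $\langle v, a\rangle = \langle wv, wa\rangle$. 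Then I would reindex the sum by $u = wv$: since $\kappa$ is $G$-invariant, $\kappa(v) = \kappa(wv) = \kappa(u)$, and since $R_+$ and $R_-$ together form $R$ which is $w$-invariant, the set $\{wv : v\in R_+\}$ is a choice of one root from each $\pm$ pair, so $wv\in R_+$ or $-wv\in R_+$; in the latter case one checks $s_{wv} = s_{-wv}$, $\langle x, -u\rangle$ and the root factor $(-u)$ absorb the two sign changes, so the sum over $\{wv\}$ equals the sum over $R_+$. This yields $\sum_{u\in R_+}\kappa(u)\,\frac{p(x) - s_u p(x)}{\langle x, u\rangle}\,\langle u, wa\rangle$, matching the reflection part of $\mathcal{D}_{wa}p(x)$.

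The main obstacle is bookkeeping: keeping the direction of the $w$-action consistent (the convention $(wp)(x) = p(w^{-1}x)$ means an extra inversion appears at each step), and justifying the reindexing $v\mapsto wv$ of the sum over positive roots — specifically checking that replacing a positive root $v$ by $wv$ (which may land in $R_-$) leaves each summand unchanged because of the combined sign cancellations in $s_{wv}$, in $\langle x, wv\rangle$, and in the trailing vector $wv$. None of this is deep, but it is the step most prone to sign errors, so I would present it carefully. An alternative, slicker route is to note that $\nabla_\kappa$ itself is $G$-equivariant, i.e. $\nabla_\kappa(wp)(x) = w\,(\nabla_\kappa p)(w^{-1}x)$, proved by the same reindexing; then pairing with $a$ and using orthogonality gives the claim in one line.
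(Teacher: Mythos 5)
Your plan is in substance the paper's own proof: expand $\mathcal{D}_a$ from the definition, use the chain rule plus orthogonality of $w$ for the gradient term, use $ws_vw^{-1}=s_{wv}$ together with the $G$-invariance of $\kappa$ for the difference quotients, and reindex the sum over roots. The one point where you genuinely diverge is the reindexing step: you keep the sum over $R_+$ and check by hand that when $wv$ lands in $R_-$ the summand is unchanged (the sign changes in $\langle x,wv\rangle$ and in the factor $\langle wv,wa\rangle$ cancel, while $s_{wv}=s_{-wv}$), whereas the paper first rewrites $\sum_{v\in R_+}$ as $\tfrac12\sum_{v\in R}$ so that $v\mapsto w^{-1}v$ is a clean bijection of $R$ and no case analysis is needed. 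Both are correct; the paper's device is slightly tidier, your version makes the cancellation explicit.

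One concrete caution on the bookkeeping you yourself flag as the danger point: under the paper's convention $(wp)(x)=p(w^{-1}x)$, applying $w^{-1}$ to a function substitutes $x\mapsto wx$, not $x\mapsto w^{-1}x$; accordingly $(wp)(w^{-1}x)=p(w^{-2}x)$, not $p(x)$, and your chain $(s_vwp)(w^{-1}x)=p(w^{-1}s_v^{-1}wx)$ is not consistent with the evaluation point you chose. If you carry the computation through consistently with that convention you end up proving $w\mathcal{D}_aw^{-1}=\mathcal{D}_{wa}$, equivalently $w^{-1}\mathcal{D}_aw=\mathcal{D}_{w^{-1}a}$. This is in fact exactly what the paper's own proof establishes and what is used later (e.g.\ $\mathcal{D}_{wt}=w\mathcal{D}_tw^{-1}$ in showing $\rho(wp)=w\rho(p)w^{-1}$); the Proposition as printed has the inverse on the wrong side. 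Since the principal applications take $w$ to be a reflection, where $w=w^{-1}$, nothing downstream is affected, but your write-up should fix one convention at the outset and state which of the two conjugation identities it is proving.
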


\begin{proof}
Let $p\in\Pi^{d}$ then%
\begin{align*}
w\mathcal{D}_{a}w^{-1}p\left(  x\right)   &  =w\left\langle \nabla
w^{-1}p\left(  x\right)  ,a\right\rangle +\frac{1}{2}w\sum_{v\in R}%
\kappa\left(  v\right)  \frac{p\left(  wx\right)  -p\left(  ws_{v}x\right)
}{\left\langle x,v\right\rangle }\left\langle v,a\right\rangle \\
&  =\left\langle \nabla p\left(  x\right)  ,wa\right\rangle +\frac{1}{2}%
\sum_{v\in R}\kappa\left(  v\right)  \frac{p\left(  x\right)  -p\left(
ws_{v}w^{-1}x\right)  }{\left\langle w^{-1}x,v\right\rangle }\left\langle
v,a\right\rangle \\
&  =\left\langle \nabla p\left(  x\right)  ,wa\right\rangle +\frac{1}{2}%
\sum_{u\in R}\kappa\left(  w^{-1}u\right)  \frac{p\left(  x\right)  -p\left(
s_{u}x\right)  }{\left\langle x,u\right\rangle }\left\langle u,wa\right\rangle
\\
&  =\mathcal{D}_{wa}p\left(  x\right)  .
\end{align*}
In the reflection part of $\mathcal{D}_{a}$ the sum over $R_{+}$ can be
replaced by $\frac{1}{2}$ of the sum over $R$. Then the summation variable
$v\in R$ is replaced by $v=w^{-1}u$.
\end{proof}

For $t\in\mathbb{R}^{d}$ let $m_{t}$ denote the multiplier operator on
$\Pi^{d}$ given by%
\[
\left(  m_{t}p\right)  \left(  x\right)  :=\left\langle t,x\right\rangle
p\left(  x\right)  .
\]
The commutator of two operators $A,B$ on $\Pi^{d}$ is $\left[  A,B\right]
:=AB-BA$. The identities $\left[  \left[  A,B\right]  ,C\right]  =\left[
\left[  A,C\right]  ,B\right]  -\left[  \left[  B,C\right]  ,A\right]  $ and
$\left[  A^{2},B\right]  =A\left[  A,B\right]  +\left[  A,B\right]  A$ are
used below.

\begin{proposition}
\label{c4Dmt}For $a,t\in\mathbb{R}^{d}$%
\[
\left[  \mathcal{D}_{a},m_{t}\right]  =\left\langle a,t\right\rangle
+2\sum_{v\in R_{+}}\kappa\left(  v\right)  \frac{\left\langle a,v\right\rangle
\left\langle t,v\right\rangle }{\left\vert v\right\vert ^{2}}s_{v}.
\]

\end{proposition}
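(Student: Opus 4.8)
The plan is to compute the commutator $[\mathcal{D}_a, m_t]$ directly by applying it to an arbitrary polynomial $p \in \Pi^d$ and simplifying. First I would write $\mathcal{D}_a p(x) = \langle \nabla p(x), a\rangle + \sum_{v\in R_+}\kappa(v)\frac{\langle a,v\rangle}{\langle x,v\rangle}\bigl(p(x) - s_v p(x)\bigr)$, separating the operator into its differential part $\partial_a := \langle \nabla, a\rangle$ and its reflection part. Since the bracket is linear in $\mathcal{D}_a$, I get $[\mathcal{D}_a, m_t] = [\partial_a, m_t] + \sum_{v\in R_+}\kappa(v)\langle a,v\rangle\,[T_v, m_t]$, where $T_v p(x) := \frac{p(x)-s_v p(x)}{\langle x,v\rangle}$.

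Next I would handle the two pieces separately. For the differential part, the classical Leibniz rule gives $\partial_a(\langle t,x\rangle p(x)) = \langle a,t\rangle p(x) + \langle t,x\rangle \partial_a p(x)$, so $[\partial_a, m_t] = \langle a,t\rangle$ (multiplication by the scalar). For the reflection part, I would compute $T_v(m_t p)(x) = \frac{\langle t,x\rangle p(x) - \langle t, s_v x\rangle\, s_v p(x)}{\langle x,v\rangle}$ and subtract $m_t(T_v p)(x) = \frac{\langle t,x\rangle\bigl(p(x) - s_v p(x)\bigr)}{\langle x,v\rangle}$. The $\langle t,x\rangle p(x)$ terms cancel, leaving $[T_v, m_t]p(x) = \frac{\langle t,x\rangle - \langle t, s_v x\rangle}{\langle x,v\rangle}\, s_v p(x)$. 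The key identity is $\langle t, s_v x\rangle = \langle s_v t, x\rangle = \langle t,x\rangle - 2\frac{\langle t,v\rangle}{|v|^2}\langle x,v\rangle$, so $\langle t,x\rangle - \langle t,s_v x\rangle = 2\frac{\langle t,v\rangle}{|v|^2}\langle x,v\rangle$, and the factor $\langle x,v\rangle$ cancels cleanly, giving $[T_v, m_t] = 2\frac{\langle t,v\rangle}{|v|^2} s_v$.

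Assembling the pieces yields $[\mathcal{D}_a, m_t] = \langle a,t\rangle + 2\sum_{v\in R_+}\kappa(v)\frac{\langle a,v\rangle\langle t,v\rangle}{|v|^2} s_v$, which is exactly the claimed formula. I do not anticipate a genuine obstacle here; the computation is essentially mechanical. The one point requiring a little care is the cancellation in the reflection part: one must make sure to commute $s_v$ past the multiplier correctly, using $s_v(m_t p) = m_{s_v t}(s_v p)$ at the level of polynomials, i.e., that $(s_v$ applied to $\langle t, x\rangle p(x))$ equals $\langle t, s_v^{-1} x\rangle\,(s_v p)(x) = \langle s_v t, x\rangle (s_v p)(x)$, since $s_v = s_v^{-1}$. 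Keeping track of which variable the reflection acts on in each term is the only place an error could creep in, so I would write out $T_v(m_t p)$ explicitly before combining terms.
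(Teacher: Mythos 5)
Your proposal is correct and follows essentially the same route as the paper: split $\mathcal{D}_a$ into its differential and reflection parts, use the Leibniz rule to get the scalar $\langle a,t\rangle$, and observe that in the reflection part the $\langle t,x\rangle p(x)$ terms cancel, leaving $\bigl(\langle t,x\rangle-\langle s_v t,x\rangle\bigr)/\langle x,v\rangle = 2\langle t,v\rangle/|v|^2$ multiplying $s_v$. The computation and the key identity are exactly those in the paper's proof.
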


\begin{proof}
Let $p\in\Pi^{d}$. Then $\left\langle \nabla\left(  \left\langle
t,x\right\rangle p\left(  x\right)  \right)  ,a\right\rangle -\left\langle
t,x\right\rangle \left\langle \nabla p\left(  x\right)  ,a\right\rangle
=\left\langle t,a\right\rangle p\left(  x\right)  $. Next for any $v\in R_{+}$
we have
\begin{align*}
&  \frac{\left\langle t,x\right\rangle p\left(  x\right)  -\left\langle
t,s_{v}x\right\rangle p\left(  s_{v}x\right)  }{\left\langle x,v\right\rangle
}-\left\langle t,x\right\rangle \frac{p\left(  x\right)  -p\left(
s_{v}x\right)  }{\left\langle x,v\right\rangle }\\
&  =\frac{p\left(  s_{v}x\right)  }{\left\langle x,v\right\rangle }\left(
\left\langle t,x\right\rangle -\left\langle s_{v}t,x\right\rangle \right)
=2\frac{p\left(  s_{v}x\right)  \left\langle x,v\right\rangle }{\left\langle
x,v\right\rangle \left\vert v\right\vert ^{2}}\left\langle t,v\right\rangle .
\end{align*}
This proves the formula.
\end{proof}

\begin{lemma}
For $b\in\mathbb{R}^{d}$ and $v\in R$,%
\[
\left[  s_{v},\mathcal{D}_{b}\right]  =2\frac{\left\langle b,v\right\rangle
}{\left\vert v\right\vert ^{2}}s_{v}\mathcal{D}_{v}.
\]

\end{lemma}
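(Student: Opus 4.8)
The plan is to verify the identity by evaluating both sides on an arbitrary polynomial $p\in\Pi^d$ and comparing. Since $\mathcal{D}_b p(x)=\langle\nabla p(x),b\rangle+\sum_{u\in R_+}\kappa(u)\frac{p(x)-s_u p(x)}{\langle x,u\rangle}\langle u,b\rangle$, I would split $\mathcal{D}_b$ into its differential part $\partial_b:=\langle\nabla(\cdot),b\rangle$ and its reflection part $T_b:=\sum_{u\in R_+}\kappa(u)\frac{\langle u,b\rangle}{\langle x,u\rangle}(1-s_u)$, and compute $[s_v,\partial_b]$ and $[s_v,T_b]$ separately.

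For the differential part, the chain rule gives $s_v(\partial_b p)(x)=\partial_b(p(s_v x))$ applied via $\nabla(p\circ s_v)(x)=s_v(\nabla p)(s_v x)$, so $s_v\partial_b s_v^{-1}=\partial_{s_v b}$; hence $[s_v,\partial_b]=s_v\partial_b-\partial_b s_v=(\partial_{s_v b}-\partial_b)s_v$. Now $s_v b-b=-2\frac{\langle b,v\rangle}{|v|^2}v$, so $\partial_{s_v b}-\partial_b=-2\frac{\langle b,v\rangle}{|v|^2}\partial_v$, giving $[s_v,\partial_b]=-2\frac{\langle b,v\rangle}{|v|^2}\partial_v s_v$. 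For the reflection part, I would use that conjugation by $s_v$ permutes $R_+$ up to signs and fixes $\kappa$; the key computation is $s_v\big(\frac{p(x)-s_u p(x)}{\langle x,u\rangle}\big)$, which after the substitution $x\mapsto s_v x$ and using $\langle s_v x,u\rangle=\langle x,s_v u\rangle$ rearranges the sum over $u$ into a sum over $s_v u$. Collecting terms, $s_v T_b s_v^{-1}=T_{s_v b}$ as well (this is essentially Proposition \ref{c4wDw} restricted to the reflection part), so $[s_v,T_b]=(T_{s_v b}-T_b)s_v=-2\frac{\langle b,v\rangle}{|v|^2}T_v s_v$.

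Adding the two pieces gives $[s_v,\mathcal{D}_b]=-2\frac{\langle b,v\rangle}{|v|^2}(\partial_v+T_v)s_v=-2\frac{\langle b,v\rangle}{|v|^2}\mathcal{D}_v s_v$. Finally I would convert $\mathcal{D}_v s_v$ to $s_v\mathcal{D}_v$ with a sign: since $s_v v=-v$, Proposition \ref{c4wDw} gives $s_v^{-1}\mathcal{D}_v s_v=\mathcal{D}_{s_v v}=\mathcal{D}_{-v}=-\mathcal{D}_v$, hence $\mathcal{D}_v s_v=-s_v\mathcal{D}_v$, and therefore $[s_v,\mathcal{D}_b]=2\frac{\langle b,v\rangle}{|v|^2}s_v\mathcal{D}_v$, as claimed. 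The main obstacle is bookkeeping in the reflection part: one must handle the denominators $\langle x,u\rangle$ carefully under $x\mapsto s_v x$ and confirm that the reindexing $u\mapsto s_v u$ of $R_+$ (with the compensating sign changes and the invariance of $\kappa$) reproduces exactly $T_{s_v b}$ with no leftover terms; alternatively, one can sidestep this entirely by invoking $w^{-1}\mathcal{D}_b w=\mathcal{D}_{wb}$ from Proposition \ref{c4wDw} with $w=s_v$ and then only doing the elementary algebra $s_v b-b=-2\frac{\langle b,v\rangle}{|v|^2}v$ together with the sign flip $\mathcal{D}_v s_v=-s_v\mathcal{D}_v$.
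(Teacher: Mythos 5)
Your proof is correct, and the ``alternative'' you mention at the end is essentially the paper's own proof: it applies Proposition \ref{c4wDw} with $w=s_{v}$ to get $\mathcal{D}_{b}s_{v}=s_{v}\mathcal{D}_{s_{v}b}$, whence $[s_{v},\mathcal{D}_{b}]=s_{v}\left(\mathcal{D}_{b}-\mathcal{D}_{s_{v}b}\right)=s_{v}\mathcal{D}_{b-s_{v}b}=2\frac{\langle b,v\rangle}{|v|^{2}}s_{v}\mathcal{D}_{v}$ by linearity of $a\mapsto\mathcal{D}_{a}$. By factoring $s_{v}$ out on the left rather than on the right, the paper avoids your extra sign-flip step $\mathcal{D}_{v}s_{v}=-s_{v}\mathcal{D}_{v}$, and the componentwise re-derivation of the covariance (differential part plus reflection part) in your main route is unnecessary, since Proposition \ref{c4wDw} already supplies it for $\mathcal{D}_{b}$ as a whole.
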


\begin{proof}
Indeed by Proposition \ref{c4wDw} $\left[  s_{v},\mathcal{D}_{a}\right]
=s_{v}\mathcal{D}_{a}-\mathcal{D}_{a}s_{v}=s_{v}\left(  \mathcal{D}%
_{a}-\mathcal{D}_{s_{v}a}\right)  =s_{v}\left(  2\frac{\left\langle
a,v\right\rangle }{\left\vert v\right\vert ^{2}}\mathcal{D}_{v}\right)  $.
\end{proof}

\begin{theorem}
\label{c4DD0}If $a,b\in\mathbb{R}^{d}$ then $\left[  \mathcal{D}%
_{a},\mathcal{D}_{b}\right]  =0.$
\end{theorem}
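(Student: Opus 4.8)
The plan is to reduce the commutativity $[\mathcal D_a,\mathcal D_b]=0$ to a statement about how the operators interact with the multiplier operators $m_t$, which is where the structural information from Proposition \ref{c4Dmt} lives. The key observation is that for any operator $A$ that is homogeneous of degree $-1$ on $\Pi^d$, $A$ is determined by its commutators $[A,m_t]$ together with its action on constants (which is zero). More precisely, I would first establish the auxiliary fact: if $A$ is homogeneous of degree $-2$ and $[A,m_t]=0$ for all $t\in\mathbb R^d$, then $A=0$. This follows by induction on degree: $A$ kills constants for degree reasons, and if $A$ kills $\Pi_{n-1}^d$ then for $p\in\Pi_{n-1}^d$ we get $A(m_t p)=m_t(Ap)=0$, and since every element of $\Pi_n^d$ is a sum of terms $\langle t,x\rangle p$, we conclude $A$ kills $\Pi_n^d$.

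Applying this with $A=[\mathcal D_a,\mathcal D_b]$, which is indeed homogeneous of degree $-2$, it suffices to show $\bigl[\,[\mathcal D_a,\mathcal D_b],m_t\,\bigr]=0$ for all $t$. Here I would invoke the Jacobi-type identity quoted just before Proposition \ref{c4Dmt}, namely $[[\mathcal D_a,\mathcal D_b],m_t]=[[\mathcal D_a,m_t],\mathcal D_b]-[[\mathcal D_b,m_t],\mathcal D_a]$. Now Proposition \ref{c4Dmt} gives an explicit formula for $[\mathcal D_a,m_t]$ as a scalar plus a sum of reflections $s_v$ weighted by $\kappa(v)\langle a,v\rangle\langle t,v\rangle/|v|^2$; the scalar part commutes with everything and drops out. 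So the whole computation reduces to evaluating $[s_v,\mathcal D_b]$, and this is exactly the content of the Lemma immediately preceding: $[s_v,\mathcal D_b]=2\langle b,v\rangle|v|^{-2}\,s_v\mathcal D_v$.

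Substituting, $[[\mathcal D_a,m_t],\mathcal D_b]$ becomes $-2\sum_{v\in R_+}\kappa(v)\frac{\langle a,v\rangle\langle t,v\rangle}{|v|^2}[s_v,\mathcal D_b] = -4\sum_{v\in R_+}\kappa(v)\frac{\langle a,v\rangle\langle t,v\rangle\langle b,v\rangle}{|v|^4}\,s_v\mathcal D_v$, and the expression $[[\mathcal D_b,m_t],\mathcal D_a]$ gives the same thing with $a$ and $b$ swapped in the numerator — but $\langle a,v\rangle\langle b,v\rangle$ is symmetric in $a,b$, so the two terms are identical and their difference is zero. Hence $[[\mathcal D_a,\mathcal D_b],m_t]=0$ for every $t$, and the auxiliary fact finishes the argument.

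The main obstacle, such as it is, is bookkeeping rather than conceptual: one must be careful that the scalar terms $\langle a,t\rangle$ and $\langle b,t\rangle$ from Proposition \ref{c4Dmt} really do commute with $\mathcal D_b$ and $\mathcal D_a$ respectively (they do, being scalars) so that only the reflection parts survive, and one must correctly track the factors of $2$ and the sign in the Jacobi identity. The elegance is that the noncommutative "error terms" produced by the two reflection parts are forced to cancel purely by the symmetry of $v\mapsto\langle a,v\rangle\langle b,v\rangle$ under interchange of $a$ and $b$; no further properties of the root system (such as the $s_va\in R$ closure condition) are needed at this final stage, those having already been absorbed into the Lemma and into Proposition \ref{c4wDw}.
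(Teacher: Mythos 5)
Your proof is correct and follows essentially the same route as the paper's (Etingof's argument): reduce to $\left[\left[\mathcal{D}_{a},\mathcal{D}_{b}\right],m_{t}\right]=0$ via the Jacobi-type identity, apply Proposition \ref{c4Dmt} and the lemma on $\left[s_{v},\mathcal{D}_{b}\right]$, and observe that the resulting expression is symmetric in $a,b$, then conclude since an operator annihilating $1$ and commuting with all $m_{t}$ is zero. The only blemish is a harmless sign slip --- $\left[\left[\mathcal{D}_{a},m_{t}\right],\mathcal{D}_{b}\right]$ equals $+2\sum_{v\in R_{+}}\kappa\left(v\right)\frac{\left\langle a,v\right\rangle \left\langle t,v\right\rangle }{\left\vert v\right\vert ^{2}}\left[s_{v},\mathcal{D}_{b}\right]$, not $-2\sum_{v}$ --- which does not affect the cancellation, since only the symmetry in $a$ and $b$ is used.
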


\begin{proof}
Let $t\in\mathbb{R}^{d}$; we will show $\left[  \left[  \mathcal{D}%
_{a},\mathcal{D}_{b}\right]  ,m_{t}\right]  =0$. Indeed%
\[
\left[  \left[  \mathcal{D}_{a},\mathcal{D}_{b}\right]  ,m_{t}\right]
=\left[  \left[  \mathcal{D}_{a},m_{t}\right]  ,\mathcal{D}_{b}\right]
-\left[  \left[  \mathcal{D}_{b},m_{t}\right]  ,\mathcal{D}_{a}\right]  ,
\]
and%
\begin{align*}
\left[  \left[  \mathcal{D}_{a},m_{t}\right]  ,\mathcal{D}_{b}\right]   &
=\left[  \left\langle a,t\right\rangle ,\mathcal{D}_{b}\right]  +2\sum_{v\in
R_{+}}\kappa\left(  v\right)  \frac{\left\langle a,v\right\rangle \left\langle
t,v\right\rangle }{\left\vert v\right\vert ^{2}}\left[  s_{v},\mathcal{D}%
_{b}\right] \\
&  =4\sum_{v\in R_{+}}\kappa\left(  v\right)  \frac{\left\langle
a,v\right\rangle \left\langle t,v\right\rangle \left\langle b,v\right\rangle
}{\left\vert v\right\vert ^{4}}s_{v}\mathcal{D}_{v},
\end{align*}
which is symmetric in $a,b$. The algebra generated by $\left\{  m_{t}%
:t\in\mathbb{R}^{d}\right\}  $ is $\Pi^{d}$. For any $p\in\Pi^{d}$ we have
$\left[  \mathcal{D}_{a},\mathcal{D}_{b}\right]  p=p\left[  \mathcal{D}%
_{a},\mathcal{D}_{b}\right]  1=0$.
\end{proof}

This method of proof is due to Etingof \cite[Theorem 2.15]{EtM} (the result
was first established by Dunkl \cite{Du1}). One important consequence is that
the operators $\mathcal{D}_{a}$ generate a commutative algebra.

\begin{definition}
Let $\mathcal{A}_{\kappa}$ denote the algebra of operators on $\Pi^{d}$
generated by $\left\{  \mathcal{D}_{i}:1\leq i\leq d\right\}  $. Let $\rho$
denote the homomorphism $\Pi^{d}\rightarrow\mathcal{A}_{\kappa}$ given by
$\rho p\left(  x_{1},\ldots,x_{d}\right)  =p\left(  \mathcal{D}_{1}%
,\ldots,\mathcal{D}_{d}\right)  ,p\in\Pi^{d}$.
\end{definition}

The map $\rho$ is an isomorphism.

\begin{proposition}
If $w\in G$ and $p\in\Pi^{d}$ then $\rho\left(  wp\right)  =w\rho\left(
p\right)  w^{-1}$.
\end{proposition}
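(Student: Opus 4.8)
The plan is to verify the identity first on the coordinate functions $x_{1},\dots,x_{d}$ and then propagate it to all of $\Pi^{d}$ by multiplicativity. Observe that $p\mapsto wp$ is an $\mathbb{F}$-algebra automorphism of $\Pi^{d}$, since $w(pq)(x)=(pq)(w^{-1}x)=p(w^{-1}x)q(w^{-1}x)=((wp)(wq))(x)$ and $w\cdot 1=1$, and that $\rho$ is an $\mathbb{F}$-algebra homomorphism with $\rho(1)=\mathrm{id}$. Likewise conjugation $A\mapsto wAw^{-1}$ is an algebra automorphism of the algebra of operators on $\Pi^{d}$. Hence both $p\mapsto\rho(wp)$ and $p\mapsto w\rho(p)w^{-1}$ are $\mathbb{F}$-algebra homomorphisms $\Pi^{d}\to\mathrm{End}(\Pi^{d})$ carrying $1$ to $\mathrm{id}$; since $\Pi^{d}$ is generated as an $\mathbb{F}$-algebra by $x_{1},\dots,x_{d}$, it suffices to check $\rho(wx_{i})=w\rho(x_{i})w^{-1}$ for each $i$.

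For the generators I would use two facts. First, $a\mapsto\mathcal{D}_{a}$ is linear, so $\mathcal{D}_{\sum_{j}c_{j}\varepsilon_{j}}=\sum_{j}c_{j}\mathcal{D}_{j}$. Second, writing $x_{i}=\langle\varepsilon_{i},x\rangle$ and using $w^{-1}=w^{T}$ (orthogonality) gives $(wx_{i})(x)=\langle\varepsilon_{i},w^{-1}x\rangle=\langle w\varepsilon_{i},x\rangle=\sum_{j}(w\varepsilon_{i})_{j}x_{j}$. Applying $\rho$ and invoking the linearity of $a\mapsto\mathcal{D}_{a}$ then yields $\rho(wx_{i})=\sum_{j}(w\varepsilon_{i})_{j}\mathcal{D}_{j}=\mathcal{D}_{w\varepsilon_{i}}$. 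On the other hand, by the $G$-covariance of the Dunkl operators (Proposition \ref{c4wDw}), $w\rho(x_{i})w^{-1}=w\mathcal{D}_{\varepsilon_{i}}w^{-1}=\mathcal{D}_{w\varepsilon_{i}}$. This proves the equality on the generators, hence on all of $\Pi^{d}$.

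If one prefers to avoid the abstract homomorphism argument, the same computation can be pushed through directly: for a monomial $x^{\alpha}$ one has $w(x^{\alpha})=\prod_{i}(wx_{i})^{\alpha_{i}}$, and since $\rho$ is a homomorphism and the $\mathcal{D}_{i}$ commute (Theorem \ref{c4DD0}), $\rho(w(x^{\alpha}))=\prod_{i}\rho(wx_{i})^{\alpha_{i}}=\prod_{i}(w\mathcal{D}_{i}w^{-1})^{\alpha_{i}}=w\big(\prod_{i}\mathcal{D}_{i}^{\alpha_{i}}\big)w^{-1}=w\rho(x^{\alpha})w^{-1}$; extending $\mathbb{F}$-linearly over $\alpha$ finishes the proof. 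I do not expect a genuine obstacle here: the only point requiring care is the index bookkeeping that matches the action of $w$ on the linear form $x_{i}$ (which involves $w\varepsilon_{i}$, the $i$-th column of $w$) with the conjugate $w\mathcal{D}_{i}w^{-1}$, and this is exactly where one must invoke $w^{-1}=w^{T}$.
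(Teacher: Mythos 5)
Your proposal is correct and follows essentially the same route as the paper: reduce to degree-one polynomials (generators) via the multiplicativity of $\rho$, of the $w$-action, and of conjugation, then compute $\rho(w\,\langle t,\cdot\rangle)=\mathcal{D}_{wt}=w\mathcal{D}_{t}w^{-1}$ using the covariance Proposition \ref{c4wDw}. Your version merely spells out the homomorphism reduction and the use of $w^{-1}=w^{T}$ more explicitly than the paper does.
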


\begin{proof}
It suffices to show this for first-degree polynomials. For $t\in\mathbb{R}%
^{t}$ let $p_{t}\left(  x\right)  =\left\langle t,x\right\rangle $, then $\rho
p_{t}=\sum_{i=1}^{d}t_{i}\mathcal{D}_{i}=\mathcal{D}_{t}$. Also $wp_{t}\left(
x\right)  =\left\langle t,w^{-1}x\right\rangle =\left\langle wt,x\right\rangle
=p_{wt}\left(  x\right)  $. Then $\rho\left(  wp_{t}\right)  =\rho\left(
p_{wt}\right)  =\mathcal{D}_{wt}=w\mathcal{D}_{t}w^{-1}=w\rho\left(  p\right)
w^{-1}$ by Proposition \ref{c4wDw}.
\end{proof}

There is a Laplacian-type operator in the algebra $\mathcal{A}_{\kappa}$. This
operator is an important part of the analysis of the $L^{2}$-theory associated
to the $G$-invariant weight functions.

\begin{definition}
For $p\in\Pi^{d}$ the \emph{Dunkl Laplacian} $\Delta_{\kappa}$ is given by%
\[
\Delta_{\kappa}p\left(  x\right)  :=\Delta p\left(  x\right)  +2\sum_{v\in
R_{+}}\kappa\left(  v\right)  \left\{  \frac{\left\langle \nabla p\left(
x\right)  ,v\right\rangle }{\left\langle x,v\right\rangle }-\frac{\left\vert
v\right\vert ^{2}}{2}\frac{p\left(  x\right)  -p\left(  s_{v}x\right)
}{\left\langle x,v\right\rangle ^{2}}\right\}  .
\]

\end{definition}

\begin{theorem}
\label{c4Lapmt}$\Delta_{\kappa}=\sum_{i=1}^{d}\mathcal{D}_{i}^{2}$, and
$\left[  \Delta_{\kappa},m_{t}\right]  =2\mathcal{D}_{t}$ for each
$t\in\mathbb{R}^{d}$.
\end{theorem}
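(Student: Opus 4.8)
The plan is to establish the two assertions separately, starting with the identity $\Delta_\kappa=\sum_{i=1}^d\mathcal{D}_i^2$ and then deriving the commutator formula from it. For the first identity I would compute $\mathcal{D}_i^2 p(x)$ directly from the definition $\mathcal{D}_i p=\langle\nabla_\kappa p,\varepsilon_i\rangle$, sum over $i$, and match the result term-by-term with the stated expression for $\Delta_\kappa p$. The sum $\sum_i \mathcal{D}_i^2$ naturally splits into four types of terms: (a) the pure gradient--gradient part, which gives $\Delta p(x)$; (b) cross terms where one $\mathcal{D}_i$ contributes a gradient and the other a reflection difference quotient; (c) terms where both contribute a reflection difference quotient for the same root $v$; and (d) terms where the two difference quotients involve distinct roots $v\neq v'$. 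I expect (b) to produce the $2\sum_v\kappa(v)\langle\nabla p,v\rangle/\langle x,v\rangle$ piece together with the $-|v|^2 (p-s_v p)/\langle x,v\rangle^2$ piece (the latter arising because differentiating $\langle x,v\rangle^{-1}$ and applying $\nabla$ to $s_v p$ contribute), and I expect (c) and (d) — the genuinely "reflection-quadratic" terms — to cancel.

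The cancellation in step (d) is where I expect the main obstacle. When $v\neq v'$ are distinct positive roots, the operator $\mathcal{D}_i\mathcal{D}_j$ produces a term proportional to $\kappa(v)\kappa(v')$ times a sum over $i$ of $\langle\varepsilon_i,v\rangle\langle\varepsilon_i,v'\rangle=\langle v,v'\rangle$ times a double-difference expression in $p$, acting with $s_v$ on the $v'$-quotient. One must argue that these terms, summed over all ordered pairs $(v,v')$ with $v\neq v'$, vanish. The standard route is to pair $(v,v')$ with a related pair obtained by applying $s_v$ (which sends $v'$ to another root $s_v v'\in R$) and to use that the geometry of root strings forces these contributions to cancel in pairs; concretely one uses that $s_v$ permutes $R_+\setminus\{v\}$ up to sign and that $\kappa$ is $W$-invariant. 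Rather than grinding through this combinatorics, a cleaner alternative is available: since Theorem \ref{c4DD0} already gives $[\mathcal{D}_i,\mathcal{D}_j]=0$, the operator $\sum_i\mathcal{D}_i^2$ is $G$-invariant (by Proposition \ref{c4wDw}, conjugation by $w\in G$ permutes the $\mathcal{D}_i$ orthogonally and hence fixes $\sum_i\mathcal{D}_i^2$), and one can reduce the verification of the difference-quotient identity to checking it on a convenient spanning set, exploiting the commutativity to reorganize the double sum. I would first try the direct pairing argument and fall back on this structural reduction if the bookkeeping becomes unwieldy.

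For the commutator statement, once $\Delta_\kappa=\sum_i\mathcal{D}_i^2$ is known, I would write $m_t=\sum_i t_i m_{\varepsilon_i}$ and use bilinearity of the commutator together with the Leibniz-type identity $[A^2,B]=A[A,B]+[A,B]A$ recalled in the text, applied with $A=\mathcal{D}_i$ and $B=m_{\varepsilon_i}$. This gives
\[
[\Delta_\kappa,m_t]=\sum_{i=1}^d t_i\bigl(\mathcal{D}_i[\mathcal{D}_i,m_{\varepsilon_i}]+[\mathcal{D}_i,m_{\varepsilon_i}]\mathcal{D}_i\bigr).
\]
Now invoke Proposition \ref{c4Dmt} with $a=t'$, $t=t'$ replaced appropriately: $[\mathcal{D}_i,m_{\varepsilon_j}]=\langle\varepsilon_i,\varepsilon_j\rangle+2\sum_{v\in R_+}\kappa(v)\langle\varepsilon_i,v\rangle\langle\varepsilon_j,v\rangle|v|^{-2}s_v$. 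Substituting and summing over $i$, the scalar part contributes $\sum_i t_i(\mathcal{D}_i+\mathcal{D}_i)=2\mathcal{D}_t$, which is the desired answer, so it remains to show the reflection part cancels. The reflection contribution is $2\sum_{v\in R_+}\kappa(v)|v|^{-2}\sum_i t_i\langle\varepsilon_i,v\rangle\,(\mathcal{D}_i s_v+s_v\mathcal{D}_i)=2\sum_v\kappa(v)|v|^{-2}\langle t,v\rangle\sum_i\langle\varepsilon_i,v\rangle(\mathcal{D}_i s_v+s_v\mathcal{D}_i)$. Wait — more carefully, $\sum_i t_i\langle\varepsilon_i,v\rangle \mathcal{D}_i=\mathcal{D}_{?}$ needs the two factors of $v$ to line up; using the Lemma $[s_v,\mathcal{D}_b]=2\langle b,v\rangle|v|^{-2}s_v\mathcal{D}_v$ one rewrites $s_v\mathcal{D}_i=\mathcal{D}_i s_v - 2\langle\varepsilon_i,v\rangle|v|^{-2}s_v\mathcal{D}_v$, so $\mathcal{D}_i s_v+s_v\mathcal{D}_i=2\mathcal{D}_i s_v-2\langle\varepsilon_i,v\rangle|v|^{-2}s_v\mathcal{D}_v$, and summing against $t_i\langle\varepsilon_i,v\rangle$ and using $\sum_i t_i\langle\varepsilon_i,v\rangle\langle\varepsilon_i,v\rangle$-type contractions together with $s_v\mathcal{D}_v=-\mathcal{D}_v s_v$ forces the whole reflection part to telescope to zero. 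I would lay this out as a short explicit computation; it is routine once the Lemma is in hand, and the only care needed is tracking the two occurrences of $v$ in the coefficients.
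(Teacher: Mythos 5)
There is a genuine gap, and it sits exactly where you predicted it would: the cancellation of the cross terms in your direct expansion of $\sum_{i}\mathcal{D}_{i}^{2}$. For distinct positive roots $v\neq v'$ you are left with terms proportional to $\kappa(v)\kappa(v')\langle v,v'\rangle$ times a composition of the two difference quotients, and neither of your two sketches disposes of them. The ``pair $(v,v')$ with $(v,s_{v}v')$ and use root strings'' idea is precisely the hard combinatorial core of Dunkl's original 1989 proof of $[\mathcal{D}_{a},\mathcal{D}_{b}]=0$; it is not routine, it requires signs and $W$-invariance of $\kappa$ to be tracked over two-plane subsystems, and asserting that it ``forces these contributions to cancel in pairs'' is not a proof. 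The fallback (``reduce to a convenient spanning set, exploiting commutativity to reorganize the double sum'') does not describe an argument at all: $G$-invariance of $\sum_{i}\mathcal{D}_{i}^{2}$ does not by itself identify the operator with $\Delta_{\kappa}$, and no spanning set is specified on which the identity becomes checkable. Since your proof of the second assertion is conditional on the first identity, the whole proposal is incomplete.

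The paper avoids this entirely by running your second computation \emph{twice}, once for each operator, and then invoking the same principle used to prove Theorem \ref{c4DD0}: two operators that annihilate $1$ and have equal commutators with every multiplier $m_{t}$ agree on all of $\Pi^{d}$, because $\Pi^{d}$ is generated from $1$ by the $m_{t}$. Concretely, (i) $\bigl[\sum_{i}\mathcal{D}_{i}^{2},m_{t}\bigr]=2\mathcal{D}_{t}$ follows from Proposition \ref{c4Dmt} and $\mathcal{D}_{v}s_{v}+s_{v}\mathcal{D}_{v}=0$ --- this is essentially the computation you carry out, though your displayed formula pairing $\mathcal{D}_{i}$ only with $m_{\varepsilon_{i}}$ drops the cross terms $[\mathcal{D}_{j}^{2},m_{\varepsilon_{i}}]$ with $j\neq i$ before you implicitly restore them; the clean route is to keep $m_{t}$ intact and use $[\mathcal{D}_{i},m_{t}]=t_{i}+2\sum_{v}\kappa(v)|v|^{-2}v_{i}\langle t,v\rangle s_{v}$, contracting $\sum_{i}v_{i}\mathcal{D}_{i}=\mathcal{D}_{v}$. (ii) $[\Delta_{\kappa},m_{t}]=2\mathcal{D}_{t}$ is computed directly from the difference-quotient definition of $\Delta_{\kappa}$: writing $T_{v}$ for the bracketed term, one finds $[T_{v},m_{t}]p=\langle t,v\rangle\langle x,v\rangle^{-1}\bigl(p(x)-p(s_{v}x)\bigr)$ in one line, using $\langle x,t\rangle-\langle s_{v}x,t\rangle=2|v|^{-2}\langle x,v\rangle\langle t,v\rangle$. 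Both operators kill $1$, so they coincide, and both assertions of the theorem drop out simultaneously with no root-system combinatorics. You already have (i) nearly in hand; adding (ii) and the generation argument is the missing idea that closes the gap.
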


\begin{proof}
We use the same method as in the proof of Theorem \ref{c4DD0}, that is, we
show $\left[  \Delta_{\kappa},m_{t}\right]  =\sum_{i=1}^{d}\left[
\mathcal{D}_{i}^{2},m_{t}\right]  $ for $t\in\mathbb{R}^{d}$. Clearly
$\Delta_{\kappa}1=0=\sum_{i=1}^{d}\mathcal{D}_{i}^{2}1$. We have%
\begin{align*}
\sum_{i=1}^{d}\left[  \mathcal{D}_{i}^{2},m_{t}\right]   &  =\sum_{i=1}%
^{d}\left\{  \mathcal{D}_{i}\left[  \mathcal{D}_{i},m_{t}\right]  +\left[
\mathcal{D}_{i},m_{t}\right]  \mathcal{D}_{i}\right\} \\
&  =\sum_{i=1}^{d}\left\{  2t_{i}\mathcal{D}_{i}+2\sum_{v\in R_{+}}%
\kappa\left(  v\right)  \frac{\left\langle t,v\right\rangle v_{i}}{\left\vert
v\right\vert ^{2}}\left(  \mathcal{D}_{i}s_{v}+s_{v}\mathcal{D}_{i}\right)
\right\} \\
&  =2\mathcal{D}_{t}+2\sum_{v\in R_{+}}\kappa\left(  v\right)  \frac
{\left\langle t,v\right\rangle }{\left\vert v\right\vert ^{2}}\left(
\mathcal{D}_{v}s_{v}+s_{v}\mathcal{D}_{v}\right)  =2\mathcal{D}_{t},
\end{align*}
because $\mathcal{D}_{v}s_{v}+s_{v}\mathcal{D}_{v}=\mathcal{D}_{v}%
s_{v}+\mathcal{D}_{s_{v}v}s_{v}$ and $\mathcal{D}_{s_{v}v}=-\mathcal{D}_{v}$.
Next $\left[  \Delta,m_{t}\right]  =2\sum_{i=1}^{d}t_{i}\frac{\partial
}{\partial x_{i}}$. For $v\in R_{+}$ let $T_{v}$ denote the operator defined
in $\left\{  \cdot\right\}  $ in the formula for $\Delta_{\kappa}$, then%
\begin{align*}
\left[  T_{v},m_{t}\right]  p\left(  x\right)   &  =\frac{\left\langle
t,v\right\rangle }{\left\langle x,v\right\rangle }p\left(  x\right)
-\frac{\left\vert v\right\vert ^{2}}{2}\frac{\left\langle x,t\right\rangle
-\left\langle s_{v}x,t\right\rangle }{\left\langle x,v\right\rangle ^{2}%
}p\left(  s_{v}x\right) \\
&  =\frac{\left\langle t,v\right\rangle }{\left\langle x,v\right\rangle
}\left(  p\left(  x\right)  -p\left(  s_{v}x\right)  \right)  ,
\end{align*}
since $\left\langle x,t\right\rangle -\left\langle s_{v}x,t\right\rangle
=\frac{2}{\left\vert v\right\vert ^{2}}\left\langle x,v\right\rangle
\left\langle t,v\right\rangle $, for any $p\in\Pi^{d}$. Thus $\left[
\Delta_{\kappa},m_{t}\right]  p=2\mathcal{D}_{t}p$.
\end{proof}

\begin{corollary}
$\Delta_{\kappa}\in\mathcal{A}_{\kappa}$ and $\left[  \Delta_{\kappa
},w\right]  =0$ for $w\in G$.
\end{corollary}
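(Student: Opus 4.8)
The plan is to deduce both assertions directly from Theorem \ref{c4Lapmt} together with Theorem \ref{c4DD0}. For the first claim, $\Delta_\kappa\in\mathcal{A}_\kappa$: by definition $\mathcal{A}_\kappa$ is generated by $\mathcal{D}_1,\ldots,\mathcal{D}_d$, and Theorem \ref{c4Lapmt} exhibits $\Delta_\kappa=\sum_{i=1}^d\mathcal{D}_i^2$ as a polynomial (indeed $\rho$ applied to $\sum_i x_i^2$) in those generators, so it visibly lies in $\mathcal{A}_\kappa$. There is essentially nothing to prove here beyond citing the identity.

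For the commutation claim, the natural route is again the commutator-with-multiplier trick used in the proofs of Theorems \ref{c4DD0} and \ref{c4Lapmt}. Fix $w\in G$; I want to show $[\Delta_\kappa,w]=0$ as operators on $\Pi^d$. I would first check agreement on constants: $\Delta_\kappa 1=0$ and $w1=1$, so $[\Delta_\kappa,w]1=0$. Then, since $\Pi^d$ is generated as an algebra by the linear multipliers $\{m_t:t\in\mathbb{R}^d\}$, it suffices to show that the operator $[\Delta_\kappa,w]$ commutes appropriately with each $m_t$ — more precisely, that $[[\Delta_\kappa,w],m_t]=0$ so that the vanishing on $1$ propagates to all of $\Pi^d$. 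Using the Jacobi-type identity $[[\Delta_\kappa,w],m_t]=[[\Delta_\kappa,m_t],w]-[[w,m_t],\Delta_\kappa]$ and the relation $w\,m_t=m_{wt}\,w$ (equivalently $[w,m_t]=(m_{wt}-m_t)w$ after rearrangement, which one gets from $(wp_t)(x)=\langle wt,x\rangle$ as in the proof of the preceding proposition), reduces everything to the single computed commutator $[\Delta_\kappa,m_t]=2\mathcal{D}_t$ from Theorem \ref{c4Lapmt}.

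Concretely, I would compute $w^{-1}[\Delta_\kappa,w] = w^{-1}\Delta_\kappa w - \Delta_\kappa$ and show it kills every $m_t$-monomial by induction on degree. The inductive step uses $\Delta_\kappa(m_t p)=m_t\Delta_\kappa p+2\mathcal{D}_t p$ together with $w^{-1}\mathcal{D}_t w=\mathcal{D}_{w^{-1}t}$ (Proposition \ref{c4wDw}) and $w^{-1}m_t w = m_{w^{-1}t}$; pushing $w^{-1}\Delta_\kappa w$ through the product $m_t p$ and comparing with $\Delta_\kappa(m_t p)$, the "boundary" terms are $2\mathcal{D}_{w^{-1}t}(w^{-1}p)$ versus $w^{-1}(2\mathcal{D}_t p)=2\mathcal{D}_{w^{-1}t}(w^{-1}p)$, which cancel, leaving $m_{w^{-1}t}\bigl(w^{-1}\Delta_\kappa w-\Delta_\kappa\bigr)w^{-1}p$ plus lower-order data handled by the inductive hypothesis; hence the whole thing vanishes. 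Alternatively, and more slickly, one observes directly that $[\Delta_\kappa,w]=[\sum_i\mathcal{D}_i^2,w]$ and expands $w^{-1}(\sum_i\mathcal{D}_i^2)w=\sum_i(w^{-1}\mathcal{D}_i w)^2=\sum_i\mathcal{D}_{w^{-1}\varepsilon_i}^2$; since $w^{-1}$ is orthogonal, $\{w^{-1}\varepsilon_i\}$ is again an orthonormal basis, and $\sum_i\mathcal{D}_{u_i}^2$ is independent of the choice of orthonormal basis $\{u_i\}$ because $\mathcal{D}_a$ depends linearly on $a$ and $\sum_i u_i u_i^T=I_d$. This last identity is exactly the content of $\Delta_\kappa=\sum_i\mathcal{D}_i^2$ being basis-independent, which is immediate from bilinearity of $(a,b)\mapsto \mathcal{D}_a\mathcal{D}_b$.

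I expect the only real subtlety to be justifying the basis-independence of $\sum_i\mathcal{D}_i^2$ cleanly — i.e. that $\sum_i\mathcal{D}_{u_i}^2=\sum_i\mathcal{D}_{\varepsilon_i}^2$ for any orthonormal basis $\{u_i\}$ — but this follows formally: writing $u_i=\sum_j w_{ij}\varepsilon_j$ with $w$ orthogonal, $\sum_i\mathcal{D}_{u_i}^2=\sum_i\sum_{j,k}w_{ij}w_{ik}\mathcal{D}_j\mathcal{D}_k=\sum_{j,k}\delta_{jk}\mathcal{D}_j\mathcal{D}_k=\sum_j\mathcal{D}_j^2$, using only linearity of $a\mapsto\mathcal{D}_a$ and $\sum_i w_{ij}w_{ik}=\delta_{jk}$ (no commutativity needed, though commutativity from Theorem \ref{c4DD0} is available anyway). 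Thus $w^{-1}\Delta_\kappa w=\Delta_\kappa$, giving $[\Delta_\kappa,w]=0$; and the membership $\Delta_\kappa\in\mathcal{A}_\kappa$ is already recorded. The multiplier-commutator induction is available as a fallback but the basis-independence argument is shorter and is the one I would present.
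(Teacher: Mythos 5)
Your argument is correct and is essentially the paper's own: the paper observes that $p_{2}(x)=|x|^{2}$ is $G$-invariant and applies $\rho(wp)=w\rho(p)w^{-1}$ to conclude $w\Delta_{\kappa}w^{-1}=\rho(wp_{2})=\rho(p_{2})=\Delta_{\kappa}$, which is exactly your orthonormal-basis-independence computation for $\sum_{i}\mathcal{D}_{u_{i}}^{2}$ written out explicitly. (One trivial slip: Proposition \ref{c4wDw} gives $w^{-1}\mathcal{D}_{\varepsilon_{i}}w=\mathcal{D}_{w\varepsilon_{i}}$ rather than $\mathcal{D}_{w^{-1}\varepsilon_{i}}$, but either family is an orthonormal basis, so your conclusion is unaffected.)
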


\begin{proof}
For any reflection group $p_{2}\left(  x\right)  =\left\vert x\right\vert
^{2}$ is $G$-invariant. The Theorem shows that $\Delta_{\kappa}=\rho\left(
p_{2}\right)  \in\mathcal{A}_{\kappa}$ and thus $w\Delta_{\kappa}w^{-1}%
=\Delta_{\kappa}$.
\end{proof}

There is a natural bilinear $G$-invariant form on $\Pi^{d}$ associated with
Dunkl operators. We will show that the form is symmetric, and
positive-definite when $\kappa\geq0$, that is, $\kappa\left(  v\right)  \geq0$
for all $v\in R$. The proof involves a number of ingredients.

\begin{lemma}
\label{c4sumxD}For each $n\in\mathbb{N}_{0}$ the space $\Pi_{n}^{d}$ is a
direct sum of eigenvectors of $\sum_{i=1}^{d}x_{i}\mathcal{D}_{i}$.
\end{lemma}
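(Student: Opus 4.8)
The plan is to find an operator in the algebra $\mathcal{A}_\kappa$ (or built from it together with $G$) that commutes with $E := \sum_{i=1}^d x_i \mathcal{D}_i$ and acts as a scalar that depends on $n$ plus something the group can see, so that the simultaneous-eigenspace decomposition forces $E$ to be diagonalizable on each $\Pi_n^d$. Concretely, I would first compute the commutator $[E, m_t]$ for $t \in \mathbb{R}^d$. Using $[\mathcal{D}_i, m_t] = t_i + 2\sum_{v \in R_+}\kappa(v)\frac{v_i \langle t,v\rangle}{|v|^2}s_v$ from Proposition \ref{c4Dmt} and the Leibniz-type identity $[AB,C] = A[B,C] + [A,C]B$, one gets
\[
[E, m_t] = \sum_{i=1}^d x_i [\mathcal{D}_i, m_t] + \sum_{i=1}^d [x_i \text{-mult}, m_t]\mathcal{D}_i = m_t + \sum_{i=1}^d x_i\Bigl(2\sum_{v\in R_+}\kappa(v)\tfrac{v_i\langle t,v\rangle}{|v|^2}s_v\Bigr),
\]
since multiplier operators commute. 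The sum $\sum_i x_i v_i s_v = m_v s_v = s_v m_{s_v v}\cdot(-1)\cdots$; more usefully $\langle x,v\rangle s_v = s_v \langle s_v x, v\rangle = -s_v\langle x,v\rangle$, so $\sum_i x_i v_i s_v = -\langle x,v\rangle\, (\text{reflection acting after})$. Tracking this carefully, $[E,m_t] = m_t + 2\sum_{v\in R_+}\kappa(v)\frac{\langle t,v\rangle}{|v|^2}\langle x,v\rangle\, s_v$, which is exactly $m_t$ plus a term I recognize: comparing with the Dunkl operator itself, one finds $E = \sum_i x_i \mathcal{D}_i$ satisfies $[E, m_t] = m_t + $ (reflection part), i.e. $E$ differs from the ordinary Euler operator $\sum_i x_i \partial_i$ by the operator $\gamma - \sum_{v\in R_+}\kappa(v)(1-s_v)$ up to sign, where $\gamma := \sum_{v\in R_+}\kappa(v)$. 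The clean statement I expect to land on is
\[
E = \sum_{i=1}^d x_i \mathcal{D}_i = \Bigl(\sum_{i=1}^d x_i \partial_i\Bigr) + \gamma - \sum_{v\in R_+}\kappa(v)\, s_v,
\]
which is immediate from the definition of $\mathcal{D}_i$ once one observes $\sum_i x_i v_i \frac{p(x) - s_v p(x)}{\langle x,v\rangle} = \langle x,v\rangle\frac{p(x)-s_vp(x)}{\langle x,v\rangle} = p - s_v p$.

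Granting that identity, the argument finishes quickly. On $\Pi_n^d$ the ordinary Euler operator $\sum_i x_i\partial_i$ is the scalar $n$, so $E = nI + \gamma I - \sum_{v\in R_+}\kappa(v) s_v$. The remaining operator $\sum_{v\in R_+}\kappa(v)s_v$ lies in the group algebra $\mathbb{F}[G]$ (it is $\kappa$-weighted sum of reflections, which is central? — no, not central, but it is an element of the group algebra), and it commutes with the $G$-action in the sense needed: actually it need not be central, so I instead argue that $\sum_{v\in R_+}\kappa(v)s_v$ is a self-adjoint operator on $\Pi_n^d$ with respect to any $G$-invariant inner product (e.g. the one making the real representation $\Pi_n^d$ orthogonal, since each $s_v$ is orthogonal and $\kappa$ is real), hence diagonalizable over $\mathbb{R}$; therefore $\Pi_n^d$ decomposes into eigenspaces of $\sum_{v\in R_+}\kappa(v)s_v$, and on each such eigenspace $E$ acts as the scalar $n + \gamma - (\text{that eigenvalue})$. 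Thus $\Pi_n^d$ is a direct sum of eigenvectors of $E$.

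The main obstacle is the first step: verifying the operator identity $E = \sum x_i\partial_i + \gamma - \sum_{v\in R_+}\kappa(v)s_v$ cleanly, in particular getting the reflection bookkeeping right — one must check that $\sum_{i=1}^d x_i v_i\,\dfrac{p(x)-(s_vp)(x)}{\langle x,v\rangle}$ collapses to $p(x) - (s_vp)(x)$ because $\sum_i x_i v_i = \langle x,v\rangle$, which cancels the denominator, and then sum over $v\in R_+$ with the weights $\kappa(v)$; the constant $\gamma = \sum_{v\in R_+}\kappa(v)$ appears because I split $\sum_v \kappa(v)(p - s_vp) = \gamma\, p - \sum_v\kappa(v) s_v p$, and the $\gamma p$ gets absorbed. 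Everything after that is standard representation theory: a real self-adjoint (or even just a commuting family of) operator on a finite-dimensional real inner-product space is orthogonally diagonalizable. One could alternatively phrase the whole thing via the intertwining operator $V$, but that machinery is heavier than needed and the direct computation above is self-contained given Proposition \ref{c4Dmt} and the definition of $\mathcal{D}_i$.
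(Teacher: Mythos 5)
Your proof is correct in substance and hinges on exactly the same identity as the paper's: writing $\sum_{i=1}^{d}x_{i}\mathcal{D}_{i}=\sum_{i=1}^{d}x_{i}\frac{\partial}{\partial x_{i}}+\sum_{v\in R_{+}}\kappa\left(  v\right)  \left(  1-s_{v}\right)  $, which follows from the cancellation $\sum_{i}x_{i}v_{i}\frac{p-s_{v}p}{\left\langle x,v\right\rangle }=p-s_{v}p$ that you verify. Where you diverge is the final diagonalizability step, and here you make one genuine misstatement: you assert that $\sum_{v\in R_{+}}\kappa\left(  v\right)  s_{v}$ is \emph{not} central in the group algebra. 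It is central: $\kappa$ is by definition constant on $G$-orbits in $R$, the set of reflections $\left\{  s_{v}:v\in R_{+}\right\}  $ is closed under conjugation, and so the operator is a $\kappa$-weighted sum of conjugacy-class sums. This is precisely the paper's route --- decompose $\Pi_{n}^{d}$ into irreducible $G$-submodules $M_{j}$ and apply Schur's lemma to get scalars $c_{j}\left(  \kappa\right)  $, so that $\sum_{i}x_{i}\mathcal{D}_{i}$ acts as $n+c_{j}\left(  \kappa\right)  $ on $M_{j}$. Your substitute argument (each $s_{v}$ is an orthogonal involution for a $G$-invariant inner product on $\Pi_{n}^{d}$, hence self-adjoint, hence the real linear combination is self-adjoint and diagonalizable by the spectral theorem) is valid, but only when $\kappa$ takes real values; the paper's definition also permits $\kappa$ to take values in a transcendental extension of $\mathbb{Q}$, where the spectral theorem is unavailable but the centrality-plus-Schur argument still works and moreover exhibits the eigenvalue as $n+\sum_{v\in R_{+}}\kappa\left(  v\right)  c_{j,v}$ with $c_{j,v}\in\mathbb{Q}$ --- a form that is used later (in the proof of Theorem \ref{c4kform}) to argue by polynomial identity in $\kappa$. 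Also, your opening commutator computation $\left[  E,m_{t}\right]  $ is a detour you never actually need (and the intermediate manipulation of $\sum_{i}x_{i}v_{i}s_{v}$ there is garbled); the direct verification from the definition of $\mathcal{D}_{i}$, which you give afterwards, is the whole content.
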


\begin{proof}
If $p\in\Pi^{d}$ then%
\[
\sum_{i=1}^{d}x_{i}\mathcal{D}_{i}p\left(  x\right)  =\sum_{i=1}^{d}x_{i}%
\frac{\partial}{\partial x_{i}}p\left(  x\right)  +\sum_{v\in R_{+}}%
\kappa\left(  v\right)  \left(  p\left(  x\right)  -s_{v}p\left(  x\right)
\right)  .
\]
The space $\Pi_{n}^{d}$ is a $G$-module under the action $w\mapsto\left(
p\mapsto wp\right)  $ for $w\in G,p\in\Pi_{n}^{d}$. So $\Pi_{n}^{d}=\sum
_{j=1}^{m}\oplus M_{j}$ where each $M_{j}$ is an irreducible $G$-submodule.
There are constants $c_{j}\left(  \kappa\right)  $ such that $\sum_{v\in
R_{+}}\kappa\left(  v\right)  \left(  1-s_{v}\right)  p=c_{j}\left(
\kappa\right)  p$ for each $p\in M_{j},1\leq j\leq m$, because $\sum_{v\in
R_{+}}\kappa\left(  v\right)  \left(  1-s_{v}\right)  $ is in the center of
the group algebra of $G$, (it is a sum over conjugacy classes and
$c_{j}\left(  \kappa\right)  =\sum_{v\in R_{+}}\kappa\left(  v\right)
c_{j,v}$ where $c_{j,v}\in\mathbb{Q}$ and the map $v\mapsto c_{j,v}$ is
constant on $G$-orbits in $R$). Thus $\sum_{i=1}^{d}x_{i}\mathcal{D}%
_{i}p=\left(  n+c_{j}\left(  \kappa\right)  \right)  p$ for $p\in M_{j}$.
\end{proof}

\begin{definition}
For $p,q\in\Pi^{d}$ let $\left\langle p,q\right\rangle _{\kappa}:=\rho\left(
p\right)  q\left(  x\right)  |_{x=0}$, (the polynomial $\rho\left(  p\right)
q$ is evaluated at $x=0$).
\end{definition}

\begin{theorem}
\label{c4kform}The pairing $\left\langle \cdot,\cdot\right\rangle _{\kappa}$
has the following properties:\newline(1) if $p\in\Pi_{n}^{d},q\in\Pi_{m}^{d}$
and $m\neq n$ then $\left\langle p,q\right\rangle _{\kappa}=0$;\newline(2) if
$w\in G$ and $p,q,r\in\Pi^{d}$ then $\left\langle wp,wq\right\rangle _{\kappa
}=\left\langle p,q\right\rangle _{\kappa}$ and $\left\langle rp,q\right\rangle
_{\kappa}=\left\langle p,\rho\left(  r\right)  q\right\rangle _{\kappa}%
$;\newline(3) the form is bilinear and symmetric.
\end{theorem}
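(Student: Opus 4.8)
Part (1) is a degree count: each $\mathcal{D}_i$ is homogeneous of degree $-1$ and the $\mathcal{D}_i$ commute (Theorem~\ref{c4DD0}), so $\rho(p)$ is homogeneous of degree $-n$ for $p\in\Pi_n^d$, hence $\rho(p)q\in\Pi_{m-n}^d$; this is $0$ when $m<n$ and vanishes at the origin when $m>n$, so $\langle p,q\rangle_\kappa=0$. For Part (2), covariance comes from the identity $\rho(wp)=w\rho(p)w^{-1}$: then $\rho(wp)(wq)=w\rho(p)w^{-1}(wq)=w(\rho(p)q)$, and evaluating at $x=0$ uses that $w$ fixes the origin. The adjoint relation is immediate from $\rho$ being a ring homomorphism into the \emph{commutative} algebra $\mathcal{A}_\kappa$: $\rho(rp)=\rho(r)\rho(p)=\rho(p)\rho(r)$, so $\langle rp,q\rangle_\kappa=(\rho(p)(\rho(r)q))(0)=\langle p,\rho(r)q\rangle_\kappa$. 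Bilinearity in Part (3) is clear, since $\rho$ is linear, $\rho(p)$ acts linearly, and evaluation at $0$ is linear.

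The symmetry is the real work, and the plan is to derive it from a companion adjoint identity. By bilinearity and Part (1) it is enough to prove $\langle p,q\rangle_\kappa=\langle q,p\rangle_\kappa$ when $p=\langle t_1,x\rangle\cdots\langle t_n,x\rangle=m_{t_1}\cdots m_{t_n}1$ is a product of linear forms and $q\in\Pi_n^d$ (products of linear forms span $\Pi_n^d$). Since $\rho(\langle t,x\rangle)=\mathcal{D}_t$, iterating the adjoint relation of Part (2) strips the factors off $p$ and gives $\langle p,q\rangle_\kappa=(\mathcal{D}_{t_n}\cdots\mathcal{D}_{t_1}q)(0)$. I claim that the companion identity
\[
\langle u,m_t v\rangle_\kappa=\langle\mathcal{D}_t u,v\rangle_\kappa\qquad(u,v\in\Pi^d,\ t\in\mathbb{R}^d)
\]
also holds; granting it, stripping the same factors off $p$ inside $\langle q,p\rangle_\kappa$ yields $\langle q,p\rangle_\kappa=(\rho(\mathcal{D}_{t_n}\cdots\mathcal{D}_{t_1}q)\,1)(0)$, and since $\mathcal{D}_{t_n}\cdots\mathcal{D}_{t_1}q$ is a constant this equals $(\mathcal{D}_{t_n}\cdots\mathcal{D}_{t_1}q)(0)=\langle p,q\rangle_\kappa$, establishing symmetry.

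It remains to prove the companion identity. Writing $\langle u,m_tv\rangle_\kappa=(\rho(u)m_tv)(0)$ and noting $(m_t\rho(u)v)(0)=\langle t,0\rangle(\rho(u)v)(0)=0$, it is equivalent to the operator statement $([\rho(p),m_t]g)(0)=(\rho(\mathcal{D}_tp)g)(0)$ for all $p,g\in\Pi^d$, which I would prove by induction on $\deg p$. For $\deg p\le 1$, say $p=\langle s,x\rangle$, Proposition~\ref{c4Dmt} gives $[\mathcal{D}_s,m_t]=\langle s,t\rangle+2\sum_{v\in R_+}\kappa(v)\langle s,v\rangle\langle t,v\rangle\,|v|^{-2}s_v$; applied to $g$ and evaluated at $0$ every $s_v$ collapses to the identity because $s_v$ fixes the origin, leaving $(\mathcal{D}_t\langle s,x\rangle)\,g(0)$, which is $(\rho(\mathcal{D}_tp)g)(0)$ since $\mathcal{D}_t\langle s,x\rangle$ is a scalar. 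For the inductive step write $p=m_sp'$ with $\deg p'=\deg p-1$, so $\rho(p)=\mathcal{D}_s\rho(p')$; expanding $[\mathcal{D}_s\rho(p'),m_t]$ by the Leibniz rule for commutators, commuting $\mathcal{D}_s$ past $\rho(p')$ (both lie in the commutative algebra $\mathcal{A}_\kappa$), and using the covariance $\rho(s_vp')=s_v\rho(p')s_v$ together with the inductive hypothesis, one checks that the reflection terms cancel in pairs while the scalar produced by $[\mathcal{D}_s,m_t]$ cancels a leftover term, leaving exactly $(\rho(\mathcal{D}_tp)g)(0)$. The main obstacle is precisely this last computation: organizing the reflection contributions in the inductive step and recognizing that their cancellation is forced by the commutativity of the Dunkl operators (Theorem~\ref{c4DD0}).
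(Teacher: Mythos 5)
Your parts (1), (2) and bilinearity agree with the paper. For symmetry you take a genuinely different route. The paper decomposes $\Pi_{n}^{d}$ into irreducible $G$-submodules (Lemma \ref{c4sumxD}), inducts on degree using the self-adjointness of the central element $\sum_{v}\kappa(v)(1-s_{v})$ and of $\sum_{i}x_{i}\mathcal{D}_{i}$, and then disposes of the degenerate case $n+c_{1}(\kappa)=0$ by a polynomial-identity (genericity) argument in the parameter space. You instead reduce symmetry to the companion adjoint identity $\langle u,m_{t}v\rangle_{\kappa}=\langle\mathcal{D}_{t}u,v\rangle_{\kappa}$ and prove that by induction on $\deg u$. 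Your route avoids the representation theory of $G$ and the genericity step entirely, and gives the identity uniformly in $\kappa$; the paper's route yields as a by-product the orthogonality of distinct isotypic components of $\Pi_{n}^{d}$. The reduction itself (strip the linear factors of $p$ off either slot, using commutativity of $\mathcal{A}_{\kappa}$ to reorder the resulting Dunkl operators) is sound, as is the base case.

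The one step you flag as an obstacle does close, but only if the commutator is expanded in the right order, and this deserves to be made explicit. With $p=\langle s,x\rangle p'$, first use Theorem \ref{c4DD0} to write $\rho(p)=\rho(p')\mathcal{D}_{s}$ and then expand
\begin{equation*}
\left[\rho(p')\mathcal{D}_{s},m_{t}\right]=\rho(p')\left[\mathcal{D}_{s},m_{t}\right]+\left[\rho(p'),m_{t}\right]\mathcal{D}_{s}.
\end{equation*}
Applied to $g$ and evaluated at the origin, the second summand is $(\left[\rho(p'),m_{t}\right](\mathcal{D}_{s}g))(0)$, which the inductive hypothesis (valid for every test polynomial, here $\mathcal{D}_{s}g$) converts to $(\rho(\mathcal{D}_{t}p')\mathcal{D}_{s}g)(0)=(\mathcal{D}_{s}\rho(\mathcal{D}_{t}p')g)(0)$; the first summand gives, by Proposition \ref{c4Dmt} and $s_{v}0=0$, the quantity $\langle s,t\rangle(\rho(p')g)(0)+2\sum_{v\in R_{+}}\kappa(v)\langle s,v\rangle\langle t,v\rangle|v|^{-2}(\rho(p')s_{v}g)(0)$. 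On the other side, $\mathcal{D}_{t}p=\langle s,x\rangle\mathcal{D}_{t}p'+\langle s,t\rangle p'+2\sum_{v\in R_{+}}\kappa(v)\langle s,v\rangle\langle t,v\rangle|v|^{-2}s_{v}p'$, and $\rho(s_{v}p')=s_{v}\rho(p')s_{v}$ loses its leading $s_{v}$ upon evaluation at the origin; the three groups of terms then match verbatim, so no cancellation ``in pairs'' is actually needed. By contrast, the other natural decomposition $\mathcal{D}_{s}\left[\rho(p'),m_{t}\right]+\left[\mathcal{D}_{s},m_{t}\right]\rho(p')$ does \emph{not} close the induction: the hypothesis only controls the value of $\left[\rho(p'),m_{t}\right]g-\rho(\mathcal{D}_{t}p')g$ at the origin, not its value after $\mathcal{D}_{s}$ is applied. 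So your phrase ``commuting $\mathcal{D}_{s}$ past $\rho(p')$'' is carrying essential weight; with that made precise the proof is complete and correct.
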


\begin{proof}
If $p\in\Pi_{n}^{d},q\in\Pi_{m}^{d}$ and $m\geq n$ then $\rho\left(  p\right)
q\in\Pi_{m-n}^{d}$ and vanishes at $x=0$ if $m>n$; if $m<n$ then $\rho\left(
p\right)  q=0$; a nonzero value at $x=0$ is possible only if $m=n$ and then
$\rho\left(  p\right)  q$ is a constant. For part (2) we may assume $p,q\in
\Pi_{n}^{d}$ for some $n$ and $\left\langle p,q\right\rangle _{\kappa}%
=\rho\left(  p\right)  q$ (a constant). By Proposition \ref{c4wDw}
$\rho\left(  wp\right)  =w\rho\left(  p\right)  w^{-1}$ thus $\left\langle
wp,wq\right\rangle _{\kappa}=w\rho\left(  p\right)  w^{-1}wq\allowbreak
=w\rho\left(  p\right)  q=\left\langle p,q\right\rangle _{\kappa}$ (because
$w1=1$). That $\left\langle rp,q\right\rangle _{\kappa}=\left\langle
p,\rho\left(  r\right)  q\right\rangle _{\kappa}$ follows easily from the
definition. Use induction on part (3): for constants $p_{1},p_{2}$ the form
equals $\left\langle p_{1}1,p_{2}\right\rangle _{\kappa}=p_{1}p_{2}$; assume
the form is symmetric on $\sum_{m=0}^{n}\Pi_{m}^{d}$ for some $n$ and let
$p,q\in\Pi_{n+1}^{d}$. Using Lemma \ref{c4sumxD} suppose $p$ and $q$ are
eigenfunctions of $\sum_{v\in R_{+}}\kappa\left(  v\right)  \left(
1-s_{v}\right)  $ with eigenvalues $c_{1}\left(  \kappa\right)  $ and
$c_{2}\left(  \kappa\right)  $ respectively then%
\begin{align*}
c_{1}\left(  \kappa\right)  \left\langle p,q\right\rangle _{\kappa}  &
=\sum_{v\in R_{+}}\kappa\left(  v\right)  \left\langle \left(  1-s_{v}\right)
p,q\right\rangle _{\kappa}=\sum_{v\in R_{+}}\kappa\left(  v\right)
\left\langle p,\left(  1-s_{v}\right)  q\right\rangle \\
&  =c_{2}\left(  \kappa\right)  \left\langle p,q\right\rangle _{\kappa}.
\end{align*}
So $c_{1}\left(  \kappa\right)  \neq c_{2}\left(  \kappa\right)  $ implies
$\left\langle p,q\right\rangle _{\kappa}=0$ (a symmetric relation). Now assume
$c_{1}\left(  \kappa\right)  =c_{2}\left(  \kappa\right)  $, then
\begin{align*}
\left(  n+c_{1}\left(  \kappa\right)  \right)  \left\langle p,q\right\rangle
_{\kappa}  &  =\left\langle \sum_{i=1}^{d}x_{i}\mathcal{D}_{i}p,q\right\rangle
=\sum_{i=1}^{d}\left\langle \mathcal{D}_{i}p,\mathcal{D}_{i}q\right\rangle
_{\kappa}\\
&  =\sum_{i=1}^{d}\left\langle \mathcal{D}_{i}q,\mathcal{D}_{i}p\right\rangle
_{\kappa}=\left\langle \sum_{i=1}^{d}x_{i}\mathcal{D}_{i}q,p\right\rangle \\
&  =\left(  n+c_{1}\left(  \kappa\right)  \right)  \left\langle
q,p\right\rangle _{\kappa},
\end{align*}
where the inductive hypothesis was used to imply $\left\langle \mathcal{D}%
_{i}p,\mathcal{D}_{i}q\right\rangle _{\kappa}=\left\langle \mathcal{D}%
_{i}q,\mathcal{D}_{i}p\right\rangle _{\kappa}$. For fixed $p,q$ $\left(
\left\langle p,q\right\rangle _{\kappa}-\left\langle q,p\right\rangle
_{\kappa}\right)  $ is a polynomial in the values of $\kappa$ and
$n+c_{1}\left(  \kappa\right)  =0$ defines a hyperplane in the parameter space
(the dimension equals the number of $G$-orbits in $R$). Thus $\left\langle
p,q\right\rangle _{\kappa}=\left\langle q,p\right\rangle _{\kappa}$ for all
$\kappa$.
\end{proof}

\begin{corollary}
\label{c4!form}Suppose $\left\langle \cdot,\cdot\right\rangle _{1}$ is a
bilinear symmetric form on $\Pi^{d}$ such that $\left\langle 1,1\right\rangle
_{1}=1$ and $\left\langle x_{i}p,q\right\rangle _{1}=\left\langle
p,\mathcal{D}_{i}q\right\rangle _{1}$ for all $p,q\in\Pi^{d}$ and $1\leq i\leq
d$, then $\left\langle \cdot,\cdot\right\rangle _{1}=\left\langle \cdot
,\cdot\right\rangle _{\kappa}$.
\end{corollary}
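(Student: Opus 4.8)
The plan is to exploit the fact that $\Pi^{d}$ is generated as an algebra by the linear forms $x_{1},\dots,x_{d}$, so that every polynomial is obtained by applying a product of multiplication operators to the constant $1$; the hypothesis $\langle x_{i}p,q\rangle_{1}=\langle p,\mathcal{D}_{i}q\rangle_{1}$ then lets me transfer any such product across the form onto the second argument, where it becomes $\rho$ applied to a polynomial. Concretely, I would first show by induction on $\left\vert\alpha\right\vert$ that $\langle x^{\alpha},q\rangle_{1}=\langle 1,\rho(x^{\alpha})q\rangle_{1}$ for every $\alpha\in\mathbb{N}_{0}^{d}$ and every $q\in\Pi^{d}$: the base case $\alpha=0$ is trivial, and for the inductive step one picks $i$ with $\alpha_{i}\geq 1$, writes $x^{\alpha}=x_{i}\,x^{\alpha-\varepsilon_{i}}$, applies the hypothesis to move the factor $x_{i}$ across as $\mathcal{D}_{i}$, invokes the inductive hypothesis, and finally uses that $\rho$ is an algebra homomorphism together with the commutativity of the Dunkl operators (Theorem \ref{c4DD0}) to recombine $\mathcal{D}_{i}\,\rho(x^{\alpha-\varepsilon_{i}})=\rho(x^{\alpha})$. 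By bilinearity this upgrades to $\langle p,q\rangle_{1}=\langle 1,\rho(p)q\rangle_{1}$ for all $p,q\in\Pi^{d}$.

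Next I would evaluate $\langle 1,r\rangle_{1}$ for an arbitrary $r\in\Pi^{d}$. Writing $r=\sum_{\beta}d_{\beta}x^{\beta}$ and using symmetry of $\langle\cdot,\cdot\rangle_{1}$ together with the identity just established, I get $\langle 1,r\rangle_{1}=\sum_{\beta}d_{\beta}\langle x^{\beta},1\rangle_{1}=\sum_{\beta}d_{\beta}\langle 1,\rho(x^{\beta})1\rangle_{1}$. Since each $\mathcal{D}_{i}$ is homogeneous of degree $-1$ it annihilates constants, so $\rho(x^{\beta})1=0$ whenever $\left\vert\beta\right\vert>0$ and only the $\beta=0$ term survives; using $\langle 1,1\rangle_{1}=1$ this gives $\langle 1,r\rangle_{1}=d_{0}=r(0)$, the constant term of $r$.

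Combining the two steps with $r=\rho(p)q$ yields $\langle p,q\rangle_{1}=\langle 1,\rho(p)q\rangle_{1}=(\rho(p)q)(0)=\langle p,q\rangle_{\kappa}$ by the definition of $\langle\cdot,\cdot\rangle_{\kappa}$, which is exactly the claim. The argument is essentially bookkeeping; the only points that genuinely need care are that $\rho$ is an algebra homomorphism (so that a product of multiplication operators corresponds to $\rho$ of the product), that the Dunkl operators commute (Theorem \ref{c4DD0}), which is what makes the ``move $x_{i}$ to the other side'' step well-defined independent of the order in which the factors of $x^{\alpha}$ are peeled off, and that symmetry of the given form is invoked in the second step to turn $\langle 1,x^{\beta}\rangle_{1}$ into something on which the hypothesis can act. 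No positivity or $L^{2}$ input is needed; the corollary is purely algebraic.
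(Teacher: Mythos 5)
Your proof is correct and follows essentially the same route as the paper: both iterate the adjoint hypothesis to obtain $\left\langle x^{\alpha},q\right\rangle _{1}=\left\langle 1,\rho\left(  x^{\alpha}\right)  q\right\rangle _{1}$ and then finish using $\left\langle 1,1\right\rangle _{1}=1$ together with symmetry. The only cosmetic difference is that you package the endgame as the single identity $\left\langle 1,r\right\rangle _{1}=r\left(  0\right)  $, which handles all degrees uniformly, whereas the paper splits into cases according to the degree of $q$ and invokes symmetry at the last step.
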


\begin{proof}
Let $\alpha\in\mathbb{N}_{0}^{d}$ and $\left\vert \alpha\right\vert =n$ for
some $n\geq0$. Let $q\in\sum_{m=0}^{n-1}\Pi_{m}^{d}$. By hypothesis
$\left\langle x^{\alpha},q\right\rangle _{1}=\left\langle 1,\rho\left(
x^{\alpha}\right)  q\right\rangle _{1}=0$. If $q\in\Pi_{n}^{d}$ then
$\left\langle x^{\alpha},q\right\rangle _{1}=\left\langle 1,\rho\left(
x^{\alpha}\right)  q\right\rangle _{1}\allowbreak=\left\langle 1,\rho\left(
x^{\alpha}\right)  q\right\rangle _{\kappa}=\left\langle x^{\alpha
},q\right\rangle _{\kappa}$. By linearity and symmetry the proof is complete.
\end{proof}

\subsection{The Gaussian form}

The operator $e^{\Delta_{\kappa}/2}$ maps $\Pi_{n}^{d}$ into $\sum_{m=0}%
^{n}\Pi_{m}^{d}$ and its inverse is $e^{-\Delta_{\kappa}/2}$.

\begin{definition}
The \emph{Gaussian form} on $\Pi^{d}$ is given by%
\[
\left\langle p,q\right\rangle _{g}:=\left\langle e^{\Delta_{\kappa}%
/2}p,e^{\Delta_{\kappa}/2}q\right\rangle _{\kappa}.
\]

\end{definition}

\begin{proposition}
The Gaussian form is symmetric, bilinear and if $p,q\in\Pi^{d}$ then
$\left\langle wp,wq\right\rangle _{g}=\left\langle p,q\right\rangle _{g}$ for
all $w\in G$, $\left\langle \mathcal{D}_{i}p,q\right\rangle _{g}=\left\langle
p,\left(  x_{i}-\mathcal{D}_{i}\right)  q\right\rangle _{g}$ and $\left\langle
x_{i}p,q\right\rangle _{g}=\left\langle p,x_{i}q\right\rangle _{g}$ for $1\leq
i\leq d$.
\end{proposition}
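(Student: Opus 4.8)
The plan is to reduce each of the four assertions to the already-established properties of the pairing $\langle\cdot,\cdot\rangle_\kappa$ (Theorem \ref{c4kform}) together with the key commutator relations from Theorem \ref{c4Lapmt}, namely $[\Delta_\kappa,m_t]=2\mathcal{D}_t$ and the commutativity of the $\mathcal{D}_i$'s (Theorem \ref{c4DD0}). Since $\langle p,q\rangle_g=\langle e^{\Delta_\kappa/2}p,e^{\Delta_\kappa/2}q\rangle_\kappa$, bilinearity of $\langle\cdot,\cdot\rangle_g$ is immediate from bilinearity of $\langle\cdot,\cdot\rangle_\kappa$ and linearity of $e^{\Delta_\kappa/2}$; symmetry follows because $\langle\cdot,\cdot\rangle_\kappa$ is symmetric (Theorem \ref{c4kform}(3)). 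For $G$-invariance, first observe that $\Delta_\kappa$ commutes with every $w\in G$ (the Corollary after Theorem \ref{c4Lapmt}), hence $e^{\Delta_\kappa/2}$ commutes with $w$, and then $\langle wp,wq\rangle_g=\langle e^{\Delta_\kappa/2}wp,e^{\Delta_\kappa/2}wq\rangle_\kappa=\langle we^{\Delta_\kappa/2}p,we^{\Delta_\kappa/2}q\rangle_\kappa=\langle e^{\Delta_\kappa/2}p,e^{\Delta_\kappa/2}q\rangle_\kappa=\langle p,q\rangle_g$, using Theorem \ref{c4kform}(2).

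The substantive point is the adjointness relation $\langle\mathcal{D}_ip,q\rangle_g=\langle p,(x_i-\mathcal{D}_i)q\rangle_g$; the identity $\langle x_ip,q\rangle_g=\langle p,x_iq\rangle_g$ will then follow by combining it with $\langle x_ip,q\rangle_\kappa=\langle\mathcal{D}_i(\cdot),\cdot\rangle_\kappa$-type manipulations, or more cleanly by a symmetric argument. For the adjointness of $\mathcal{D}_i$ under the Gaussian form, the key operator identity I expect to need is
\[
e^{\Delta_\kappa/2}\,m_{\varepsilon_i}\,e^{-\Delta_\kappa/2}=m_{\varepsilon_i}-\mathcal{D}_i
\quad\text{(equivalently }e^{-\Delta_\kappa/2}m_{\varepsilon_i}e^{\Delta_\kappa/2}=m_{\varepsilon_i}+\mathcal{D}_i\text{).}
\]
This is the Dunkl analogue of the classical Hermite-operator conjugation. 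To prove it, I would use the standard Lie-algebraic expansion $e^{A}Be^{-A}=\sum_{k\ge0}\frac{1}{k!}(\mathrm{ad}\,A)^k(B)$ with $A=\Delta_\kappa/2$ and $B=m_{\varepsilon_i}$: the first commutator is $[\Delta_\kappa/2,m_{\varepsilon_i}]=\mathcal{D}_i$ by Theorem \ref{c4Lapmt}, and the second commutator $[\Delta_\kappa/2,\mathcal{D}_i]=0$ by Theorem \ref{c4DD0}, so the series terminates and yields $m_{\varepsilon_i}+\mathcal{D}_i$ (all convergence issues are vacuous since on any fixed $\Pi_n^d$ only finitely many terms of $e^{\pm\Delta_\kappa/2}$ act nontrivially).

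Granting that conjugation identity, the computation is routine: for $p,q\in\Pi^d$,
\[
\langle\mathcal{D}_ip,q\rangle_g=\langle e^{\Delta_\kappa/2}\mathcal{D}_ip,\,e^{\Delta_\kappa/2}q\rangle_\kappa
=\langle \mathcal{D}_i e^{\Delta_\kappa/2}p,\,e^{\Delta_\kappa/2}q\rangle_\kappa,
\]
using $[\Delta_\kappa,\mathcal{D}_i]=0$ again; then by Theorem \ref{c4kform}(2) (the relation $\langle rp,q\rangle_\kappa=\langle p,\rho(r)q\rangle_\kappa$ applied with $r=x_i$, so that multiplication by $x_i$ is adjoint to $\mathcal{D}_i$) this equals $\langle e^{\Delta_\kappa/2}p,\,m_{\varepsilon_i}e^{\Delta_\kappa/2}q\rangle_\kappa=\langle e^{\Delta_\kappa/2}p,\,e^{\Delta_\kappa/2}(m_{\varepsilon_i}-\mathcal{D}_i)q\rangle_\kappa=\langle p,(x_i-\mathcal{D}_i)q\rangle_g$. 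Finally, for $\langle x_ip,q\rangle_g=\langle p,x_iq\rangle_g$: apply the adjointness relation just proved with $p$ replaced by $(x_i-\mathcal{D}_i)p$ wherever needed, or simply note $\langle x_ip,q\rangle_g=\langle(x_i-\mathcal{D}_i)p,q\rangle_g+\langle\mathcal{D}_ip,q\rangle_g$, and observe that $\langle(x_i-\mathcal{D}_i)p,q\rangle_g=\langle p,\mathcal{D}_iq\rangle_g$ is the adjoint of the already-established relation (which is valid because $x_i-\mathcal{D}_i$ and $\mathcal{D}_i$ are mutually adjoint under $\langle\cdot,\cdot\rangle_g$ once symmetry is known); adding gives $\langle x_ip,q\rangle_g=\langle p,\mathcal{D}_iq\rangle_g+\langle p,(x_i-\mathcal{D}_i)q\rangle_g=\langle p,x_iq\rangle_g$. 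The only real obstacle is establishing the conjugation identity cleanly, and the $\mathrm{ad}$-expansion handles it because the relevant double commutator vanishes.
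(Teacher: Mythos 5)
Your proof is correct and follows essentially the same route as the paper: both establish the conjugation identity $e^{\Delta_{\kappa}/2}m_{\varepsilon_{i}}=\left(  m_{\varepsilon_{i}}+\mathcal{D}_{i}\right)  e^{\Delta_{\kappa}/2}$ (the paper via the induction $\left[  \Delta_{\kappa}^{n},m_{\varepsilon_{i}}\right]  =2n\mathcal{D}_{i}\Delta_{\kappa}^{n-1}$, you via the terminating $\mathrm{ad}$-expansion, which amount to the same computation) and then transfer the adjointness through $\left\langle \cdot,\cdot\right\rangle _{\kappa}$ exactly as in the text. The only blemish is that the signs in your displayed conjugation identity are swapped --- the correct statements are $e^{\Delta_{\kappa}/2}m_{\varepsilon_{i}}e^{-\Delta_{\kappa}/2}=m_{\varepsilon_{i}}+\mathcal{D}_{i}$ and $e^{-\Delta_{\kappa}/2}m_{\varepsilon_{i}}e^{\Delta_{\kappa}/2}=m_{\varepsilon_{i}}-\mathcal{D}_{i}$, which is what your $\mathrm{ad}$-series derivation produces and what your subsequent computation actually uses, so the argument itself is unaffected.
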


\begin{proof}
The first claim follows from the property $w\Delta_{\kappa}=\Delta_{\kappa}w$.
By Theorem \ref{c4Lapmt} $\left[  \Delta_{\kappa},m_{\varepsilon_{i}}\right]
=2\mathcal{D}_{i}$ ( $m_{\varepsilon_{i}}$ denotes multiplication by $x_{i}$).
Using the general formula $\left[  A^{n},B\right]  =A\left[  A^{n-1},B\right]
+\left[  A,B\right]  A^{n-1}$ repeatedly shows $\left[  \Delta_{\kappa}%
^{n},m_{\varepsilon_{i}}\right]  =2n\mathcal{D}_{i}\Delta_{\kappa}^{n-1}$.
This implies $e^{\Delta_{\kappa}/2}x_{i}p\left(  x\right)  -x_{i}%
e^{\Delta_{\kappa}/2}p\left(  x\right)  =\mathcal{D}_{i}e^{\Delta_{\kappa}%
/2}p\left(  x\right)  $ for any $p\in\Pi^{d}$. Thus%
\begin{align*}
\left\langle \mathcal{D}_{i}p,q\right\rangle _{g}  &  =\left\langle
\mathcal{D}_{i}e^{\Delta_{\kappa}/2}p,e^{\Delta_{\kappa}/2}q\right\rangle
_{\kappa}=\left\langle e^{\Delta_{\kappa}/2}p,x_{i}e^{\Delta_{\kappa}%
/2}q\right\rangle _{\kappa}\\
&  =\left\langle e^{\Delta_{\kappa}/2}p,e^{\Delta_{\kappa}/2}\left(
x_{i}-\mathcal{D}_{i}\right)  q\right\rangle _{\kappa}=\left\langle p,\left(
x_{i}-\mathcal{D}_{i}\right)  q\right\rangle _{g}.
\end{align*}
Finally $\left\langle p,x_{i}q\right\rangle _{g}=$ $\left\langle
\mathcal{D}_{i}p,q\right\rangle _{g}+\left\langle p,\mathcal{D}_{i}%
q\right\rangle _{g}$, which is symmetric in $p,q$.
\end{proof}

Thus the Gaussian form satisfies $\left\langle p,q\right\rangle _{g}%
=\left\langle 1,pq\right\rangle _{g}$. This suggests that there may be an
integral formula; this is indeed the situation when $\kappa\geq0$. The
properties in the Proposition imply a uniqueness result by \textquotedblleft
reading the proof backwards\textquotedblright; set $\left\langle
p,q\right\rangle _{1}=\left\langle e^{-\Delta_{\kappa}/2}p,e^{-\Delta_{\kappa
}/2}q\right\rangle _{g}$ and use Corollary \ref{c4!form}.

\begin{definition}
For $\kappa\geq0$ the fundamental $G$-\emph{invariant weight function} is%
\[
w_{\kappa}\left(  x\right)  :=\prod_{v\in R_{+}}\left\vert \left\langle
x,v\right\rangle \right\vert ^{2\kappa\left(  v\right)  },x\in\mathbb{R}^{d}.
\]

\end{definition}

The G-invariance is a consequence of the definition of multiplicity functions.

\begin{definition}
\label{c4Gauss2}For $p,q\in\Pi^{d}$ and $\kappa\geq0$ let%
\[
\left\langle p,q\right\rangle _{2}:=\frac{c_{\kappa}}{\left(  2\pi\right)
^{d/2}}\int_{\mathbb{R}^{d}}p\left(  x\right)  q\left(  x\right)  w_{\kappa
}\left(  x\right)  e^{-\left\vert x\right\vert ^{2}/2}dx,
\]
where $c_{\kappa}$ is the normalizing constant resulting in $\left\langle
1,1\right\rangle _{2}=1$.
\end{definition}

The constant $c_{\kappa}$ is related to the Macdonald-Mehta integral. This
will be discussed below. It is not involved in the following.

\begin{theorem}
\label{c4Gint}If $p,q\in\Pi^{d},1\leq i\leq d$ then $\left\langle
\mathcal{D}_{i}p,q\right\rangle _{2}=\left\langle p,\left(  x_{i}%
-\mathcal{D}_{i}\right)  q\right\rangle _{2}$. The forms $\left\langle
\cdot,\cdot\right\rangle _{2}$ and $\left\langle \cdot,\cdot\right\rangle
_{g}$ are equal when $\kappa\geq0$.
\end{theorem}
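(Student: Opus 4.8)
The plan is to prove the adjointness relation $\langle\mathcal D_ip,q\rangle_2=\langle p,(x_i-\mathcal D_i)q\rangle_2$ by integrating by parts against the measure $w_\kappa(x)\,dx$, and then to obtain the identity of forms essentially for free from the uniqueness property of the Gaussian form already recorded above.

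First I would establish the underlying \emph{skew-symmetry of} $\mathcal D_i$: for $\kappa\ge0$ and functions $f,g$ of which at least one is a Schwartz function,
\[
\int_{\mathbb R^d}(\mathcal D_if)\,g\,w_\kappa\,dx=-\int_{\mathbb R^d}f\,(\mathcal D_ig)\,w_\kappa\,dx .
\]
Split $\mathcal D_i=\partial_i+\delta_i$, where $\delta_ih(x)=\sum_{v\in R_+}\kappa(v)v_i\langle x,v\rangle^{-1}\bigl(h(x)-h(s_vx)\bigr)$; for smooth $h$ the quotient is bounded near each $v^{\bot}$, the terms with $\kappa(v)=0$ contribute nothing, and for $\kappa(v)>0$ the factor $|\langle x,v\rangle|^{2\kappa(v)}$ in $w_\kappa$ keeps everything locally integrable since $2\kappa(v)>-1$. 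For the $\partial_i$-part, integrate by parts: the contribution at infinity vanishes by the decay of $g$, and the contributions over the hyperplanes, obtained by excising $\epsilon$-tubes around each $v^{\bot}$ and letting $\epsilon\to0$, also vanish; what remains is $-\int f\,\partial_ig\,w_\kappa-\int fg\,(\partial_iw_\kappa)$ with $\partial_iw_\kappa=w_\kappa\sum_{v\in R_+}2\kappa(v)v_i\langle x,v\rangle^{-1}$. For the $\delta_i$-part, in the term of $\int(\delta_if)g\,w_\kappa$ containing $f(s_vx)$ substitute $x\mapsto s_vx$; since $w_\kappa$ and Lebesgue measure are $s_v$-invariant and $\langle s_vx,v\rangle=-\langle x,v\rangle$, one gets $\int(\delta_if)g\,w_\kappa=\sum_v\kappa(v)v_i\int f\,\langle x,v\rangle^{-1}\bigl(g(x)+g(s_vx)\bigr)w_\kappa$, whereas $\int f(\delta_ig)\,w_\kappa=\sum_v\kappa(v)v_i\int f\,\langle x,v\rangle^{-1}\bigl(g(x)-g(s_vx)\bigr)w_\kappa$. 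Adding the two full expressions, the $\pm\int f\,\partial_ig\,w_\kappa$ cancel, and the surviving singular terms collapse to $\sum_v2\kappa(v)v_i\int fg\,\langle x,v\rangle^{-1}w_\kappa$, which cancels the $\partial_iw_\kappa$ term; the total is $0$, as claimed.

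Now specialize $f=p$ (a polynomial) and $g(x)=q(x)e^{-|x|^2/2}$ (Schwartz). Because $|s_vx|=|x|$, the Gaussian factor commutes past $\delta_i$, while $\partial_i$ produces the extra $-x_iq$, so $\mathcal D_i\!\bigl(q(x)e^{-|x|^2/2}\bigr)=e^{-|x|^2/2}\bigl((\mathcal D_i-x_i)q\bigr)(x)$. Inserting this into the skew-symmetry identity and restoring the factor $c_\kappa/(2\pi)^{d/2}$ gives $\langle\mathcal D_ip,q\rangle_2=\langle p,(x_i-\mathcal D_i)q\rangle_2$. For the second assertion, $\langle\cdot,\cdot\rangle_2$ is bilinear, symmetric (since $pq=qp$), satisfies $\langle1,1\rangle_2=1$ by the choice of $c_\kappa$, and now satisfies the Dunkl-adjoint relation — exactly the properties that force a form to equal $\langle\cdot,\cdot\rangle_g$ by the uniqueness argument recorded after the Gaussian proposition. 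Explicitly: put $\langle p,q\rangle_1:=\langle e^{-\Delta_\kappa/2}p,e^{-\Delta_\kappa/2}q\rangle_2$; using $e^{-\Delta_\kappa/2}x_i=(x_i-\mathcal D_i)e^{-\Delta_\kappa/2}$ (from Theorem \ref{c4Lapmt}, as in the Gaussian proposition), the symmetry of $\langle\cdot,\cdot\rangle_2$ together with the adjoint relation, and $[\mathcal D_i,\Delta_\kappa]=0$, one checks $\langle x_ip,q\rangle_1=\langle p,\mathcal D_iq\rangle_1$ and $\langle1,1\rangle_1=1$, so Corollary \ref{c4!form} gives $\langle\cdot,\cdot\rangle_1=\langle\cdot,\cdot\rangle_\kappa$; replacing $p,q$ by $e^{\Delta_\kappa/2}p,e^{\Delta_\kappa/2}q$ this reads $\langle p,q\rangle_2=\langle e^{\Delta_\kappa/2}p,e^{\Delta_\kappa/2}q\rangle_\kappa=\langle p,q\rangle_g$.

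The main obstacle is the first step: making the integration by parts rigorous across the singular locus of $w_\kappa$, i.e. verifying that the boundary integrals over $\{|\langle x,v\rangle|=\epsilon\}$ really do vanish as $\epsilon\to0$ for all $v\in R_+$ simultaneously and for every $\kappa\ge0$, and that the intermediate singular integrals are grouped so as to be absolutely convergent. Once the skew-symmetry identity is in hand, everything that follows is routine bookkeeping.
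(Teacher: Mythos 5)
Your proof is correct and follows essentially the same route as the paper: the same integration by parts against $w_{\kappa}$, the same substitution $x\mapsto s_{v}x$ to dispose of the singular reflection terms, the same cancellation against the logarithmic derivative of $w_{\kappa}$, and the same appeal to Corollary \ref{c4!form} via conjugation by $e^{\pm\Delta_{\kappa}/2}$ for the equality with the Gaussian form. The only cosmetic difference is that you first prove anti-symmetry of $\mathcal{D}_{i}$ with respect to $w_{\kappa}\,dx$ and then absorb the Gaussian factor, whereas the paper works directly with the weight $w_{\kappa}(x)e^{-\left\vert x\right\vert ^{2}/2}$; the computations are identical.
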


\begin{proof}
Let $p,q\in\Pi^{d},1\leq i\leq d$. Note
\[
\frac{\partial}{\partial x_{i}}\left(  w_{\kappa}\left(  x\right)
e^{-\left\vert x\right\vert ^{2}/2}\right)  =\left(  -x_{i}+2\sum_{v\in R_{+}%
}\frac{\kappa\left(  v\right)  v_{i}}{\left\langle x,v\right\rangle }\right)
w_{\kappa}\left(  x\right)  e^{-\left\vert x\right\vert ^{2}/2};
\]
(in the special case $0<\kappa\left(  v\right)  <\frac{1}{2}$ the formula is
valid provided $\left\langle x,v\right\rangle \neq0$.) It suffices to show the
following integral vanishes:%
\begin{align*}
&  \int_{\mathbb{R}^{d}}\left(  q\mathcal{D}_{i}p+p\mathcal{D}_{i}%
q-x_{i}pq\right)  w_{\kappa}e^{-\left\vert x\right\vert ^{2}/2}dx\\
&  =\int_{\mathbb{R}^{d}}\frac{\partial}{\partial x_{i}}\left(  pq\right)
w_{\kappa}e^{-\left\vert x\right\vert ^{2}/2}dx-\int_{\mathbb{R}^{d}}%
x_{i}pqw_{\kappa}e^{-\left\vert x\right\vert ^{2}/2}dx\\
&  +\sum_{v\in R_{+}}\kappa\left(  v\right)  v_{i}\int_{\mathbb{R}^{d}}%
\frac{2p\left(  x\right)  q\left(  x\right)  -p\left(  s_{v}x\right)  q\left(
x\right)  -p\left(  x\right)  q\left(  s_{v}x\right)  }{\left\langle
x,v\right\rangle }w_{\kappa}e^{-\left\vert x\right\vert ^{2}/2}dx\\
&  =-\sum_{v\in R_{+}}\kappa\left(  v\right)  v_{i}\int_{\mathbb{R}^{d}}%
\frac{p\left(  s_{v}x\right)  q\left(  x\right)  +p\left(  x\right)  q\left(
s_{v}x\right)  }{\left\langle x,v\right\rangle }w_{\kappa}e^{-\left\vert
x\right\vert ^{2}/2}dx=0,
\end{align*}
because in each term the integrand is odd under the action of a reflection
$s_{v}$. Integration by parts and exponential decay shows $\int_{\mathbb{R}%
^{d}}\frac{\partial}{\partial x_{i}}\left(  pq\right)  w_{\kappa
}e^{-\left\vert x\right\vert ^{2}/2}dx=-\int_{\mathbb{R}^{d}}pq\frac{\partial
}{\partial x_{i}}\left(  w_{\kappa}e^{-\left\vert x\right\vert ^{2}/2}\right)
dx$. The terms in the sum over $R_{+}$ have the singularity $\left\vert
\left\langle x,v\right\rangle \right\vert ^{2\kappa\left(  v\right)  -1}$
which is integrable for $\kappa\left(  v\right)  >0$ (the terms with
$\kappa\left(  v\right)  =0$ do not appear).
\end{proof}

As a consequence of the Theorem, and of the invertibility of $e^{\Delta
_{\kappa}/2}$ on polynomials it follows that the forms $\left\langle
\cdot,\cdot\right\rangle _{\kappa}$ and $\left\langle \cdot,\cdot\right\rangle
_{g}$ are positive-definite when $\kappa\geq0$.

There is an elegant formula for $c_{\kappa}$ when there is only one $G$-orbit
in $R$. Recall the discriminant $a_{R}\left(  x\right)  =\prod_{v\in R_{+}%
}\left\langle x,v\right\rangle $.

\begin{theorem}
Suppose $G$ has just one conjugacy class of reflections, $\left\vert
v\right\vert ^{2}=2$ for each $v\in R$ and $\kappa_{0}>0$ , then%
\[
\left(  2\pi\right)  ^{-d/2}\int_{\mathbb{R}^{d}}\left\vert a_{R}\left(
x\right)  \right\vert ^{2\kappa_{0}}e^{-\left\vert x\right\vert ^{2}%
/2}dx=\prod_{i=1}^{d}\frac{\Gamma\left(  1+n_{i}\kappa_{0}\right)  }%
{\Gamma\left(  1+\kappa_{0}\right)  },
\]
where $\left\{  n_{i}:1\leq i\leq d\right\}  $ is the set of fundamental
degrees of $G$ (see Theorem \ref{c4invts}).
\end{theorem}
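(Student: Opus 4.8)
The plan is to compute the integral by a clever use of the operator identities already established, rather than by any direct integration. Write $I(\kappa_0)$ for the left-hand side and note first that, since $G$ has a single conjugacy class, the weight function is $w_\kappa(x)=|a_R(x)|^{2\kappa_0}$. The key observation is that $a_R$ is the lowest-degree alternating polynomial: it is the unique (up to scalar) polynomial of degree $N:=\#R_+$ satisfying $wp=\det(w)\,p$ for all $w\in G$, because any such $p$ is divisible by each $\langle x,v\rangle$ by the divisibility argument in Section~2. First I would show that $a_R$ is, in the Dunkl setting, a \emph{harmonic} polynomial in a suitable twisted sense, or more precisely that $\rho(a_R)$ acting on polynomials interacts nicely with $a_R$ itself. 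The cleanest route: show that $\langle a_R,a_R\rangle_\kappa$ can be evaluated in closed form, and that this value equals $\prod_i \Gamma(1+n_i\kappa_0)/\Gamma(1+\kappa_0)$ up to the Gaussian normalization — this is because $\langle\cdot,\cdot\rangle_g$ has the reproducing property $\langle p,q\rangle_g=\langle 1,pq\rangle_g$, so $I(\kappa_0)=c_\kappa^{-1}$ and, separately, $c_\kappa^{-1}=\langle a_R,a_R\rangle$-type quantities can be built up.

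The main tool is the intertwining between the form $\langle\cdot,\cdot\rangle_\kappa$ at parameter $\kappa_0$ and the form at parameter $\kappa_0+1$. The heuristic is the classical ``shift'' identity: multiplying the weight by $|a_R|^2$ advances $\kappa_0$ by one, and $|a_R|^2$-weighted integration against Dunkl-harmonics of degree $N$ relates the two Gaussian forms. Concretely I would prove that for $p,q\in\Pi^d$,
\[
\langle a_R\,p,\;a_R\,q\rangle_{\kappa_0,g}
\;=\;
\gamma(\kappa_0)\,\langle p,q\rangle_{\kappa_0+1,g},
\]
for an explicit constant $\gamma(\kappa_0)$, using $\rho(a_R)\,a_R=\langle a_R,a_R\rangle_\kappa$ together with $G$-invariance and the fact that $a_R$ is homogeneous of degree $N=\sum_i(n_i-1)$. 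Specializing $p=q=1$ gives $I(\kappa_0)=\gamma(\kappa_0)\,I(\kappa_0+1)$ after accounting for the change in normalizing constants and in the power of $2\pi$. Then I would compute $\gamma(\kappa_0)$ by expanding $\rho(a_R)$ in terms of the $\mathcal{D}_i$ and evaluating on $a_R$; using Theorem~\ref{c4invts} (so that $N=\sum(n_i-1)$ and $\#G=\prod n_i$) one matches $\gamma(\kappa_0)=\prod_{i=1}^d \frac{\Gamma(1+n_i(\kappa_0+1))/\Gamma(1+n_i\kappa_0)}{\Gamma(2+\kappa_0)/\Gamma(1+\kappa_0)}$, which is exactly what makes the recursion telescope to the claimed product.

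Finally I would close the recursion. The right-hand side $F(\kappa_0):=\prod_i\Gamma(1+n_i\kappa_0)/\Gamma(1+\kappa_0)$ satisfies $F(0)=1$ and the same functional equation $F(\kappa_0)=\gamma(\kappa_0)F(\kappa_0+1)$; since both $I$ and $F$ are analytic in $\kappa_0$ on a half-line and agree at $\kappa_0=0$ (where the integral is the plain Gaussian, value $1$), the identity $I\equiv F$ follows by the standard analytic-continuation-from-a-recursion argument, exactly as the symmetry proof in Theorem~\ref{c4kform} used a hyperplane-of-parameters argument. The hard part will be establishing the shift identity with the correct constant $\gamma(\kappa_0)$: one must verify that conjugating a Dunkl operator by multiplication by $a_R$ produces the Dunkl operator at the shifted parameter plus lower-order terms that die under $\langle 1,\cdot\rangle_g$, and track the combinatorial constant through the action of $\rho(a_R)$ on $a_R$. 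Everything else — the reproducing property, $G$-covariance, positivity for $\kappa_0>0$, and the analytic continuation — is already in hand from the results above.
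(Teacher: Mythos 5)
First, note that the paper does not prove this theorem at all: it is the Macdonald--Mehta--Opdam identity, stated with attributions to Opdam \cite{Op1} and Etingof \cite{Et1} (with the $H_3$, $H_4$ constants originally requiring a computer-assisted check by Garvan). So your proposal is not being measured against an in-paper argument but against the known difficulty of the result, and as it stands it has two genuine gaps. The first is a circularity at the heart of the recursion. Your functional equation $I(\kappa_0+1)=\gamma(\kappa_0)\,I(\kappa_0)$ with $\gamma(\kappa_0)=c_{\kappa_0}/c_{\kappa_0+1}=\langle a_R,a_R\rangle_{\kappa_0,g}=\langle a_R,a_R\rangle_{\kappa_0}$ is indeed immediate from $a_R^2\,w_{\kappa_0}=w_{\kappa_0+1}$ and $\Delta_\kappa a_R=0$; but the entire content then sits in evaluating $\langle a_R,a_R\rangle_{\kappa_0}=\rho(a_R)a_R\big|_{x=0}$ in closed form \emph{without} using the integral. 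The paper's Corollary obtains that value \emph{from} the theorem, not the other way around, and your plan to ``expand $\rho(a_R)$ in terms of the $\mathcal{D}_i$ and match'' the product $\prod_i n_i\prod_{j=1}^{n_i-1}(j+n_i\kappa_0)$ is precisely the step that is never carried out. There is no routine expansion that produces this; Opdam's actual argument gets the recursion from the theory of shift operators, which is substantial machinery not available from the results in this chapter.

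The second gap is that the recursion does not close the way you claim. The equation $I(\kappa_0+1)=\gamma(\kappa_0)I(\kappa_0)$ together with $I(0)=1$ and analyticity on a half-line does \emph{not} determine $I$: if $I$ is a solution, so is $I$ times any analytic function of period $1$ equal to $1$ at $0$. Your appeal to the hyperplane-of-parameters argument from the symmetry proof of the $\kappa$-form is not applicable, because there the discrepancy was a \emph{polynomial} in $\kappa$ vanishing on a Zariski-dense set, whereas $I(\kappa_0)$ and $F(\kappa_0)$ are transcendental functions of $\kappa_0$. To pin down the solution one needs quantitative control -- e.g.\ the asymptotics of the integral as $\kappa_0\to+\infty$ (by Laplace's method concentrated at the maxima of $|a_R|^2e^{-|x|^2/(2\kappa_0)}$-type rescalings) compared against Stirling asymptotics of the Gamma product, or boundedness in a vertical strip so that a Carlson-type theorem applies. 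This analytic step is exactly where the real work (and the $H_3$, $H_4$ subtlety) lives, and it is absent from your outline. The overall architecture (shift by $a_R^2$, recursion in $\kappa_0$, normalization at $\kappa_0=0$) is the right skeleton -- it is essentially Opdam's -- but both load-bearing steps are asserted rather than proved.
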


(If the rank of $G$ is less than $d$ then some of the degrees equal $1$.) The
integral for the symmetric group can be deduced from Selberg's integral
formula. Macdonald conjectured a formula for $c_{\kappa}$ for any reflection
group. Opdam \cite{Op1} proved the formula for all cases, except for a
constant (independent of $\kappa)$ multiple for $H_{3}$ and $H_{4}$; the
constant was verified by Garvan with a computer-assisted proof. Etingof
\cite{Et1} extended Opdam's method to a general proof for the one-class type.

\begin{corollary}
With the notation as in the Theorem and $\kappa\left(  v\right)  =\kappa_{0}$
for all $v\in R$%
\[
\left\langle a_{R},a_{R}\right\rangle _{\kappa}=\left\langle a_{R}%
,a_{R}\right\rangle _{g}=\#G\prod_{i=1}^{d}\prod_{j=1}^{n_{i}-1}\left(
j+n_{i}\kappa_{0}\right)  .
\]

\end{corollary}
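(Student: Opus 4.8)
The plan is to combine the preceding Theorem (the Macdonald–Mehta evaluation in the one-class case) with a Rodrigues-type identity that computes $\langle a_R,a_R\rangle_g$ as a Gaussian integral of $|a_R|^2$. First I would observe that, by the last displayed remark before Definition~\ref{c4Gauss2} (namely $\langle p,q\rangle_g=\langle 1,pq\rangle_g$) together with Theorem~\ref{c4Gint} identifying $\langle\cdot,\cdot\rangle_g$ with $\langle\cdot,\cdot\rangle_2$ when $\kappa\ge 0$, we have
\[
\langle a_R,a_R\rangle_\kappa=\langle a_R,a_R\rangle_g
=\frac{c_\kappa}{(2\pi)^{d/2}}\int_{\mathbb{R}^d}a_R(x)^2\,w_\kappa(x)\,e^{-|x|^2/2}\,dx,
\]
where $\kappa(v)=\kappa_0$ for all $v$, so that $w_\kappa(x)=|a_R(x)|^{2\kappa_0}$ and hence $a_R(x)^2 w_\kappa(x)=|a_R(x)|^{2(\kappa_0+1)}$. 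Thus the task reduces to evaluating the Macdonald–Mehta integral at the shifted parameter $\kappa_0+1$ and dividing by the normalizing constant $c_\kappa$ at parameter $\kappa_0$.

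Next I would invoke the Theorem twice: once with exponent $\kappa_0$ to identify
\[
\frac{(2\pi)^{d/2}}{c_\kappa}=\int_{\mathbb{R}^d}|a_R(x)|^{2\kappa_0}e^{-|x|^2/2}\,dx
=(2\pi)^{d/2}\prod_{i=1}^{d}\frac{\Gamma(1+n_i\kappa_0)}{\Gamma(1+\kappa_0)},
\]
and once with exponent $\kappa_0+1$ to get
\[
\int_{\mathbb{R}^d}|a_R(x)|^{2(\kappa_0+1)}e^{-|x|^2/2}\,dx
=(2\pi)^{d/2}\prod_{i=1}^{d}\frac{\Gamma(1+n_i(\kappa_0+1))}{\Gamma(2+\kappa_0)}.
\]
Forming the ratio, the factor $(2\pi)^{d/2}$ cancels and we obtain
\[
\langle a_R,a_R\rangle_g
=\prod_{i=1}^{d}\frac{\Gamma(1+n_i(\kappa_0+1))}{\Gamma(2+\kappa_0)}\cdot\frac{\Gamma(1+\kappa_0)}{\Gamma(1+n_i\kappa_0)}
=\prod_{i=1}^{d}\frac{1}{\kappa_0+1}\cdot\frac{\Gamma(1+n_i\kappa_0+n_i)}{\Gamma(1+n_i\kappa_0)}.
\]

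Finally I would simplify the telescoping Gamma quotient using $\Gamma(1+n_i\kappa_0+n_i)/\Gamma(1+n_i\kappa_0)=\prod_{j=1}^{n_i}(j+n_i\kappa_0)$, split off the $j=n_i$ factor $n_i(1+\kappa_0)=n_i(\kappa_0+1)$ to cancel the $1/(\kappa_0+1)$, leaving $\prod_{j=1}^{n_i-1}(j+n_i\kappa_0)$ times $n_i$ in each factor. Since $\prod_{i=1}^d n_i=\#G$ by Theorem~\ref{c4invts}, collecting these gives exactly
\[
\langle a_R,a_R\rangle_\kappa=\langle a_R,a_R\rangle_g=\#G\prod_{i=1}^{d}\prod_{j=1}^{n_i-1}(j+n_i\kappa_0).
\]
The only delicate point is the bookkeeping of the Gamma-function shift and the cancellation of the $(\kappa_0+1)$ factors; I do not anticipate any genuine obstacle, since the analytic content is entirely carried by the Theorem and the algebraic identities of Theorem~\ref{c4invts}. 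One should note that if $\mathrm{rank}(G)<d$ then some $n_i=1$ and the corresponding inner product $\prod_{j=1}^{0}(\cdots)=1$ contributes trivially, consistent with $a_R$ depending only on the variables spanning $R$.
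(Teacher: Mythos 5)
Your evaluation of $\langle a_R,a_R\rangle_g$ is correct and is exactly the paper's computation: $\langle a_R,a_R\rangle_g=\langle a_R,a_R\rangle_2=c_{\kappa_0}/c_{\kappa_0+1}$, the Theorem applied at $\kappa_0$ and $\kappa_0+1$, and the telescoping $\Gamma(1+n_i\kappa_0+n_i)/\Gamma(1+n_i\kappa_0)=\prod_{j=1}^{n_i}(j+n_i\kappa_0)$ with the $j=n_i$ factor absorbing $(1+\kappa_0)^{-1}$ and $\prod_i n_i=\#G$.

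However, there is a genuine gap at your very first step: the equality $\langle a_R,a_R\rangle_\kappa=\langle a_R,a_R\rangle_g$ does not follow from the two facts you cite. The identity $\langle p,q\rangle_g=\langle 1,pq\rangle_g$ and Theorem \ref{c4Gint} ($\langle\cdot,\cdot\rangle_g=\langle\cdot,\cdot\rangle_2$) relate the Gaussian form to the integral, but say nothing about the form $\langle\cdot,\cdot\rangle_\kappa$; by definition $\langle p,q\rangle_g=\langle e^{\Delta_\kappa/2}p,e^{\Delta_\kappa/2}q\rangle_\kappa$, and the two forms genuinely differ on most polynomials (e.g.\ already on $|x|^2$). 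The missing idea, which is the one nontrivial observation in the paper's proof, is that $a_R$ is $\kappa$-harmonic: since $\Delta_\kappa$ commutes with the $G$-action, $\Delta_\kappa a_R$ is again alternating, but it has degree $\#R_+-2$, while $a_R$ is the nonzero alternating polynomial of minimal degree $\#R_+$ (every alternating polynomial is divisible by each $\langle x,v\rangle$, $v\in R_+$, hence by $a_R$). Therefore $\Delta_\kappa a_R=0$, so $e^{\Delta_\kappa/2}a_R=a_R$ and $\langle a_R,a_R\rangle_g=\langle a_R,a_R\rangle_\kappa$. With that observation inserted, your argument coincides with the paper's and is complete.
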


\begin{proof}
The polynomial $\Delta_{\kappa}a_{R}=0$ because it is alternating and of
degree $\#R_{+}-2$, and $a_{R}$ is the minimal degree nonzero alternating
polynomial. Thus $e^{\Delta_{\kappa}/2}a_{R}=a_{R}$. From the definition of
$c_{\kappa}$ it follows that%
\begin{align*}
\left\langle a_{R},a_{R}\right\rangle _{g}  &  =\frac{c_{\kappa_{0}}%
}{c_{\kappa_{0}+1}}=\left(  1+\kappa_{0}\right)  ^{-d}\prod_{i=1}^{d}%
\frac{\Gamma\left(  1+n_{i}+n_{i}\kappa_{0}\right)  }{\Gamma\left(
1+n_{i}\kappa_{0}\right)  }\\
&  =\left(  1+\kappa_{0}\right)  ^{-d}\prod_{i=1}^{d}\prod_{j=1}^{n_{i}%
}\left(  j+n_{i}\kappa_{0}\right)  =\prod_{i=1}^{d}n_{i}\prod_{j=1}^{n_{i}%
-1}\left(  j+n_{i}\kappa_{0}\right)  ,
\end{align*}
and $\prod_{i=1}^{d}n_{i}=\#G$.
\end{proof}

The formula is valid for all $\kappa_{0}\in\mathbb{C}$ because it is a
polynomial identity.

The indecomposable reflection groups with two classes of reflections consist
of $I_{2}\left(  2m\right)  $ with $m\geq2$, $B_{d}$ and $F_{4}$. To get a
convenient expression for the dihedral group $I_{2}\left(  2m\right)  $ let
$z=x_{1}+\mathrm{i}x_{2}$ and denote the values of $\kappa$ by $\kappa_{0}$
and $\kappa_{1}$; write the weight function as $w_{\kappa}\left(  x\right)
=\left\vert z^{m}-\overline{z}^{m}\right\vert ^{2\kappa_{0}}\left\vert
z^{m}+\overline{z}^{m}\right\vert ^{2\kappa_{1}}$. Then%
\[
\frac{1}{2\pi}\int_{\mathbb{R}^{2}}w_{\kappa}\left(  x\right)  e^{-\left\vert
x\right\vert ^{2}/2}dx=2^{m\left(  \kappa_{0}+\kappa_{1}\right)  }\frac
{\Gamma\left(  1+2\kappa_{0}\right)  \Gamma\left(  1+2\kappa_{1}\right)
\Gamma\left(  1+m\left(  \kappa_{0}+\kappa_{1}\right)  \right)  }%
{\Gamma\left(  1+\kappa_{0}\right)  \Gamma\left(  1+\kappa_{1}\right)
\Gamma\left(  1+\kappa_{0}+\kappa_{1}\right)  }.
\]

For the hyperoctahedral group $B_{d}$ (see Section \ref{c4rootB} for the root
system) the integral is
\begin{align*}
&  \left(  2\pi\right)  ^{-d/2}\int_{\mathbb{R}^{d}}\prod_{i=1}^{d}\left\vert
x_{i}\right\vert ^{2\kappa_{1}}\prod_{1\leq i<j\leq d}\left\vert x_{i}%
^{2}-x_{j}^{2}\right\vert ^{2\kappa_{0}}e^{-\left\vert x\right\vert ^{2}%
/2}dx\\
&  =2^{d\left(  \left(  d-1\right)  \kappa_{0}+\kappa_{1}\right)  }\prod
_{i=1}^{d}\frac{\Gamma\left(  1+i\kappa_{0}\right)  \Gamma\left(  \left(
i-1\right)  \kappa_{0}+\kappa_{1}+\frac{1}{2}\right)  }{\Gamma\left(
1+\kappa_{0}\right)  \Gamma\left(  \frac{1}{2}\right)  }.
\end{align*}
In the formula $\Gamma\left(  \frac{1}{2}\right)  $ is used in place of
$\sqrt{\pi}$ for the sake of appearance. The formula can be derived from
Selberg's integral.

Using the notation of Section \ref{c4rootF} for the group $F_{4}$ (a special
case of Opdam's result)%
\begin{align*}
&  \frac{1}{4\pi^{2}}\int_{\mathbb{R}^{4}}\prod_{1\leq i<j\leq4}\left\vert
x_{i}^{2}-x_{j}^{2}\right\vert ^{2\kappa_{1}}\prod_{1\leq i<j\leq4}\left\vert
y_{i}^{2}-y_{j}^{2}\right\vert ^{2\kappa_{2}}e^{-\left\vert x\right\vert
^{2}/2}dx\\
&  =2^{12\left(  \kappa_{1}+\kappa_{2}\right)  }\frac{\Gamma\left(
2\kappa_{1}+\kappa_{2}+\frac{1}{2}\right)  \Gamma\left(  \kappa_{1}%
+2\kappa_{2}+\frac{1}{2}\right)  \Gamma\left(  3\kappa_{1}+3\kappa_{2}%
+\frac{1}{2}\right)  }{\Gamma\left(  \frac{1}{2}\right)  ^{3}}\\
&  \times\frac{\Gamma\left(  4\kappa_{1}+4\kappa_{2}+1\right)  }{\Gamma\left(
\kappa_{1}+\kappa_{2}+1\right)  }\prod_{i=1}^{2}\frac{\Gamma\left(
2\kappa_{i}+1\right)  \Gamma\left(  3\kappa_{i}+1\right)  }{\Gamma\left(
\kappa_{i}+1\right)  ^{2}}.
\end{align*}
The formula agrees with the simpler single-class result when $\kappa
_{1}=\kappa_{2}$, and with the $D_{4}$ value when $\kappa_{2}=0$ (by use of
the Gamma function duplication formula).

\section{Harmonic polynomials}

The Gaussian form is an important part of our analysis. Accordingly the
polynomials in the kernel of $\Delta_{\kappa}$ have properties relevant to the
two forms.

\begin{definition}
Let $\mathcal{H}_{\kappa}:=\left\{  p\in\Pi^{d}:\Delta_{\kappa}p=0\right\}  $
and $\mathcal{H}_{\kappa,n}:=\mathcal{H}_{\kappa}\cap\Pi_{n}^{d}$ for
$n=0,1,2,\ldots$. These are the spaces of \emph{harmonic and harmonic
homogeneous polynomials}, respectively. Let $\gamma_{\kappa}:=\sum_{v\in
R_{+}}\kappa\left(  v\right)  $ ($w_{\kappa}$ is positively homogeneous of
degree $2\gamma_{\kappa}$).
\end{definition}

For convenience we let $\left\vert x\right\vert ^{2}$ denote both the
polynomial in $\Pi_{2}^{d}$ and the corresponding multiplier operator. We will
show that $\Pi_{n}^{d}=\sum_{j=0}^{\left\lfloor n/2\right\rfloor }%
\oplus\left\vert x\right\vert ^{2j}\mathcal{H}_{\kappa,n-2j}$ for each
$n\geq2$ provided that $\gamma_{\kappa}+\frac{d}{2}\notin-\mathbb{N}_{0}$.
Trivially $\Pi_{n}^{d}=\mathcal{H}_{\kappa,n}$ for $n=0,1$. Note that the
proof can not use any nonsingularity property of the form $\left\langle
\cdot,\cdot\right\rangle _{\kappa}$.

\begin{lemma}
\label{c4Lapr2m}For $m=1,2,3,\ldots$%
\[
\left[  \Delta_{\kappa},\left\vert x\right\vert ^{2m}\right]  =2m\left\vert
x\right\vert ^{2\left(  m-1\right)  }\left(  2m-2+d+2\gamma_{\kappa}%
+2\sum_{i=1}^{d}x_{i}\frac{\partial}{\partial x_{i}}\right)  .
\]

\end{lemma}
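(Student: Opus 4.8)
The plan is to establish the identity by induction on $m$, using the already-proved case $m=1$ as the base. For $m=1$, we have $[\Delta_{\kappa},|x|^{2}] = 2\mathcal{D}_{t}$-type data; more precisely, since $|x|^{2} = m_{x} \cdot (\text{dual})$, we can compute $[\Delta_{\kappa},|x|^{2}]$ directly. Actually the cleanest base case comes from $[\Delta_{\kappa},m_{t}] = 2\mathcal{D}_{t}$ (Theorem \ref{c4Lapmt}): writing $|x|^{2} = \sum_{i} x_{i}^{2}$ and using the Leibniz-type commutator identity $[\Delta_{\kappa}, m_{\varepsilon_i}^2] = \mathcal{D}_i m_{\varepsilon_i} + m_{\varepsilon_i}\mathcal{D}_i + \dots$, one gets $[\Delta_{\kappa}, |x|^{2}] = 2d + 4\gamma_{\kappa} + 4\sum_i x_i \partial_i$, perhaps after reconciling the Dunkl-operator terms with the Euler operator via Lemma \ref{c4sumxD} (the reflection contributions in $\sum_i x_i \mathcal{D}_i$ combine with the $\gamma_{\kappa}$ coming from differentiating the weight). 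This matches the claimed formula at $m=1$.

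For the inductive step, I would write $|x|^{2m} = |x|^{2}\cdot|x|^{2(m-1)}$ and apply the derivation-like rule $[\Delta_{\kappa}, AB] = A[\Delta_{\kappa},B] + [\Delta_{\kappa},A]B$ — but $\Delta_{\kappa}$ is a second-order operator, not a derivation, so this fails. Instead the right tool is the operator identity $[\Delta_{\kappa}, |x|^{2m}] = [\Delta_{\kappa}, |x|^{2}]\,|x|^{2(m-1)} + |x|^{2}\,[\Delta_{\kappa}, |x|^{2(m-1)}]$ is also wrong for the same reason. The correct approach: use $[\Delta_{\kappa}, |x|^{2m}] = \sum$ via the second-order Leibniz rule. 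Concretely, since $\Delta_{\kappa} = \sum_i \mathcal{D}_i^2$, write $[\Delta_{\kappa}, |x|^{2m}] = \sum_i \bigl( \mathcal{D}_i [\mathcal{D}_i, |x|^{2m}] + [\mathcal{D}_i, |x|^{2m}]\mathcal{D}_i \bigr)$, and one needs $[\mathcal{D}_i, |x|^{2m}]$. This in turn follows from $[\mathcal{D}_i, |x|^2] = 2x_i$ (a consequence of Proposition \ref{c4Dmt} with $a=\varepsilon_i$, $t=\sum x_j\varepsilon_j$-type manipulation, the reflection terms vanishing because $\langle \varepsilon_i, v\rangle\langle x, v\rangle$ summed against $s_v$ — actually $[\mathcal{D}_i, m_{x_i}]$ issues) together with the fact that $x_i$ and $|x|^2$ do not commute as operators only through the trivial multiplier relations, so $[\mathcal{D}_i, |x|^{2m}] = 2m\,x_i\,|x|^{2(m-1)}$ by an easy sub-induction using $[\mathcal{D}_i, |x|^{2m}] = [\mathcal{D}_i,|x|^2]|x|^{2(m-1)} + |x|^2[\mathcal{D}_i,|x|^{2(m-1)}]$ (here the Leibniz rule IS valid since $\mathcal{D}_i$ acts on the algebra of multipliers and $|x|^2$ is central in that commutative algebra).

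Plugging $[\mathcal{D}_i, |x|^{2m}] = 2m\,x_i\,|x|^{2(m-1)}$ into the second-order expansion gives
\[
[\Delta_{\kappa},|x|^{2m}] = 2m\sum_i \bigl( \mathcal{D}_i\, x_i |x|^{2(m-1)} + x_i|x|^{2(m-1)}\mathcal{D}_i \bigr),
\]
and then I would push the $\mathcal{D}_i$ past $x_i|x|^{2(m-1)}$ using $\mathcal{D}_i x_i = x_i\mathcal{D}_i + [\mathcal{D}_i, m_{x_i}]$ and $[\mathcal{D}_i,|x|^{2(m-1)}] = 2(m-1)x_i|x|^{2(m-2)}$, collecting the term $\sum_i x_i\mathcal{D}_i$ (the Euler operator up to the $\gamma_\kappa$-correction of Lemma \ref{c4sumxD}), the term $\sum_i [\mathcal{D}_i, m_{x_i}]$ which produces $d + 2\gamma_\kappa$ (the reflection parts of $[\mathcal{D}_i, m_{x_i}]$ sum to $2\sum_{v}\kappa(v)\langle\varepsilon_i,v\rangle^2/|v|^2$ over $i$, giving $2\gamma_\kappa$), and the leftover $2(m-1)$ from differentiating $|x|^{2(m-1)}$. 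Matching coefficients yields exactly $2m|x|^{2(m-1)}(2m-2+d+2\gamma_\kappa + 2\sum_i x_i\partial_i)$. I expect the main obstacle to be bookkeeping: correctly separating the genuine first-order part $\sum_i x_i\partial_i$ from the group-algebra term $\sum_{v\in R_+}\kappa(v)(1-s_v)$ inside $\sum_i x_i\mathcal{D}_i$, and checking that the reflection contributions reorganize into the constant $2\gamma_\kappa$ rather than surviving as operator terms — this is where the $G$-invariance of $\kappa$ and the identity $\sum_i \langle\varepsilon_i,v\rangle^2 = |v|^2$ do the essential work.
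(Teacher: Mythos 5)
The paper states this lemma without proof, so there is no official argument to compare against; judged on its own, your plan is the natural one and does close up, but one intermediate claim is false as written and must be replaced by the cancellation you only gesture at in your final sentence. The solid part: $[\mathcal{D}_i,m_q]=m_{\partial_i q}$ for any $G$-invariant multiplier $q$ (your sub-induction via $[A,BC]=[A,B]C+B[A,C]$ works, though it is even quicker to note $q(s_vx)=q(x)$ directly in the definition of $\mathcal{D}_i$), hence $[\mathcal{D}_i,|x|^{2m}]=2m\,x_i|x|^{2(m-1)}$, and then
\[
[\Delta_{\kappa},|x|^{2m}]=2m\sum_{i=1}^{d}\Bigl(2\,x_i|x|^{2(m-1)}\mathcal{D}_i+\bigl[\mathcal{D}_i,\,x_i|x|^{2(m-1)}\bigr]\Bigr).
\]
The error is the parenthetical asserting that $\sum_i[\mathcal{D}_i,m_{\varepsilon_i}]$ "produces $d+2\gamma_{\kappa}$": by Proposition \ref{c4Dmt} it is the \emph{operator} $d+2\sum_{v\in R_+}\kappa(v)s_v$ (using $\sum_i v_i^2=|v|^2$), and the reflections do not reduce to a constant at this stage. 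They disappear only upon combining with the other term: by the identity in the proof of Lemma \ref{c4sumxD}, $\sum_i x_i\mathcal{D}_i=\sum_i x_i\frac{\partial}{\partial x_i}+\gamma_{\kappa}-\sum_{v\in R_+}\kappa(v)s_v$, so the first sum above contributes $-2\sum_{v}\kappa(v)s_v$, which exactly cancels the $+2\sum_{v}\kappa(v)s_v$ from the commutator terms (both commute with $|x|^{2(m-1)}$ because that multiplier is $G$-invariant), leaving the constant $2\gamma_{\kappa}$ together with $d$, $2(m-1)$ from $x_i\partial_i|x|^{2(m-1)}$, and $2\sum_i x_i\partial_i$ — i.e. exactly the stated right-hand side. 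With that one correction made explicit, your argument is complete; as written, the step "$\sum_i[\mathcal{D}_i,m_{x_i}]=d+2\gamma_{\kappa}$" is not a valid operator identity and the proof does not go through without invoking the Euler-operator decomposition to kill the surviving $s_v$ terms.
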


\begin{corollary}
If $m,n,k=1,2,3,\ldots$ and $p\in\mathcal{H}_{\kappa,n}$ then\newline1)
$\Delta_{\kappa}\left(  \left\vert x\right\vert ^{2m}p\left(  x\right)
\right)  =2m\left(  2m-2+d+2\gamma_{\kappa}+2n\right)  \left\vert x\right\vert
^{2m-2}p\left(  x\right)  $,\newline2) $\Delta_{\kappa}^{k}\left(  \left\vert
x\right\vert ^{2m}p\left(  x\right)  \right)  =4^{k}\left(  -m\right)
_{k}\left(  1-m-d/2-\gamma_{\kappa}-n\right)  _{k}\left\vert x\right\vert
^{2m-2k}p\left(  x\right)  $.
\end{corollary}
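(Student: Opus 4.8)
Both parts follow directly from Lemma~\ref{c4Lapr2m} together with the definition of $\mathcal{H}_{\kappa,n}$, so the strategy is entirely computational. For part~1), I would apply the commutator identity in Lemma~\ref{c4Lapr2m} to the polynomial $p\in\mathcal{H}_{\kappa,n}$. Since $\Delta_\kappa p=0$, we have $\Delta_\kappa\bigl(|x|^{2m}p\bigr)=\bigl[\Delta_\kappa,|x|^{2m}\bigr]p$. Now $p$ is homogeneous of degree $n$, so Euler's identity gives $\sum_{i=1}^d x_i\frac{\partial}{\partial x_i}p=np$. Substituting this into the bracket from Lemma~\ref{c4Lapr2m} collapses the differential operator to the scalar $2n$, yielding
\[
\Delta_\kappa\bigl(|x|^{2m}p\bigr)=2m\,|x|^{2(m-1)}\bigl(2m-2+d+2\gamma_\kappa+2n\bigr)p,
\]
which is exactly the claimed formula.

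For part~2), the idea is to iterate part~1). Note that $|x|^{2m-2}p(x)$ is again of the form $|x|^{2m'}p$ with $p\in\mathcal{H}_{\kappa,n}$, where now $m'=m-1$ — so the hypothesis of part~1) is reproduced (as long as $m'\geq 1$; when the power of $|x|^2$ drops to zero the factor $(2m-2+\cdots)$ is irrelevant because the leading scalar coefficient $2m'$ has already forced things, and in any case the Pochhammer $(-m)_k$ kills the overflow terms). Applying part~1) repeatedly $k$ times produces the product of scalar factors
\[
\prod_{\ell=0}^{k-1} 2(m-\ell)\bigl(2(m-\ell)-2+d+2\gamma_\kappa+2n\bigr),
\]
times $|x|^{2m-2k}p(x)$. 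The remaining task is purely bookkeeping: I would factor out a $2$ from each of the $2k$ linear factors to extract $4^k$, rewrite $\prod_{\ell=0}^{k-1}(m-\ell)=(-1)^k(-m)_k$, and rewrite $\prod_{\ell=0}^{k-1}\bigl((m-\ell)-1+d/2+\gamma_\kappa+n\bigr)$. Reversing the order of this last product (replace $\ell$ by $k-1-\ell$) turns it into $\prod_{\ell=0}^{k-1}\bigl(m-k+1+d/2+\gamma_\kappa+n+\ell\bigr)=\bigl(m-k+1+d/2+\gamma_\kappa+n\bigr)_k$, and then reflecting the Pochhammer via $(a)_k=(-1)^k(1-a-k)_k$ converts it to $(-1)^k\bigl(1-m-d/2-\gamma_\kappa-n\bigr)_k$. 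The two sign factors $(-1)^k$ cancel, leaving precisely $4^k(-m)_k\bigl(1-m-d/2-\gamma_\kappa-n\bigr)_k|x|^{2m-2k}p(x)$.

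The only mild subtlety — and the one place I'd be careful — is the degenerate step when the power of $|x|^2$ reaches $0$ before the $k$ applications are exhausted, i.e.\ when $k>m$. In that case $|x|^{2m-2k}$ would have a negative exponent, but the Pochhammer symbol $(-m)_k$ vanishes for $k>m$ (it contains the factor $-m+m=0$), so both sides are identically zero and the formula still holds; one should simply remark that the formula is a polynomial identity in $|x|^2$ whose both sides vanish once $k$ exceeds $m$. Apart from this, the proof is a routine induction on $k$ with base case $k=1$ being part~1), and I would likely present it just as the chain of Pochhammer manipulations above rather than a formal induction.
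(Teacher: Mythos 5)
Your argument is correct and is exactly the intended one: the paper states this as an immediate corollary of Lemma~\ref{c4Lapr2m} (with no written proof), the point being precisely that $\Delta_\kappa p=0$ reduces $\Delta_\kappa(|x|^{2m}p)$ to the commutator, Euler's identity replaces $\sum_i x_i\partial_i$ by $n$, and iteration plus Pochhammer bookkeeping (with $(-m)_k=0$ handling $k>m$) gives part~2). One trivial slip: after reversing the product the intermediate Pochhammer should be $\left(m-k+d/2+\gamma_\kappa+n\right)_k$ rather than $\left(m-k+1+d/2+\gamma_\kappa+n\right)_k$; with that correction the reflection $(a)_k=(-1)^k(1-a-k)_k$ yields exactly $(-1)^k\left(1-m-d/2-\gamma_\kappa-n\right)_k$ as you claim.
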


Part 2) implies that $\Delta_{\kappa}^{k}\left(  \left\vert x\right\vert
^{2m}p\left(  x\right)  \right)  =0$ if $k>m$. This leads to an orthogonality relation.

\begin{proposition}
\label{c4hmperp}Suppose $m,k\leq\frac{n}{2},n\geq2,p\in\mathcal{H}%
_{\kappa,n-2k}$, and $q\in\mathcal{H}_{\kappa,n-2m}$; if $m\neq k$ then
$\left\langle \left\vert x\right\vert ^{2k}p,\left\vert x\right\vert
^{2m}q\right\rangle _{\kappa}=0$.
\end{proposition}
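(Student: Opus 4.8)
The plan is to reduce this to the adjointness property of the pairing established in Theorem~\ref{c4kform}(2), namely $\langle rp,q\rangle_\kappa = \langle p,\rho(r)q\rangle_\kappa$, applied with $r = |x|^{2k}$ (viewed as a polynomial, so $\rho(r) = \Delta_\kappa^k$ by Theorem~\ref{c4Lapmt}). Without loss of generality assume $m > k$. Then
\[
\big\langle |x|^{2k}p,\,|x|^{2m}q\big\rangle_\kappa
= \big\langle p,\, \Delta_\kappa^{k}\big(|x|^{2m}q\big)\big\rangle_\kappa .
\]
Now apply part~2) of the Corollary following Lemma~\ref{c4Lapr2m}: since $q \in \mathcal{H}_{\kappa,n-2m}$ is harmonic homogeneous, $\Delta_\kappa^{k}\big(|x|^{2m}q\big)$ is a constant multiple of $|x|^{2m-2k}q$. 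The key point is that this multiple is $4^{k}(-m)_k(1-m-d/2-\gamma_\kappa-n)_k$, and the factor $(-m)_k = (-m)(-m+1)\cdots(-m+k-1)$ is nonzero precisely when $k \le m$ and in fact here $k < m$ guarantees it does not vanish regardless of the parameter $\kappa$ (the second Pochhammer factor is irrelevant to the argument). So the right-hand side equals a nonzero scalar times $\langle p, |x|^{2m-2k}q\rangle_\kappa$.

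It remains to observe that $\langle p, |x|^{2m-2k}q\rangle_\kappa = 0$ for a trivial degree reason: $p \in \Pi^d_{n-2k}$ while $|x|^{2m-2k}q \in \Pi^d_{(n-2m)+2(m-k)} = \Pi^d_{n-2k}$—wait, these degrees agree, so this cheap argument fails. Instead I proceed by induction on $m$ (for fixed $n$, over the range $k \le m \le n/2$, or more cleanly by downward induction on $k+m$ or induction on $\min(k,m)$). If $\min(k,m) = 0$, say $k=0$: then $\langle p, |x|^{2m}q\rangle_\kappa = \langle p, \rho(|x|^{2m})^{*}\cdots\rangle$—better: use $\langle p,|x|^{2m}q\rangle_\kappa = \langle |x|^{2m}\,(\text{nothing})\dots\rangle$; actually when $k=0$ use adjointness the other way, moving $|x|^{2m}$ onto $p$: $\langle p,|x|^{2m}q\rangle_\kappa = \langle |x|^{2m}q, p\rangle_\kappa$ (symmetry, Theorem~\ref{c4kform}(3)) $= \langle q, \Delta_\kappa^{m}p\rangle_\kappa = 0$ since $p$ is harmonic of degree $n > 0$ (as $m \ge 1$ when $m \neq k = 0$), so $\Delta_\kappa^m p = 0$. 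For the inductive step with $k,m \ge 1$ and $m > k$, the computation in the first paragraph gives $\langle |x|^{2k}p, |x|^{2m}q\rangle_\kappa = c\,\langle p, |x|^{2(m-k)}q\rangle_\kappa$ with $c \neq 0$; rewriting $\langle p, |x|^{2(m-k)}q\rangle_\kappa = \langle |x|^{0}p, |x|^{2(m-k)}q\rangle_\kappa$ and noting $0 \neq m-k$, this is of the same shape with the smaller parameter $\min(0, m-k) = 0$, handled by the base case.

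The one genuine subtlety—and the step I expect to need the most care—is ensuring the scalar $c = 4^k(-m)_k(1-m-d/2-\gamma_\kappa-n)_k$ is nonzero so that we may divide by it: the Pochhammer $(-m)_k$ is a product of integers $-m, -m+1, \dots, -m+k-1$, all of which are strictly negative since $k \le m$, hence $(-m)_k \neq 0$; this is exactly the observation recorded just before the Proposition that $\Delta_\kappa^k(|x|^{2m}p) = 0$ when $k > m$ but (implicitly) nonzero scaling when $k \le m$, and crucially it does not require the hypothesis $\gamma_\kappa + d/2 \notin -\mathbb{N}_0$. Care is also needed that the induction is set up to actually terminate—tracking that each application strictly decreases $\min(k,m)$ to $0$—but this is bookkeeping rather than a real obstacle.
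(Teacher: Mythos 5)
Your proof is correct, and it rests on the same two ingredients as the paper's (the adjointness $\langle rp,q\rangle_\kappa=\langle p,\rho(r)q\rangle_\kappa$ with $\rho(|x|^{2k})=\Delta_\kappa^k$, and part 2) of the Corollary to Lemma~\ref{c4Lapr2m}), but you chose the less convenient side of the WLOG and paid for it with an induction. The paper assumes $k>m$ and moves the \emph{larger} power across: then $\Delta_\kappa^k\bigl(|x|^{2m}q\bigr)=0$ outright, because the factor $(-m)_k$ vanishes for $k>m$ --- exactly the remark recorded just before the Proposition --- and the proof is one line. By instead assuming $m>k$ you land on a nonzero multiple of $\langle p,|x|^{2(m-k)}q\rangle_\kappa$ and must run a (correct, and in fact one-step) reduction to the case $k=0$, which you then settle by harmonicity of $p$. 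Two smaller points: the nonvanishing of the scalar $c=4^k(-m)_k(\cdots)_k$, which you flag as the delicate step, is actually irrelevant to your argument --- you are proving the pairing is zero, so $c=0$ would only help; and in the second Pochhammer symbol the degree entering the Corollary is that of $q$, namely $n-2m$, not $n$ (harmless, since you discard that factor). Both routes avoid any hypothesis on $\gamma_\kappa+\tfrac{d}{2}$, as they must, since the Proposition is stated for all $\kappa$.
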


\begin{proof}
By the symmetry of the form we may assume $k>m$; then $\left\langle \left\vert
x\right\vert ^{2k}p,\left\vert x\right\vert ^{2m}q\right\rangle _{\kappa
}=\left\langle p,\Delta_{\kappa}^{k}\left\vert x\right\vert ^{2m}%
q\right\rangle _{\kappa}=0$.
\end{proof}

\begin{definition}
Suppose $\gamma_{\kappa}+\frac{d}{2}\neq0,-1,-2,\ldots$and $n=2,3,4,\ldots
$then let%
\[
\pi_{\kappa,n}:=\sum_{j=0}^{\left\lfloor n/2\right\rfloor }\frac{1}%
{4^{j}j!\left(  -\gamma_{\kappa}-n+2-d/2\right)  _{j}}\left\vert x\right\vert
^{2j}\Delta_{\kappa}^{j};
\]
for $n=0,1$ let $\pi_{\kappa,n}:=I$.
\end{definition}

The following is a version of Dixon's summation theorem (see \cite[16.4.4]%
{NIST}).

\begin{lemma}
\label{c43F2}Suppose $k\in\mathbb{N}_{0}$ and $a,b$ satisfy $a+1,a-b+1\notin
-\mathbb{N}_{0}$ then%
\[
_{3}F_{2}\left(
\genfrac{}{}{0pt}{}{-k,a,b}{k+a+1,a-b+1}%
;1\right)  =\frac{\left(  a+1\right)  _{k}\left(  \frac{a}{2}-b+1\right)
_{k}}{\left(  \frac{a}{2}+1\right)  _{k}\left(  a-b+1\right)  _{k}}%
\]

\end{lemma}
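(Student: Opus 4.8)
The statement to be proved is the evaluation of a terminating $_3F_2(1)$ series — a form of Dixon's theorem. The plan is to recognize that this is the classical Dixon summation (as cited in \cite{NIST}), so the work is simply to match the stated formula to the standard reference form and check that the hypothesis $a+1,a-b+1\notin-\mathbb{N}_0$ is exactly what guarantees the terminating series on the left and all the Pochhammer symbols on the right are well-defined.

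First I would write the terminating $_3F_2$ explicitly as $\sum_{j=0}^{k}\frac{(-k)_j(a)_j(b)_j}{(k+a+1)_j(a-b+1)_j\,j!}$; the upper index $-k$ with $k\in\mathbb{N}_0$ makes it a polynomial in the parameters, so convergence is not an issue and the identity is a rational-function identity in $a,b$ valid wherever denominators do not vanish. Then I would invoke the standard statement of Dixon's theorem for a well-poised terminating $_3F_2$: in the reference \cite[16.4.4]{NIST} one has the sum of $_3F_2(-n,b,c;1+a-b,1+a-c;1)$ type evaluated as a ratio of gamma/Pochhammer factors. The quickest route is to specialize that reference formula by the substitution that makes its parameters agree with $-k,a,b$ and the lower parameters $k+a+1,a-b+1$; reading off the right-hand side then gives $\dfrac{(a+1)_k(\frac a2-b+1)_k}{(\frac a2+1)_k(a-b+1)_k}$ after simplifying the gamma quotients via $\Gamma(z+k)/\Gamma(z)=(z)_k$ and, where needed, the duplication formula to convert factors like $\Gamma(\tfrac12 a+\cdots)$ into $(\tfrac a2+1)_k$ form.

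If instead one wants a self-contained derivation rather than a citation, I would prove it by induction on $k$: both sides are polynomials in $a,b$, so it suffices to verify a contiguous-relation recurrence. Denote the left side $S_k(a,b)$ and the right side $P_k(a,b)$. One checks $S_0=1=P_0$, and that $P_{k+1}/P_k=\dfrac{(a+1+k)(\frac a2-b+1+k)}{(\frac a2+1+k)(a-b+1+k)}$ is an explicit rational factor; then one shows $S_{k+1}/S_k$ equals the same factor by manipulating the finite sum — e.g.\ using the Zeilinger/creative-telescoping identity for $_3F_2$, or by splitting off the top term and reindexing. This reduces the whole statement to one contiguous relation among $_3F_2$'s, which is routine.

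The main obstacle is purely bookkeeping: the half-integer shifts $\tfrac a2$ in the right-hand Pochhammers. Depending on which normalization of Dixon's theorem one starts from, matching the $(\tfrac a2+1)_k$ and $(\tfrac a2-b+1)_k$ factors may require an application of the Legendre duplication formula $(2z)_{2k}=4^k(z)_k(z+\tfrac12)_k$ (equivalently a gamma-function duplication), and one must be careful that the excluded parameter set $a+1,a-b+1\notin-\mathbb{N}_0$ really does keep $(\tfrac a2+1)_k$ and $(a-b+1)_k$ nonzero — it does, since $(\tfrac a2+1)_k=0$ would force $a$ to be an even negative integer $\le -2$, hence $a+1\in-\mathbb{N}_0$, contradiction. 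Once the parameter dictionary and this one duplication step are in place, the identity follows immediately from the cited Dixon summation.
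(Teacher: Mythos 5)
Your proposal is correct and matches the paper's treatment: the paper gives no proof of this lemma, simply identifying it as Dixon's summation theorem via the citation to \cite[16.4.4]{NIST}, which is exactly the parameter-matching you carry out (with $A=a$, $B=-k$, $C=b$ in the well-poised form, the Gamma quotients collapse directly to the stated Pochhammer ratio, and no duplication formula is actually needed). Your check that the hypothesis $a+1,\,a-b+1\notin-\mathbb{N}_{0}$ keeps all denominators nonzero is a correct and worthwhile addition.
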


\begin{proposition}
\label{c4harmex}If $\gamma_{\kappa}+\frac{d}{2}\notin-\mathbb{N}_{0}$ and
$p\in\Pi_{n}^{d},n=2,3,4,\ldots$then $\pi_{\kappa,n}p\in\mathcal{H}_{\kappa
,n}$; if $p\in\mathcal{H}_{\kappa,n}$ then $p=\pi_{\kappa,n}p$, that is,
$\pi_{\kappa,n}$ is a projection. Furthermore%
\[
p=\sum_{j=0}^{\left\lfloor n/2\right\rfloor }\frac{1}{4^{j}j!\left(
\gamma_{\kappa}+d/2+n-2j\right)  _{j}}\left\vert x\right\vert ^{2j}\pi
_{\kappa,n-2j}\left(  \Delta_{\kappa}^{j}p\right)  .
\]

\end{proposition}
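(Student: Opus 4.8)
The plan is to establish the three assertions in turn, writing $\mu:=\gamma_{\kappa}+\tfrac{d}{2}$ throughout; one first notes that the hypothesis $\mu\notin-\mathbb{N}_{0}$ keeps every Pochhammer symbol $(2-\mu-m)_{j}$ and $(\mu+n-2k)_{k}$ that appears below from vanishing, so that all the operators $\pi_{\kappa,m}$ and all the coefficients in the asserted formula are well defined. To see that $\pi_{\kappa,n}p$ is harmonic I would apply $\Delta_{\kappa}$ to $\pi_{\kappa,n}p=\sum_{j=0}^{\lfloor n/2\rfloor}c_{j}|x|^{2j}\Delta_{\kappa}^{j}p$, where $c_{j}=\bigl(4^{j}j!\,(2-\mu-n)_{j}\bigr)^{-1}$. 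Since $\Delta_{\kappa}^{j}p$ is homogeneous of degree $n-2j$ but not harmonic, I use Lemma \ref{c4Lapr2m} in the form $\Delta_{\kappa}(|x|^{2j}q)=|x|^{2j}\Delta_{\kappa}q+2j\,(2j-2+d+2\gamma_{\kappa}+2\deg q)\,|x|^{2(j-1)}q$ for homogeneous $q$ (the operator $\sum_{i}x_{i}\partial_{i}$ acting as $\deg q$); with $q=\Delta_{\kappa}^{j}p$ the bracket becomes $4j(n-j-1+\mu)$. Writing $\Delta_{\kappa}\pi_{\kappa,n}p$ as a sum of terms $|x|^{2i}\Delta_{\kappa}^{i+1}p$, the coefficient of each is $c_{i}+4(i+1)(n-i-2+\mu)c_{i+1}$, which vanishes because $c_{i+1}=c_{i}/\bigl(4(i+1)(i+2-\mu-n)\bigr)$ and $i+2-\mu-n=-(n-i-2+\mu)$; the solitary top term is $c_{\lfloor n/2\rfloor}|x|^{2\lfloor n/2\rfloor}\Delta_{\kappa}^{\lfloor n/2\rfloor+1}p=0$. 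Hence $\Delta_{\kappa}\pi_{\kappa,n}p=0$. The projection claim is then immediate: for $p\in\mathcal{H}_{\kappa,n}$ all the higher terms $\Delta_{\kappa}^{j}p$ ($j\geq1$) vanish, so $\pi_{\kappa,n}p=p$, and together with the harmonicity just proved this gives $\pi_{\kappa,n}^{2}=\pi_{\kappa,n}$ with range $\mathcal{H}_{\kappa,n}$.

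For the expansion formula I would first produce the decomposition itself by iteration: $p-\pi_{\kappa,n}p$ equals $|x|^{2}$ times an element of $\Pi_{n-2}^{d}$, and repeating the construction on that element and continuing until the remainder has degree $0$ or $1$ (hence is automatically harmonic) yields $p=\sum_{k=0}^{\lfloor n/2\rfloor}|x|^{2k}h_{k}$ with $h_{k}\in\mathcal{H}_{\kappa,n-2k}$. The key fact is that $\pi_{\kappa,m}(|x|^{2\ell}h)=0$ whenever $h\in\mathcal{H}_{\kappa,m-2\ell}$ and $\ell\geq1$: by part 2) of the Corollary to Lemma \ref{c4Lapr2m}, $\Delta_{\kappa}^{i}(|x|^{2\ell}h)=4^{i}(-\ell)_{i}(1+\ell-\mu-m)_{i}|x|^{2(\ell-i)}h$, so $\pi_{\kappa,m}(|x|^{2\ell}h)=|x|^{2\ell}h\cdot{}_{2}F_{1}(-\ell,1+\ell-\mu-m;2-\mu-m;1)$, and the Chu--Vandermonde summation evaluates this ${}_{2}F_{1}$ to $(1-\ell)_{\ell}/(2-\mu-m)_{\ell}=0$. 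Now applying $\Delta_{\kappa}^{k}$ to $p=\sum_{j}|x|^{2j}h_{j}$ and using that same Corollary gives $\Delta_{\kappa}^{k}p=\sum_{j\geq k}4^{k}(-j)_{k}(1+j-\mu-n)_{k}|x|^{2(j-k)}h_{j}$ (terms with $j<k$ drop since $(-j)_{k}=0$); applying $\pi_{\kappa,n-2k}$, the terms with $j>k$ die by the identity just proved while the $j=k$ term is fixed (as $\pi_{\kappa,n-2k}$ is the identity on $\mathcal{H}_{\kappa,n-2k}$), leaving $\pi_{\kappa,n-2k}(\Delta_{\kappa}^{k}p)=4^{k}(-1)^{k}k!\,(1+k-\mu-n)_{k}\,h_{k}$. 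Finally $(-1)^{k}(1+k-\mu-n)_{k}=(\mu+n-2k)_{k}$, so $h_{k}=\bigl(4^{k}k!\,(\mu+n-2k)_{k}\bigr)^{-1}\pi_{\kappa,n-2k}(\Delta_{\kappa}^{k}p)$, and substituting into $p=\sum_{k}|x|^{2k}h_{k}$ is precisely the stated formula.

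I expect the only real obstacle to be the hypergeometric and Pochhammer bookkeeping: checking the telescoping cancellation in the first step, checking the Chu--Vandermonde evaluation and the simplification $(-1)^{k}(1+k-\mu-n)_{k}=(\mu+n-2k)_{k}$ in the last, and confirming at each stage (using $\mu\notin-\mathbb{N}_{0}$) that no denominator is zero; all of the conceptual content lies in how the coefficients of $\pi_{\kappa,n}$ and of the formula are chosen. As an alternative to the iteration argument one can expand each $\pi_{\kappa,n-2j}$, compose the operators, and reduce the desired identity $\sum_{j}\bigl(4^{j}j!(\mu+n-2j)_{j}\bigr)^{-1}|x|^{2j}\pi_{\kappa,n-2j}\Delta_{\kappa}^{j}=I$ directly to Dixon's theorem in the form of Lemma \ref{c43F2}, whose right-hand side then carries the vanishing factor $(0)_{\ell}$ for each $\ell\geq1$ (this route needs a genericity remark on $\mu$, since Dixon's hypotheses are slightly stronger than $\mu\notin-\mathbb{N}_{0}$, and then extension by rationality in $\mu$).
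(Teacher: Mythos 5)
Your proof is correct, and for the expansion formula it takes a genuinely different route from the paper's. The paper expands each $\pi_{\kappa,n-2j}$, composes the operators, collects the coefficient of $\left(4^{k}k!\right)^{-1}\left\vert x\right\vert^{2k}\Delta_{\kappa}^{k}$ in the right-hand side, and kills it for $k\geq1$ by Dixon's $_{3}F_{2}$ summation (Lemma \ref{c43F2}), arguing for generic $\gamma_{\kappa}$ and then invoking rationality --- which is precisely the ``alternative'' you sketch in your closing sentence, down to the genericity caveat. Your main route instead first manufactures the decomposition $p=\sum_{k}\left\vert x\right\vert^{2k}h_{k}$, $h_{k}\in\mathcal{H}_{\kappa,n-2k}$, by iterating the already-established first part, proves the annihilation identity $\pi_{\kappa,m}\left(\left\vert x\right\vert^{2\ell}h\right)=0$ for $\ell\geq1$ by the terminating Chu--Vandermonde evaluation $\frac{(1-\ell)_{\ell}}{(2-\mu-m)_{\ell}}=0$, and then reads off $h_{k}=\left(4^{k}k!\left(\mu+n-2k\right)_{k}\right)^{-1}\pi_{\kappa,n-2k}\left(\Delta_{\kappa}^{k}p\right)$. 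What this buys: only a $_{2}F_{1}$ summation is needed instead of Dixon's theorem; the hypothesis $\gamma_{\kappa}+\frac{d}{2}\notin-\mathbb{N}_{0}$ enters transparently as nonvanishing of explicit linear factors at each step, with no genericity-and-continuation argument; and the direct sum $\Pi_{n}^{d}=\sum_{j}\oplus\left\vert x\right\vert^{2j}\mathcal{H}_{\kappa,n-2j}$ (which the paper deduces only after the proposition) is obtained en route, with uniqueness of the components as a by-product. Your telescoping verification that $\Delta_{\kappa}\pi_{\kappa,n}p=0$ is exactly the computation the paper compresses into ``a consequence of Lemma \ref{c4Lapr2m},'' and the coefficient bookkeeping there, as well as the identity $\left(-1\right)^{k}\left(1+k-\mu-n\right)_{k}=\left(\mu+n-2k\right)_{k}$, checks out.
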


\begin{proof}
The first part is a consequence of Lemma \ref{c4Lapr2m}. The proof of the
expansion formula depends on Lemma \ref{c43F2}. Set $a=1-n-\gamma_{\kappa
}-\frac{d}{2}$ and $b=\frac{a}{2}+1$ then the coefficient of $\left(
4^{k}k!\right)  ^{-1}\left\vert x\right\vert ^{2k}\Delta_{\kappa}^{k}$ in the
right side is%
\begin{gather*}
\sum_{j=0}^{k}\binom{k}{j}\frac{1}{\left(  1-a-2j\right)  _{j}\left(
a+1+2j\right)  _{k-j}}=\sum_{j=0}^{k}\frac{\left(  -k\right)  _{j}\left(
a+1\right)  _{2j}}{j!\left(  a+j\right)  _{j}\left(  a+1\right)  _{k+j}}\\
=\frac{1}{\left(  a+1\right)  _{k}}\sum_{j=0}^{k}\frac{\left(  -k\right)
_{j}\left(  a+1\right)  _{2j}\left(  a\right)  _{j}}{j!\left(  a\right)
_{2j}\left(  a+1+k\right)  _{j}}\\
=\frac{1}{\left(  a+1\right)  _{k}}\sum_{j=0}^{k}\frac{\left(  -k\right)
_{j}\left(  \frac{a}{2}+1\right)  _{j}\left(  a\right)  _{j}}{j!\left(
\frac{a}{2}\right)  _{j}\left(  a+1+k\right)  _{j}},
\end{gather*}
which vanishes for $k\geq1$ because of the term $\left(  \frac{a}%
{2}-b+1\right)  _{k}=\left(  0\right)  _{k}$ in the summation formula Lemma
\ref{c43F2}; the transformation $\frac{\left(  a+1\right)  _{2j}}{\left(
a\right)  _{2j}}=\frac{a+2j}{a}=\frac{\left(  a/2+1\right)  _{j}}{\left(
a/2\right)  _{j}}$ was used in the last step. The sum equals $1$ when $k=0.$
The identity holds for generic $\gamma_{\kappa}$, and the hypothesis
$\gamma_{\kappa}+\frac{d}{2}\notin-\mathbb{N}_{0}$ insures that the terms in
$\pi_{\kappa,n-2j}$ are well-defined.
\end{proof}

This establishes the validity of $\Pi_{n}^{d}=\sum_{j=0}^{\left\lfloor
n/2\right\rfloor }\oplus\left\vert x\right\vert ^{2j}\mathcal{H}_{\kappa
,n-2j}$, provided $\gamma_{\kappa}+\frac{d}{2}\notin-\mathbb{N}_{0}$. (Argue
by induction that $\mathcal{H}_{\kappa,n}\cap\left\vert x\right\vert ^{2}%
\Pi_{n-2}^{d}=\left\{  0\right\}  $.) To transfer these results to the
Gaussian form let $p\in\mathcal{H}_{\kappa,n},s\in\mathbb{R}$ and evaluate%
\begin{align}
e^{s\Delta_{\kappa}}\left(  \left\vert x\right\vert ^{2m}p\left(  x\right)
\right)   &  =\sum_{j=0}^{m}\frac{1}{j!}s^{j}\Delta_{\kappa}^{j}\left(
\left\vert x\right\vert ^{2m}p\left(  x\right)  \right) \label{LaguerreF}\\
&  =\sum_{j=0}^{m}\frac{1}{j!}\left(  4s\right)  ^{j}\left(  -m\right)
_{j}\left(  1-m-d/2-\gamma_{\kappa}-n\right)  _{j}\left\vert x\right\vert
^{2m-2j}p\left(  x\right) \nonumber\\
&  =\left(  4s\right)  ^{m}m!L_{m}^{\left(  \alpha\right)  }\left(
-\frac{\left\vert x\right\vert ^{2}}{4s}\right)  p\left(  x\right)  ,\nonumber
\end{align}
where $\alpha=\gamma_{\kappa}+\frac{d}{2}+n-1$ and $L_{m}^{\left(
\alpha\right)  }$ denotes the Laguerre polynomial of degree $m$ and index
$\alpha$; it is defined provided $\alpha+1\notin-\mathbb{N}_{0}$ and part of
an orthogonal family of polynomials if $\alpha>-1$. Here we use $s=-\frac
{1}{2}$.

We list the conditions on $\gamma_{\kappa}$ specialized to some reflection groups:

\begin{itemize}
\item $I_{2}\left(  2m\right)  $: $m\left(  \kappa_{0}+\kappa_{1}\right)
+1\notin-\mathbb{N}_{0}$;

\item $A_{d-1}\subset\mathbb{R}^{d}$: $\frac{d}{2}\left(  \left(  d-1\right)
\kappa+1\right)  \notin-\mathbb{N}_{0}$;

\item $B_{d}$: $d\left(  \left(  d-1\right)  \kappa_{0}+\kappa_{1}+\frac{1}%
{2}\right)  \notin-\mathbb{N}_{0}$.
\end{itemize}

When $\kappa\geq0$ the Gaussian form can be related to an integral over the
unit sphere, and there is an analog of the spherical harmonics. Let $\omega$
denote the normalized rotation-invariant measure on the surface of the unit
sphere $S_{d-1}=\left\{  x\in\mathbb{R}^{d}:\left\vert x\right\vert
=1\right\}  $. Suppose $f$ is continuous and integrable over $\mathbb{R}^{d}$
then
\[
\int_{\mathbb{R}^{d}}f\left(  x\right)  dx=\frac{2\pi^{d/2}}{\Gamma\left(
d/2\right)  }\int_{0}^{\infty}r^{d-1}dr\int_{S_{d-1}}f\left(  ru\right)
d\omega\left(  u\right)  .
\]
The constant multiplier is evaluated by setting $f\left(  x\right)
=e^{-\left\vert x\right\vert ^{2}/2}$. Now suppose $f$ is positively
homogeneous of degree $\beta$ (that is, $f\left(  tx\right)  =t^{\beta
}f\left(  x\right)  $ for $t>0$) and $\beta+d>1$ then%
\[
\left(  2\pi\right)  ^{-d/2}\int_{\mathbb{R}^{d}}f\left(  x\right)
e^{-\left\vert x\right\vert ^{2}/2}dx=2^{\beta/2}\frac{\Gamma\left(
\frac{\beta+d}{2}\right)  }{\Gamma\left(  \frac{d}{2}\right)  }\int_{S_{d-1}%
}f\left(  u\right)  d\omega\left(  u\right)  .
\]
To normalize the measure $w_{\kappa}d\omega$ set $f=w_{\kappa}$ (with
$\beta=2\gamma_{\kappa}$) and let%
\[
c_{\kappa,S}^{-1}:=\int_{S_{d-1}}w_{\kappa}d\omega=2^{-\gamma_{\kappa}}%
\frac{\Gamma\left(  \frac{d}{2}\right)  }{\Gamma\left(  \gamma_{\kappa}%
+\frac{d}{2}\right)  }c_{\kappa}^{-1}.
\]
Observe that the condition $\gamma_{\kappa}+\frac{d}{2}\notin-\mathbb{N}_{0}$
appears again.

\begin{proposition}
\label{c4sphint}Suppose $p\in\mathcal{H}_{\kappa,n}$ and $q\in\mathcal{H}%
_{\kappa,m}$,\newline1) if $m\neq n$ then $c_{\kappa,S}\int_{S_{d-1}%
}pqw_{\kappa}d\omega=0$;\newline2) if $m=n$ then%
\begin{align*}
c_{\kappa,S}\int_{S_{d-1}}pqw_{\kappa}d\omega &  =\frac{c_{\kappa}\left(
2\pi\right)  ^{-d/2}}{2^{n}\left(  \gamma_{\kappa}+\frac{d}{2}\right)  _{n}%
}\int_{\mathbb{R}^{d}}p\left(  x\right)  q\left(  x\right)  w_{\kappa}\left(
x\right)  e^{-\left\vert x\right\vert ^{2}/2}dx\\
&  =\frac{1}{2^{n}\left(  \gamma_{\kappa}+\frac{d}{2}\right)  _{n}%
}\left\langle p,q\right\rangle _{\kappa}.
\end{align*}

\end{proposition}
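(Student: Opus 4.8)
The plan is to reduce the spherical integral to the Gaussian integral, and then invoke the equality $\langle\cdot,\cdot\rangle_2=\langle\cdot,\cdot\rangle_g$ from Theorem \ref{c4Gint} together with the fact that harmonic polynomials are fixed by $e^{\pm\Delta_\kappa/2}$. For part (1), I would write the Gaussian integral of $pq w_\kappa$ in polar coordinates using the homogeneity identity displayed just before the Proposition: since $pqw_\kappa$ is positively homogeneous of degree $n+m+2\gamma_\kappa$, the integral $\int_{\mathbb{R}^d}pqw_\kappa e^{-|x|^2/2}dx$ equals a Gamma-function constant times $\int_{S_{d-1}}pqw_\kappa\,d\omega$. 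On the other hand $p,q\in\mathcal{H}_\kappa$ means $e^{\Delta_\kappa/2}p=p$ and $e^{\Delta_\kappa/2}q=q$, so $\langle p,q\rangle_g=\langle p,q\rangle_\kappa$, which vanishes when $m\neq n$ by part (1) of Theorem \ref{c4kform}; then Theorem \ref{c4Gint} gives $\langle p,q\rangle_2=0$, hence the spherical integral vanishes.

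For part (2), the main computation is to track the constant relating $\int_{S_{d-1}}pqw_\kappa\,d\omega$ to $\langle p,q\rangle_\kappa$ when $m=n$. I would proceed as follows. First, $\langle p,q\rangle_g=\langle p,q\rangle_\kappa$ as above (harmonics are fixed by the exponential), and by Theorem \ref{c4Gint} this also equals $\langle p,q\rangle_2=\frac{c_\kappa}{(2\pi)^{d/2}}\int_{\mathbb{R}^d}pqw_\kappa e^{-|x|^2/2}dx$. So it remains to convert the last Gaussian integral to a spherical one. Here $pqw_\kappa$ is homogeneous of degree $\beta=2n+2\gamma_\kappa$, so by the homogeneity formula in the text,
\[
(2\pi)^{-d/2}\int_{\mathbb{R}^d}pqw_\kappa e^{-|x|^2/2}dx
= 2^{\,n+\gamma_\kappa}\,\frac{\Gamma(n+\gamma_\kappa+\tfrac d2)}{\Gamma(\tfrac d2)}\int_{S_{d-1}}pqw_\kappa\,d\omega.
\]
Combining these and using the definition $c_{\kappa,S}^{-1}=2^{-\gamma_\kappa}\frac{\Gamma(d/2)}{\Gamma(\gamma_\kappa+d/2)}c_\kappa^{-1}$, i.e. $c_{\kappa,S}=2^{\gamma_\kappa}\frac{\Gamma(\gamma_\kappa+d/2)}{\Gamma(d/2)}c_\kappa$, I expect the constant to collapse to $2^n(\gamma_\kappa+\tfrac d2)_n$ via the Pochhammer identity $\Gamma(n+\gamma_\kappa+\tfrac d2)/\Gamma(\gamma_\kappa+\tfrac d2)=(\gamma_\kappa+\tfrac d2)_n$. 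That yields $\langle p,q\rangle_\kappa=2^n(\gamma_\kappa+\tfrac d2)_n\,c_{\kappa,S}\int_{S_{d-1}}pqw_\kappa\,d\omega$, which rearranges to the second displayed equality; the first displayed equality in (2) is then just the statement $\langle p,q\rangle_\kappa=\langle p,q\rangle_2$ written out.

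The only place requiring care — and the main (though minor) obstacle — is the bookkeeping of the constants: one must be consistent about the factors $2^{\gamma_\kappa}$, $2^n$, $\Gamma(d/2)$, $c_\kappa$, and $c_{\kappa,S}$ coming from three sources (the homogeneity-to-sphere formula, the normalization of $c_{\kappa,S}$, and the definition of $\langle\cdot,\cdot\rangle_2$), and verify they cancel to leave exactly $2^n(\gamma_\kappa+\tfrac d2)_n$. A secondary point worth a sentence is that everything is legitimate only under $\gamma_\kappa+\tfrac d2\notin-\mathbb{N}_0$ (so that $c_{\kappa,S}$ is defined) and $\kappa\ge0$ (so that the weight is integrable and Theorem \ref{c4Gint} applies), exactly the hypotheses already in force; no nonsingularity of $\langle\cdot,\cdot\rangle_\kappa$ is needed since the vanishing in part (1) comes directly from Theorem \ref{c4kform}(1).
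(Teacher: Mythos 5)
Your proposal is correct and is exactly the argument the paper intends: the Proposition is stated without an explicit proof precisely because the two ingredients you use — the polar-coordinate formula for integrating a positively homogeneous function against the Gaussian, and the identities $\langle p,q\rangle_2=\langle p,q\rangle_g=\langle p,q\rangle_\kappa$ for $\Delta_\kappa$-harmonic $p,q$ (via Theorem \ref{c4Gint} and $e^{\Delta_\kappa/2}p=p$) — are set up in the immediately preceding paragraphs, and your constant bookkeeping ($2^{n+\gamma_\kappa}\Gamma(n+\gamma_\kappa+\tfrac d2)/\Gamma(\tfrac d2)\cdot c_\kappa=2^{n}(\gamma_\kappa+\tfrac d2)_n c_{\kappa,S}$) checks out.
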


That is, the spaces $\left\{  \mathcal{H}_{\kappa,n}:n\in\mathbb{N}%
_{0}\right\}  $ are pairwise orthogonal in $L^{2}\left(  S_{d-1},w_{\kappa
}d\omega\right)  $. By Proposition \ref{c4harmex} each polynomial agrees with
a harmonic one on $S_{d-1}$ and so $L^{2}\left(  S_{d-1},w_{\kappa}%
d\omega\right)  =\sum_{n=0}^{\infty}\oplus\mathcal{H}_{\kappa,n}$ by the
density of polynomials. In the next section we consider the reproducing and
Poisson kernels.

By a version of Hamburger's theorem $\Pi^{d}$ is dense in $L^{2}\left(
\mathbb{R}^{d},w_{\kappa}\left(  x\right)  e^{-\left\vert x\right\vert ^{2}%
/2}dx\right)  $. There exist orthogonal bases consisting of products of
harmonic polynomials and Laguerre polynomials with argument $\left\vert
x\right\vert ^{2}/2$. In general there is no explicit orthogonal basis for
$\mathcal{H}_{\kappa,n}$.

\begin{definition}
For $n\in\mathbb{N}_{0}$ let $X_{n}:=\mathrm{span}\left\{  p\left(  x\right)
L_{m}^{\left(  \alpha_{n}\right)  }\left(  \frac{\left\vert x\right\vert ^{2}%
}{2}\right)  :p\in\mathcal{H}_{\kappa,n},m\in\mathbb{N}_{0}\right\}  $, where
$\alpha_{n}=\gamma_{\kappa}+\frac{d}{2}+n-1$.
\end{definition}

Suppose $k,l,m,n\in\mathbb{N}_{0}$ and $p\in\mathcal{H}_{\kappa,n}%
,q\in\mathcal{H}_{\kappa,l}$ then $\left\langle \left\vert x\right\vert
^{2m}p\left(  x\right)  ,\left\vert x\right\vert ^{2k}q\left(  x\right)
\right\rangle _{\kappa}=0$ unless $n=l$ and $m=k$; if $2m+n\neq2k+l$ this
follows from part (1) of Theorem \ref{c4kform}; Proposition \ref{c4hmperp}
applies if $2m+n=2k+l$ and $m\neq k$. By equation \ref{LaguerreF} with
$s=-\frac{1}{2}$ $\left\langle L_{m}^{\left(  \alpha_{n}\right)  }\left(
\left\vert x\right\vert ^{2}/2\right)  p\left(  x\right)  ,L_{k}^{\left(
\alpha_{l}\right)  }\left(  \left\vert x\right\vert ^{2}/2\right)  q\left(
x\right)  \right\rangle _{g}=0$ unless $n=l$ and $m=k$. Thus%
\[
L^{2}\left(  \mathbb{R}^{d},w_{\kappa}\left(  x\right)  e^{-\left\vert
x\right\vert ^{2}/2}dx\right)  =\sum_{n=0}^{\infty}\oplus X_{n}.
\]
Suppose $p,q\in\mathcal{H}_{\kappa,n}$ and $k,m\in\mathbb{N}_{0}$ then%
\[
\left\langle L_{m}^{\left(  \alpha_{n}\right)  }\left(  \left\vert
x\right\vert ^{2}/2\right)  p\left(  x\right)  ,L_{k}^{\left(  \alpha
_{l}\right)  }\left(  \left\vert x\right\vert ^{2}/2\right)  q\left(
x\right)  \right\rangle _{g}=\delta_{mk}\frac{\left(  \alpha_{n}+1\right)
_{m}}{m!}\left\langle p,q\right\rangle _{g},
\]
and $\left\langle p,q\right\rangle _{g}=2^{n}\left(  \gamma_{\kappa}+\frac
{d}{2}\right)  _{n}c_{\kappa,S}\int_{S_{d-1}}pqw_{\kappa}d\omega$. These
formulae show how an orthogonal basis for $\mathcal{H}_{\kappa,n}$ can be used
to produce such a basis for $L^{2}\left(  \mathbb{R}^{d},w_{\kappa}\left(
x\right)  e^{-\left\vert x\right\vert ^{2}/2}dx\right)  $.

\section{The intertwining operator and the Dunkl kernel}

Several important objects can be defined when the form $\left\langle
\cdot,\cdot\right\rangle _{\kappa}$ is nondegenerate for specific numerical
values of $\kappa$. We refer to \textquotedblleft generic\textquotedblright%
\ $\kappa$ when $\kappa$ has some transcendental value (formal parameter), and
to \textquotedblleft specific" $\kappa$ when $\kappa$ takes on real values.
The form $\left\langle \cdot,\cdot\right\rangle _{\kappa}$ is defined for all
$\kappa$, and so is the following operator.

\begin{definition}
The operator $V_{\kappa}^{0}$ on $\Pi^{d}$ is given by%
\begin{align*}
V_{\kappa}^{0}p\left(  y\right)   &  :=\left\langle e^{\left\langle
y,x\right\rangle },p\left(  x\right)  \right\rangle _{\kappa},p\in\Pi^{d},\\
V_{\kappa}^{0}p\left(  y\right)   &  =\sum_{\alpha\in\mathbb{N}_{0}%
^{d},\left\vert \alpha\right\vert =n}\frac{1}{\alpha!}y^{\alpha}%
\mathcal{D}^{\alpha}p,p\in\Pi_{n}^{d}.
\end{align*}

\end{definition}

The second equation is the explicit effect of the formal operator defined in
the first equation. Note $e^{\left\langle y,x\right\rangle }=\sum
_{n=0}^{\infty}\frac{1}{n!}\sum_{\left\vert \alpha\right\vert =n}\binom
{n}{\alpha}x^{\alpha}y^{\alpha}$ (where $\alpha!=\prod_{i=1}^{d}\alpha_{i}!$
and $\binom{n}{\alpha}=\frac{n!}{\alpha!}$) and $\mathcal{D}^{\alpha
}p=\left\langle x^{\alpha},p\left(  x\right)  \right\rangle $ for $p\in\Pi
_{n}^{d},\left\vert \alpha\right\vert =n\in\mathbb{N}_{0}$. Also $V_{\kappa
}^{0}1=1$.

\begin{proposition}
If $1\leq i\leq d$ and $p\in\Pi^{d}$ then $\frac{\partial}{\partial x_{i}%
}V_{\kappa}^{0}p\left(  x\right)  =V_{\kappa}^{0}\mathcal{D}_{i}p\left(
x\right)  $. If $w\in G$ then $V_{\kappa}^{0}wp=wV_{\kappa}^{0}p$.
\end{proposition}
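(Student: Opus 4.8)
The plan is to verify both identities directly from the explicit polynomial formula for $V_\kappa^0$, namely that on $\Pi_n^d$ one has $V_\kappa^0 p(x)=\sum_{|\alpha|=n}\frac{1}{\alpha!}x^\alpha \mathcal{D}^\alpha p$. For the first identity, I would fix $p\in\Pi_n^d$ and apply $\frac{\partial}{\partial x_i}$ term by term: since $\frac{\partial}{\partial x_i}\frac{x^\alpha}{\alpha!}=\frac{x^{\alpha-\varepsilon_i}}{(\alpha-\varepsilon_i)!}$ when $\alpha_i\ge 1$ (and vanishes otherwise), reindexing $\beta=\alpha-\varepsilon_i$ gives $\frac{\partial}{\partial x_i}V_\kappa^0 p=\sum_{|\beta|=n-1}\frac{x^\beta}{\beta!}\mathcal{D}^{\beta+\varepsilon_i}p=\sum_{|\beta|=n-1}\frac{x^\beta}{\beta!}\mathcal{D}^\beta(\mathcal{D}_i p)=V_\kappa^0(\mathcal{D}_i p)$, using that the operators $\mathcal{D}_j$ commute (Theorem \ref{c4DD0}) so that $\mathcal{D}^{\beta+\varepsilon_i}=\mathcal{D}^\beta\mathcal{D}_i$, and that $\mathcal{D}_i p\in\Pi_{n-1}^d$. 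One should also note both sides vanish on constants, handling the base case $n=0$.

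For the second identity, I would argue by the covariance already established. The cleanest route is to use the defining formula $V_\kappa^0 p(y)=\langle e^{\langle y,x\rangle},p(x)\rangle_\kappa$ together with part (2) of Theorem \ref{c4kform}, which gives $\langle wq,wp\rangle_\kappa=\langle q,p\rangle_\kappa$ for $w\in G$. Since $w\in O(d)$ preserves the inner product, $(w^{-1}\text{ acting in }x)\,e^{\langle y,x\rangle}=e^{\langle y,wx\rangle}=e^{\langle w^{-1}y,x\rangle}$; more precisely, writing the action of $w$ on a function of $x$ as $(wf)(x)=f(w^{-1}x)$, the generating function transforms as a function of the pair in a way compatible with $w$. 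Concretely, $(wV_\kappa^0 p)(y)=V_\kappa^0 p(w^{-1}y)=\langle e^{\langle w^{-1}y,x\rangle},p(x)\rangle_\kappa=\langle e^{\langle y,wx\rangle},p(x)\rangle_\kappa$. Now applying the substitution $x\mapsto w^{-1}x$ inside the pairing — which is exactly the $G$-invariance of $\langle\cdot,\cdot\rangle_\kappa$ — converts this to $\langle e^{\langle y,x\rangle},p(w^{-1}x)\rangle_\kappa=\langle e^{\langle y,x\rangle},(wp)(x)\rangle_\kappa=V_\kappa^0(wp)(y)$, as desired.

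Alternatively, and perhaps more transparently at the level of the finite sum, one can combine the first part of this Proposition with Proposition \ref{c4wDw}. Since $\frac{\partial}{\partial x_i}(wf)=w(\frac{\partial}{\partial x_{w^{-1}\cdot}}f)$ rotates gradients correctly, one checks that $V_\kappa^0 w$ and $wV_\kappa^0$ have the same derivative relations and agree on constants; because a polynomial operator is determined by its value on $1$ together with how it intertwines the first-order operators $\mathcal{D}_i$ (compare the uniqueness argument for $\rho$ and for the forms via Corollary \ref{c4!form}), the two operators coincide. I expect the main obstacle to be purely bookkeeping: keeping the action $(wf)(x)=f(w^{-1}x)$ straight so that the rotation of the variable $x$ inside the Dunkl pairing matches the rotation of $y$ on the outside, and confirming that the change of summation variable in the first part is legitimate (i.e.\ the boundary terms $\alpha_i=0$ genuinely contribute nothing). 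Neither step involves a real difficulty, so the proof is short once the indexing conventions are pinned down.
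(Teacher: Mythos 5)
Your proof is correct and follows essentially the same route as the paper: the second identity is handled exactly as in the text (rotate the exponential kernel and invoke the $G$-invariance of $\left\langle \cdot,\cdot\right\rangle _{\kappa}$ from Theorem \ref{c4kform}(2)), while your first identity differs only in that you differentiate the explicit sum and appeal to commutativity of the $\mathcal{D}_{j}$ (Theorem \ref{c4DD0}), where the paper instead differentiates the formal kernel and uses the adjoint relation $\left\langle x_{i}q,p\right\rangle _{\kappa}=\left\langle q,\mathcal{D}_{i}p\right\rangle _{\kappa}$. Both ingredients are already available at this point, so the two computations are interchangeable and your reindexing step is legitimate.
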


\begin{proof}
We have $\frac{\partial}{\partial y_{i}}V_{\kappa}^{0}p\left(  y\right)
=\left\langle x_{i}e^{\left\langle y,x\right\rangle },p\left(  x\right)
\right\rangle _{\kappa}=\left\langle e^{\left\langle y,x\right\rangle
},\mathcal{D}_{i}p\left(  x\right)  \right\rangle _{\kappa}$. For
$x,y\in\mathbb{R}^{d}$ let $f_{x}\left(  y\right)  =\left\langle
x,y\right\rangle $. Suppose $p\in\Pi_{n}^{d}$ then $V_{\kappa}^{0}\left(
wp\right)  \left(  x\right)  =\frac{1}{n!}\left\langle f_{x}^{n}%
,wp\right\rangle _{\kappa}=\frac{1}{n!}\left\langle \left(  w^{-1}%
f_{x}\right)  ^{n},p\right\rangle _{\kappa}$, and $w^{-1}f_{x}\left(
y\right)  =\left\langle x,wy\right\rangle =\left\langle w^{-1}x,y\right\rangle
$ so $V_{\kappa}^{0}\left(  wp\right)  \left(  x\right)  =V_{\kappa}%
^{0}p\left(  w^{-1}x\right)  $.
\end{proof}

\begin{definition}
For a specific $\kappa$ let $\mathrm{Rad}\left(  \kappa\right)  :=\left\{
p\in\Pi^{d}:\left\langle p,q\right\rangle _{\kappa}=0~\forall q\in\Pi
^{d}\right\}  $, the \emph{radical}.
\end{definition}

\begin{proposition}
The space $\mathrm{Rad}\left(  \kappa\right)  $ has the following
properties:\newline1) $p\in\mathrm{Rad}\left(  \kappa\right)  ,1\leq i\leq d,$
and $w\in G$ imply $x_{i}p\left(  x\right)  ,\mathcal{D}_{i}p\left(  x\right)
,wp\left(  x\right)  \in\mathrm{Rad}\left(  \kappa\right)  $,\newline2)
$\mathrm{Rad}\left(  \kappa\right)  =\ker V_{\kappa}^{0}$,\newline3)
$\mathrm{Rad}\left(  \kappa\right)  =\sum_{n=0}^{\infty}\left(  \mathrm{Rad}%
\left(  \kappa\right)  \cap\Pi_{n}^{d}\right)  $ (algebraic direct sum).
\end{proposition}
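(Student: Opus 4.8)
The plan is to prove the three properties in turn, using the adjointness relation $\langle rp, q\rangle_\kappa = \langle p, \rho(r) q\rangle_\kappa$ from Theorem \ref{c4kform}(2), the $G$-invariance of the form, the identification of $V_\kappa^0$ via its power-series definition, and the fact that the form pairs $\Pi_n^d$ only with itself.

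For part 1), let $p\in\mathrm{Rad}(\kappa)$. To see $x_i p\in\mathrm{Rad}(\kappa)$, take any $q\in\Pi^d$ and write $\langle x_i p, q\rangle_\kappa = \langle p, \mathcal{D}_i q\rangle_\kappa = 0$ since $\mathcal{D}_i q\in\Pi^d$. To see $\mathcal{D}_i p\in\mathrm{Rad}(\kappa)$, one needs the other adjointness direction: $\langle \mathcal{D}_i p, q\rangle_\kappa = \langle p, x_i q\rangle_\kappa$; this follows from Theorem \ref{c4kform} using symmetry of the form, namely $\langle \mathcal{D}_i p,q\rangle_\kappa = \langle q,\mathcal{D}_i p\rangle_\kappa = \langle x_i q, p\rangle_\kappa = \langle p, x_i q\rangle_\kappa = 0$. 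For $wp$, use $\langle wp, q\rangle_\kappa = \langle wp, w(w^{-1}q)\rangle_\kappa = \langle p, w^{-1}q\rangle_\kappa = 0$ by the $G$-invariance in Theorem \ref{c4kform}(2).

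For part 2), recall $V_\kappa^0 p(y) = \langle e^{\langle y,x\rangle}, p(x)\rangle_\kappa$, so $p\in\ker V_\kappa^0$ means $\langle e^{\langle y,x\rangle}, p\rangle_\kappa = 0$ as a function of $y$. Since this is a power series in $y$, it vanishes identically if and only if each coefficient $\langle y^\alpha/\alpha!,\ldots\rangle$ — more precisely $\langle f_x^n, p\rangle_\kappa = 0$ for all $n$, where $f_x(y)=\langle x,y\rangle$ — vanishes. Since the monomials $x^\alpha$ span $\Pi^d$ and $\langle x^\alpha, p\rangle_\kappa = \mathcal{D}^\alpha p(0) \cdot(\text{const})$ are exactly the pairings appearing in the expansion of $\langle e^{\langle y,x\rangle}, p\rangle_\kappa$, the condition $V_\kappa^0 p = 0$ is equivalent to $\langle x^\alpha, p\rangle_\kappa = 0$ for all $\alpha$, hence (by bilinearity) to $\langle q, p\rangle_\kappa = 0$ for all $q\in\Pi^d$, i.e.\ $p\in\mathrm{Rad}(\kappa)$. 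The main subtlety here is bookkeeping: one must observe that the homogeneous component of $V_\kappa^0 p$ of degree $m$ only sees the degree-$m$ component of $p$, because $\langle \cdot,\cdot\rangle_\kappa$ pairs $\Pi_m^d$ with $\Pi_m^d$; this both makes the coefficient extraction clean and sets up part 3).

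For part 3), write $p = \sum_{n} p_n$ with $p_n\in\Pi_n^d$ its homogeneous decomposition, and suppose $p\in\mathrm{Rad}(\kappa)$. For any $q\in\Pi_m^d$, Theorem \ref{c4kform}(1) gives $\langle q, p\rangle_\kappa = \langle q, p_m\rangle_\kappa$, and the left side is $0$; since $m$ and $q\in\Pi_m^d$ are arbitrary, $\langle q, p_m\rangle_\kappa = 0$ for all $q\in\Pi^d$, so $p_m\in\mathrm{Rad}(\kappa)\cap\Pi_m^d$. Hence $\mathrm{Rad}(\kappa)$ is the (automatically direct, by degree) sum of its graded pieces. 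I expect part 2) — specifically the argument that vanishing of the power series $V_\kappa^0 p$ is equivalent to vanishing of all the pairings $\langle x^\alpha, p\rangle_\kappa$, keeping the grading straight — to be the only step requiring genuine care; parts 1) and 3) are short formal consequences of the properties of $\langle\cdot,\cdot\rangle_\kappa$ already established.
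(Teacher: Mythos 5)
Your proofs of parts 1) and 2) are correct and essentially identical to the paper's: part 1) is exactly the ``formal consequence of the adjointness, symmetry and $G$-invariance of $\langle\cdot,\cdot\rangle_{\kappa}$'' that the paper leaves to the reader, and part 2) is the same degree-by-degree coefficient extraction from $V_{\kappa}^{0}p(y)=\sum_{j}\frac{1}{j!}\langle\langle y,x\rangle^{j},p\rangle_{\kappa}$. Where you genuinely diverge is part 3). You invoke Theorem \ref{c4kform}(1) directly: since the form pairs $\Pi_{m}^{d}$ nontrivially only with $\Pi_{m}^{d}$, testing $p=\sum_{n}p_{n}\in\mathrm{Rad}(\kappa)$ against homogeneous $q\in\Pi_{m}^{d}$ isolates $\langle q,p_{m}\rangle_{\kappa}=0$, so each $p_{m}$ lies in the radical. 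This is correct, shorter, and notably does not even use part 1). The paper instead writes the Euler operator as $\sum_{i}x_{i}\frac{\partial}{\partial x_{i}}=\sum_{i}x_{i}\mathcal{D}_{i}-\sum_{v\in R_{+}}\kappa(v)(1-s_{v})$, observes via part 1) that $\mathrm{Rad}(\kappa)$ is stable under this operator, and extracts the homogeneous components from the eigenvalue decomposition (a Vandermonde-type argument on $E^{k}p=\sum_{n}n^{k}p_{n}$). The paper's route buys a structural point that it exploits immediately afterwards --- the radical is an ideal of the rational Cherednik algebra, and gradedness follows purely from that stability, an argument that transfers to any submodule closed under $x_{i}$, $\mathcal{D}_{i}$ and $G$ --- whereas your route is the more economical one for this particular space, since the degree-orthogonality of the form is already on the table. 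Both are valid.
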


\begin{proof}
Part (1) follows directly from the properties of $\left\langle \cdot
,\cdot\right\rangle _{\kappa}$ and the definition of the radical. For part (2)
suppose the degree of $p$ is $n$ (that is, $p\in\sum_{j=0}^{n}\Pi_{j}^{d}$)
and $p\in\mathrm{Rad}\left(  \kappa\right)  $, then $V_{\kappa}^{0}p\left(
y\right)  =\sum_{j=0}^{n}\frac{1}{j!}\left\langle \left\langle
y,x\right\rangle ^{j},p\left(  x\right)  \right\rangle _{\kappa}=0$;
conversely suppose $V_{\kappa}^{0}p\left(  y\right)  =0$ then $\left\langle
x^{\alpha},p\left(  x\right)  \right\rangle _{\kappa}=0$ for all $\alpha
\in\mathbb{N}_{0}^{d}$ with $\left\vert \alpha\right\vert \leq n$, and thus
$p\in\mathrm{Rad}\left(  \kappa\right)  $. For part (3) the Euler operator
satisfies%
\[
\sum_{i=1}^{d}x_{i}\frac{\partial}{\partial x_{i}}=\sum_{i=1}^{d}%
x_{i}\mathcal{D}_{i}-\sum_{v\in R_{+}}\kappa\left(  v\right)  \left(
1-s_{v}\right)  .
\]
If $p\in\mathrm{Rad}\left(  \kappa\right)  $ then $\sum_{i=1}^{d}x_{i}%
\frac{\partial}{\partial x_{i}}p\left(  x\right)  \in\mathrm{Rad}\left(
\kappa\right)  $ by part (1), and hence $\mathrm{Rad}\left(  \kappa\right)  $
is the sum of its homogeneous subspaces.
\end{proof}

Part (1) implies that $\mathrm{Rad}\left(  \kappa\right)  $ is an ideal of the
\emph{rational Cherednik algebra} (an abstract algebra isomorphic to the
algebra of operators on $\Pi^{d}$ generated by the multipliers $x_{i}$, the
operators $\mathcal{D}_{i}$ and the group $G$). We can now set up the key
decomposition of the parameter space. Multiplicity functions can be identified
with points in $\mathbb{R}^{c}$ where $c$ is the number of $G$-orbits in $R$.

\begin{definition}
Let $\Lambda^{0}:=\left\{  \kappa:\mathrm{Rad}\left(  \kappa\right)
\neq\left\{  0\right\}  \right\}  $, the \emph{singular set}, and let
$\Lambda^{reg}:=\{\kappa:\mathrm{Rad}(\kappa)=\{0\}\}$, the \emph{regular set}.
\end{definition}

As a result of the papers of Opdam \cite{Op1} and Dunkl, de Jeu and Opdam
\cite{DJO} there is a concise description of the singular set for
indecomposable reflection groups: The value of the integral $\int
_{\mathbb{R}^{d}}w_{\kappa}\left(  x\right)  e^{-\left\vert x\right\vert
^{2}/2}dx$ is a meromorphic function of $\kappa$; the integral is defined for
$\kappa\geq0$ but the value extends analytically to $\mathbb{C}$. The poles
coincide with the singular set. For the one-class type the singular set is
$\left\{  -j/n_{m}:j\in\mathbb{N}_{0},1\leq m\leq d,j/n_{m}\notin
\mathbb{Z}\right\}  $, where the rank of $G$ is $d$ and the fundamental
degrees are $n_{1},\ldots,n_{d}$. The realization of the Gaussian form as an
integral shows that $\kappa\geq0$ implies $\kappa\in\Lambda^{reg}$. In the
next paragraphs we use superscripts $\left(  x\right)  ,\left(  y\right)  $ to
indicate the variable on which an operator acts.

\begin{definition}
For $\kappa\in\Lambda^{reg}$ let $V_{\kappa}:=\left(  V_{\kappa}^{0}\right)
^{-1}$, the \emph{intertwining operator}, and let $K_{\kappa,n}\left(
x,y\right)  :=\frac{1}{n!}V_{\kappa}^{\left(  y\right)  }\left\langle
x,y\right\rangle ^{n}=\sum\limits_{\alpha\in\mathbb{N}_{0}^{d},\left\vert
\alpha\right\vert =n}\frac{1}{\alpha!}x^{\alpha}V_{\kappa}\left(  y^{\alpha
}\right)  ,x,y\in\mathbb{R}^{d},n\in\mathbb{N}_{0}$. The polynomial
$K_{\kappa,n}$ is homogeneous of degree $n$ in both $x$ and $y$.
\end{definition}

\begin{theorem}
\label{c4KVprop}$K_{\kappa,n}$ and $V_{\kappa}$ have the following
properties:\newline1) if $p\in\Pi^{d}$ and $1\leq i\leq d$ then $\mathcal{D}%
_{i}\left(  V_{\kappa}p\right)  \left(  x\right)  =V_{\kappa}\left(
\frac{\partial}{\partial x_{i}}p\left(  x\right)  \right)  $; if $w\in G$ then
$wV_{\kappa}=V_{\kappa}w$;\newline2) $V_{\kappa}$ maps $\Pi_{n}^{d}$
one-to-one onto $\Pi_{n}^{d}$ for each $n$;\newline3) $\mathcal{D}%
_{i}^{\left(  y\right)  }K_{\kappa,n}\left(  x,y\right)  =x_{i}K_{\kappa
,n-1}\left(  x,y\right)  ;$\newline4) $\left\langle K_{\kappa,n}\left(
x,\cdot\right)  ,p\right\rangle _{\kappa}=p\left(  x\right)  $ for $p\in
\Pi_{n}^{d}$;\newline5) $K_{\kappa,n}\left(  x,y\right)  =K_{\kappa,n}\left(
y,x\right)  $ for all $x,y\in\mathbb{R}^{d}$ , and $K_{\kappa,n}\left(
wx,wy\right)  =K_{\kappa,n}\left(  x,y\right)  $ for each $w\in G$.
\end{theorem}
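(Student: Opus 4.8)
The plan is to bootstrap all five statements from the two identities already proved for $V_{\kappa}^{0}$ (the Proposition just before the definition of $\mathrm{Rad}(\kappa)$), namely $\tfrac{\partial}{\partial x_{i}}V_{\kappa}^{0}p=V_{\kappa}^{0}\mathcal{D}_{i}p$ and $V_{\kappa}^{0}(wp)=w(V_{\kappa}^{0}p)$, together with the explicit series for $V_{\kappa}^{0}$ and $K_{\kappa,n}$. I would begin with (2): since $\kappa\in\Lambda^{reg}$ we have $\ker V_{\kappa}^{0}=\mathrm{Rad}(\kappa)=\{0\}$, and the formula $V_{\kappa}^{0}p=\sum_{|\alpha|=n}\tfrac{1}{\alpha!}y^{\alpha}\mathcal{D}^{\alpha}p$ shows $V_{\kappa}^{0}$ maps $\Pi_{n}^{d}$ into itself; an injective endomorphism of a finite-dimensional space is bijective, so $V_{\kappa}=(V_{\kappa}^{0})^{-1}$ is a well-defined, linear, degree-preserving bijection of each $\Pi_{n}^{d}$, which is (2). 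Then (1) follows by conjugation: substituting $V_{\kappa}p$ for the argument in $\tfrac{\partial}{\partial x_{i}}V_{\kappa}^{0}=V_{\kappa}^{0}\mathcal{D}_{i}$ and applying $V_{\kappa}$ on the left gives $V_{\kappa}\tfrac{\partial}{\partial x_{i}}p=\mathcal{D}_{i}V_{\kappa}p$, and the same manipulation applied to $V_{\kappa}^{0}w=wV_{\kappa}^{0}$ gives $wV_{\kappa}=V_{\kappa}w$.

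Part (3) is then immediate from (1): from $K_{\kappa,n}(x,y)=\tfrac{1}{n!}V_{\kappa}^{(y)}\langle x,y\rangle^{n}$ and $\tfrac{\partial}{\partial y_{i}}\langle x,y\rangle^{n}=nx_{i}\langle x,y\rangle^{n-1}$ one gets $\mathcal{D}_{i}^{(y)}K_{\kappa,n}(x,y)=\tfrac{1}{n!}V_{\kappa}^{(y)}\big(nx_{i}\langle x,y\rangle^{n-1}\big)=x_{i}K_{\kappa,n-1}(x,y)$. For the reproducing property (4) I would pair in the first variable, using the series $K_{\kappa,n}(x,y)=\sum_{|\alpha|=n}\tfrac{1}{\alpha!}x^{\alpha}V_{\kappa}(y^{\alpha})$ and the identity $\langle x^{\alpha},p\rangle_{\kappa}=\mathcal{D}^{\alpha}p$ (a scalar, for $p\in\Pi_{n}^{d}$ and $|\alpha|=n$):
\[
\big\langle K_{\kappa,n}(\cdot,y),p\big\rangle_{\kappa}=\sum_{|\alpha|=n}\frac{\mathcal{D}^{\alpha}p}{\alpha!}\,V_{\kappa}(y^{\alpha})=V_{\kappa}\Big(\sum_{|\alpha|=n}\frac{\mathcal{D}^{\alpha}p}{\alpha!}\,y^{\alpha}\Big)=V_{\kappa}(V_{\kappa}^{0}p)=p,
\]
so that $\langle K_{\kappa,n}(\cdot,y),p\rangle_{\kappa}=p(y)$, the key point being that the inner sum is exactly $V_{\kappa}^{0}p$, which $V_{\kappa}$ collapses back to $p$.

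Finally (5). The symmetry $K_{\kappa,n}(x,y)=K_{\kappa,n}(y,x)$ follows by feeding $K$ into itself: both $K_{\kappa,n}(\cdot,x)$ and $K_{\kappa,n}(\cdot,y)$ lie in $\Pi_{n}^{d}$, so the reproducing identity above and the symmetry of $\langle\cdot,\cdot\rangle_{\kappa}$ (Theorem~\ref{c4kform}(3)) give $K_{\kappa,n}(y,x)=\langle K_{\kappa,n}(\cdot,y),K_{\kappa,n}(\cdot,x)\rangle_{\kappa}=\langle K_{\kappa,n}(\cdot,x),K_{\kappa,n}(\cdot,y)\rangle_{\kappa}=K_{\kappa,n}(x,y)$; combined with the previous paragraph this also yields $\langle K_{\kappa,n}(x,\cdot),p\rangle_{\kappa}=p(x)$ exactly as stated in (4). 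For the $G$-invariance, write $h_{x}(y)=\langle x,y\rangle^{n}$; orthogonality of $w$ gives $h_{wx}=wh_{x}$, hence $V_{\kappa}h_{wx}=V_{\kappa}wh_{x}=wV_{\kappa}h_{x}$ by (1), and evaluating at $wy$ absorbs the $w$: $K_{\kappa,n}(wx,wy)=\tfrac{1}{n!}(V_{\kappa}h_{wx})(wy)=\tfrac{1}{n!}(V_{\kappa}h_{x})(y)=K_{\kappa,n}(x,y)$. I expect the only step needing genuine care is (4): one must get the bookkeeping right so that the sum over $\alpha$ reassembles into $V_{\kappa}^{0}p$, and (5) rests entirely on it; everything else is a formal consequence of the intertwining relations for $V_{\kappa}^{0}$ and of Theorem~\ref{c4kform}.
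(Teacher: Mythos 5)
Your proposal is correct, and parts (1)--(3) run exactly as the paper intends (the paper simply declares them straightforward). Where you genuinely diverge is in (4) and (5). For (4) the paper expands $p(x)=\sum_{|\alpha|=n}\frac{1}{\alpha!}x^{\alpha}p(\partial_{y})y^{\alpha}$, applies $V_{\kappa}^{(y)}$, and commutes it past $p(\partial_{y})$ via $V_{\kappa}^{(y)}p(\partial_{y})=p(\mathcal{D}^{(y)})V_{\kappa}^{(y)}$, landing directly on $\langle p,K_{\kappa,n}(x,\cdot)\rangle_{\kappa}=p(x)$ (pairing in the slot carrying the $V_{\kappa}(y^{\alpha})$'s); you instead pair in the slot carrying the plain monomials, use $\langle x^{\alpha},p\rangle_{\kappa}=\mathcal{D}^{\alpha}p$, and collapse the sum via $V_{\kappa}V_{\kappa}^{0}=I$. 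The two are essentially dual to one another, but yours proves the first-slot identity and only recovers the stated second-slot form after symmetry is known --- you flag this correctly, and the order of deductions is non-circular. For the symmetry in (5) the paper argues indirectly: it shows $K_{\kappa,n}(\mathcal{D}^{(x)},\mathcal{D}^{(y)})p(y)q(x)$ equals both $\langle p,q\rangle_{\kappa}$ and $\langle q,p\rangle_{\kappa}$ and then invokes $\kappa\in\Lambda^{reg}$ (nondegeneracy of the form on $\Pi_{n}^{d}$) to conclude $K_{\kappa,n}(x,y)=K_{\kappa,n}(y,x)$; your computation $K_{\kappa,n}(y,x)=\langle K_{\kappa,n}(\cdot,y),K_{\kappa,n}(\cdot,x)\rangle_{\kappa}=K_{\kappa,n}(x,y)$ is more direct and gets symmetry purely from the symmetry of the form plus the reproducing identity, which is arguably cleaner. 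Likewise your $G$-invariance argument works straight from $wV_{\kappa}=V_{\kappa}w$ applied to $h_{x}(y)=\langle x,y\rangle^{n}$, whereas the paper routes it through the reproducing characterization ($K_{\kappa,n}(x,wy)=K_{\kappa,n}(w^{-1}x,y)$ via $\langle w^{-1}f_{x},p\rangle_{\kappa}$); both are valid, and yours avoids any appeal to uniqueness of the reproducing kernel.
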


\begin{proof}
Parts (1) and (3) are straightforward. Part (2) holds because $V_{\kappa}^{0}$
maps $\Pi_{n}^{d}$ into $\Pi_{n}^{d}$ and its inverse exists. For part (4) let
$\partial_{y}^{\alpha}=\prod_{i=1}^{d}\left(  \frac{\partial}{\partial y_{i}%
}\right)  ^{\alpha_{i}}$; if $p\in\Pi_{n}^{d}$ then $p\left(  x\right)
=\sum_{\alpha\in\mathbb{N}_{0}^{d},\left\vert \alpha\right\vert =n}\frac
{1}{\alpha!}x^{\alpha}p\left(  \partial_{y}\right)  y^{\alpha}$. Apply
$V_{\kappa}^{\left(  y\right)  }$ to both sides (and the left side is
independent of $y)$ thus%
\begin{align*}
p\left(  x\right)   &  =V_{\kappa}^{\left(  y\right)  }p\left(  x\right)
=\sum_{\alpha\in\mathbb{N}_{0}^{d},\left\vert \alpha\right\vert =n}\frac
{1}{\alpha!}x^{\alpha}V_{\kappa}^{\left(  y\right)  }p\left(  \partial
_{y}\right)  y^{\alpha}\\
&  =\sum_{\alpha\in\mathbb{N}_{0}^{d},\left\vert \alpha\right\vert =n}\frac
{1}{\alpha!}x^{\alpha}p\left(  \mathcal{D}^{\left(  y\right)  }\right)
V_{\kappa}^{\left(  y\right)  }y^{\alpha}=\left\langle p,K_{\kappa,n}\left(
x,\cdot\right)  \right\rangle _{\kappa};
\end{align*}
and the form $\left\langle \cdot,\cdot\right\rangle _{\kappa}$ is symmetric.
For any $p,q\in\Pi_{n}^{d}$, by part (4),%
\begin{align*}
\left\langle p,q\right\rangle _{\kappa}  &  =\left\langle K_{\kappa,n}\left(
\cdot,\mathcal{D}^{\left(  y\right)  }\right)  p\left(  y\right)
,q\right\rangle _{\kappa}=K_{\kappa,n}\left(  \mathcal{D}^{\left(  x\right)
},\mathcal{D}^{\left(  y\right)  }\right)  p\left(  y\right)  q\left(
x\right) \\
&  =\left\langle q,p\right\rangle _{\kappa}.
\end{align*}
This implies $K_{\kappa,n}\left(  x,y\right)  =K_{\kappa,n}\left(  y,x\right)
$, because $\kappa\in\Lambda^{reg}$. Let $f_{x}\left(  y\right)  =K_{\kappa
,n}\left(  x,y\right)  ,w\in G$ and $p\in\Pi_{n}^{d}$, then $p\left(
x\right)  =\left\langle f_{x},p\right\rangle _{\kappa}$ and%
\begin{align*}
wp\left(  x\right)   &  =p\left(  w^{-1}x\right)  =\left\langle f_{w^{-1}%
x},p\right\rangle _{\kappa}\\
&  =\left\langle f_{x},wp\right\rangle _{\kappa}=\left\langle w^{-1}%
f_{x},p\right\rangle _{\kappa};
\end{align*}
thus $K_{\kappa,n}\left(  x,wy\right)  =w^{-1}f_{x}=f_{w^{-1}x}=K_{\kappa
,n}\left(  w^{-1}x,y\right)  $.
\end{proof}

Denote the formal sum $\sum_{n=0}^{\infty}K_{\kappa,n}\left(  x,y\right)  $ by
$K_{\kappa}\left(  x,y\right)  $. The question now arises: does the series
converge in a useful way? There are some strong results for $\kappa\geq0$. For
$x,y\in\mathbb{R}^{d}$ let $\rho\left(  x,y\right)  =\max_{w\in G}\left\vert
\left\langle x,wy\right\rangle \right\vert $.

\begin{theorem}
\label{c4Knbound}Suppose $\kappa\geq0$ and $x,y\in\mathbb{R}^{d}$, then
$\left\vert K_{\kappa,n}\left(  x,y\right)  \right\vert \leq\frac{1}{n!}%
\rho\left(  x,y\right)  ^{n}$ for all $n\in\mathbb{N}_{0}$, the series for
$K_{\kappa}$ converges uniformly and absolutely on compact subsets of
$\mathbb{R}^{d}\times\mathbb{R}^{d}$, and $\left\vert K_{\kappa}\left(
x,y\right)  \right\vert \leq e^{\rho\left(  x,y\right)  }$.
\end{theorem}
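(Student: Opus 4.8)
The plan is to prove the pointwise bound $\left\vert K_{\kappa,n}\left(x,y\right)\right\vert \le \rho\left(x,y\right)^{n}/n!$ by induction on $n$, using a recursion for $K_{\kappa,n}$ in the second variable that is obtained from the Dunkl--Euler operator, and then to read off the convergence statements by the Weierstrass $M$-test.

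First I would establish the identity
$$\left(n+\gamma_{\kappa}\right)K_{\kappa,n}\left(x,y\right)-\sum_{v\in R_{+}}\kappa\left(v\right)K_{\kappa,n}\left(s_{v}x,y\right)=\left\langle x,y\right\rangle K_{\kappa,n-1}\left(x,y\right).$$
To see it, apply $\sum_{i=1}^{d}y_{i}\mathcal{D}_{i}^{\left(y\right)}$ to $K_{\kappa,n}\left(x,y\right)$. On one hand, part 3) of Theorem \ref{c4KVprop} gives $\mathcal{D}_{i}^{\left(y\right)}K_{\kappa,n}\left(x,y\right)=x_{i}K_{\kappa,n-1}\left(x,y\right)$, so the result equals $\left\langle x,y\right\rangle K_{\kappa,n-1}\left(x,y\right)$. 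On the other hand the Euler-type formula $\sum_{i}y_{i}\mathcal{D}_{i}^{\left(y\right)}=\sum_{i}y_{i}\frac{\partial}{\partial y_{i}}+\sum_{v\in R_{+}}\kappa\left(v\right)\left(1-s_{v}^{\left(y\right)}\right)$ (already used in the proof of Lemma \ref{c4sumxD}), applied to $K_{\kappa,n}\left(x,\cdot\right)$, which is homogeneous of degree $n$ in $y$, gives $nK_{\kappa,n}\left(x,y\right)+\sum_{v\in R_{+}}\kappa\left(v\right)\bigl(K_{\kappa,n}\left(x,y\right)-K_{\kappa,n}\left(x,s_{v}y\right)\bigr)$. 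Using $K_{\kappa,n}\left(x,s_{v}y\right)=K_{\kappa,n}\left(s_{v}x,y\right)$, which follows from part 5) of Theorem \ref{c4KVprop} with $w=s_{v}$, together with $\gamma_{\kappa}=\sum_{v\in R_{+}}\kappa\left(v\right)$, produces the recursion.

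Next I would run the induction, the case $n=0$ being the equality $K_{\kappa,0}\equiv1$. Fix $x,y$ and set $A_{n}:=\max_{w\in G}\left\vert K_{\kappa,n}\left(wx,y\right)\right\vert$. Two elementary observations about $\rho$ are used: $\rho\left(wx,y\right)=\rho\left(x,y\right)$ for all $w\in G$ (replace the maximizing index $u$ by $w^{-1}u$), and $\left\vert \left\langle wx,y\right\rangle\right\vert=\left\vert \left\langle x,w^{-1}y\right\rangle\right\vert\le\rho\left(x,y\right)$; also $s_{v}w\in G$. Evaluating the recursion with $wx$ in place of $x$, taking absolute values, and invoking the inductive bound $\left\vert K_{\kappa,n-1}\left(wx,y\right)\right\vert\le\rho\left(x,y\right)^{n-1}/\left(n-1\right)!$ gives, using $\kappa\left(v\right)\ge0$ to bound the reflection sum by $\gamma_{\kappa}A_{n}$,
$$\left(n+\gamma_{\kappa}\right)\left\vert K_{\kappa,n}\left(wx,y\right)\right\vert\le\frac{\rho\left(x,y\right)^{n}}{\left(n-1\right)!}+\gamma_{\kappa}A_{n}.$$
Taking the maximum over $w\in G$ on the left and cancelling the finite quantity $\gamma_{\kappa}A_{n}$ yields $nA_{n}\le\rho\left(x,y\right)^{n}/\left(n-1\right)!$, hence $\left\vert K_{\kappa,n}\left(x,y\right)\right\vert\le A_{n}\le\rho\left(x,y\right)^{n}/n!$. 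Finally, $\rho$ is a maximum of finitely many continuous functions, hence continuous, hence bounded by some $M$ on any given compact subset of $\mathbb{R}^{d}\times\mathbb{R}^{d}$; then $\sum_{n}\left\vert K_{\kappa,n}\right\vert\le\sum_{n}M^{n}/n!=e^{M}$ there, so $K_{\kappa}$ converges uniformly and absolutely on compacta, and summing the pointwise bound gives $\left\vert K_{\kappa}\left(x,y\right)\right\vert\le\sum_{n}\rho\left(x,y\right)^{n}/n!=e^{\rho\left(x,y\right)}$.

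The step I expect to require the most care is the derivation of the recursion together with the realization that the hypothesis $\kappa\ge0$ is precisely what lets the degree-$n$ contributions $\gamma_{\kappa}A_{n}$ cancel instead of accumulating; once that is in place, the remainder is bookkeeping with the triangle inequality and the $G$-invariance of $\rho$.
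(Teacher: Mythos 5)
Your proof is correct. The paper itself gives no argument for Theorem \ref{c4Knbound}; it simply cites Dunkl's original construction \cite{Du2}, and the argument you have reconstructed --- applying $\sum_i y_i\mathcal{D}_i^{(y)}$ to $K_{\kappa,n}(x,y)$, splitting it via the Euler-plus-reflections identity from the proof of Lemma \ref{c4sumxD} to get the recursion $(n+\gamma_\kappa)K_{\kappa,n}(x,y)-\sum_{v\in R_+}\kappa(v)K_{\kappa,n}(s_vx,y)=\langle x,y\rangle K_{\kappa,n-1}(x,y)$, and then inducting on the orbit maximum $A_n=\max_{w\in G}|K_{\kappa,n}(wx,y)|$ so that the $\gamma_\kappa A_n$ terms cancel --- is essentially the proof given there. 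All the ingredients you invoke (parts 3) and 5) of Theorem \ref{c4KVprop}, and $\kappa\geq 0\Rightarrow\kappa\in\Lambda^{reg}$ so that $V_\kappa$ and $K_{\kappa,n}$ are defined) are available in the text, and the passage from the pointwise bound to uniform convergence on compacta is routine.
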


\begin{theorem}
\label{c4Vxpos}Suppose $\kappa\geq0$ and $x\in\mathbb{R}^{d}$ then there
exists a Baire probability measure $\mu_{x}$ with $spt\left(  \mu_{x}\right)
\subset co\left\{  wx:w\in G\right\}  $ (the convex hull) such that
$V_{\kappa}p\left(  x\right)  =\int_{\mathbb{R}^{d}}pd\mu_{x}$ for each
$p\in\Pi^{d}$.
\end{theorem}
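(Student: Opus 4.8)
The plan is to obtain $\mu_{x}$ from the Riesz representation theorem, the one serious input being the positivity of $V_{\kappa}$ for $\kappa\geq0$ (this is R\"{o}sler's theorem). Fix $x\in\mathbb{R}^{d}$ and put $C:=co\{wx:w\in G\}$, a compact convex polytope. Define the linear functional $L$ on $\Pi^{d}$ by $L(p):=V_{\kappa}p(x)$; since $V_{\kappa}^{0}1=1$ one has $V_{\kappa}1=1$, so $L(1)=1$. It suffices to show that $L$ is \emph{positive relative to} $C$, that is, $p|_{C}\geq0$ implies $L(p)\geq0$. Indeed, granting this, any real $p$ with $m\leq p\leq M$ on $C$ satisfies $m=L(m)\leq L(p)\leq L(M)=M$, hence $|L(p)|\leq\sup_{C}|p|$; thus $L(p)$ depends only on $p|_{C}$ and defines a positive unital functional on the subalgebra of $C(C)$ of polynomial restrictions, which is dense by Stone--Weierstrass. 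Its extension to $C(C)$ is represented by a Baire probability measure $\mu_{x}$ on $C$, and $V_{\kappa}p(x)=\int p\,d\mu_{x}$ for every $p\in\Pi^{d}$; this yields both the measure and $\mathrm{spt}(\mu_{x})\subset C$ at once.

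Before attacking positivity I would record what Theorem~\ref{c4Knbound} already delivers. For $t\in\mathbb{R}^{d}$ the polynomial $q(y)=\langle t,y\rangle^{n}$ satisfies $V_{\kappa}q=n!\,K_{\kappa,n}(t,\cdot)$, hence $V_{\kappa}q(x)=n!\,K_{\kappa,n}(t,x)=n!\,K_{\kappa,n}(x,t)$, and Theorem~\ref{c4Knbound} gives
\[
|V_{\kappa}q(x)|\leq\rho(t,x)^{n}=\max_{w\in G}|\langle t,wx\rangle|^{n}=\sup_{z\in C}|\langle t,z\rangle|^{n}=\sup_{C}|q|,
\]
where the third equality uses convexity of $z\mapsto|\langle t,z\rangle|$ over the polytope $C$. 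Summing, $V_{\kappa}(e^{\langle t,\cdot\rangle})(x)=K_{\kappa}(t,x)$ with $|K_{\kappa}(t,x)|\leq e^{\rho(t,x)}$. So $L$ is \emph{a priori} contractive for the $C(C)$-norm on the linear span of the exponentials $e^{\langle t,\cdot\rangle}$; this already prescribes the moments any representing measure must carry and localizes it to $C$. Only the sign is missing.

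Proving $p|_{C}\geq0\Rightarrow V_{\kappa}p(x)\geq0$ is R\"{o}sler's theorem, and I expect it to be the whole difficulty. One natural route is through the Dunkl heat semigroup. For $\kappa\geq0$ the difference--differential operator $\Delta_{\kappa}$ satisfies the positive maximum principle: if $u$ is smooth with a nonnegative maximum at an interior $x_{0}$ lying on no wall $v^{\bot}$, then $\nabla u(x_{0})=0$ annihilates the term $\langle\nabla u,v\rangle/\langle x,v\rangle$, the factor $u(x_{0})-u(s_{v}x_{0})\geq0$ against the coefficient $-\kappa(v)|v|^{2}/\langle x_{0},v\rangle^{2}\leq0$ makes each reflection term $\leq0$, and $\Delta u(x_{0})\leq0$, so $\Delta_{\kappa}u(x_{0})\leq0$ (points on a wall follow by continuity). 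By Hille--Yosida--Ray theory $\Delta_{\kappa}$ generates a positivity-preserving conservative contraction (Markov) semigroup $(H_{t})_{t\geq0}$; iterating part~(1) of Theorem~\ref{c4KVprop} gives $\Delta_{\kappa}V_{\kappa}=V_{\kappa}\Delta$ on $\Pi^{d}$, hence $H_{t}\,V_{\kappa}=V_{\kappa}\,e^{t\Delta}$, and therefore $V_{\kappa}p=H_{t}\big(V_{\kappa}e^{-t\Delta}p\big)$ for every $t>0$; letting $t\to0^{+}$, where $H_{t}$ concentrates near the orbit $Gx\subset C$ and $V_{\kappa}e^{-t\Delta}p\to V_{\kappa}p$, forces $V_{\kappa}p(x)\geq0$ once $p\geq0$ on $C$. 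The delicate points---a rigorous positive maximum principle for the \emph{nonlocal} $\Delta_{\kappa}$, and control of the $t\to0$ limit on the noncompact $\mathbb{R}^{d}$ with polynomially growing data (equivalently, that $K_{\kappa}(\cdot,x)$ is the Laplace transform of a positive measure)---are exactly where the real work lies. A naive induction on degree, using the identity $V_{\kappa}\big((n+\sum_{v\in R_{+}}\kappa(v)(1-s_{v}))p\big)=\sum_{i=1}^{d}x_{i}\,V_{\kappa}(\partial_{i}p)$ that follows from the Euler-operator formula, breaks down precisely because the multipliers $x_{i}$ have indefinite sign; one may instead simply invoke R\"{o}sler's positivity theorem at this step.
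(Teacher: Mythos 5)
There is a genuine gap here, and it is worth noting first that the paper itself offers no proof of this theorem: it is stated as a cited result, with the attribution ``Later R\"osler \cite{Ro2} proved Theorem \ref{c4Vxpos}'' appearing immediately afterward. Your outer layer --- reduce to the claim that $p|_{C}\geq 0$ implies $V_{\kappa}p(x)\geq 0$, then get $\mu_{x}$ from Stone--Weierstrass and the Riesz representation theorem --- is sound and standard (modulo one side remark: term-by-term bounds $|L(q)|\leq\sup_{C}|q|$ for $q=\langle t,\cdot\rangle^{n}$ do \emph{not} yield contractivity of $L$ on the linear span of such $q$, so that paragraph establishes less than it claims). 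But the entire mathematical content of the theorem is the positivity statement, and you do not prove it. Your stated fallback, ``one may instead simply invoke R\"osler's positivity theorem at this step,'' is circular: that theorem \emph{is} the statement being proved.

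The heat-semigroup sketch does not close the gap either. Even granting a positivity-preserving Markov semigroup $(H_{t})$ generated by $\Delta_{\kappa}$ (itself delicate: the positive maximum principle is easy, but the Hille--Yosida--Ray range condition is where the work lies, and in the literature the positivity of the Dunkl heat kernel is usually \emph{deduced from} $K_{\kappa}>0$, i.e.\ from R\"osler's theorem --- another circularity risk), the identity $V_{\kappa}p=H_{t}\bigl(V_{\kappa}e^{-t\Delta}p\bigr)$ yields no sign information: the inner function $V_{\kappa}e^{-t\Delta}p$ has no known sign (indeed its positivity is again the statement to be proved), so applying the positive operator $H_{t}$ to it proves nothing, and the limit $t\to 0^{+}$ merely returns $V_{\kappa}p(x)$ tautologically; $H_{t}\delta_{x}$ concentrates at $x$, not ``near the orbit $Gx$.'' R\"osler's actual argument is an inductive, purely algebraic positivity argument on a suitable cone of polynomials, using the intertwining relation and a degree-lowering semigroup; nothing in your proposal substitutes for it. As written, the proposal is a correct reduction plus an acknowledged placeholder where the theorem itself should be.
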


\begin{corollary}
If $x,y\in\mathbb{R}^{d}$ then $K_{\kappa}\left(  x,y\right)  >0$ and
$\left\vert K_{\kappa}\left(  x,\mathrm{i}y\right)  \right\vert \leq1$.
\end{corollary}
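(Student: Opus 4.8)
The plan is to turn $K_{\kappa}(x,y)$ into an honest integral of the ordinary exponential against the representing measure $\mu_{y}$ supplied by Theorem \ref{c4Vxpos}, and then read off both inequalities directly. Throughout, $\kappa\geq0$ is in force (as in Theorems \ref{c4Knbound} and \ref{c4Vxpos}).

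First I would fix $x,y\in\mathbb{R}^{d}$ and apply Theorem \ref{c4Vxpos} at the point $y$ to get a Baire probability measure $\mu_{y}$ with $spt\left(\mu_{y}\right)\subset co\left\{wy:w\in G\right\}$ and $V_{\kappa}q(y)=\int_{\mathbb{R}^{d}}q\,d\mu_{y}$ for every $q\in\Pi^{d}$. Taking $q(u)=\langle x,u\rangle^{n}$ and using the definition of $K_{\kappa,n}$ gives
\[
K_{\kappa,n}(x,y)=\tfrac{1}{n!}V_{\kappa}^{(y)}\langle x,y\rangle^{n}=\tfrac{1}{n!}\int_{\mathbb{R}^{d}}\langle x,u\rangle^{n}\,d\mu_{y}(u).
\]
If $u=\sum_{w}c_{w}\,wy$ lies in the convex hull (so $c_{w}\geq0$, $\sum_{w}c_{w}=1$), then $|\langle x,u\rangle|\leq\max_{w}|\langle x,wy\rangle|=\rho(x,y)$, hence $\sum_{n\geq0}\frac{1}{n!}|\langle x,u\rangle|^{n}\leq e^{\rho(x,y)}$ on $spt(\mu_{y})$; this bound is a constant, hence $\mu_{y}$-integrable. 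So the sum over $n$ and the integral against $\mu_{y}$ may be interchanged (Tonelli for the absolute values, then dominated convergence), and
\[
K_{\kappa}(x,y)=\sum_{n=0}^{\infty}K_{\kappa,n}(x,y)=\int_{\mathbb{R}^{d}}e^{\langle x,u\rangle}\,d\mu_{y}(u).
\]
Since the integrand is strictly positive on $\mathbb{R}^{d}$ and $\mu_{y}$ is a probability measure with compact support, $K_{\kappa}(x,y)\geq\min_{u\in spt(\mu_{y})}e^{\langle x,u\rangle}>0$, which is the first claim.

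For the second claim I would use that $K_{\kappa,n}$ is homogeneous of degree $n$ in its second argument, so $K_{\kappa,n}(x,\mathrm{i}y)=\mathrm{i}^{n}K_{\kappa,n}(x,y)$ and in particular $|K_{\kappa,n}(x,\mathrm{i}y)|=|K_{\kappa,n}(x,y)|\leq\rho(x,y)^{n}/n!$ by Theorem \ref{c4Knbound}; thus $\sum_{n}K_{\kappa,n}(x,\mathrm{i}y)$ converges absolutely and defines $K_{\kappa}(x,\mathrm{i}y)$. Repeating the interchange, now with dominating terms $|(\mathrm{i}\langle x,u\rangle)^{n}/n!|=|\langle x,u\rangle|^{n}/n!$, yields
\[
K_{\kappa}(x,\mathrm{i}y)=\sum_{n=0}^{\infty}\frac{\mathrm{i}^{n}}{n!}\int_{\mathbb{R}^{d}}\langle x,u\rangle^{n}\,d\mu_{y}(u)=\int_{\mathbb{R}^{d}}e^{\mathrm{i}\langle x,u\rangle}\,d\mu_{y}(u).
\]
Because $u$ ranges over $\mathbb{R}^{d}$ we have $|e^{\mathrm{i}\langle x,u\rangle}|=1$, and since $\mu_{y}$ is a probability measure, $|K_{\kappa}(x,\mathrm{i}y)|\leq\int_{\mathbb{R}^{d}}1\,d\mu_{y}(u)=1$.

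The one point that needs care — the main obstacle — is the legitimacy of swapping $\sum_{n}$ with $\int d\mu_{y}$; this is exactly where one must exploit that $\mu_{y}$ is supported in the (compact) orbit hull, so that $|\langle x,u\rangle|\leq\rho(x,y)$ uniformly there and the exponential series provides an honest dominating function. Everything else is bookkeeping with the homogeneity of $K_{\kappa,n}$ and the termwise bound of Theorem \ref{c4Knbound}.
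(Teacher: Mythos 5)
Your proof is correct and follows essentially the same route as the paper: invoke Theorem \ref{c4Vxpos} to represent $V_{\kappa}$ by the probability measure $\mu_{y}$, interchange sum and integral to obtain $K_{\kappa}(x,y)=\int e^{\langle x,u\rangle}\,d\mu_{y}(u)$ and $K_{\kappa}(x,\mathrm{i}y)=\int e^{\mathrm{i}\langle x,u\rangle}\,d\mu_{y}(u)$, and read off the two conclusions. The only difference is that you spell out the domination argument justifying the interchange (via compact support in the orbit hull and the bound $\rho(x,y)$), where the paper simply cites Fubini's theorem.
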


\begin{proof}
By Fubini's theorem summation and integration can be interchanged in
$K_{\kappa}\left(  x,y\right)  =\sum\limits_{n=0}^{\infty}\frac{1}%
{n!}V_{\kappa}^{\left(  y\right)  }\left\langle x,y\right\rangle ^{n}%
=\int_{\mathbb{R}^{d}}e^{\left\langle x,z\right\rangle }d\mu_{y}\left(
z\right)  $. By homogeneity $K_{\kappa}(x,\mathrm{i}y)=\sum_{n=0}^{\infty
}\mathrm{i}^{n}K_{\kappa,n}(x,y)=\int_{\mathbb{R}^{d}}e^{\mathrm{i}\langle
x,z\rangle}\allowbreak\,d\mu_{y}\left(  z\right)  $.
\end{proof}

Theorem \ref{c4Knbound} was shown in Dunkl's paper \cite{Du2} constructing
$K_{\kappa}$, which is now called the \emph{Dunkl kernel}. Later R\"{o}sler
\cite{Ro2} proved Theorem \ref{c4Vxpos}. The inequality $\left\vert K_{\kappa
}\left(  x,\mathrm{i}y\right)  \right\vert \leq1$ will be used in the later
section on the Dunkl transform. There is a mean-value type result for
$V_{\kappa}$:

\begin{proposition}
Suppose $p\in\Pi^{d}$ then%
\[
\frac{c_{\kappa}}{\left(  2\pi\right)  ^{d/2}}\int_{\mathbb{R}^{d}}V_{\kappa
}p\left(  x\right)  w_{\kappa}\left(  x\right)  e^{-\left\vert x\right\vert
^{2}/2}dx=\frac{1}{\left(  2\pi\right)  ^{d/2}}\int_{\mathbb{R}^{d}}p\left(
x\right)  e^{-\left\vert x\right\vert ^{2}/2}dx.
\]

\end{proposition}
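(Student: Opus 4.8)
The plan is to rewrite the left-hand side in terms of the abstract pairing $\langle\cdot,\cdot\rangle_{\kappa}$ and reduce everything to evaluation at the origin. Since $\kappa\geq0$ is implicit here (the weight $w_{\kappa}$ and constant $c_{\kappa}$ require it), Definition \ref{c4Gauss2} shows the left-hand side is exactly $\langle V_{\kappa}p,1\rangle_{2}$, and by Theorem \ref{c4Gint} this equals $\langle V_{\kappa}p,1\rangle_{g}$. By the definition of the Gaussian form and $\Delta_{\kappa}1=0$ (so $e^{\Delta_{\kappa}/2}1=1$) we get $\langle V_{\kappa}p,1\rangle_{g}=\langle e^{\Delta_{\kappa}/2}V_{\kappa}p,\,1\rangle_{\kappa}$. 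For any polynomial $r$ one has $\langle r,1\rangle_{\kappa}=\langle 1,r\rangle_{\kappa}=r(0)$, using symmetry of the form and part (1) of Theorem \ref{c4kform} (the positive-degree homogeneous parts of $r$ pair to zero with the degree-$0$ element $1$, and $\langle 1,1\rangle_{\kappa}=1$). Hence the left-hand side equals $\bigl(e^{\Delta_{\kappa}/2}V_{\kappa}p\bigr)(0)$.

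The key step is the intertwining identity $\Delta_{\kappa}V_{\kappa}=V_{\kappa}\Delta$, where $\Delta$ is the ordinary Laplacian. This follows from part (1) of Theorem \ref{c4KVprop}: since $\mathcal{D}_{i}V_{\kappa}=V_{\kappa}\tfrac{\partial}{\partial x_{i}}$ for each $i$, we have $\mathcal{D}_{i}^{2}V_{\kappa}=\mathcal{D}_{i}V_{\kappa}\tfrac{\partial}{\partial x_{i}}=V_{\kappa}\tfrac{\partial^{2}}{\partial x_{i}^{2}}$; summing over $i$ and using $\Delta_{\kappa}=\sum_{i=1}^{d}\mathcal{D}_{i}^{2}$ (Theorem \ref{c4Lapmt}) gives the claim. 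Iterating, $\Delta_{\kappa}^{j}V_{\kappa}=V_{\kappa}\Delta^{j}$ for all $j\in\mathbb{N}_{0}$, and since both exponential series terminate when applied to any fixed polynomial, $e^{\Delta_{\kappa}/2}V_{\kappa}=V_{\kappa}e^{\Delta/2}$ as operators on $\Pi^{d}$. Therefore the left-hand side equals $\bigl(V_{\kappa}e^{\Delta/2}p\bigr)(0)$.

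To finish, observe that $V_{\kappa}$ preserves the degree of a homogeneous polynomial and $V_{\kappa}1=1$ (because $V_{\kappa}^{0}1=1$); writing an arbitrary polynomial $q$ as a sum of homogeneous parts, the parts of positive degree map under $V_{\kappa}$ to polynomials vanishing at $0$ while the constant part is fixed, so $(V_{\kappa}q)(0)=q(0)$. Applying this with $q=e^{\Delta/2}p$ shows the left-hand side equals $\bigl(e^{\Delta/2}p\bigr)(0)$. Finally $\bigl(e^{\Delta/2}p\bigr)(0)=\tfrac{1}{(2\pi)^{d/2}}\int_{\mathbb{R}^{d}}p(x)e^{-|x|^{2}/2}\,dx$, the classical Gaussian moment identity: both sides vanish unless every component of the exponent is even, in which case, writing $\beta_{i}=2\gamma_{i}$, each equals $\prod_{i=1}^{d}(2\gamma_{i})!/(2^{\gamma_{i}}\gamma_{i}!)$; equivalently this is the $\kappa=0$ instance of the chain of identities just used (with $V_{0}=I$, $\Delta_{0}=\Delta$, $w_{0}\equiv1$, $c_{0}=1$). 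This is the right-hand side.

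The only genuine obstacle is the intertwining identity $\Delta_{\kappa}V_{\kappa}=V_{\kappa}\Delta$; once it is established, the statement collapses to the triviality that a polynomial and its $V_{\kappa}$-image share the same value at the origin. Minor points to keep honest: all the exponential operators act as finite sums on polynomials, so no convergence issue arises, and the first reduction invokes Theorem \ref{c4Gint}, hence $\kappa\geq0$, which is already built into the statement.
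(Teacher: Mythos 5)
Your proposal is correct and follows essentially the same route as the paper: rewrite the left side as $\langle V_{\kappa}p,1\rangle_{2}=\langle V_{\kappa}p,1\rangle_{g}$ via Theorem \ref{c4Gint}, commute $e^{\Delta_{\kappa}/2}$ past $V_{\kappa}$ using the intertwining identity $\Delta_{\kappa}V_{\kappa}=V_{\kappa}\Delta$ (which, as you note, is immediate from Theorem \ref{c4KVprop}(1) and $\Delta_{\kappa}=\sum_{i}\mathcal{D}_{i}^{2}$), and reduce to the $\kappa=0$ Gaussian pairing $\langle e^{\Delta/2}p,1\rangle_{0}$. You simply make explicit the steps the paper leaves terse (evaluation at the origin, $(V_{\kappa}q)(0)=q(0)$, and the classical Gaussian moment identity), all of which are sound.
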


\begin{proof}
By Theorem \ref{c4Gint} the left side equals $\left\langle e^{\Delta_{\kappa
}/2}V_{\kappa}p,e^{\Delta_{\kappa}/2}1\right\rangle _{\kappa}=\left\langle
V_{\kappa}e^{\Delta/2}p,1\right\rangle _{\kappa}=\left\langle e^{\Delta
/2}p,1\right\rangle _{0}$ which equals the right side (the subscript $0$
indicates $\kappa=0)$.
\end{proof}

\begin{corollary}
If $f\in C\left(  \left\{  x\in\mathbb{R}^{d}:\left\vert x\right\vert
\leq1\right\}  \right)  $ and $\kappa>0$ then%
\[
c_{\kappa,S}\int_{S_{d-1}}\left(  V_{\kappa}f\right)  w_{\kappa}d\omega
=2\frac{\Gamma\left(  \gamma_{\kappa}+\frac{d}{2}\right)  }{\Gamma\left(
\gamma_{\kappa}\right)  \Gamma\left(  \frac{d}{2}\right)  }\int_{\left\vert
x\right\vert \leq1}f\left(  x\right)  \left(  1-\left\vert x\right\vert
^{2}\right)  ^{\gamma_{\kappa}-1}dx.
\]

\end{corollary}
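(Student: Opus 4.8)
The plan is to prove the identity first when $f$ is a homogeneous polynomial and then extend to arbitrary $f\in C(\{|x|\le1\})$ by a continuity/density argument. For the polynomial case the two engines are already available: the mean-value identity of the preceding Proposition, $c_\kappa(2\pi)^{-d/2}\int_{\mathbb{R}^d}(V_\kappa p)w_\kappa e^{-|x|^2/2}\,dx=(2\pi)^{-d/2}\int_{\mathbb{R}^d}p\,e^{-|x|^2/2}\,dx$, and the integral formula recorded earlier in this section: for a positively homogeneous $g$ of degree $\beta$ with $\beta+d>1$, $(2\pi)^{-d/2}\int_{\mathbb{R}^d}g\,e^{-|x|^2/2}\,dx=2^{\beta/2}\Gamma(\tfrac{\beta+d}{2})\Gamma(\tfrac d2)^{-1}\int_{S_{d-1}}g\,d\omega$.

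First I would fix $p\in\Pi_n^d$. By Theorem~\ref{c4KVprop}\,(2), $V_\kappa p\in\Pi_n^d$, so $(V_\kappa p)\,w_\kappa$ is positively homogeneous of degree $n+2\gamma_\kappa$ and the Gaussian factor makes all integrals converge. Applying the homogeneous integral formula to $(V_\kappa p)w_\kappa$ on the left of the mean-value identity and to $p$ on its right converts that identity into
\[
c_\kappa\,2^{\gamma_\kappa}\,\Gamma\bigl(\tfrac{n+d}{2}+\gamma_\kappa\bigr)\int_{S_{d-1}}(V_\kappa p)\,w_\kappa\,d\omega=\Gamma\bigl(\tfrac{n+d}{2}\bigr)\int_{S_{d-1}}p\,d\omega .
\]
Multiplying by $c_{\kappa,S}/c_\kappa=2^{\gamma_\kappa}\Gamma(\gamma_\kappa+\tfrac d2)\Gamma(\tfrac d2)^{-1}$ (the normalization of $w_\kappa\,d\omega$ recorded above) expresses $c_{\kappa,S}\int_{S_{d-1}}(V_\kappa p)w_\kappa\,d\omega$ as an explicit ratio of Gamma values times $\int_{S_{d-1}}p\,d\omega$. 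For the right side of the asserted formula I would pass to polar coordinates using the volume identity from the text, use $p(ru)=r^np(u)$, and evaluate the radial factor $\int_0^1 r^{n+d-1}(1-r^2)^{\gamma_\kappa-1}\,dr=\tfrac12 B(\tfrac{n+d}{2},\gamma_\kappa)$; this again rewrites $\int_{|x|\le1}p\,(1-|x|^2)^{\gamma_\kappa-1}\,dx$ as a constant times $\int_{S_{d-1}}p\,d\omega$. Writing $B(\tfrac{n+d}{2},\gamma_\kappa)=\Gamma(\tfrac{n+d}{2})\Gamma(\gamma_\kappa)/\Gamma(\tfrac{n+d}{2}+\gamma_\kappa)$ and comparing the two expressions, the $n$-dependent Gamma factors cancel, and after the bookkeeping of the powers of $2$ and $\pi$ the polynomial case follows; by linearity it then holds for every polynomial $p$.

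To pass to general $f\in C(\{|x|\le1\})$, I would invoke Theorem~\ref{c4Vxpos}: for $|x|\le1$ one has $V_\kappa h(x)=\int h\,d\mu_x$ with $\mu_x$ a probability measure supported in $\mathrm{co}\{wx:w\in G\}\subset\{|z|\le1\}$, so $|V_\kappa h(x)|\le\sup_{|z|\le1}|h(z)|$ there. Hence if polynomials $p_k\to f$ uniformly on the ball (Weierstrass), the functions $V_\kappa p_k$ form a uniform Cauchy sequence on $\{|x|\le1\}$, so $V_\kappa f:=\lim_k V_\kappa p_k$ is a well-defined continuous function on $\{|x|\le1\}$, independent of the approximating sequence. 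Both sides of the corollary are then bounded linear functionals of $f$: the left is dominated by $c_{\kappa,S}\,\|f\|_\infty\int_{S_{d-1}}w_\kappa\,d\omega=\|f\|_\infty$, and the right by a constant multiple of $\|f\|_\infty\int_{|x|\le1}(1-|x|^2)^{\gamma_\kappa-1}\,dx$, which is finite since $\gamma_\kappa>0$. As they agree on the dense subspace of polynomials, they agree on all of $C(\{|x|\le1\})$.

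The genuine content sits in the extension step rather than in the polynomial identity. It is precisely the positivity of $V_\kappa$ (Theorem~\ref{c4Vxpos}, R\"osler's theorem) that makes $V_\kappa$ a supremum-norm contraction on the ball; without it $V_\kappa$ is unbounded in the sup norm on $\Pi^d$ and no limiting argument is available, so this is the indispensable ingredient. One must also check that $w_\kappa$ is $\omega$-integrable on $S_{d-1}$ — it is bounded there for $\kappa\ge0$ — so that the left-hand functional is defined. The rest — tracking the $2^{\gamma_\kappa}$, $\pi^{d/2}$ and Beta/Gamma factors and verifying that the resulting constant is independent of $n$ — is routine but must be done carefully.
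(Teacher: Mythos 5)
Your proposal is correct and follows essentially the same route as the paper: establish the identity for a homogeneous polynomial by combining the mean-value Proposition with the polar-coordinate/homogeneous-function integral formula, then extend to $C(\{|x|\le1\})$ via R\"osler's positivity theorem (Theorem \ref{c4Vxpos}), which makes $V_{\kappa}$ a sup-norm contraction on the ball. One remark: if you actually carry out the bookkeeping you advertise, the test $f\equiv1$ (left side $=1$) shows the constant produced by your computation is $\Gamma(\gamma_{\kappa}+\tfrac{d}{2})/\bigl(\pi^{d/2}\Gamma(\gamma_{\kappa})\bigr)$, i.e.\ the constant printed in the Corollary is off by the factor $2\pi^{d/2}/\Gamma(\tfrac{d}{2})$ (the unnormalized surface area); this is a typo in the statement, not a defect of your argument, but you should not assert that the factors cancel to give the printed constant without checking.
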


This is proven by applying the Proposition to a homogeneous polynomial and
then evaluating the two integrals in spherical polar coordinates (see
Proposition \ref{c4sphint}; only the even degree case need be computed). The
formula extends to continuous functions by Theorem \ref{c4Vxpos}. This result
is due to Xu \cite{Xu1}.

Opdam \cite{Op1} defined a Bessel function in the multiplicity function
context. His approach was through $G$-invariant differential operators
commuting with $\Delta+2\sum_{v\in R_{+}}\kappa\left(  v\right)
\frac{\left\langle v,\nabla\right\rangle }{\left\langle x,v\right\rangle }$
(the differential part of $\Delta_{\kappa}$). The result is that $J_{G}\left(
x,y\right)  =\frac{1}{\#G}\sum_{w\in G}K_{\kappa}\left(  wx,y\right)  $ is
real-entire in $x,y$ for each $\kappa\in\Lambda^{reg}$, and $J_{G}\left(
x,y\right)  $ is meromorphic in $\kappa$ with poles on $\Lambda^{0}$. In the
paper \cite[Rem. 6.12]{Op1} Opdam observed that $J_{G}$ can be interpreted as
a spherical function on a Euclidean symmetric space, when $G$ is a Weyl group
and $\kappa$ takes values in certain discrete sets.

The properties of $K_{\kappa,n}$ described in Theorem \ref{c4KVprop} extend to
$K_{\kappa}$:\newline1) $\mathcal{D}_{i}^{\left(  y\right)  }K_{\kappa}\left(
x,y\right)  =x_{i}K_{\kappa}\left(  x,y\right)  $ for $1\leq i\leq d$%
,\newline2) $\left\langle K_{\kappa}\left(  x,\cdot\right)  ,p\right\rangle
_{\kappa}=p\left(  x\right)  $ for $p\in\Pi^{d}$;\newline3) $K_{\kappa}\left(
x,y\right)  =K_{\kappa}\left(  y,x\right)  $ for all $x,y\in\mathbb{R}^{d}$ ,
and $K_{\kappa}\left(  wx,wy\right)  =K_{\kappa}\left(  x,y\right)  $ for each
$w\in G$.

Property (3) shows that $J_{G}\left(  wx,y\right)  =J_{G}\left(  x,wy\right)
=J_{G}\left(  x,y\right)  $ for all $w\in G$.

\subsection{Example: $Z_{2}$}

The objects described above can be stated explicitly for the smallest
reflection group. We use $\kappa$ for the value of the multiplicity function
and suppress the subscript \textquotedblleft1\textquotedblright\ $($for
example, in $x_{1}$). Throughout $n\in\mathbb{N}_{0}$.\newline1)
$\mathcal{D}p\left(  x\right)  =\partial_{x}p\left(  x\right)  +\kappa
\frac{p\left(  x\right)  -p\left(  -x\right)  }{x}$,\newline2) $\left\langle
x^{2n},x^{2n}\right\rangle _{\kappa}=2^{2n}n!\left(  \kappa+\frac{1}%
{2}\right)  _{n}$, $\left\langle x^{2n+1},x^{2n+1}\right\rangle _{\kappa
}=2^{2n+1}n!\left(  \kappa+\frac{1}{2}\right)  _{n+1}$;\newline3) $V_{\kappa
}^{0}x^{2n}=\frac{\left(  \kappa+\frac{1}{2}\right)  _{n}}{\left(  \frac{1}%
{2}\right)  _{n}}x^{2n}$, $V_{\kappa}^{0}x^{2n+1}=\frac{\left(  \kappa
+\frac{1}{2}\right)  _{n+1}}{\left(  \frac{1}{2}\right)  _{n+1}}x^{2n+1}%
$;\newline4) $\Lambda^{0}=\left\{  -\frac{1}{2},-\frac{3}{2},\ldots\right\}
$; if $\kappa=-\frac{1}{2}-n$ then $\mathrm{Rad}\left(  \kappa\right)
=\mathrm{span}\left\{  x^{m}:m\geq2n+1\right\}  ;$\newline5) $\left\langle
p,q\right\rangle _{g}=2^{-\kappa-1/2}\Gamma\left(  \kappa+\frac{1}{2}\right)
^{-1}\int_{-\infty}^{\infty}p\left(  x\right)  q\left(  x\right)  \left\vert
x\right\vert ^{2\kappa}e^{-x^{2}/2}dx$, valid for $\kappa>-\frac{1}{2}%
$;\newline6) $e^{-\mathcal{D}^{2}/2}x^{2n}=\left(  -1\right)  ^{m}n!2^{n}%
L_{n}^{\left(  \kappa-1/2\right)  }\left(  \frac{x^{2}}{2}\right)  $,
$e^{-\mathcal{D}^{2}/2}x^{2n+1}=\left(  -1\right)  ^{n}n!2^{n}xL_{n}^{\left(
\kappa+1/2\right)  }\left(  \frac{x^{2}}{2}\right)  $;\newline7) $V_{\kappa
}p\left(  x\right)  =\frac{\Gamma\left(  \frac{1}{2}\right)  \Gamma\left(
\kappa\right)  }{\Gamma\left(  \kappa+\frac{1}{2}\right)  }\int_{-1}%
^{1}p\left(  xt\right)  \left(  1+t\right)  ^{\kappa}\left(  1-t\right)
^{\kappa-1}dt$, for $\kappa>0$;\newline8) $K_{\kappa}\left(  x,y\right)
=\sum\limits_{n=0}^{\infty}\frac{1}{n!\left(  \kappa+\frac{1}{2}\right)  _{n}%
}\left(  \frac{xy}{2}\right)  ^{2n}+\frac{xy}{1+2\kappa}\sum\limits_{n=0}%
^{\infty}\frac{1}{\left(  \kappa+\frac{3}{2}\right)  _{n}~n!}\left(  \frac
{xy}{2}\right)  ^{2n}$.

Part (7) can be shown by substituting $p\left(  x\right)  =x^{n}$ in the
integral and using part (3). In part (8) note that the modified Bessel
function $I_{\kappa-1/2}\left(  x\right)  =\frac{\left(  x/2\right)
^{\kappa-1/2}}{\Gamma\left(  \kappa+1/2\right)  }\sum\limits_{n=0}^{\infty
}\frac{1}{n!\left(  \kappa+\frac{1}{2}\right)  _{n}}\left(  \frac{x}%
{2}\right)  ^{2n}$. This partly explains the use of \textquotedblleft
Bessel\textquotedblright\ in naming $J_{G}\left(  x,y\right)  $.

\subsection{Asymptotic properties of the Dunkl kernel}

De Jeu and R\"{o}sler \cite{dJR} proved the following results concerning the
limiting behavior of $K_{\kappa}\left(  x,y\right)  $ as $x\rightarrow\infty$,
for $\kappa\geq0$. The variable $x$ is restricted to the inside of a chamber.
The \emph{fundamental chamber} corresponds to the positive root system $R_{+}$
, also let $\delta>0$, then define%
\begin{align*}
\mathcal{C}  &  :=\left\{  x\in\mathbb{R}^{d}:\left\langle x,v\right\rangle
>0~\forall v\in R_{+}\right\}  ,\\
\mathcal{C}_{\delta}  &  :=\left\{  x\in\mathbb{R}^{d}:\left\langle
x,v\right\rangle >\delta\left\vert x\right\vert ~\forall v\in R_{+}\right\}  .
\end{align*}
The walls of $\mathcal{C}$ are the hyperplanes $v^{\bot}$ for the simple roots
$v$.

\begin{theorem}
For each $w\in G$ there is a constant $A_{w}$ such that for all $y\in
\mathcal{C}$%
\[
\lim_{x\in\mathcal{C}_{\delta},\left\vert x\right\vert \rightarrow\infty}%
\sqrt{w_{\kappa}\left(  x\right)  w_{\kappa}\left(  y\right)  }e^{-\mathrm{i}%
\left\langle x,wy\right\rangle }K_{\kappa}\left(  \mathrm{i}x,wy\right)
=A_{w}.
\]

\end{theorem}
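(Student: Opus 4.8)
The statement is an asymptotic for the Dunkl kernel on the imaginary axis, restricted to a fixed chamber, and it should follow from the known asymptotics of Opdam's (hypergeometric/Bessel) special functions together with the formula $J_G(x,y)=\frac{1}{\#G}\sum_{w\in G}K_\kappa(wx,y)$. The key structural fact is that $J_G$ is $G$-invariant in each argument while $K_\kappa$ is not, so one expects $K_\kappa(\mathrm{i}x,y)$ to decompose into $\#G$ oscillatory pieces, one attached to each Weyl chamber, each with its own plane-wave phase $e^{\mathrm{i}\langle x,wy\rangle}$ and amplitude $\sim A_w/\sqrt{w_\kappa(x)w_\kappa(y)}$. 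First I would recall the Harish-Chandra/Opdam type expansion: on the chamber $\mathcal C$ the Bessel function $J_G(\mathrm{i}x,y)$ has, as $|x|\to\infty$ with $x\in\mathcal C_\delta$, an expansion $J_G(\mathrm{i}x,y)\sim\sum_{w\in G}\tilde c(wy)\,\frac{e^{\mathrm{i}\langle x,wy\rangle}}{\sqrt{w_\kappa(x)w_\kappa(y)}}$, where $\tilde c$ is the (Opdam) $c$-function; this is the content of the de Jeu–R\"osler analysis and is the substantive input.

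Then the plan is to upgrade this from the symmetrized kernel $J_G$ to $K_\kappa$ itself. The idea: $K_\kappa(\mathrm{i}x,wy)$ for the $\#G$ choices of $w$ are $\#G$ functions of $x$ which, after being averaged, give $\#G\cdot J_G(\mathrm{i}x,y)$; and more generally averaging $K_\kappa(\mathrm{i}x,\sigma y)$ against characters of appropriate (double) cosets, or simply using that the map $w\mapsto K_\kappa(\mathrm{i}x,wy)$ together with the known PDE system $\mathcal D_i^{(y)}K_\kappa(\mathrm{i}x,y)=\mathrm{i}x_iK_\kappa(\mathrm{i}x,y)$ pins down each $K_\kappa(\mathrm{i}x,wy)$ as a specific linear combination of the $\#G$ basic solutions $e^{\mathrm{i}\langle x,\sigma y\rangle}/\sqrt{w_\kappa(x)w_\kappa(y)}\,(1+o(1))$ of that system. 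Concretely: the functions $x\mapsto K_\kappa(\mathrm{i}x,\sigma y)$, $\sigma\in G$, span the solution space of the holonomic system $\mathcal D_i^{(y)}u=\mathrm{i}x_iu$ (pushed to the $x$-side), whose characteristic variety gives exactly the phases $\langle x,\sigma y\rangle$; matching leading terms forces $\sqrt{w_\kappa(x)w_\kappa(y)}\,e^{-\mathrm{i}\langle x,wy\rangle}K_\kappa(\mathrm{i}x,wy)\to A_w$ for a constant $A_w$ depending only on $w$ (and $\kappa$, $R$), not on $y\in\mathcal C$.

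To make the $y$-independence transparent I would use the $G$-equivariance $K_\kappa(wx,wy)=K_\kappa(x,y)$ from Theorem \ref{c4KVprop}(5): this reduces the general $w$ to a single representative computation and, combined with the fact that the limit for $J_G$ has coefficient equal to the $c$-function evaluated at $wy$, shows that in the asymptotics of $K_\kappa(\mathrm{i}x,wy)$ the chamber of $wy$ is fixed (namely $w\mathcal C$) so the relevant $c$-function value is constant up to the explicit normalization already absorbed into $\sqrt{w_\kappa(y)}$. The boundedness estimate $|K_\kappa(x,\mathrm{i}y)|\le1$ and the positivity/integral representation (Theorem \ref{c4Vxpos}) are used to justify that these limits exist and are finite, and to control error terms uniformly on $\mathcal C_\delta$.

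The main obstacle is the second step: transferring the asymptotics from the manifestly $G$-invariant $J_G$ (where Harish-Chandra's method applies cleanly via the radial part of $\Delta_\kappa$) to the non-invariant kernel $K_\kappa$. One cannot simply ``unsymmetrize'' $\frac{1}{\#G}\sum_w K_\kappa(wx,y)$ because different summands could a priori interfere; the resolution is that each $K_\kappa(\mathrm{i}x,wy)$ individually solves the full first-order system $\mathcal D_i^{(y)}K=\mathrm{i}x_iK$, which is much more rigid than the single second-order equation $J_G$ satisfies, and this rigidity (a $\#G$-dimensional solution space with distinct exponential growth directions, no two of which coincide when $x,y$ are regular) forces each $K_\kappa(\mathrm{i}x,wy)$ to pick out exactly one plane wave in the regime $x\in\mathcal C_\delta$. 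Verifying that the corresponding coefficient is a genuine constant $A_w$ — rather than a function of $y$ — is where the $G$-equivariance of $K_\kappa$ and the explicit homogeneity $K_{\kappa,n}(tx,y)=t^nK_{\kappa,n}(x,y)$ do the essential bookkeeping; this is the step I would write out most carefully, leaning on the de Jeu–R\"osler computation of the constant for the invariant case and then using equivariance to deduce the general $A_w$.
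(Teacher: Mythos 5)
The paper offers no proof of this theorem --- it is quoted from de Jeu and R\"osler \cite{dJR} --- so your proposal must stand on its own, and as written it has two genuine gaps. First, the ``substantive input'' you invoke, namely the Harish--Chandra-type expansion $J_G(\mathrm{i}x,y)\sim\sum_{w}\tilde c(wy)\,e^{\mathrm{i}\langle x,wy\rangle}/\sqrt{w_{\kappa}(x)w_{\kappa}(y)}$, is essentially equivalent in difficulty to the theorem itself, and you give no argument for it. Since $J_G$ is \emph{defined} as the group average of $K_{\kappa}$, deducing the asymptotics of $K_{\kappa}$ from those of $J_G$ risks circularity unless the expansion of $J_G$ is established independently (via Opdam's hypergeometric system and a genuine Harish--Chandra series construction with convergence control in the rational setting, or via a Laplace-type analysis of the heat kernel). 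That work is the bulk of \cite{dJR} and is absent from your plan.

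Second, and more seriously, the ``rigidity'' step is asserted rather than proved, and the assertion it rests on is not correct as stated. The functions $x\mapsto K_{\kappa}(\mathrm{i}x,\sigma y)$, $\sigma\in G$, do \emph{not} all solve one first-order system: by equivariance, $K_{\kappa}(\mathrm{i}x,\sigma y)$ is the joint eigenfunction of the $\mathcal{D}_i^{(x)}$ for the eigenvalue $\mathrm{i}\sigma y$, a different system for each $\sigma$. What they share is only the $G$-invariant part, $q(\mathcal{D}^{(x)})u=q(\mathrm{i}y)u$ for invariant $q$, whose solution space has dimension $\#G$ for regular $y$; membership in that space does not by itself force a single plane wave to survive. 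A priori $\sqrt{w_{\kappa}(x)w_{\kappa}(y)}\,K_{\kappa}(\mathrm{i}x,wy)$ could be asymptotic to $\sum_{\sigma}B_{\sigma}e^{\mathrm{i}\langle x,\sigma y\rangle}$ with several nonzero $B_{\sigma}$, all of the same (modulus-one) order, in which case the stated limit would not exist; the theorem is precisely the claim that $B_{\sigma}=0$ for $\sigma\neq w$. Already for $G=\mathbb{Z}_2$ this amounts to a nontrivial exact cancellation between the $e^{-\mathrm{i}xy}$ halves of the asymptotic cosines of $J_{\kappa-1/2}$ and $J_{\kappa+1/2}$, which happens only because the two amplitudes match. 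Your plan supplies no mechanism for this cancellation. The dominant-exponential argument you gesture at works cleanly only for the companion result in this subsection ($\operatorname{Re}z\to\infty$, where $e^{z\langle x,y\rangle}$ genuinely dominates $e^{z\langle x,\sigma y\rangle}$ for $x,y$ in the same chamber); on the imaginary axis all phases have equal magnitude. The equivariance bookkeeping reducing general $w$ to $w=1$ and giving the $y$-independence of $A_w$ is fine once the core asymptotic is in hand, but that core is missing.
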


Recall $\mathbb{\gamma}_{\kappa}=\sum_{v\in R_{+}}\kappa\left(  v\right)  $.
For $z\in\mathbb{C}$ with $\operatorname{Re}z\geq0$ let $z^{\mathbb{\gamma
}_{\kappa}}$ denote the principal branch ($1^{\mathbb{\gamma}_{\kappa}}=1$).
(See Definition \ref{c4Gauss2} for $c_{\kappa}$.)

\begin{theorem}
\label{c4asymp}The constant $A_{1}=\left(  \mathrm{i}^{\mathbb{\gamma}%
_{\kappa}}c_{\kappa}\right)  ^{-1}$ and for $x,y\in\mathcal{C}$%
\[
\lim_{\operatorname{Re}z\geq0,z\rightarrow\infty}z^{\mathbb{\gamma}_{\kappa}%
}e^{-z\left\langle x,y\right\rangle }K_{\kappa}\left(  zx,y\right)  =\frac
{1}{c_{\kappa}\sqrt{w_{\kappa}\left(  x\right)  w_{\kappa}\left(  y\right)  }%
}.
\]

\end{theorem}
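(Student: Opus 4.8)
\emph{Overall plan and Step 1 (reduction to the imaginary axis).} The idea is to deduce the $z$‑statement from the chamber‑asymptotics theorem stated just above (the one producing the constants $A_w$), and then to pin down $A_1$ by inserting the resulting pointwise asymptotic into a Gaussian integral identity. First fix $x,y\in\mathcal{C}$. For $t\geq 1$ the point $tx$ lies in $\mathcal{C}_\delta$ for a suitable $\delta=\delta(x)>0$, so the previous theorem with $w=1$ gives $\sqrt{w_\kappa(tx)\,w_\kappa(y)}\;e^{-\mathrm{i}t\langle x,y\rangle}K_\kappa(\mathrm{i}tx,y)\to A_1$ as $t\to\infty$. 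Since $w_\kappa$ is positively homogeneous of degree $2\gamma_\kappa$ we have $\sqrt{w_\kappa(tx)}=t^{\gamma_\kappa}\sqrt{w_\kappa(x)}$, so writing $z=\mathrm{i}t$ (principal branch, $z^{\gamma_\kappa}=\mathrm{i}^{\gamma_\kappa}t^{\gamma_\kappa}$) this becomes
\[
\lim_{t\to\infty}\;\mathrm{i}^{\gamma_\kappa}t^{\gamma_\kappa}\,e^{-\mathrm{i}t\langle x,y\rangle}K_\kappa(\mathrm{i}tx,y)=\frac{\mathrm{i}^{\gamma_\kappa}A_1}{\sqrt{w_\kappa(x)w_\kappa(y)}}.
\]
Because $K_\kappa$ has real Taylor coefficients and $\gamma_\kappa,\langle x,y\rangle$ are real, conjugation yields the analogous limit along $z\in-\mathrm{i}\mathbb{R}_{>0}$. (If $\kappa=0$ then $K_0(x,y)=e^{\langle x,y\rangle}$, $c_0=1$, $\gamma_0=0$, and the assertion is trivial; so assume $\gamma_\kappa>0$.)

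\emph{Step 2 (passage to the full half-plane).} By Theorem \ref{c4Vxpos}, $K_\kappa(x,y)=\int_{\mathbb{R}^d}e^{\langle x,\zeta\rangle}\,d\mu_y(\zeta)$ with $\mu_y$ a probability measure on $\mathrm{co}\{wy:w\in G\}$; for $x,y$ in the closed fundamental chamber one has $\langle x,\zeta\rangle\leq\langle x,y\rangle$ on that polytope, with equality only at the vertex $\zeta=y$ when $x,y\in\mathcal{C}$ (so in particular $\rho(x,y)=\langle x,y\rangle$). Put $\varphi(\zeta):=\langle x,y-\zeta\rangle\geq 0$ and let $\nu:=\varphi_*\mu_y$, a probability measure on $[0,\varphi_{\max}]$ with $0\in\mathrm{spt}\,\nu$ and $\nu(\{0\})=0$. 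Analytic continuation of the (compactly supported) integral gives $e^{-z\langle x,y\rangle}K_\kappa(zx,y)=\int_0^{\varphi_{\max}}e^{-z\epsilon}\,d\nu(\epsilon)=\widehat\nu(z)$, whence $\bigl|e^{-z\langle x,y\rangle}K_\kappa(zx,y)\bigr|\leq 1$ for $\operatorname{Re}z\geq 0$. From Step 1 we know $z^{\gamma_\kappa}\widehat\nu(z)$ has a finite limit along $\mathrm{i}\mathbb{R}$; a Tauberian argument using positivity of $\nu$ — or, what de Jeu and R\"osler likely use, the structure of $\mu_y$ near its vertex read off from the differential–difference equations $\mathcal{D}_i^{(y)}K_\kappa(x,y)=x_iK_\kappa(x,y)$ — shows $d\nu(\epsilon)=g(\epsilon)\epsilon^{\gamma_\kappa-1}d\epsilon$ near $0$ with $g$ continuous and $g(0)>0$. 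A Watson's lemma for such compactly supported densities, uniform up to $|\arg z|=\tfrac\pi2$ (the boundary case being the classical stationary‑phase estimate with an algebraic endpoint singularity), then gives
\[
z^{\gamma_\kappa}e^{-z\langle x,y\rangle}K_\kappa(zx,y)=z^{\gamma_\kappa}\widehat\nu(z)\longrightarrow \Gamma(\gamma_\kappa)\,g(0)\qquad(\operatorname{Re}z\geq 0,\ z\to\infty),
\]
and comparison with Step 1 forces this common value to equal $\mathrm{i}^{\gamma_\kappa}A_1/\sqrt{w_\kappa(x)w_\kappa(y)}$ (so in particular $\mathrm{i}^{\gamma_\kappa}A_1\in\mathbb{R}$).

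\emph{Step 3 (evaluation of the constant).} Applying the mean‑value property $\tfrac{c_\kappa}{(2\pi)^{d/2}}\int_{\mathbb{R}^d}V_\kappa p\,(x)w_\kappa(x)e^{-|x|^2/2}dx=(2\pi)^{-d/2}\int_{\mathbb{R}^d}p(x)e^{-|x|^2/2}dx$ to $p=e^{\langle x,\cdot\rangle}$ (termwise, then by dominated convergence) gives $\tfrac{c_\kappa}{(2\pi)^{d/2}}\int_{\mathbb{R}^d}K_\kappa(x,z)w_\kappa(z)e^{-|z|^2/2}dz=e^{|x|^2/2}$, and replacing $x$ by $x/\sqrt\lambda$ and rescaling $z\mapsto\sqrt\lambda z$ (using $K_{\kappa,n}(x/\sqrt\lambda,z)=K_{\kappa,n}(x,z/\sqrt\lambda)$ and $w_\kappa(\sqrt\lambda z)=\lambda^{\gamma_\kappa}w_\kappa(z)$) produces the Weierstrass‑type identity
\[
\frac{c_\kappa}{(2\pi)^{d/2}}\int_{\mathbb{R}^d}K_\kappa(x,z)\,w_\kappa(z)\,e^{-\lambda|z|^2/2}\,dz=\lambda^{-\gamma_\kappa-d/2}\,e^{|x|^2/(2\lambda)}\qquad(\lambda>0).
\]
Let $\lambda\to 0^+$ and apply Laplace's method to the left side: for $x\in\mathcal{C}$ the integrand concentrates at the single interior maximizer $z=x/\lambda$, near which $w_\kappa$ is smooth and positive and $K_\kappa(x,z)e^{-\langle x,z\rangle}$ is governed by the pointwise asymptotic of Step 2; one finds the left side is asymptotic to $c_\kappa\cdot(\text{limit of Step 2})\cdot\lambda^{-\gamma_\kappa-d/2}e^{|x|^2/(2\lambda)}$, and matching with the right side yields
\[
\lim_{\operatorname{Re}z\geq 0,\ z\to\infty}z^{\gamma_\kappa}e^{-z\langle x,y\rangle}K_\kappa(zx,y)=\frac{1}{c_\kappa\sqrt{w_\kappa(x)w_\kappa(y)}};
\]
specializing to $z=\mathrm{i}t$ and comparing with Step 1 gives $A_1=(\mathrm{i}^{\gamma_\kappa}c_\kappa)^{-1}$.

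\emph{Main obstacle.} Step 1 is soft; the real work is Step 2, because a merely bounded holomorphic function on the right half-plane with equal limits along the two boundary rays need not converge as $z\to\infty$ inside the open sector — a soft Phragm\'en--Lindel\"of argument does not suffice. One genuinely needs the fine behavior of R\"osler's measure $\mu_y$ at its vertex $y$ (equivalently, a Tauberian/Watson analysis of $\widehat\nu$ that is uniform up to $\arg z=\pm\tfrac\pi2$). Step 3 is then routine Laplace‑method bookkeeping, requiring only the mild care that the asymptotic constant being isolated also occurs inside the integral.
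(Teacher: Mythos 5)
The paper itself gives no proof of this theorem --- it is quoted from de Jeu and R\"osler \cite{dJR} --- so your proposal can only be judged on its own merits, and as you yourself candidly admit in your closing paragraph, it has a genuine gap exactly where the theorem lives. Step 1 is fine (the reduction to the boundary rays via homogeneity of $w_{\kappa}$, and the observation that the value of $A_{1}$ is forced once the half-plane limit is known). Step 3 is plausible bookkeeping, modulo the need for the Step 2 limit to hold locally uniformly in the direction of the argument so that it can be inserted into the Laplace integral, and modulo checking that only the critical point in the fundamental chamber contributes at top order. But Step 2 asserts, without proof, that the push-forward $\nu=\varphi_{*}\mu_{y}$ has a density $g(\epsilon)\epsilon^{\gamma_{\kappa}-1}$ near $0$ with $g$ continuous and $g(0)>0$. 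This is a strong local regularity statement about R\"osler's representing measure at the vertex of the orbit polytope; it is not established anywhere in this chapter, it does not follow from Theorem \ref{c4Vxpos} (which gives only existence and support), and it is --- as far as is known --- harder than the theorem you are trying to prove. Even the preliminary claims $\nu(\{0\})=0$ and $0\in\mathrm{spt}\,\nu$ are unproved (the first is what would rule out a constant, non-decaying term in $\widehat\nu(z)$; the second is essentially equivalent to the nonvanishing of the limit).

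The two escape routes you offer for Step 2 both fail as stated. A Tauberian argument needs the behavior of $\widehat\nu$ on the positive real axis (Hardy--Littlewood--Karamata applies to Laplace transforms), but Step 1 only supplies the limit on the imaginary axis; Tauberian theorems for Fourier transforms of positive measures yield at best one-sided mass estimates $\nu([0,1/t])$, never a continuous density, and certainly not the constant $g(0)$. Conversely, even if the local density were granted, ``Watson's lemma uniform up to $|\arg z|=\pi/2$'' is not routine: on the boundary rays the integral $\int_{0}^{\varphi_{\max}}e^{-\mathrm{i}t\epsilon}\,d\nu(\epsilon)$ picks up stationary-phase contributions of order $t^{-1}$ from any endpoint or interior irregularity of $\nu$ away from $0$, and these dominate $t^{-\gamma_{\kappa}}$ whenever $\gamma_{\kappa}>1$; so you would also need global regularity of $\mu_{y}$, which you do not have. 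In short, the architecture (boundary limit $+$ interior control $+$ normalization by a Gaussian identity) is reasonable, but the load-bearing middle step is missing, and the published proof does not obtain it by analyzing $\mu_{y}$; it works instead with Gaussian/heat-kernel integral representations of $K_{\kappa}$ of the kind you write down in Step 3, used as the engine of the asymptotics rather than merely to evaluate the constant at the end.
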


This limit is used in the context of a heat kernel.

\subsection{The heat kernel}

For functions defined on $\mathbb{R}^{d}\times\left(  0,\infty\right)  $ the
\emph{generalized heat equation} is%
\[
\Delta_{\kappa}u\left(  x,t\right)  -\frac{\partial}{\partial t}u\left(
x,t\right)  =0.
\]
The associated boundary-value problem is to find the solution $u$ such that
$u\left(  x,0\right)  =f\left(  x\right)  $ where $f$ is a given bounded
continuous function on $\mathbb{R}^{d}$.

\begin{definition}
For $x,y\in\mathbb{R}^{d}$ and $t>0$ the \emph{generalized heat kernel}
$\Gamma_{\kappa}$ is given by%
\[
\Gamma_{\kappa}\left(  t,x,y\right)  :=\frac{c_{\kappa}}{\left(  2t\right)
^{\mathbb{\gamma}_{\kappa}+d/2}\left(  2\pi\right)  ^{d/2}}\exp\left(
-\frac{\left\vert x\right\vert ^{2}+\left\vert y\right\vert ^{2}}{4t}\right)
K_{\kappa}\left(  \frac{x}{\sqrt{2t}},\frac{y}{\sqrt{2t}}\right)  .
\]

\end{definition}

\begin{definition}
For a bounded continuous function $f$ on $\mathbb{R}^{d}$ and $t>0$ let%
\[
H\left(  t\right)  f\left(  x\right)  :=\int_{\mathbb{R}^{d}}f\left(
y\right)  \Gamma_{\kappa}\left(  t,x,y\right)  w_{\kappa}\left(  y\right)
dy.
\]

\end{definition}

\begin{theorem}
Suppose $f\in\mathcal{S}\left(  \mathbb{R}^{d}\right)  $ (the Schwartz space)
then $H\left(  t\right)  f\in\mathcal{S}\left(  \mathbb{R}^{d}\right)  $ for
all $t>0$, $H\left(  s\right)  H\left(  t\right)  f=H\left(  s+t\right)  f$
for all $s,t>0$, and $\lim\limits_{t\rightarrow0_{+}}\sup\limits_{x}\left\vert
H\left(  t\right)  f\left(  x\right)  -f\left(  x\right)  \right\vert =0$.
Furthermore the function $u\left(  x,t\right)  =H\left(  t\right)  f\left(
x\right)  $ for $t>0$, $=f\left(  x\right)  $ for $t=0$, solves the
boundary-value problem.
\end{theorem}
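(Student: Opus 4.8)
The plan is to show that on the Dunkl transform side $H(t)$ acts as the multiplier $e^{-t|\xi|^{2}}$, after which all four assertions follow by standard Fourier-analytic arguments; the technical heart is the identification of the transform of the kernel $\Gamma_{\kappa}$. Let $\mathcal{F}_{\kappa}$ be the Dunkl transform, $\mathcal{F}_{\kappa}f(\xi)=\frac{c_{\kappa}}{(2\pi)^{d/2}}\int_{\mathbb{R}^{d}}f(y)K_{\kappa}(y,-\mathrm{i}\xi)w_{\kappa}(y)\,dy$, with inverse $\mathcal{F}_{\kappa}^{-1}g(x)=\frac{c_{\kappa}}{(2\pi)^{d/2}}\int_{\mathbb{R}^{d}}g(\xi)K_{\kappa}(x,\mathrm{i}\xi)w_{\kappa}(\xi)\,d\xi$. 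Its basic properties (it is a homeomorphism of $\mathcal{S}(\mathbb{R}^{d})$ and, by skew-symmetry of $\mathcal{D}_{j}$ for $w_{\kappa}(x)\,dx$ together with $\mathcal{D}_{j}^{(x)}K_{\kappa}(x,y)=y_{j}K_{\kappa}(x,y)$ from Theorem \ref{c4KVprop}, it intertwines $\mathcal{D}_{j}$ with multiplication by $\mathrm{i}\xi_{j}$, hence $\Delta_{\kappa}$ with $-|\xi|^{2}$) belong to the next section; we also use $|K_{\kappa}(x,\mathrm{i}\xi)|\le 1$ for $\kappa\ge 0$. The key lemma is that the formula defining $\Gamma_{\kappa}$ is precisely the kernel for which
\[
H(t)f=\mathcal{F}_{\kappa}^{-1}\big(e^{-t|\cdot|^{2}}\mathcal{F}_{\kappa}f\big),\qquad t>0,
\]
equivalently $\Gamma_{\kappa}(t,x,y)=\frac{c_{\kappa}^{2}}{(2\pi)^{d}}\int_{\mathbb{R}^{d}}e^{-t|\xi|^{2}}K_{\kappa}(x,\mathrm{i}\xi)K_{\kappa}(y,-\mathrm{i}\xi)w_{\kappa}(\xi)\,d\xi$. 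I would prove this by rescaling $\xi\mapsto\xi/\sqrt{2t}$, using the bihomogeneity $K_{\kappa}(\lambda a,\mu b)=K_{\kappa}(\lambda\mu a,b)$ and $K_{\kappa}(a,\mathrm{i}\xi)=K_{\kappa}(\mathrm{i}a,\xi)$, and invoking the \emph{Gaussian product formula}
\[
\frac{c_{\kappa}}{(2\pi)^{d/2}}\int_{\mathbb{R}^{d}}K_{\kappa}(a,z)K_{\kappa}(b,z)\,w_{\kappa}(z)\,e^{-|z|^{2}/2}\,dz=e^{(|a|^{2}+|b|^{2})/2}K_{\kappa}(a,b)
\]
(for real $a,b$; both sides are entire in $(a,b)$, so it persists for complex arguments with $|a|^{2}$ read as $\langle a,a\rangle$). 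That product formula is in turn $\langle K_{\kappa}(a,\cdot),K_{\kappa}(b,\cdot)\rangle_{g}$ evaluated two ways: via $\langle\cdot,\cdot\rangle_{g}=\langle\cdot,\cdot\rangle_{2}$ (Definition \ref{c4Gauss2}, Theorem \ref{c4Gint}), and via $e^{\Delta_{\kappa}/2}K_{\kappa}(a,\cdot)=e^{|a|^{2}/2}K_{\kappa}(a,\cdot)$ (since $\mathcal{D}_{i}^{(y)}K_{\kappa,n}(a,y)=a_{i}K_{\kappa,n-1}(a,y)$ by Theorem \ref{c4KVprop}) combined with the reproducing relation $\langle K_{\kappa,n}(a,\cdot),p\rangle_{\kappa}=p(a)$ applied degree by degree; convergence of the $K_{\kappa}$-series is Theorem \ref{c4Knbound}. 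As an independent check of the heat equation alone, $\Delta_{\kappa}^{(x)}\Gamma_{\kappa}=\partial_{t}\Gamma_{\kappa}$ follows directly from the closed formula, using that $e^{-|x|^{2}/(4t)}$ is $G$-invariant so that $\mathcal{D}_{i}(fg)=(\partial_{i}f)g+f\mathcal{D}_{i}g$, together with $\mathcal{D}_{i}^{(x)}K_{\kappa}(x,y)=y_{i}K_{\kappa}(x,y)$ and Proposition \ref{c4Dmt} for $\mathcal{D}_{i}m_{\varepsilon_{i}}$: the two sides collapse to the same expression, the reflection contributions $-t^{-1}\sum_{v}\kappa(v)s_{v}^{(x)}\Gamma_{\kappa}$ occurring identically on each.

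Granting the lemma, the four claims are routine. As $e^{-t|\cdot|^{2}}$ is a Schwartz multiplier, $e^{-t|\cdot|^{2}}\mathcal{F}_{\kappa}f\in\mathcal{S}(\mathbb{R}^{d})$, hence $H(t)f\in\mathcal{S}(\mathbb{R}^{d})$; and $H(s)H(t)f=\mathcal{F}_{\kappa}^{-1}(e^{-s|\cdot|^{2}}e^{-t|\cdot|^{2}}\mathcal{F}_{\kappa}f)=H(s+t)f$. For the initial value, $H(t)f-f=\mathcal{F}_{\kappa}^{-1}\big((e^{-t|\cdot|^{2}}-1)\mathcal{F}_{\kappa}f\big)$, and since $|(e^{-t|\xi|^{2}}-1)\mathcal{F}_{\kappa}f(\xi)|\le\min(1,t|\xi|^{2})\,|\mathcal{F}_{\kappa}f(\xi)|$ is dominated by the $w_{\kappa}$-integrable $|\mathcal{F}_{\kappa}f|$ and tends to $0$ pointwise, dominated convergence with $|K_{\kappa}(x,\mathrm{i}\xi)|\le 1$ gives $\sup_{x}|H(t)f(x)-f(x)|\to 0$. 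Finally, setting $u(x,t)=H(t)f(x)$ for $t>0$ and $u(x,0)=f(x)$: differentiating under the integral — legitimate for $t\ge t_{0}>0$ by the rapid decay of $e^{-t|\xi|^{2}}$, the decay of $\mathcal{F}_{\kappa}f$, and $|K_{\kappa}(x,\mathrm{i}\xi)|\le 1$ — yields $\partial_{t}u=\mathcal{F}_{\kappa}^{-1}(-|\cdot|^{2}e^{-t|\cdot|^{2}}\mathcal{F}_{\kappa}f)=\Delta_{\kappa}u$, while the uniform limit just proved is $u(\cdot,t)\to f$ as $t\to 0_{+}$; so $u$ solves the boundary-value problem. (The contractivity $\|H(t)f\|_{\infty}\le\|f\|_{\infty}$ used implicitly needs $K_{\kappa}>0$, hence $\Gamma_{\kappa}>0$, from the corollary of Theorem \ref{c4Vxpos}, and $\int\Gamma_{\kappa}(t,x,y)w_{\kappa}(y)\,dy=1$, which is the rescaled $a=b$ case $\frac{c_{\kappa}}{(2\pi)^{d/2}}\int K_{\kappa}(b,y)w_{\kappa}(y)e^{-|y|^{2}/2}dy=e^{|b|^{2}/2}$ of the product formula, itself also a consequence of the mean-value formula for $V_{\kappa}$ applied to $p=e^{\langle b,\cdot\rangle}$.)

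The main obstacle is the lemma — identifying $\mathcal{F}_{\kappa}[\Gamma_{\kappa}(t,x,\cdot)]$, equivalently proving the Gaussian product formula for $K_{\kappa}$ and carrying out the bookkeeping of normalizing constants and Gaussian exponents so that the argument of $K_{\kappa}$ comes out as $(x/\sqrt{2t},y/\sqrt{2t})$. (If one prefers to obtain the semigroup law from the Chapman--Kolmogorov identity $\int\Gamma_{\kappa}(s,x,z)\Gamma_{\kappa}(t,z,y)w_{\kappa}(z)\,dz=\Gamma_{\kappa}(s+t,x,y)$, the same product formula is used, now after rescaling $z\mapsto(2st/(s+t))^{1/2}z$, and one must check that the resulting scalar $\lambda\mu$ equals $1/(2(s+t))$.) The remaining, purely technical point is the justification of the interchanges of limit and differentiation with the integrals, for which the bounds $|K_{\kappa}(a,b)|\le e^{\rho(a,b)}$ (Theorem \ref{c4Knbound}) and $|K_{\kappa}(x,\mathrm{i}y)|\le 1$, combined with the Gaussian factors, suffice. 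It is worth noting that the transform route sidesteps a genuine difficulty: the elementary pointwise estimate for $\Gamma_{\kappa}$ only localizes the kernel near the orbit $Gx$, not near $x$ itself, so $H(t)f\to f$ is not a naive approximate-identity argument; the refined asymptotics of Theorem \ref{c4asymp} offer an alternative handle on this, but Plancherel is cleaner.
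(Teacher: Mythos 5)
Your proposal is sound; note, though, that the paper itself offers no proof of this theorem --- it simply attributes the results to R\"osler \cite{Ro1} --- so there is no in-text argument to compare against. What you have written is essentially the standard proof from the literature, and its load-bearing ingredient is exactly the one you isolate: the Gaussian product formula
\[
\frac{c_{\kappa}}{(2\pi)^{d/2}}\int_{\mathbb{R}^{d}}K_{\kappa}(a,z)K_{\kappa}(b,z)\,w_{\kappa}(z)\,e^{-|z|^{2}/2}\,dz=e^{(\langle a,a\rangle+\langle b,b\rangle)/2}K_{\kappa}(a,b),
\]
which goes back to Dunkl \cite{Du2} and is the engine of R\"osler's treatment of the heat semigroup. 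Your derivation of it --- evaluating $\left\langle K_{\kappa}(a,\cdot),K_{\kappa}(b,\cdot)\right\rangle _{g}$ once by Theorem \ref{c4Gint} and once via $e^{\Delta_{\kappa}/2}K_{\kappa}(a,\cdot)=e^{\langle a,a\rangle/2}K_{\kappa}(a,\cdot)$ together with the reproducing property of $K_{\kappa,n}$ --- is correct, with the caveat that both forms are defined only on polynomials, so the degree-by-degree computation and the interchange of summation with integration must be justified by the bounds of Theorem \ref{c4Knbound}; you flag this, and the bounds do suffice. The rescaling $\xi\mapsto z/\sqrt{2t}$ does reproduce the stated normalization of $\Gamma_{\kappa}$, including the factor $(2t)^{-\gamma_{\kappa}-d/2}$ coming from $w_{\kappa}(\xi)\,d\xi$. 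Two small points. First, the identity $\int\Gamma_{\kappa}(t,x,y)w_{\kappa}(y)\,dy=1$ is the $b=0$ (not $a=b$) specialization of the product formula, since $K_{\kappa}(0,y)=1$; the displayed formula you wrote is the correct one, only the label is off. Second, your argument leans on the fact that $\mathcal{F}_{\kappa}$ is a homeomorphism of $\mathcal{S}(\mathbb{R}^{d})$ and satisfies the inversion formula; within this paper those facts are themselves only cited (de Jeu \cite{dJ1}, and the $L^{2}$/inversion corollary of the Hermite-eigenfunction theorem), so your proof is conditional on them to the same extent the literature is. Your closing remark is well taken: the kernel $\Gamma_{\kappa}(t,x,\cdot)$ concentrates on the orbit $Gx$ rather than on $x$, so the uniform convergence $H(t)f\to f$ genuinely requires the transform (or an equivalent spectral) argument rather than a naive approximate-identity estimate, and your Plancherel-side dominated-convergence argument handles this correctly.
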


These results are due to R\"{o}sler \cite{Ro1}. Theorem \ref{c4asymp} implies%
\[
\lim_{t\rightarrow0_{+}}\frac{\sqrt{w_{\kappa}\left(  x\right)  w_{\kappa
}\left(  y\right)  }\Gamma_{\kappa}\left(  t,x,y\right)  }{\Gamma_{0}\left(
t,x,y\right)  }=1
\]
for all $x,y\in\mathcal{C}$.

There is an associated c\`{a}dl\`{a}g Markov process $X=\left(  X_{t}\right)
_{t\geq0}$ with infinitesimal generator $\frac{1}{2}\Delta_{\kappa}$. The
semigroup densities are
\[
p_{t}^{\left(  \kappa\right)  }\left(  x,y\right)  :=\Gamma_{\kappa}\left(
\frac{t}{2},x,y\right)  w_{\kappa}\left(  y\right)  .
\]
For further details see R\"{o}sler and Voit \cite{RoV}, Gallardo and Yor
\cite{GY}, and the monograph \cite{GRY}.

\section{The Dunkl transform}

The Dunkl kernel is used to define a generalization of the Fourier transform.
The Fourier integral kernel $e^{-\mathrm{i}\left\langle x,y\right\rangle }$ is
replaced by $K_{\kappa}\left(  x,-\mathrm{i}y\right)  w_{\kappa}\left(
x\right)  $. Throughout this section $\kappa\geq0$. Recall $\left\vert
K_{\kappa}\left(  x,-\mathrm{i}y\right)  \right\vert \leq1$ for all
$x,y\in\mathbb{R}^{d}$.

\begin{definition}
For $f\in L^{1}\left(  \mathbb{R}^{d},w_{\kappa}\left(  x\right)  dx\right)  $
let%
\[
\mathcal{F}f\left(  y\right)  :=\frac{c_{\kappa}}{\left(  2\pi\right)  ^{d/2}%
}\int_{\mathbb{R}^{d}}f\left(  x\right)  K_{\kappa}\left(  x,-\mathrm{i}%
y\right)  w_{\kappa}\left(  x\right)  dx.
\]

\end{definition}

By finding a set of eigenfunctions of $\mathcal{F}$ which is dense in
$L^{2}\left(  \mathbb{R}^{d},w_{\kappa}\left(  x\right)  dx\right)  $ (by
Hamburger's theorem) we show that $\mathcal{F}$ is an $L^{2}$-isometry, has
period $4$, and $\mathcal{F}x_{j}=\mathrm{i}\mathcal{D}_{j}\mathcal{F}$ for
$1\leq j\leq d$. Convergence arguments, mostly depending on the dominated
convergence theorem, are omitted, and appropriate smoothness restrictions on
functions are implicitly assumed.

\begin{theorem}
Let $f\left(  x\right)  =p\left(  x\right)  L_{m}^{\left(  \alpha\right)
}\left(  \left\vert x\right\vert ^{2}\right)  e^{-\left\vert x\right\vert
^{2}/2}$ where $m,n\in\mathbb{N}_{0}$, $\alpha=n+\frac{d}{2}+\mathbb{\gamma
}_{\kappa}-1$ and $p\in\mathcal{H}_{\kappa,n}$, then $\mathcal{F}f\left(
y\right)  =\left(  -\mathrm{i}\right)  ^{n+2m}f\left(  y\right)
,y\in\mathbb{R}^{d}$.
\end{theorem}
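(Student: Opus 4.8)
The plan is to prove, and then exploit, the following \emph{master formula}: for every homogeneous $q\in\Pi_{N}^{d}$,
\[
\mathcal{F}\bigl(q\,e^{-|\cdot|^{2}/2}\bigr)(y)=(-\mathrm{i})^{N}\bigl(e^{-\Delta_{\kappa}/2}q\bigr)(y)\,e^{-|y|^{2}/2}.
\]
Granting this, the theorem drops out. Since $\Delta_{\kappa}$ lowers degree by $2$, the operators $e^{t\Delta_{\kappa}}$ act finitely on $\Pi^{d}$ and satisfy $e^{\Delta_{\kappa}/4}e^{-\Delta_{\kappa}/2}=e^{-\Delta_{\kappa}/4}$ there. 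Applying $\mathcal{F}$ term by term to $\bigl(e^{-\Delta_{\kappa}/4}q\bigr)e^{-|\cdot|^{2}/2}=\sum_{j\ge0}\tfrac1{j!}(-\tfrac14)^{j}(\Delta_{\kappa}^{j}q)e^{-|\cdot|^{2}/2}$, invoking the master formula on each piece $\Delta_{\kappa}^{j}q\in\Pi_{N-2j}^{d}$ and using $(-\mathrm{i})^{N-2j}=(-\mathrm{i})^{N}(-1)^{j}$, I get $\mathcal{F}\bigl((e^{-\Delta_{\kappa}/4}q)e^{-|\cdot|^{2}/2}\bigr)=(-\mathrm{i})^{N}\bigl(e^{\Delta_{\kappa}/4}e^{-\Delta_{\kappa}/2}q\bigr)e^{-|\cdot|^{2}/2}=(-\mathrm{i})^{N}\bigl(e^{-\Delta_{\kappa}/4}q\bigr)e^{-|\cdot|^{2}/2}$. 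Now choose $q:=\frac{(-1)^{m}}{m!}|x|^{2m}p(x)\in\Pi_{n+2m}^{d}$; by \eqref{LaguerreF} with $s=-\tfrac14$ — whose Laguerre index is exactly $\alpha=\gamma_{\kappa}+\tfrac d2+n-1$ because $p\in\mathcal{H}_{\kappa,n}$, and $\alpha+1\ge\tfrac12$ is never a nonpositive integer for $\kappa\ge0$, so the formula applies — one has $e^{-\Delta_{\kappa}/4}q=L_{m}^{(\alpha)}(|x|^{2})p(x)$. Hence $f=\bigl(e^{-\Delta_{\kappa}/4}q\bigr)e^{-|\cdot|^{2}/2}$, and $\mathcal{F}f=(-\mathrm{i})^{n+2m}f$ as asserted.

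To establish the master formula I would expand the Dunkl kernel under the integral. Fix $y$. By homogeneity of $K_{\kappa,k}$ in its second argument, $K_{\kappa,k}(x,-\mathrm{i}y)=(-\mathrm{i})^{k}K_{\kappa,k}(x,y)$, and since $\kappa\ge0$, Theorem~\ref{c4Knbound} gives $|K_{\kappa,k}(x,y)|\le\frac1{k!}\rho(x,y)^{k}\le\frac1{k!}(|x|\,|y|)^{k}$. Thus $\sum_{k}|q(x)K_{\kappa,k}(x,-\mathrm{i}y)|\,w_{\kappa}(x)e^{-|x|^{2}/2}\le|q(x)|e^{|x|\,|y|}w_{\kappa}(x)e^{-|x|^{2}/2}\in L^{1}(\mathbb{R}^{d})$, so dominated convergence lets me integrate $K_{\kappa}(x,-\mathrm{i}y)=\sum_{k}K_{\kappa,k}(x,-\mathrm{i}y)$ term by term in Definition~\ref{c4Gauss2}; by Theorem~\ref{c4Gint} ($\langle\cdot,\cdot\rangle_{2}=\langle\cdot,\cdot\rangle_{g}$ on $\Pi^{d}$, extended $\mathbb{C}$-bilinearly) I obtain $\mathcal{F}\bigl(q\,e^{-|\cdot|^{2}/2}\bigr)(y)=\sum_{k\ge0}(-\mathrm{i})^{k}\langle q,K_{\kappa,k}(\cdot,y)\rangle_{g}$.

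The remaining step is to compute $\langle q,K_{\kappa,k}(\cdot,y)\rangle_{g}=\bigl\langle e^{\Delta_{\kappa}/2}q,\,e^{\Delta_{\kappa}/2}K_{\kappa,k}(\cdot,y)\bigr\rangle_{\kappa}$. From Theorem~\ref{c4KVprop}(3) together with $K_{\kappa,k}(x,y)=K_{\kappa,k}(y,x)$ one gets $\mathcal{D}_{j}^{(x)}K_{\kappa,k}(x,y)=y_{j}K_{\kappa,k-1}(x,y)$, hence (applying $\mathcal{D}_{j}^{(x)}$ twice and summing, then iterating) $\Delta_{\kappa}^{i}K_{\kappa,k}(\cdot,y)=|y|^{2i}K_{\kappa,k-2i}(\cdot,y)$ in $x$, so $e^{\Delta_{\kappa}/2}K_{\kappa,k}(\cdot,y)=\sum_{i\ge0}\frac{|y|^{2i}}{2^{i}i!}K_{\kappa,k-2i}(\cdot,y)$; also $e^{\Delta_{\kappa}/2}q=\sum_{\ell\ge0}\frac1{2^{\ell}\ell!}\Delta_{\kappa}^{\ell}q$ with $\Delta_{\kappa}^{\ell}q\in\Pi_{N-2\ell}^{d}$. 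Since $\langle\cdot,\cdot\rangle_{\kappa}$ annihilates unequal degrees (Theorem~\ref{c4kform}(1)) and $\langle P,K_{\kappa,M}(\cdot,y)\rangle_{\kappa}=P(y)$ for $P\in\Pi_{M}^{d}$ (Theorem~\ref{c4KVprop}(4) with the symmetries of $\langle\cdot,\cdot\rangle_{\kappa}$ and of $K_{\kappa,M}$), only the pair with $k-2i=N-2\ell$ survives, which forces $k\equiv N\pmod2$ and fixes $i$; writing $k=N-2\ell+2s$ and summing over $k$ at fixed $\ell$ yields the factor $\sum_{s\ge0}\frac{(-1)^{s}|y|^{2s}}{2^{s}s!}=e^{-|y|^{2}/2}$ (using $(-\mathrm{i})^{2s}=(-1)^{s}$). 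Collecting the $\ell$-sum then gives $\mathcal{F}\bigl(q\,e^{-|\cdot|^{2}/2}\bigr)(y)=e^{-|y|^{2}/2}\sum_{\ell\ge0}(-\mathrm{i})^{N-2\ell}\frac{(\Delta_{\kappa}^{\ell}q)(y)}{2^{\ell}\ell!}=(-\mathrm{i})^{N}\bigl(e^{-\Delta_{\kappa}/2}q\bigr)(y)\,e^{-|y|^{2}/2}$, the master formula; taking $q$ harmonic recovers the base case $m=0$, where $e^{-\Delta_{\kappa}/2}q=q$.

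I expect the main obstacle to be the bookkeeping of degrees in that final double sum: one must check that for each residue class of $k\bmod2$ the inner sum over $k$ is exactly the full exponential series, and that the constraint $k-2i=N-2\ell$ (with $i\ge0$ and $\Delta_{\kappa}^{\ell}q=0$ once $2\ell>N$) leaves precisely one admissible $i$ for each contributing $\ell$. Everything else is routine: all interchanges of sums and of sum with integral are covered by the single dominated-convergence estimate above, and beyond \eqref{LaguerreF} and Theorems~\ref{c4Gint}, \ref{c4kform}, \ref{c4KVprop}, \ref{c4Knbound} the only ingredients are the elementary identities $(-\mathrm{i})^{N-2j}=(-\mathrm{i})^{N}(-1)^{j}$ and $e^{\Delta_{\kappa}/4}e^{-\Delta_{\kappa}/2}=e^{-\Delta_{\kappa}/4}$ on polynomials.
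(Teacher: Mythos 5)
Your argument is correct and is in essence the paper's own proof: your ``master formula'' is exactly the paper's identity $\frac{c_{\kappa}}{(2\pi)^{d/2}}\int_{\mathbb{R}^{d}}e^{-\Delta_{\kappa}/2}q(x)\,K_{\kappa}(x,u)\,e^{-|x|^{2}/2}w_{\kappa}(x)\,dx=e^{\frac{1}{2}\sum_{j}u_{j}^{2}}q(u)$ specialized to $u=-\mathrm{i}y$, and both arguments rest on the same three ingredients (Theorem \ref{c4Gint} identifying the integral with the Gaussian form, the reproducing property of $K_{\kappa,n}$ for $\langle\cdot,\cdot\rangle_{\kappa}$, and equation (\ref{LaguerreF}) to recognize the Laguerre--harmonic product). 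The only difference is organizational: where the paper applies the eigenfunction relation $e^{-\Delta_{\kappa}^{(x)}/2}K_{\kappa}(x,u)=e^{-\frac{1}{2}\sum_{j}u_{j}^{2}}K_{\kappa}(x,u)$ to the full kernel and works with a general (non-homogeneous) polynomial $q$, you unpack the same fact degree by degree via $\Delta_{\kappa}^{(x)}K_{\kappa,k}(\cdot,y)=|y|^{2}K_{\kappa,k-2}(\cdot,y)$ and then restore generality by summing over homogeneous components.
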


\begin{proof}
Suppose $q$ is an arbitrary polynomial of degree $n$, for some $n$, and $N\geq
n$ then the formula $\left\langle q,\sum_{j=0}^{N}K_{\kappa,j}\left(
\cdot,u\right)  \right\rangle _{\kappa}=q\left(  u\right)  $ is valid for all
$u\in\mathbb{C}^{d}$, since it is a polynomial relation. By Theorem
\ref{c4Gint} and letting $N\rightarrow\infty$%
\[
\frac{c_{\kappa}}{\left(  2\pi\right)  ^{d/2}}\int_{\mathbb{R}^{d}}%
e^{-\Delta_{\kappa}/2}q\left(  x\right)  e^{-\Delta_{\kappa}^{\left(
x\right)  }/2}K_{\kappa}\left(  x,u\right)  e^{-\left\vert x\right\vert
^{2}/2}w_{\kappa}\left(  x\right)  dx=q\left(  u\right)  ,
\]
and $e^{-\Delta_{\kappa}^{\left(  x\right)  }/2}K_{\kappa}\left(  x,u\right)
=\exp\left(  -\frac{1}{2}\sum_{j=1}^{d}u_{j}^{2}\right)  K_{\kappa}\left(
x,u\right)  $; thus%
\[
\frac{c_{\kappa}}{\left(  2\pi\right)  ^{d/2}}\int_{\mathbb{R}^{d}}%
e^{-\Delta_{\kappa}/2}q\left(  x\right)  K_{\kappa}\left(  x,u\right)
e^{-\left\vert x\right\vert ^{2}/2}w_{\kappa}\left(  x\right)  dx=\exp\left(
\frac{1}{2}\sum_{j=1}^{d}u_{j}^{2}\right)  q\left(  u\right)  .
\]
Set $q\left(  x\right)  =e^{\Delta_{\kappa}/4}\left(  \left\vert x\right\vert
^{2m}p\left(  x\right)  \right)  =m!L_{m}^{\left(  \alpha\right)  }\left(
-\sum_{j=1}^{d}x_{j}^{2}\right)  p\left(  x\right)  $ (by equation
\ref{LaguerreF}), and set $u=-\mathrm{i}y$ in the previous equation. In the
left side we have $e^{-\Delta_{\kappa}/4}\left(  \left\vert x\right\vert
^{2m}p\left(  x\right)  \right)  e^{-\left\vert x\right\vert ^{2}/2}=\left(
-1\right)  ^{m}m!f\left(  x\right)  $; and the right side equals
$m!e^{-\left\vert y\right\vert ^{2}/2}L_{m}^{\left(  \alpha\right)  }\left(
\left\vert y^{2}\right\vert \right)  p\left(  -\mathrm{i}y\right)  =\left(
-\mathrm{i}\right)  ^{n}m!f\left(  y\right)  $.
\end{proof}

\begin{corollary}
If $f\in L^{2}\left(  \mathbb{R}^{d},w_{\kappa}\left(  x\right)  dx\right)  $
then $\int_{\mathbb{R}^{d}}\left\vert \mathcal{F}f\left(  y\right)
\right\vert ^{2}w_{\kappa}\left(  y\right)  dy=\int_{\mathbb{R}^{d}}\left\vert
f\left(  y\right)  \right\vert ^{2}w_{\kappa}\left(  x\right)  dx$, and
$\mathcal{F}^{2}f\left(  x\right)  =f\left(  -x\right)  $ for almost all
$x\in\mathbb{R}^{d}$.
\end{corollary}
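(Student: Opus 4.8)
The plan is to diagonalize $\mathcal{F}$ on an explicit orthogonal basis of $L^{2}(\mathbb{R}^{d},w_{\kappa}(x)\,dx)$ built from the preceding Theorem, and then read off both the Plancherel identity and the formula for $\mathcal{F}^{2}$. For each $n\in\mathbb{N}_{0}$ I would fix a basis $\{p_{n,j}\}_{j}$ of $\mathcal{H}_{\kappa,n}$ (real, since $\Delta_{\kappa}$ has real coefficients) that is orthogonal with respect to $\int_{S_{d-1}}(\cdot)(\cdot)w_{\kappa}\,d\omega$, and consider
\[
f_{n,m,j}(x):=p_{n,j}(x)\,L_{m}^{(\alpha_{n})}\!\left(|x|^{2}\right)e^{-|x|^{2}/2},\qquad \alpha_{n}=n+\tfrac{d}{2}+\gamma_{\kappa}-1 .
\]
Each $f_{n,m,j}$ lies in $L^{1}\cap L^{2}$ for the weight $w_{\kappa}(x)\,dx$ because of the Gaussian factor, so $\mathcal{F}f_{n,m,j}$ is given directly by the integral, and the previous Theorem yields $\mathcal{F}f_{n,m,j}=(-\mathrm{i})^{n+2m}f_{n,m,j}$, an eigenvalue of modulus $1$.

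Next I would check orthogonality and completeness of this family in $L^{2}(\mathbb{R}^{d},w_{\kappa}(x)\,dx)$. For orthogonality, the substitution $x\mapsto x/\sqrt{2}$ turns $e^{-|x|^{2}}$ into $e^{-|x|^{2}/2}$ and rescales $w_{\kappa}$ and each homogeneous polynomial factor by a constant, so $\int f_{n,m,j}f_{n',m',j'}w_{\kappa}\,dx$ becomes, up to a positive constant, the Gaussian form $\langle p_{n,j}L_{m}^{(\alpha_{n})}(|x|^{2}/2),\,p_{n',j'}L_{m'}^{(\alpha_{n'})}(|x|^{2}/2)\rangle_{g}$, which the earlier computations show vanishes unless $n=n'$ and $m=m'$, and then equals a multiple of $\langle p_{n,j},p_{n',j'}\rangle_{g}$, itself a multiple of the sphere integral; the choice of $\{p_{n,j}\}_{j}$ makes the whole family orthogonal. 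For completeness I would invoke the version of Hamburger's theorem quoted in the text: since the moment argument is insensitive to the constant in the Gaussian, $\Pi^{d}$ is dense in $L^{2}(\mathbb{R}^{d},w_{\kappa}(x)e^{-|x|^{2}}dx)$, hence $\Pi^{d}e^{-|x|^{2}/2}$ is dense in $L^{2}(\mathbb{R}^{d},w_{\kappa}(x)\,dx)$; and the decomposition $\Pi^{d}=\bigoplus_{n}\bigoplus_{m}|x|^{2m}\mathcal{H}_{\kappa,n}$ together with the fact that $\{L_{m}^{(\alpha_{n})}(|x|^{2})\}_{m\ge0}$ spans the same space of polynomials in $|x|^{2}$ as $\{|x|^{2m}\}_{m\ge0}$ shows every element of $\Pi^{d}e^{-|x|^{2}/2}$ is a finite combination of the $f_{n,m,j}$.

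With $\{f_{n,m,j}/\|f_{n,m,j}\|\}$ an orthonormal basis of eigenfunctions with unimodular eigenvalues, the Plancherel identity is formal: $\mathcal{F}$ acts on the dense span by linearity, $\|\mathcal{F}g\|^{2}=\sum_{k}|c_{k}|^{2}|\lambda_{k}|^{2}\|e_{k}\|^{2}=\|g\|^{2}$ for any finite combination $g=\sum_{k}c_{k}e_{k}$, so $\mathcal{F}$ extends to an isometry of $L^{2}(\mathbb{R}^{d},w_{\kappa}(x)\,dx)$ agreeing with the integral formula where the latter converges. For the last assertion, $\mathcal{F}^{2}f_{n,m,j}=(-\mathrm{i})^{2(n+2m)}f_{n,m,j}=(-1)^{n}f_{n,m,j}$; since $p_{n,j}$ is homogeneous of degree $n$ and $L_{m}^{(\alpha_{n})}(|x|^{2})$ and $e^{-|x|^{2}/2}$ are even, $(-1)^{n}f_{n,m,j}(x)=f_{n,m,j}(-x)$, so $\mathcal{F}^{2}g(x)=g(-x)$ on the dense span, and passing to the $L^{2}$ limit gives $\mathcal{F}^{2}f(x)=f(-x)$ for almost every $x$ (and, en passant, $\mathcal{F}^{4}=I$).

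The step I expect to require the most care is the completeness claim: pinning down the version of Hamburger's theorem for the measure $w_{\kappa}(x)e^{-|x|^{2}}dx$ — note the square of the Gaussian in the eigenfunctions — and verifying that the harmonic-times-Laguerre family is not merely orthogonal but genuinely spans a dense subspace. The eigenfunction identity itself is already supplied by the previous Theorem, and the rescaling, the orthogonality, and the eigenvalue bookkeeping are then routine.
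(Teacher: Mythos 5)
Your argument is correct and is precisely the route the paper intends: the harmonic-times-Laguerre-times-Gaussian eigenfunctions with unimodular eigenvalues $(-\mathrm{i})^{n+2m}$, density via Hamburger's theorem, and the parity observation $(-1)^{n}f_{n,m,j}(x)=f_{n,m,j}(-x)$ giving $\mathcal{F}^{2}f(x)=f(-x)$. The only refinement worth noting is that your careful attention to the rescaling between $L_{m}^{(\alpha_{n})}(|x|^{2})e^{-|x|^{2}/2}$ and the Gaussian-form basis $L_{m}^{(\alpha_{n})}(|x|^{2}/2)$ is exactly what makes the orthogonality and completeness transfer work, and you have handled it properly.
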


Suppose $f\left(  x\right)  ,\left\vert x\right\vert f\left(  x\right)  \in
L^{1}\left(  \mathbb{R}^{d},w_{\kappa}\left(  x\right)  dx\right)  $ and
$1\leq j\leq d$ then%
\[
\mathcal{D}_{j}\mathcal{F}f\left(  y\right)  =-\mathrm{i}\mathcal{F}\left(
x_{j}f\left(  x\right)  \right)  \left(  y\right)  ,y\in\mathbb{R}^{d},
\]
since $K_{\kappa}\left(  x,-\mathrm{i}y\right)  =K_{\kappa}\left(
-\mathrm{i}x,y\right)  $. The $G$-invariance property of $K_{\kappa}$ implies
that $w\mathcal{F=F}w$ for all $w\in G$.

The transform and the $L^{2}$-isometry result first appeared in \cite{Du3}.
The uniform boundedness of $\left\vert K_{\kappa}\left(  x,-\mathrm{i}%
y\right)  \right\vert $ was shown by de Jeu \cite{dJ1}. For the special case
$d=1,G=Z_{2}$ and even functions $f$ on $\mathbb{R}$ the transform
$\mathcal{F}$ essentially coincides with the classical Hankel transform.

\section{The Poisson kernel}

In this section $\kappa\geq0$. There is a natural boundary value problem for
harmonic functions. Given $f\in C\left(  S_{d-1}\right)  $ find the function
$P\left[  f\right]  $ which is smooth on $\left\{  x:\left\vert x\right\vert
<1\right\}  $, $\Delta_{\kappa}P\left[  f\right]  =0$ and $\lim_{r\rightarrow
1_{-}}P\left[  f\right]  \left(  rx\right)  =f\left(  x\right)  $ for $x\in
S_{d-1}$. We outline the argument for polynomial functions on $S_{d-1}$.

Let $n\in\mathbb{N}_{0}$ and $x,y\in\mathbb{R}^{d}$. By the definition of
$\pi_{\kappa,n}$ and Theorem \ref{c4KVprop}(3)%
\[
\pi_{\kappa,n}^{\left(  x\right)  }K_{\kappa,n}\left(  x,y\right)  =\sum
_{j=0}^{\left\lfloor n/2\right\rfloor }\frac{1}{4^{j}j!\left(  -\gamma
_{\kappa}-n+2-d/2\right)  _{j}}\left\vert x\right\vert ^{2j}\left\vert
y\right\vert ^{2j}K_{\kappa,n-2j}\left(  x,y\right)  ,
\]
Applying $\pi_{n,\kappa}^{\left(  x\right)  }$ to the reproducing equation
$\left\langle p,K_{\kappa,n}\left(  x,\cdot\right)  \right\rangle _{\kappa
}=p\left(  x\right)  $ for $p\in\Pi_{n}^{d}$ we obtain%
\begin{align*}
\pi_{\kappa,n}p\left(  x\right)   &  =\left\langle p,\pi_{\kappa,n}^{\left(
x\right)  }K_{\kappa,n}\left(  x,\cdot\right)  \right\rangle _{\kappa}%
=\frac{c_{\kappa}}{\left(  2\pi\right)  ^{d/2}}\int_{\mathbb{R}^{d}}%
e^{-\Delta_{\kappa}/2}p\left(  y\right)  \pi_{\kappa,n}^{\left(  x\right)
}K_{\kappa,n}\left(  x,y\right)  e^{-\left\vert x\right\vert ^{2}/2}w_{\kappa
}\left(  y\right)  dy\\
&  =2^{n}\left(  \gamma_{\kappa}+\frac{d}{2}\right)  _{n}c_{\kappa,S}%
\int_{S_{d-1}}p\left(  y\right)  \pi_{\kappa,n}^{\left(  x\right)  }%
K_{\kappa,n}\left(  x,y\right)  w_{\kappa}\left(  y\right)  d\omega\left(
y\right)  ,
\end{align*}
by Proposition \ref{c4sphint}, and because $e^{-\Delta_{\kappa}/2}p\left(
y\right)  =p\left(  y\right)  +p^{\prime}\left(  y\right)  $ where $p^{\prime
}$ is of degree $\leq n-2$ and is thus orthogonal to .$\pi_{\kappa,n}^{\left(
x\right)  }K_{\kappa,n}\left(  x,y\right)  $.

\begin{definition}
For $n\in$ $\mathbb{N}_{0}$ and $x,y\in\mathbb{R}^{d}$ let
\[
P_{\kappa,n}\left(  x,y\right)  :=2^{n}\left(  \gamma_{\kappa}+\frac{d}%
{2}\right)  _{n}\sum_{j=0}^{\left\lfloor n/2\right\rfloor }\frac{1}%
{4^{j}j!\left(  -\gamma_{\kappa}-n+2-d/2\right)  _{j}}\left\vert x\right\vert
^{2j}\left\vert y\right\vert ^{2j}K_{\kappa,n-2j}\left(  x,y\right)  .
\]

\end{definition}

By the decomposition \ref{c4harmex} any polynomial $p$ satisfies $p\left(
x\right)  =\sum_{n,k\geq0}\left\vert x\right\vert ^{2k}p_{n,k}\left(
x\right)  $ where each $p_{n,k}\in\mathcal{H}_{\kappa,n}$ , and so $p$ agrees
with the harmonic polynomial $q\left(  x\right)  =\sum_{n,k\geq0}%
p_{n,k}\left(  x\right)  $ on $S_{d-1}$ . Thus%
\[
q\left(  x\right)  =c_{\kappa,S}\int_{S_{d-1}}p\left(  y\right)  \sum
_{j=0}^{N}P_{\kappa,j}\left(  x,y\right)  w_{\kappa}\left(  y\right)
d\omega\left(  y\right)  ,
\]
where $N$ is sufficiently large. The series converges for $\left\vert
x\right\vert <1,\left\vert y\right\vert =1$.

\begin{theorem}
For $\left\vert x\right\vert <1,\left\vert y\right\vert =1$%
\[
\sum_{j=0}^{\infty}P_{\kappa,j}\left(  x,y\right)  =V_{\kappa}^{\left(
y\right)  }\left(  \frac{1-\left\vert x\right\vert ^{2}}{\left(
1-2\left\langle x,y\right\rangle +\left\vert x\right\vert ^{2}\right)
^{\mathbb{\gamma}_{\kappa}+d/2}}\right)  .
\]

\end{theorem}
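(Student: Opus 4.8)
Write $b:=\gamma_{\kappa}+\tfrac{d}{2}$; since $\kappa\geq0$ we have $b>0$ and $\kappa\in\Lambda^{reg}$, so $V_{\kappa}$ and all $K_{\kappa,n}$ are defined. The plan is to push the $\Delta_{\kappa}$-series in $P_{\kappa,n}$ over to the $y$-variable by means of $V_{\kappa}^{(y)}\langle x,y\rangle^{m}=m!\,K_{\kappa,m}(x,y)$, pull $V_{\kappa}^{(y)}$ out of the sum, and recognize the remainder as an elementary generating function. Because $|y|=1$, the factors $|y|^{2j}$ in the definition of $P_{\kappa,n}$ drop out, so by linearity of $V_{\kappa}$
\[
\sum_{n=0}^{N}P_{\kappa,n}(x,y)=V_{\kappa}^{(y)}\Bigg[\,\sum_{n=0}^{N}\,\sum_{j=0}^{\left\lfloor n/2\right\rfloor }\frac{2^{n}(b)_{n}\,|x|^{2j}\,\langle x,y\rangle^{\,n-2j}}{4^{j}\,j!\,(2-b-n)_{j}\,(n-2j)!}\,\Bigg].
\]
With $x=r\xi$, $|\xi|=1$, $t=\langle\xi,y\rangle$, the bracket is the polynomial $\sum_{n\leq N}g_{n}(t)\,r^{n}$, where $g_{n}(t):=\sum_{j\leq n/2}\frac{2^{n}(b)_{n}t^{\,n-2j}}{4^{j}j!(2-b-n)_{j}(n-2j)!}$, a partial sum of a power series in $r$ of radius $\geq1$ (verified below). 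Letting $V_{\kappa}^{(y)}$ commute with the limit $N\to\infty$ via the representing measure $\mu_{y}$ of Theorem \ref{c4Vxpos} (on $spt\left(\mu_{y}\right)\subset co\left\{wy:w\in G\right\}$ one has $|\langle x,z\rangle|\leq|x|<1$, so the convergence is uniform there), the assertion reduces to the scalar identity $\sum_{n\geq0}g_{n}(t)r^{n}=(1-r^{2})(1-2rt+r^{2})^{-b}$.

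Reindexing the double sum by $m=n-2j$ (a rearrangement of absolutely convergent series for $r$ small, continued in $r$ to $r<1$) gives $\sum_{n}g_{n}(t)r^{n}=\sum_{m\geq0}\frac{(rt)^{m}}{m!}A_{m}(r)$ with $A_{m}(r):=\sum_{j\geq0}\frac{2^{m+2j}(b)_{m+2j}}{4^{j}j!(2-b-m-2j)_{j}}r^{2j}$. The crux is the closed form of $A_{m}$. Using the duplication identity $(a)_{2j}=4^{j}(\tfrac{a}{2})_{j}(\tfrac{a+1}{2})_{j}$ on $(b)_{m+2j}=(b)_{m}(b+m)_{2j}$ and, after writing $(2-b-m-2j)_{j}=(-1)^{j}(b+m+j-1)_{j}=(-1)^{j}(b+m-1)_{2j}/(b+m-1)_{j}$, the same identity on $(b+m-1)_{2j}$, the powers of $4$ and the factor $(\tfrac{b+m}{2})_{j}$ cancel and one is left with
\[
A_{m}(r)=2^{m}(b)_{m}\;{}_{2}F_{1}\!\left(\tfrac{b+m+1}{2},\;b+m-1;\;\tfrac{b+m-1}{2};\;-r^{2}\right).
\]
Now apply the elementary evaluation ${}_{2}F_{1}(c+1,\beta;c;z)=(1-z)^{-\beta-1}\,(c+(\beta-c)z)/c$, which follows from $\sum_{k}\tfrac{(\beta)_{k}}{k!}z^{k}=(1-z)^{-\beta}$, $\tfrac{(c+1)_{k}}{(c)_{k}}=\tfrac{c+k}{c}$ and $z\tfrac{d}{dz}(1-z)^{-\beta}=\beta z(1-z)^{-\beta-1}$. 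Here $c=\tfrac{b+m-1}{2}$, $\beta=b+m-1$, so $\beta-c=c$, the numerator is $c(1+z)$, and $A_{m}(r)=2^{m}(b)_{m}(1-r^{2})(1+r^{2})^{-(b+m)}$.

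Substituting back and summing a binomial series finishes it:
\[
\sum_{n}g_{n}(t)r^{n}=(1-r^{2})(1+r^{2})^{-b}\sum_{m\geq0}\frac{(b)_{m}}{m!}\left(\frac{2rt}{1+r^{2}}\right)^{m}=\frac{1-r^{2}}{\left(1+r^{2}-2rt\right)^{b}},
\]
the last series converging because $|2rt|\leq2r<1+r^{2}$ for $r<1$, which also establishes the radius bound used above. Since $rt=\langle x,y\rangle$ and $r^{2}=|x|^{2}$, this equals $\dfrac{1-|x|^{2}}{(1-2\langle x,y\rangle+|x|^{2})^{b}}$, precisely the argument of $V_{\kappa}^{(y)}$ in the statement, and applying $V_{\kappa}^{(y)}$ as above yields the theorem. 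The one genuine obstacle I foresee is the middle step: carrying the Pochhammer manipulations far enough to expose the degenerate ${}_{2}F_{1}$ (upper parameter one larger than the lower one) and then collapsing it; granting that closed form, the transfer through $V_{\kappa}$ and the summation are routine, and one sees that the hypothesis $|y|=1$ is used precisely to suppress the $|y|^{2j}$ factors --- without it the left side fails to be a function of $\langle x,y\rangle$ alone and the stated formula cannot hold.
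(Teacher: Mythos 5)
Your proposal is correct and is essentially the paper's own argument run in the opposite direction: the paper says only ``expand the right-hand side as a series in $V_{\kappa}^{(y)}\left(\langle x,y\rangle^{j}\right)$,'' and you carry out exactly that matching of coefficients starting from the left side, with the $|y|^{2j}$ factors suppressed by $|y|=1$ and the interchange with $V_{\kappa}^{(y)}$ justified by the representing measure. The Pochhammer/${}_{2}F_{1}$ computation you supply is the detail the paper omits; it amounts to re-deriving the classical Gegenbauer Poisson-kernel identity $\sum_{n}\frac{n+\alpha}{\alpha}C_{n}^{\alpha}(t)r^{n}=(1-r^{2})(1-2rt+r^{2})^{-\alpha-1}$ with $\alpha=\gamma_{\kappa}+\frac{d}{2}-1$ (your intermediate step degenerates when $b+m=1$, but the final identity is rational in $b$, so generic $b$ suffices), consistent with the Gegenbauer formula the paper records immediately after the theorem.
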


The result follows from expanding the right hand side as a series in
$V_{\kappa}^{\left(  y\right)  }\left(  \left\langle x,y\right\rangle
^{j}\right)  $. The left side of the equation is thus the Poisson kernel for
harmonic functions in the unit ball. For fixed $x$ with $\left\vert
x\right\vert <1$ the denominator in the $V_{\kappa}^{\left(  y\right)  }$-term
does not vanish for $\left\vert y\right\vert \leq1$. There is a formula for
$P_{\kappa,n}\left(  x,y\right)  $ restricted to $\left\vert x\right\vert =1$:%
\begin{align*}
&  V_{\kappa}^{\left(  x\right)  }\frac{n+\alpha}{\alpha}C_{n}^{\alpha}\left(
\left\langle x,y/\left\vert y\right\vert \right\rangle \right)  \left\vert
y\right\vert ^{n}\\
&  =2^{n}\left(  \gamma_{\kappa}+\frac{d}{2}\right)  _{n}\sum_{j=0}%
^{\left\lfloor n/2\right\rfloor }\frac{1}{4^{j}j!\left(  -\gamma_{\kappa
}-n+2-d/2\right)  _{j}}\left\vert y\right\vert ^{2j}K_{\kappa,n-2j}\left(
x,y\right)  ,
\end{align*}
where $C_{n}^{\alpha}$ is the Gegenbauer polynomial of degree $n$ and index
$\alpha=\mathbb{\gamma}_{\kappa}+\frac{d}{2}-1$. (for the exceptional case
$\alpha=0$, replace the left side by $V_{\kappa}^{\left(  x\right)  }%
2T_{n}\left(  \left\langle x,y/\left\vert y\right\vert \right\rangle \right)
\left\vert y\right\vert ^{n}$ for $n\geq1$; $T_{n}$ is the Chebyshev
polynomial of the first kind). The formula is suggested by a generating
function for these polynomials. Maslouhi and Youssfi \cite{MY} studied the
properties of the Poisson kernel in connection with $L^{p}$-type convergence
and with a generalized translation.

\section{Harmonic polynomials for $\mathbb{R}^{2}$}

This section exhibits the classical Gegenbauer and Jacobi polynomials as
spherical harmonics for the groups $\mathbb{Z}_{2}$ and $\mathbb{Z}_{2}%
\times\mathbb{Z}_{2}$ with root systems $\left\{  \pm\varepsilon_{2}\right\}
$ and $\left\{  \pm\varepsilon_{1},\pm\varepsilon_{2}\right\}  $ respectively.
We mention here that the structure associated with the group $\mathbb{Z}%
_{2}^{d}$, $R=\left\{  \pm\varepsilon_{i}:1\leq i\leq d\right\}  $ and
$\kappa\left(  \varepsilon_{i}\right)  =\kappa_{i}$ can be analyzed in a
similar way and leads to multi-variable Jacobi polynomials orthogonal on a
simplex; see Chapter 2.

Polar coordinates will be used: $x_{1}=r\cos\theta,x_{2}=r\sin\theta$,
$r\geq0,-\pi\leq\theta\leq\pi$.

\subsection{One parameter}

Let $R=\left\{  \pm\varepsilon_{2}\right\}  $ and $\kappa>-\frac{1}{2}$,
then\newline1) $w_{\kappa}\left(  x\right)  =\left\vert x_{2}\right\vert
^{2\kappa}=r^{2\kappa}\left\vert \sin\theta\right\vert ^{2\kappa}$,\newline2)
$\mathcal{D}_{1}p\left(  x\right)  =\frac{\partial}{\partial x_{1}}p\left(
x\right)  ,\mathcal{D}_{2}p\left(  x\right)  =\frac{\partial}{\partial x_{2}%
}p\left(  x\right)  +\kappa\frac{p\left(  x\right)  -p\left(  x_{1}%
,-x_{2}\right)  }{x_{2}}$,\newline3) $\Delta_{\kappa}p\left(  x\right)
=\Delta p\left(  x\right)  +\frac{\kappa}{x_{2}}\left(  2\frac{\partial
}{\partial x_{2}}p\left(  x\right)  -\frac{p\left(  x\right)  -p\left(
x_{1},-x_{2}\right)  }{x_{2}}\right)  $.

For $n\geq1$ the space $\mathcal{H}_{\kappa,n}$ contains an orthogonal basis
consisting of two polynomials, $p_{n,0}$ being even and $p_{n,1}$ being odd in
$x_{2}$:%
\begin{align*}
p_{n,0}\left(  x\right)   &  :=r^{n}C_{n}^{\kappa}\left(  \cos\theta\right)
,\\
p_{n,1}\left(  x\right)   &  :=r^{n}\sin\theta~C_{n-1}^{\kappa+1}\left(
\cos\theta\right)  .
\end{align*}
Let $a_{\kappa}=\frac{\Gamma\left(  \kappa+1\right)  }{2\sqrt{\pi}%
\Gamma\left(  \kappa+1/2\right)  }$, the normalizing constant such that
$a_{\kappa}\int_{-\pi}^{\pi}\left\vert \sin\theta\right\vert ^{2\kappa}%
d\theta=1$; then%
\begin{align*}
a_{\kappa}\int_{-\pi}^{\pi}p_{n,0}\left(  \cos\theta,\sin\theta\right)
^{2}\left\vert \sin\theta\right\vert ^{2\kappa}d\theta &  =\frac{\kappa\left(
2\kappa\right)  _{n}}{\left(  n+\kappa\right)  n!},\\
a_{\kappa}\int_{-\pi}^{\pi}p_{n,1}\left(  \cos\theta,\sin\theta\right)
^{2}\left\vert \sin\theta\right\vert ^{2\kappa}d\theta &  =\frac{\left(
\kappa+\frac{1}{2}\right)  \left(  2\kappa+2\right)  _{n-1}}{\left(
n+\kappa\right)  \left(  n-1\right)  !}.
\end{align*}
If the Gegenbauer polynomials are replaced by $P_{n}^{\kappa}=\frac
{n!}{\left(  2\kappa\right)  _{n}}C_{n}^{\kappa}$ (similarly for
$C_{n-1}^{\kappa+1}$, these are normalized by $P_{n}^{\kappa}\left(  1\right)
=1$) then at $\kappa=0$ one obtains $p_{n,0}=r^{n}\cos n\theta$ and
$p_{n,1}=r^{n}\sin n\theta$.

\subsection{Two parameters}

Let $R=\left\{  \pm\varepsilon_{1,}\pm\varepsilon_{2}\right\}  $ and
$\kappa_{1},\kappa_{2}>-\frac{1}{2}$, then\newline1) $w_{\kappa}\left(
x\right)  =\left\vert x_{1}\right\vert ^{2\kappa_{1}}\left\vert x_{2}%
\right\vert ^{2\kappa_{2}}=r^{2\kappa_{1}+2\kappa_{2}}\left\vert \cos
\theta\right\vert ^{2\kappa_{1}}\left\vert \sin\theta\right\vert ^{2\kappa
_{2}}$,\newline2) $\mathcal{D}_{1}p\left(  x\right)  =\frac{\partial}{\partial
x_{1}}p\left(  x\right)  +\kappa_{1}\frac{p\left(  x\right)  -p\left(
-x_{1},x_{2}\right)  }{x_{2}},\mathcal{D}_{2}p\left(  x\right)  =\frac
{\partial}{\partial x_{2}}p\left(  x\right)  +\kappa_{2}\frac{p\left(
x\right)  -p\left(  x_{1},-x_{2}\right)  }{x_{2}}$,\newline3) $\Delta_{\kappa
}p\left(  x\right)  =\Delta p\left(  x\right)  +\frac{\kappa_{1}}{x_{1}%
}\left(  2\frac{\partial}{\partial x_{1}}p\left(  x\right)  -\frac{p\left(
x\right)  -p\left(  -x_{1},x_{2}\right)  }{x_{1}}\right)  $+$\frac{\kappa_{2}%
}{x_{2}}\left(  2\frac{\partial}{\partial x_{2}}p\left(  x\right)
-\frac{p\left(  x\right)  -p\left(  x_{1},-x_{2}\right)  }{x_{2}}\right)  .$

There are 4 families of harmonic polynomials. The first subscript indicates
the degree and the second subscript is used to indicate the parity type, for
example $01$ denotes \textquotedblleft even in $x_{1}$, odd in $x_{2}%
$.\textquotedblright%
\begin{align*}
p_{2n,00}\left(  x\right)   &  :=r^{2n}P_{n}^{\left(  \kappa_{2}%
-1/2,\kappa_{1}-1/2\right)  }\left(  \cos2\theta\right)  ,\\
p_{2n,11}\left(  x\right)   &  :=r^{2n}\sin2\theta~P_{n-1}^{\left(  \kappa
_{2}+1/2,\kappa_{1}+1/2\right)  }\left(  \cos2\theta\right)  ,\\
p_{2n+1,10}\left(  x\right)   &  :=r^{2n+1}\cos\theta~P_{n}^{\left(
\kappa_{2}-1/2,\kappa_{1}+1/2\right)  }\left(  \cos2\theta\right)  ,\\
p_{2n+1,01}\left(  x\right)   &  :=r^{2n+1}\sin\theta~P_{n}^{\left(
\kappa_{2}+1/2,\kappa_{1}-1/2\right)  }\left(  \cos2\theta\right)  .
\end{align*}
The norms with respect to $L^{2}\left(  \left[  -\pi,\pi\right]  ,\left\vert
\cos\theta\right\vert ^{2\kappa_{1}}\left\vert \sin\theta\right\vert
^{2\kappa_{2}}d\theta\right)  $ can be computed from%
\begin{align*}
&  \int_{-\pi}^{\pi}\left\{  P_{n}^{\left(  \alpha,\beta\right)  }\left(
\cos2\theta\right)  \right\}  ^{2}\left\vert \sin\theta\right\vert
^{2\alpha+1}\left\vert \cos\theta\right\vert ^{2\beta+1}d\theta\\
&  =\frac{4\Gamma\left(  \alpha+1\right)  \Gamma\left(  \beta+1\right)
}{\Gamma\left(  \alpha+\beta+2\right)  }\frac{\left(  \alpha+1\right)
_{n}\left(  \beta+1\right)  _{n}\left(  \alpha+\beta+n+1\right)  }{n!\left(
\alpha+\beta+2\right)  _{n}\left(  \alpha+\beta+2n+1\right)  },n\in
\mathbb{N}_{0}.
\end{align*}

\subsection{Dihedral groups}

The harmonic polynomials for the general dihedral groups can be expressed
using the two previous types and complex coordinates $z=x_{1}+\mathrm{i}%
x_{2},\overline{z}=x_{1}-\mathrm{i}x_{2}$. Interpret $p\left(  z^{m}\right)  $
as $p\left(  \operatorname{Re}z^{m},\operatorname{Im}z^{m}\right)  $. For the
one-parameter case $I_{2}\left(  m\right)  $ with $m$ odd the harmonic
polynomials are spanned by\newline1) $p_{n,0}\left(  z^{m}\right)
,p_{n,1}\left(  z^{m}\right)  $, of degree $nm$,\newline2) $\operatorname{Re}%
p_{nm+j}\left(  z\right)  ,\operatorname{Im}p_{nm+j}\left(  z\right)  $ where
$p_{nm+j}\left(  z\right)  :=z^{j}\left(  \frac{n+2\kappa}{2\kappa}%
p_{n,0}\left(  z^{m}\right)  +\mathrm{i}p_{n,1}\left(  z^{m}\right)  \right)
$, for $1\leq j<m.$

For the two-parameter case $I_{2}\left(  2m\right)  $ with $w_{\kappa}\left(
z\right)  =\left\vert z^{m}+\overline{z}^{m}\right\vert ^{2\kappa_{1}%
}\left\vert z^{m}-\overline{z}^{m}\right\vert ^{2\kappa_{2}}$ let%
\begin{align*}
q_{2n}\left(  z\right)   &  :=p_{2n,00}\left(  z\right)  +\frac{\mathrm{i}}%
{2}p_{2n,11}\left(  z\right)  ,\\
q_{2n+1}\left(  z\right)   &  :=\left(  n+\kappa_{2}+\frac{1}{2}\right)
p_{2n+1,10}\left(  z\right)  +i\left(  n+\kappa_{1}+\frac{1}{2}\right)
p_{2n+1,01}\left(  z\right)  .
\end{align*}
The harmonic polynomials are spanned by $z^{j}q_{n}\left(  z^{m}\right)  $ and
$\overline{z^{j}q_{n}\left(  z^{m}\right)  }$ for $n\in\mathbb{N}_{0},0\leq
j\leq m$ (note $q_{0}\left(  z\right)  =1$). These results are from \cite{Du1}.

\section{Nonsymmetric Jack polynomials}

For the symmetric group $\mathcal{S}_{d}$ acting naturally on $\mathbb{R}^{d}$
there is an elegant orthogonal basis for $\Pi^{d}$ with respect to the form
$\left\langle \cdot,\cdot\right\rangle _{\kappa}$. The basis consists of
nonsymmetric Jack polynomials, so named because their symmetrization (summing
over an $\mathcal{S}_{d}$-orbit) yields the Jack polynomials, with parameter
$1/\kappa$. The construction of the basis depends on commuting self-adjoint
\emph{\textquotedblleft Cherednik-Dunkl\textquotedblright\ operators} and an
ordering of the monomial basis with respect to which the operators are
represented by triangular matrices.

Recall the notation from Section \ref{c4rootA}: $\left(  i,j\right)  $ denotes
the transposition $s_{\varepsilon_{i}-\varepsilon_{j}}$ and $s_{i}=\left(
i,i+1\right)  $ for $1\leq i<d$. Interpret $\mathcal{S}_{d}$ as the set of
bijections on $\left\{  1,2,\ldots,d\right\}  $ then the action on
$\mathbb{R}^{d}$ is given by $\left(  w^{-1}x\right)  _{i}=x_{w\left(
i\right)  }$ and the action on monomials is $w\left(  x^{\alpha}\right)
=x^{\alpha w^{-1}}$ where $\left(  \alpha w^{-1}\right)  _{i}=\alpha
_{w^{-1}\left(  i\right)  }$ for $w\in\mathcal{S}_{d},1\leq i\leq d,\alpha
\in\mathbb{N}_{0}^{d}$.

\begin{definition}
The set of \emph{partitions} (of length $\leq d$) is
\[
\mathbb{N}_{0}^{d,+}:=\left\{  \lambda\in\mathbb{N}_{0}^{d}:\lambda_{i}%
\geq\lambda_{i+1},1\leq i<d\right\}  ,
\]
and for $\alpha\in\mathbb{N}_{0}^{d}$ let $\alpha^{+}$ denote the unique
partition such that $\alpha^{+}=\alpha w$ for some $w\in\mathcal{S}_{d}$.
\end{definition}

\begin{definition}
For $\alpha\in\mathbb{N}_{0}^{d}$ and $1\leq i\leq d$ let%
\[
w_{\alpha}\left(  i\right)  :=\#\left\{  j:\alpha_{j}>\alpha_{i}\right\}
+\#\left\{  j:1\leq j\leq i,\alpha_{j}=\alpha_{i}\right\}
\]
be the rank function.
\end{definition}

Note that $w_{\alpha}\left(  i\right)  <w_{\alpha}\left(  j\right)  $ is
equivalent to $\alpha_{i}>\alpha_{j}$, or $\alpha_{i}=\alpha_{j}$ and $i<j$.
For any $\alpha$ the function $w_{\alpha}$ is one-to-one on $\left\{
1,2,\ldots,d\right\}  $, hence $w_{\alpha}\in\mathcal{S}_{d}$. Also $\alpha$
is a partition if and only if $w_{\alpha}\left(  i\right)  =i$ for all $i$. In
general $\alpha w_{\alpha}^{-1}=\alpha^{+}$ because $\left(  \alpha w_{\alpha
}^{-1}\right)  _{i}=\alpha_{w_{\alpha}^{-1}\left(  i\right)  }$ for $1\leq
i\leq d$.

There is one conjugacy class of reflections and we use $\kappa$ for the value
of the multiplicity function. For $p\in\Pi^{d}$ and $1\leq i\leq d$%
\[
\mathcal{D}_{i}p\left(  x\right)  =\frac{\partial}{\partial x_{i}}p\left(
x\right)  +\kappa\sum_{j\neq i}\frac{p\left(  x\right)  -p\left(  \left(
i,j\right)  x\right)  }{x_{i}-x_{j}}.
\]
The commutation relations from Proposition \ref{c4Dmt} (using $x_{i}$ to
denote the multiplication operator) become%
\begin{align}
\left[  \mathcal{D}_{i},x_{i}\right]   &  =1+\kappa\sum_{j\neq i}\left(
i,j\right)  ,\label{DxixD}\\
\left[  \mathcal{D}_{j},x_{i}\right]   &  =-\kappa\left(  i,j\right)  ,~j\neq
i.\nonumber
\end{align}

The order on compositions is derived from the dominance order on partitions.

\begin{definition}
For $\alpha,\beta\in\mathbb{N}_{0}^{d}$ the partial order $\alpha\succ\beta$
($\alpha$ dominates $\beta$) means that $\alpha\neq\beta$ and $\sum_{i=1}%
^{j}\alpha_{i}\geq\sum_{i=1}^{j}\beta_{i}$ for $1\leq j\leq d$; and
$\alpha\vartriangleright\beta$ means that $\left\vert \alpha\right\vert
=\left\vert \beta\right\vert $ and either $\alpha^{+}\succ\beta^{+}$ or
$\alpha^{+}=\beta^{+}$ and $\alpha\succ\beta$.
\end{definition}

For example $\left(  5,1,4\right)  \vartriangleright\left(  1,5,4\right)
\vartriangleright\left(  4,3,3\right)  $, while $\left(  1,5,4\right)  $ and
$\left(  6,2,2\right)  $ are not comparable in $\vartriangleright$. The
following hold for $\alpha\in\mathbb{N}_{0}^{d}$: (1) $\alpha^{+}%
\trianglerighteq\alpha$, (2) if $\alpha_{i}>\alpha_{j}$ and $i<j$ then
$\alpha\vartriangleright\alpha\left(  i,j\right)  $, (3) if $1\leq
m<\alpha_{i}-\alpha_{j}$ then $\alpha^{+}\vartriangleright\left(
\alpha-m\left(  \varepsilon_{i}-\varepsilon_{j}\right)  \right)  ^{+}.$

The Cherednik-Dunkl operators are extensions of the Jucys-Murphy elements; for
$1\leq i\leq d$ let
\[
\mathcal{U}_{i}:=\mathcal{D}_{i}x_{i}-\kappa\sum_{1\leq j<i}\left(
i,j\right)  .
\]

\begin{proposition}
\label{c4propU}The operators $\mathcal{U}_{i}$ satisfy:\newline1) $\left[
\mathcal{U}_{i},\mathcal{U}_{j}\right]  =0$ for $1\leq i,j\leq d$,\newline2)
$\left\langle \mathcal{U}_{i}p,q\right\rangle _{\kappa}=\left\langle
p,\mathcal{U}_{i}q\right\rangle _{\kappa}$ for $p,q\in\Pi^{d}$,\newline3)
$s_{j}\mathcal{U}_{i}s_{j}=\mathcal{U}_{i}$ for $j\neq i-1,i$ and
$s_{i}\mathcal{U}_{i}s_{i}=\mathcal{U}_{i+1}+\kappa s_{i}$ for $1\leq i\leq d$.
\end{proposition}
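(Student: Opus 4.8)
The plan is to establish parts (2) and (3) first, since both are formal consequences of the covariance and symmetry of $\langle\cdot,\cdot\rangle_{\kappa}$ already in hand, and then to use (3) to reduce (1) to a single computation for consecutive indices. For (2): Theorem \ref{c4kform}(2) with $r=x_{i}$ gives $\langle x_{i}p,q\rangle_{\kappa}=\langle p,\mathcal{D}_{i}q\rangle_{\kappa}$, and combining this with the symmetry of the form (Theorem \ref{c4kform}(3)) shows that multiplication by $x_{i}$ and $\mathcal{D}_{i}$ are mutually adjoint; hence the composition $\mathcal{D}_{i}x_{i}$ is self-adjoint. Likewise $\langle wp,q\rangle_{\kappa}=\langle p,w^{-1}q\rangle_{\kappa}$ by Theorem \ref{c4kform}(2), so each transposition $(i,j)$, being an involution, is self-adjoint. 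Therefore $\mathcal{U}_{i}=\mathcal{D}_{i}x_{i}-\kappa\sum_{j<i}(i,j)$ is a combination of self-adjoint operators, hence self-adjoint.

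For (3) I would compute $s_{j}\mathcal{U}_{i}s_{j}$ directly. Write $s_{j}=(j,j+1)$ and let $\sigma_{j}$ denote the transposition of the indices $j$ and $j+1$. Proposition \ref{c4wDw} gives $s_{j}\mathcal{D}_{i}s_{j}=\mathcal{D}_{\sigma_{j}(i)}$; the action of $s_{j}$ on coordinates gives the operator identity $s_{j}x_{i}s_{j}=x_{\sigma_{j}(i)}$; and conjugation inside $\mathcal{S}_{d}$ gives $s_{j}(i,k)s_{j}=(\sigma_{j}(i),\sigma_{j}(k))$. Hence $s_{j}\mathcal{U}_{i}s_{j}=\mathcal{D}_{\sigma_{j}(i)}x_{\sigma_{j}(i)}-\kappa\sum_{k<i}(\sigma_{j}(i),\sigma_{j}(k))$. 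If $j\notin\{i-1,i\}$ then $\sigma_{j}(i)=i$ and $\sigma_{j}$ permutes $\{1,\dots,i-1\}$, so the right-hand side is again $\mathcal{U}_{i}$. If $j=i$ then $\sigma_{i}(i)=i+1$ while $\sigma_{i}(k)=k$ for $k<i$, so the reflection sum becomes $\kappa\sum_{k<i}(i+1,k)$, which equals $\kappa\sum_{k<i+1}(i+1,k)$ minus the one extra term $\kappa(i+1,i)=\kappa s_{i}$; this is exactly $s_{i}\mathcal{U}_{i}s_{i}=\mathcal{U}_{i+1}+\kappa s_{i}$.

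For (1), the plan is to reduce to consecutive indices. If $j\ge i+2$ then $j-1\notin\{i-1,i\}$, so by (3) we have both $[\mathcal{U}_{i},s_{j-1}]=0$ and $\mathcal{U}_{j}=s_{j-1}\mathcal{U}_{j-1}s_{j-1}-\kappa s_{j-1}$; substituting the latter into $[\mathcal{U}_{i},\mathcal{U}_{j}]$ and commuting $\mathcal{U}_{i}$ past $s_{j-1}$ gives $[\mathcal{U}_{i},\mathcal{U}_{j}]=s_{j-1}[\mathcal{U}_{i},\mathcal{U}_{j-1}]s_{j-1}$, so an induction on $j-i$ reduces everything to showing $[\mathcal{U}_{i},\mathcal{U}_{i+1}]=0$. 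For the consecutive case I would put $A_{i}=\mathcal{D}_{i}x_{i}$ and $C_{i}=\kappa\sum_{k<i}(i,k)$ and expand $[\mathcal{U}_{i},\mathcal{U}_{i+1}]=[A_{i},A_{i+1}]-[A_{i},C_{i+1}]-[C_{i},A_{i+1}]+[C_{i},C_{i+1}]$. Using the relations (\ref{DxixD}), the commutativity $[\mathcal{D}_{i},\mathcal{D}_{i+1}]=0$ from Theorem \ref{c4DD0}, the operator identity $(i,j)x_{j}=x_{i}(i,j)$, and the covariance formulas from (3), a direct computation yields $[A_{i},A_{i+1}]=\kappa(A_{i}-A_{i+1})s_{i}=[A_{i},C_{i+1}]$, $[C_{i},A_{i+1}]=0$, and $[C_{i},C_{i+1}]=0$ (the last because it is a sum of commutators of transpositions in $\mathcal{S}_{d}$ whose $3$-cycle contributions cancel in pairs). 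The four terms then cancel, so $[\mathcal{U}_{i},\mathcal{U}_{i+1}]=0$.

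The routine parts are (2) and (3); the real work is the consecutive case of (1), and the main obstacle there is purely organizational: one must keep careful track of which indices each transposition moves so that the cancellations $[A_{i},A_{i+1}]=[A_{i},C_{i+1}]$ and $[C_{i},C_{i+1}]=0$ come out exactly. One could instead quote the literature on Cherednik operators for the commutativity, but the self-contained route above is short once (3) is available.
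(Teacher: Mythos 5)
Your proof is correct. Note that the paper states Proposition \ref{c4propU} without proof, so there is nothing to compare against; your argument is a complete, self-contained verification. I checked the key computations: part (2) follows exactly as you say from Theorem \ref{c4kform}, since $m_{x_i}$ and $\mathcal{D}_i$ are mutually adjoint and each transposition is a self-adjoint involution; part (3) is the straightforward conjugation computation, with the extra term $\kappa s_i$ arising precisely from the summand $k=i$ in $\mathcal{U}_{i+1}$; and in part (1) the reduction $[\mathcal{U}_i,\mathcal{U}_j]=s_{j-1}[\mathcal{U}_i,\mathcal{U}_{j-1}]s_{j-1}$ for $j\geq i+2$ is valid because $[\mathcal{U}_i,s_{j-1}]=0$ by part (3). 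For the consecutive case, writing $A_i=\mathcal{D}_ix_i$ and $C_i=\kappa\sum_{k<i}(i,k)$, one indeed gets $[A_i,A_{i+1}]=\kappa\left(\mathcal{D}_is_ix_{i+1}-\mathcal{D}_{i+1}s_ix_i\right)=\kappa(A_i-A_{i+1})s_i$ from (\ref{DxixD}) and Theorem \ref{c4DD0}; $[A_i,C_{i+1}]=\kappa[A_i,s_i]=\kappa(A_i-s_iA_is_i)s_i=\kappa(A_i-A_{i+1})s_i$ since the transpositions $(i+1,k)$ with $k<i$ commute with $A_i$; $[C_i,A_{i+1}]=0$ since each $(i,k)$ with $k<i$ fixes $\varepsilon_{i+1}$; and for $[C_i,C_{i+1}]$ the only noncommuting pairs are $\{(i,k),(i+1,k)\}$ and $\{(i,k),(i+1,i)\}$ for each fixed $k<i$, whose commutators are opposite differences of the two $3$-cycles on $\{k,i,i+1\}$ and cancel. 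The only cosmetic issue is that the identity $s_i\mathcal{U}_is_i=\mathcal{U}_{i+1}+\kappa s_i$ can only hold for $1\leq i<d$ (the range $1\leq i\leq d$ in the statement is a typo in the paper, not an error of yours).
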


\begin{proposition}
Let $\alpha\in\mathbb{N}_{0}^{d}$ and $1\leq i\leq d$, then%
\[
\mathcal{U}_{i}x^{\alpha}=\left(  \left(  d-w_{\alpha}\left(  i\right)
\right)  \kappa+\alpha_{i}+1\right)  x^{\alpha}+q_{\alpha,i}\left(  x\right)
,
\]
where $q_{\alpha,i}\left(  x\right)  $ is a sum of terms $\pm\kappa
x^{\alpha\left(  i,j\right)  }$ with $\alpha\vartriangleright\alpha\left(
i,j\right)  $ and $j\neq i$.
\end{proposition}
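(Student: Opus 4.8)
The plan is to reduce everything to the commutation relations (\ref{DxixD}) and evaluate $\mathcal{U}_i$ on the monomial $x^{\alpha}$ directly. First I would rewrite the operator so that its leading behaviour is transparent: from $\left[\mathcal{D}_i,x_i\right]=1+\kappa\sum_{j\ne i}(i,j)$ we get $\mathcal{D}_i x_i=x_i\mathcal{D}_i+1+\kappa\sum_{j\ne i}(i,j)$, hence
\[
\mathcal{U}_i=\mathcal{D}_i x_i-\kappa\sum_{1\le j<i}(i,j)=x_i\mathcal{D}_i+1+\kappa\sum_{j>i}(i,j).
\]
Applying this to $x^{\alpha}$, using $(i,j)x^{\alpha}=x^{\alpha(i,j)}$ and $\mathcal{D}_i x^{\alpha}=\alpha_i x^{\alpha-\varepsilon_i}+\kappa\sum_{j\ne i}\frac{x^{\alpha}-x^{\alpha(i,j)}}{x_i-x_j}$, gives
\[
\mathcal{U}_i x^{\alpha}=(\alpha_i+1)x^{\alpha}+\kappa\sum_{j>i}x^{\alpha(i,j)}+\kappa\sum_{j\ne i}x_i\,\frac{x^{\alpha}-x^{\alpha(i,j)}}{x_i-x_j}.
\]

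Next I would compute the divided differences explicitly. Writing $x^{\alpha}=x_i^{\alpha_i}x_j^{\alpha_j}\prod_{l\ne i,j}x_l^{\alpha_l}$ and summing a finite geometric series, for $\alpha_i>\alpha_j$ one finds
\[
x_i\,\frac{x^{\alpha}-x^{\alpha(i,j)}}{x_i-x_j}=x^{\alpha}+\sum_{m=1}^{\alpha_i-\alpha_j-1}x^{\alpha-m(\varepsilon_i-\varepsilon_j)};
\]
for $\alpha_i<\alpha_j$ the same identity applied with $\alpha(i,j)$ in place of $\alpha$ gives $-x^{\alpha(i,j)}-\sum_{m=1}^{\alpha_j-\alpha_i-1}x^{\alpha(i,j)-m(\varepsilon_i-\varepsilon_j)}$; and for $\alpha_i=\alpha_j$ the term vanishes.

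Now I would read off the coefficient of $x^{\alpha}$. It receives $\alpha_i+1$ from the first term, a summand $\kappa$ from each $j$ with $\alpha_j<\alpha_i$ (the leading term of the corresponding divided difference), and a summand $\kappa$ from each $j>i$ with $\alpha_j=\alpha_i$ (in which case $x^{\alpha(i,j)}=x^{\alpha}$); every remaining monomial is distinct from $x^{\alpha}$. Splitting $d=\#\{j:\alpha_j>\alpha_i\}+\#\{j:\alpha_j=\alpha_i\}+\#\{j:\alpha_j<\alpha_i\}$ and comparing with $w_{\alpha}(i)=\#\{j:\alpha_j>\alpha_i\}+\#\{j\le i:\alpha_j=\alpha_i\}$ shows $\#\{j:\alpha_j<\alpha_i\}+\#\{j>i:\alpha_j=\alpha_i\}=d-w_{\alpha}(i)$, so the coefficient of $x^{\alpha}$ is exactly $(d-w_{\alpha}(i))\kappa+\alpha_i+1$.

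Finally I would check that every surviving monomial $x^{\beta}$ other than $x^{\alpha}$ satisfies $\alpha\vartriangleright\beta$. The ``staircase'' monomials $x^{\alpha-m(\varepsilon_i-\varepsilon_j)}$ with $1\le m<\alpha_i-\alpha_j$ lie below $\alpha$ by property (3) of the dominance order, and applying that property to $\alpha(i,j)$ (whose associated partition is again $\alpha^{+}$) disposes of the $x^{\alpha(i,j)-m(\varepsilon_i-\varepsilon_j)}$ terms. The surviving pure transposition monomials are $x^{\alpha(i,j)}$ with $\alpha_j<\alpha_i,\ j>i$ and with $\alpha_j>\alpha_i,\ j<i$, both handled by property (2) (in the second case applied to the pair $(j,i)$); and the would-be offending terms $x^{\alpha(i,j)}$ with $\alpha_j>\alpha_i$ and $j>i$ — which are \emph{not} dominated by $\alpha$ — occur with coefficient $+\kappa$ from $\kappa\sum_{j>i}x^{\alpha(i,j)}$ and with coefficient $-\kappa$ from the $\alpha_i<\alpha_j$ divided difference, so they cancel. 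The main obstacle is precisely this last bookkeeping: keeping the signs, the ranges of $m$, and the position conditions $j<i$ versus $j>i$ straight so that the cancellation is complete and every remaining monomial is genuinely $\vartriangleright$-below $\alpha$; I would also note that $q_{\alpha,i}$ in general contains the staircase monomials above, not only pure transposition monomials.
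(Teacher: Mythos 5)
Your proof is correct and complete; the paper states this proposition without proof, and your direct computation is the standard argument (it is essentially the one in Dunkl--Xu, Chapter 8). The key steps all check out: the rewriting $\mathcal{U}_i=x_i\mathcal{D}_i+1+\kappa\sum_{j>i}(i,j)$ follows from (\ref{DxixD}); the geometric-series evaluation of $x_i\bigl(x^{\alpha}-x^{\alpha(i,j)}\bigr)/(x_i-x_j)$ in the three cases $\alpha_i>\alpha_j$, $\alpha_i<\alpha_j$, $\alpha_i=\alpha_j$ is right; the count $\#\{j:\alpha_j<\alpha_i\}+\#\{j>i:\alpha_j=\alpha_i\}=d-w_{\alpha}(i)$ gives exactly the stated diagonal coefficient; and the cancellation of the $+\kappa x^{\alpha(i,j)}$ (from $\kappa\sum_{j>i}(i,j)$) against the $-\kappa x^{\alpha(i,j)}$ (from the divided difference) for $j>i$ with $\alpha_j>\alpha_i$ is precisely what removes the only monomials not dominated by $\alpha$. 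Your appeals to properties (2) and (3) of the order $\vartriangleright$ — including applying (3) to $\alpha(i,j)$, whose associated partition is again $\alpha^{+}$ — dispose of all remaining terms.

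Your closing observation is also a genuine (minor) correction to the statement as printed: $q_{\alpha,i}$ contains, besides transposition monomials, the ``staircase'' monomials $x^{\alpha-m(\varepsilon_i-\varepsilon_j)}$ with $1\le m<\alpha_i-\alpha_j$ (e.g.\ for $d=2$, $\mathcal{U}_1x_1^2=(3+\kappa)x_1^2+\kappa x_1x_2+\kappa x_2^2$, and $x_1x_2$ is not of the form $x^{\alpha(1,2)}$). So the precise conclusion is that $q_{\alpha,i}$ is a sum of terms $\pm\kappa x^{\beta}$ with $\alpha\vartriangleright\beta$, which is exactly what the triangularity argument following the proposition uses; nothing downstream is affected.
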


This shows that the matrix representing $\mathcal{U}_{i}$ on the monomial
basis of $\Pi_{n}^{d}$ for any $n\in\mathbb{N}_{0}$ is triangular (recall any
partial order can be embedded in a total order) and the eigenvalues of
$\mathcal{U}_{i}$ are $\left\{  \left(  d-w_{\alpha}\left(  i\right)  \right)
\kappa+\alpha_{i}+1:\alpha\in\mathbb{N}_{0}^{d},\left\vert \alpha\right\vert
=n\right\}  $.

For $\alpha\in\mathbb{N}_{0}^{d}$ and $1\leq i\leq d$ let%
\[
\xi_{i}\left(  \alpha\right)  :=\left(  d-w_{\alpha}\left(  i\right)  \right)
\kappa+\alpha_{i}+1.
\]
To assert that a commuting collection of triangular matrices have a basis of
joint eigenvectors a separation property suffices: if $\alpha,\beta
\in\mathbb{N}_{0}^{d}$ and $\alpha\neq\beta$ then $\xi_{i}\left(
\alpha\right)  \neq\xi_{i}\left(  \beta\right)  $ for some $i$. This condition
is satisfied if $\kappa$ is generic or $\kappa>0$ (in this case the
eigenvalues $\left\{  \xi_{i}\left(  \alpha\right)  :1\leq i\leq d\right\}  $
are pairwise distinct, and the largest value is $\left(  d-1\right)
\kappa+\alpha_{1}^{+}$; if $\xi_{i}\left(  \alpha\right)  =\xi_{i}\left(
\beta\right)  $ for all $i$ then $\alpha_{j}=\beta_{j}$ for $j=w_{\alpha}%
^{-1}\left(  1\right)  =w_{\beta}^{-1}\left(  1\right)  $, and so on).

\begin{theorem}
Suppose $\kappa$ is generic or $\kappa>0$, then for each $\alpha\in
\mathbb{N}_{0}^{N}$, there is a unique simultaneous eigenfunction
$\zeta_{\alpha}$ such that $\mathcal{U}_{i}\zeta_{\alpha}=\xi_{i}\left(
\alpha\right)  \zeta_{\alpha}$ for $1\leq i\leq d$ and
\[
\zeta_{\alpha}=x^{\alpha}+\sum\limits_{\alpha\vartriangleright\beta}%
A_{\beta\alpha}x^{\beta},
\]
with coefficients $A_{\beta\alpha}\in\mathbb{Q}\left(  \kappa\right)  $.
\end{theorem}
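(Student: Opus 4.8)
The plan is to exploit the triangular structure established in the preceding Proposition. Fix $n=\left\vert \alpha\right\vert$ and work inside the finite-dimensional space $\Pi_{n}^{d}$, which is invariant under each $\mathcal{U}_{i}$. Choose a total order on $\{\beta\in\mathbb{N}_{0}^{d}:\left\vert\beta\right\vert=n\}$ refining the partial order $\vartriangleright$, and list the monomials accordingly. With respect to this ordered monomial basis, the Proposition shows that each $\mathcal{U}_{i}$ is represented by an upper-triangular matrix whose $(\alpha,\alpha)$ diagonal entry is exactly $\xi_{i}(\alpha)$. By Proposition~\ref{c4propU}(1) the operators $\{\mathcal{U}_{i}:1\leq i\leq d\}$ commute.

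The key step is the following linear-algebra fact: a commuting family of simultaneously upper-triangularizable operators on a finite-dimensional space, whose joint diagonal "weights" $\left(\xi_{1}(\alpha),\ldots,\xi_{d}(\alpha)\right)$ separate the basis indices, has a unique basis of joint eigenvectors of the prescribed triangular form. First I would verify the separation hypothesis: if $\alpha\neq\beta$ with $\left\vert\alpha\right\vert=\left\vert\beta\right\vert$ then $\xi_{i}(\alpha)\neq\xi_{i}(\beta)$ for some $i$. For generic $\kappa$ this is immediate since the $\xi_{i}$ are nonconstant polynomials in $\kappa$ with $\mathbb{Q}$-coefficients and the identity $\xi_{i}(\alpha)=\xi_{i}(\beta)$ cuts out a proper subvariety; for $\kappa>0$ the excerpt's parenthetical argument applies: comparing values of $\xi_{i}$ recovers $\alpha$ coordinate by coordinate via the rank function, starting from $w_{\alpha}^{-1}(1)$. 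Given separation, one constructs $\zeta_{\alpha}$ by a Gram--Schmidt-type recursion down the total order: set the coefficient of $x^{\alpha}$ to $1$, and for $\beta\vartriangleright\alpha$... wait — one must be careful with the direction. Since the triangular matrices have $x^{\beta}$-coefficients of $\mathcal{U}_{i}x^{\alpha}$ supported on $\beta$ with $\alpha\vartriangleright\beta$, the eigenvector with leading term $x^{\alpha}$ is a combination of $x^{\alpha}$ and lower terms $x^{\beta}$, $\alpha\vartriangleright\beta$, matching the stated form. The coefficients $A_{\beta\alpha}$ are determined by solving, for each $\beta$ with $\alpha\vartriangleright\beta$, the equation obtained from the $x^{\beta}$-component of $\mathcal{U}_{i}\zeta_{\alpha}=\xi_{i}(\alpha)\zeta_{\alpha}$ for a suitable choice of $i$ (one for which $\xi_{i}(\beta)\neq\xi_{i}(\alpha)$); the denominator $\xi_{i}(\alpha)-\xi_{i}(\beta)$ is a nonzero element of $\mathbb{Q}(\kappa)$, so $A_{\beta\alpha}\in\mathbb{Q}(\kappa)$, and rationality of the matrix entries of $\mathcal{U}_{i}$ (they are $\pm\kappa$ or integer combinations, by the Proposition and the formula $\mathcal{U}_{i}=\mathcal{D}_{i}x_{i}-\kappa\sum_{j<i}(i,j)$) closes the induction. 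Uniqueness follows because any two eigenvectors with the same leading term $x^{\alpha}$ differ by an element of $\Pi_{n}^{d}$ lying in a joint eigenspace disjoint from the span of the lower monomials — separation forces it to be zero.

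I would phrase the recursion cleanly as follows. Suppose inductively that for every $\gamma$ with $\alpha\vartriangleright\gamma$ a unique $\zeta_{\gamma}$ of the required form exists; these, together with $x^{\alpha}$, span the subspace $W_{\alpha}:=\mathrm{span}\{x^{\beta}:\beta=\alpha\text{ or }\alpha\vartriangleright\beta\}$, which is $\mathcal{U}_{i}$-invariant for all $i$. On $W_{\alpha}$ the operator $\mathcal{U}_{i}$ acts triangularly with distinct-or-not eigenvalues; project $x^{\alpha}$ onto the $\xi(\alpha):=(\xi_{1}(\alpha),\ldots,\xi_{d}(\alpha))$-joint-eigenspace of $W_{\alpha}$ using the spectral projections $\prod_{i}\prod_{\gamma\neq\alpha,\,\xi_{i}(\gamma)\neq\xi_{i}(\alpha)}\frac{\mathcal{U}_{i}-\xi_{i}(\gamma)}{\xi_{i}(\alpha)-\xi_{i}(\gamma)}$ applied to $x^{\alpha}$; separation guarantees this product kills every $\zeta_{\gamma}$-component and fixes the $x^{\alpha}$-leading term, producing $\zeta_{\alpha}$ of the stated shape with rational coefficients, and uniqueness is built into the projection.

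The main obstacle — really the only substantive point beyond bookkeeping — is establishing the separation property for generic $\kappa$ with enough care that the resulting $A_{\beta\alpha}$ genuinely lie in $\mathbb{Q}(\kappa)$ rather than some larger field; this amounts to checking that for each pair $(\alpha,\beta)$ one can always pick an index $i$ with $\xi_{i}(\alpha)-\xi_{i}(\beta)$ a \emph{nonzero polynomial} in $\kappa$ (not merely nonzero for a particular $\kappa$), which follows from the rank-function argument since the coefficient of $\kappa$ in $\xi_{i}(\alpha)-\xi_{i}(\beta)$ is $w_{\beta}(i)-w_{\alpha}(i)$ and the constant term is $\alpha_{i}-\beta_{i}$, and these cannot both vanish for every $i$ when $\alpha\neq\beta$. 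The positivity case $\kappa>0$ is then a specialization. Everything else is the standard simultaneous-triangularization argument carried out over the field $\mathbb{Q}(\kappa)$.
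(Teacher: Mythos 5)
Your proposal is correct and follows essentially the same route the paper takes (and merely sketches): triangularity of the commuting $\mathcal{U}_{i}$ on a total order refining $\vartriangleright$, plus the separation of the joint eigenvalue tuples $\left(\xi_{1}(\alpha),\ldots,\xi_{d}(\alpha)\right)$, yields a unique monic joint eigenvector, with the spectral-projection construction supplying the existence and rationality of the $A_{\beta\alpha}$. The only slip is the closing remark that the case $\kappa>0$ is "a specialization" of the generic case — it is not, since a generically nonzero difference $\xi_{i}(\alpha)-\xi_{i}(\beta)$ can vanish at a specific positive $\kappa$ — but your earlier invocation of the rank-function argument already handles $\kappa>0$ correctly, so nothing is missing.
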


These eigenfunctions are called \emph{nonsymmetric Jack polynomials}, and form
a basis of $\Pi^{d}$ by the triangularity property. If $\alpha\neq\beta$ then
$\xi_{i}\left(  \alpha\right)  \neq\xi_{i}\left(  \beta\right)  $ for some $i$
and $\xi_{i}\left(  \alpha\right)  \left\langle \zeta_{\alpha},\zeta_{\beta
}\right\rangle _{\kappa}=\left\langle \mathcal{U}_{i}\zeta_{\alpha}%
,\zeta_{\beta}\right\rangle _{\kappa}=\left\langle \zeta_{\alpha}%
,\mathcal{U}_{i}\zeta_{\beta}\right\rangle _{\kappa}=\xi_{i}\left(
\beta\right)  \left\langle \zeta_{\alpha},\zeta_{\beta}\right\rangle _{\kappa
}$ and thus $\left\langle \zeta_{\alpha},\zeta_{\beta}\right\rangle _{\kappa
}=0$. The formula for $\left\langle \zeta_{\alpha},\zeta_{\alpha}\right\rangle
_{\kappa}$ is more complicated.

Suppose $\alpha\in\mathbb{N}_{0}^{d}$ and $\alpha_{i}<\alpha_{i+1}$ then
$\alpha s_{i}\vartriangleright\alpha$ and $w_{\alpha s_{i}}=w_{\alpha}s_{i}$;
let $p=s_{i}\zeta_{\alpha}-c\zeta_{\alpha}$ where $c\in\mathbb{Q}\left(
\kappa\right)  $ and is to be determined. By Proposition \ref{c4propU}
$\mathcal{U}_{j}p=\xi_{j}\left(  \alpha\right)  p$ for $j\neq i,i+1$ and the
leading term (with respect to $\vartriangleright$) in $p$ is $x^{\alpha s_{i}%
}$. Solve the equation $\mathcal{U}_{i}p=\xi_{i+1}\left(  \alpha\right)  p$
for $c$ by using $\mathcal{U}_{i}s_{i}=s_{i}\mathcal{U}_{i+1}+\kappa$ to
obtain $c=\frac{\kappa}{\xi_{i}\left(  \kappa\right)  -\xi_{i+1}\left(
\kappa\right)  }$. This implies that $\mathcal{U}_{i+1}p=\xi_{i}\left(
\alpha\right)  p,p=\zeta_{\alpha s_{i}}$ and
\begin{align*}
s_{i}\zeta_{\alpha}  &  =c\zeta_{\alpha}+\zeta_{\alpha s_{i}},\\
s_{i}\zeta_{\alpha s_{i}}  &  =\left(  1-c^{2}\right)  \zeta_{\alpha}%
-c\zeta_{\alpha s_{i}},\\
\left\langle \zeta_{\alpha s_{i}},\zeta_{\alpha s_{i}}\right\rangle _{\kappa}
&  =\left(  1-c^{2}\right)  \left\langle \zeta_{\alpha},\zeta_{\alpha
}\right\rangle _{\kappa}.
\end{align*}
The last equation follows from $\left\langle \zeta_{\alpha},\zeta_{\alpha
}\right\rangle _{\kappa}=\left\langle s_{i}\zeta_{\alpha},s_{i}\zeta_{\alpha
}\right\rangle _{\kappa}=c^{2}\left\langle \zeta_{\alpha},\zeta_{\alpha
}\right\rangle _{\kappa}+\left\langle \zeta_{\alpha s_{i}},\zeta_{\alpha
s_{i}}\right\rangle _{\kappa}$. The other ingredient is a raising operator.
From the commutations (\ref{DxixD}) we obtain:%
\begin{align*}
\mathcal{U}_{i}x_{d}  &  =x_{d}\left(  \mathcal{U}_{i}-\kappa\left(
i,d\right)  \right)  ,~1\leq i<d,\\
\mathcal{U}_{d}x_{d}  &  =x_{d}\left(  1+\mathcal{D}_{d}x_{d}\right)  .
\end{align*}
Let $\theta_{d}:=s_{1}s_{2}\ldots s_{d-1}$ thus $\theta_{d}\left(  d\right)
=1$ and $\theta_{d}\left(  i\right)  =i+1$ for $1\leq i<d$ (a cyclic shift).
Then%
\begin{align*}
\mathcal{U}_{i}x_{d}  &  =x_{d}\left(  \theta_{d}^{-1}\mathcal{U}_{i+1}%
\theta_{d}\right)  ,~1\leq i<d,\\
\mathcal{U}_{d}x_{d}  &  =x_{d}\left(  1+\theta_{d}^{-1}\mathcal{U}_{1}%
\theta_{d}\right)  .
\end{align*}
If $p$ satisfies $\mathcal{U}_{i}p=\lambda_{i}p$ for $1\leq i\leq d$ then
$\mathcal{U}_{i}\left(  x_{d}\theta_{d}^{-1}f\right)  =\lambda_{i+1}\left(
x_{d}\theta_{d}^{-1}f\right)  $ for $1\leq i<d$ and $\mathcal{U}_{d}\left(
x_{d}\theta_{d}^{-1}f\right)  =\left(  \lambda_{1}+1\right)  \left(
x_{d}\theta_{d}f\right)  $. For $\alpha\in\mathbb{N}_{0}^{d}$ let $\phi\left(
\alpha\right)  :=\left(  \alpha_{2},\alpha_{3},\ldots,\alpha_{d},\alpha
_{1}+1\right)  $, then $x_{d}\theta_{d}^{-1}x^{\alpha}=x^{\phi\left(
\alpha\right)  }$.

\begin{proposition}
If $\alpha\in\mathbb{N}_{0}^{d}$ then $\zeta_{\phi\left(  \alpha\right)
}=x_{d}\theta_{d}^{-1}\zeta_{\alpha}$ and
\[
\left\langle \zeta_{\phi\left(  \alpha\right)  },\zeta_{\phi\left(
\alpha\right)  }\right\rangle _{\kappa}\allowbreak=\left(  \left(
d-w_{\alpha}\left(  1\right)  \right)  \kappa+\alpha_{1}+1\right)
\left\langle \zeta_{\alpha},\zeta_{\alpha}\right\rangle _{\kappa}.
\]

\end{proposition}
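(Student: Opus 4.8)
The plan is to verify directly that $x_d\theta_d^{-1}\zeta_\alpha$ is the simultaneous eigenfunction of the Cherednik-Dunkl operators with the eigenvalues dictated by $\phi(\alpha)$, and then to read off the norm from the adjointness of $\mathcal{U}_i$. First I would record the intertwining relations displayed just before the statement: for $p$ with $\mathcal{U}_i p=\lambda_i p$ for all $i$, one has $\mathcal{U}_i(x_d\theta_d^{-1}p)=\lambda_{i+1}(x_d\theta_d^{-1}p)$ for $1\le i<d$ and $\mathcal{U}_d(x_d\theta_d^{-1}p)=(\lambda_1+1)(x_d\theta_d^{-1}p)$. Applying this with $p=\zeta_\alpha$ and $\lambda_i=\xi_i(\alpha)=(d-w_\alpha(i))\kappa+\alpha_i+1$, I would check that the resulting eigenvalue tuple is exactly $(\xi_i(\phi(\alpha)))_{i=1}^d$: for $1\le i<d$ the claim is $\xi_i(\phi(\alpha))=\xi_{i+1}(\alpha)$, and for $i=d$ the claim is $\xi_d(\phi(\alpha))=\xi_1(\alpha)+1$. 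Both reduce to the identity $w_{\phi(\alpha)}(i)=w_\alpha(i+1)$ for $i<d$ and $w_{\phi(\alpha)}(d)=w_\alpha(1)$, which follows because $\phi(\alpha)$ is the cyclic shift of $\alpha$ with the last (formerly first) entry increased by one — incrementing the strictly-largest-among-equals entry leaves the rank function unchanged on that entry while shifting the others along with the cyclic relabeling. Then $\phi(\alpha)_i=\alpha_{i+1}$ for $i<d$ and $\phi(\alpha)_d=\alpha_1+1$ finish the arithmetic.

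Second, I would confirm the leading-term normalization. Since $x_d\theta_d^{-1}x^\beta=x^{\phi(\beta)}$ and $\zeta_\alpha=x^\alpha+\sum_{\alpha\vartriangleright\beta}A_{\beta\alpha}x^\beta$, we get $x_d\theta_d^{-1}\zeta_\alpha=x^{\phi(\alpha)}+\sum_{\alpha\vartriangleright\beta}A_{\beta\alpha}x^{\phi(\beta)}$, so it suffices to know that $\alpha\vartriangleright\beta$ implies $\phi(\alpha)\vartriangleright\phi(\beta)$; this is immediate since $\phi$ adds the same constant to $|\cdot|$ and is a cyclic shift composed with raising one coordinate, hence preserves both $\succ$ on the sorted rearrangements and the tie-breaking order. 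Therefore $x_d\theta_d^{-1}\zeta_\alpha$ has the form $x^{\phi(\alpha)}+(\text{lower in }\vartriangleright)$ and is a joint eigenfunction with the right eigenvalues, so by the uniqueness clause of the theorem defining $\zeta$ it equals $\zeta_{\phi(\alpha)}$.

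Finally, for the norm formula I would use part (2) of Proposition \ref{c4propU} (self-adjointness of $\mathcal{U}_i$) together with the definition $\mathcal{U}_i=\mathcal{D}_ix_i-\kappa\sum_{j<i}(i,j)$ to express $\mathcal{U}_d$ in terms of $\mathcal{D}_d$ and $x_d$ and compute
\[
\left\langle \zeta_{\phi(\alpha)},\zeta_{\phi(\alpha)}\right\rangle_\kappa
=\left\langle x_d\theta_d^{-1}\zeta_\alpha,\,x_d\theta_d^{-1}\zeta_\alpha\right\rangle_\kappa .
\]
Because $\theta_d\in\mathcal{S}_d$ and the form is $G$-invariant, the right side equals $\langle x_d\zeta_\alpha^{\,\theta},x_d\zeta_\alpha^{\,\theta}\rangle_\kappa$ where $\zeta_\alpha^{\,\theta}=\theta_d^{-1}\zeta_\alpha$; then using $\langle x_d p,q\rangle_\kappa=\langle p,\mathcal{D}_d q\rangle_\kappa$ and pushing $\mathcal{D}_d$ through, one gets $\langle \zeta_{\phi(\alpha)},\zeta_{\phi(\alpha)}\rangle_\kappa=\langle \zeta_\alpha^{\,\theta},\mathcal{D}_d x_d \zeta_\alpha^{\,\theta}\rangle_\kappa$. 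Now $\mathcal{D}_d x_d=\mathcal{U}_d+\kappa\sum_{j<d}(d,j)$ is, after conjugating back by $\theta_d$, exactly the operator whose eigenvalue on $\zeta_\alpha$ is $\lambda_1=(d-w_\alpha(1))\kappa+\alpha_1+1$ (this is the $i=d$ case of the intertwining relation read in reverse). Hence $\mathcal{D}_d x_d\zeta_\alpha^{\,\theta}=\big((d-w_\alpha(1))\kappa+\alpha_1+1\big)\zeta_\alpha^{\,\theta}$ and the scalar pulls out, giving the stated factor times $\langle \zeta_\alpha^{\,\theta},\zeta_\alpha^{\,\theta}\rangle_\kappa=\langle\zeta_\alpha,\zeta_\alpha\rangle_\kappa$. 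The main obstacle I anticipate is bookkeeping: getting the rank-function identity $w_{\phi(\alpha)}(i)=w_\alpha(i+1)$ (and the $i=d$ wrap-around) exactly right, and carefully tracking how the cyclic shift $\theta_d$ interacts with self-adjointness so that the eigenvalue that emerges is $\xi_1(\alpha)$ rather than some other $\xi_i$ — everything else is a routine application of the already-established commutation and adjointness relations.
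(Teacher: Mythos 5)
Your overall strategy is the paper's: identify the $\mathcal{U}_i$-eigenvalues of $x_d\theta_d^{-1}\zeta_\alpha$ as $\xi_i(\phi(\alpha))$ via the displayed intertwining relations, and obtain the norm from $\left\langle x_dp,q\right\rangle_\kappa=\left\langle p,\mathcal{D}_dq\right\rangle_\kappa$ together with $\mathcal{D}_dx_d\theta_d^{-1}=\theta_d^{-1}\mathcal{D}_1x_1=\theta_d^{-1}\mathcal{U}_1$ and $G$-invariance of the form; that second half is exactly the paper's computation and is correct. Your rank-function identities $w_{\phi(\alpha)}(i)=w_\alpha(i+1)$ for $i<d$ and $w_{\phi(\alpha)}(d)=w_\alpha(1)$ also check out (the key point being that $\alpha_1+1>\alpha_{i+1}$ if and only if $\alpha_1\geq\alpha_{i+1}$ for integers), so the eigenvalue bookkeeping $\xi_i(\phi(\alpha))=\xi_{i+1}(\alpha)$, $\xi_d(\phi(\alpha))=\xi_1(\alpha)+1$ is right.

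The one genuine flaw is the claim that $\alpha\vartriangleright\beta$ implies $\phi(\alpha)\vartriangleright\phi(\beta)$, which you use to put $x_d\theta_d^{-1}\zeta_\alpha$ into the triangular form demanded by the uniqueness clause of the theorem. This is false. Take $d=3$, $\alpha=(0,2,2)$, $\beta=(2,1,1)$: then $\alpha^{+}=(2,2,0)\succ(2,1,1)=\beta^{+}$, so $\alpha\vartriangleright\beta$; but $\phi(\alpha)=(2,2,1)$ and $\phi(\beta)=(1,1,3)$, with $\phi(\beta)^{+}=(3,1,1)\succ(2,2,1)=\phi(\alpha)^{+}$, so in fact $\phi(\beta)\vartriangleright\phi(\alpha)$ --- the order is reversed. (Raising the first entry by $1$ can change which rearrangement dominates; $\phi$ is not monotone for $\vartriangleright$.) The step is easily repaired without this lemma: under the separation hypothesis the commuting triangular family $\{\mathcal{U}_i\}$ has one-dimensional joint eigenspaces in each $\Pi_n^{d}$, so $x_d\theta_d^{-1}\zeta_\alpha$, being a joint eigenfunction with eigenvalues $\xi_i(\phi(\alpha))$, is a scalar multiple of $\zeta_{\phi(\alpha)}$; and since $\phi$ is injective on $\mathbb{N}_0^{d}$, the coefficient of $x^{\phi(\alpha)}$ in $x^{\phi(\alpha)}+\sum_{\alpha\vartriangleright\beta}A_{\beta\alpha}x^{\phi(\beta)}$ equals $1$, forcing the scalar to be $1$. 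With that substitution your argument is complete and coincides with the paper's.
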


\begin{proof}
The first part is shown by identifying the eigenvalues $\xi_{i}\left(
\phi\left(  \alpha\right)  \right)  $. Also $\left\langle \zeta_{\phi\left(
\alpha\right)  },\zeta_{\phi\left(  \alpha\right)  }\right\rangle _{\kappa
}=\left\langle \theta_{d}^{-1}\zeta_{\alpha},\mathcal{D}_{d}x_{d}\theta
_{d}^{-1}\zeta_{\alpha}\right\rangle _{\kappa}=\left\langle \theta_{d}%
^{-1}\zeta_{\alpha},\theta_{d}^{-1}\mathcal{D}_{1}x_{1}\zeta_{\alpha
}\right\rangle _{\kappa}=\xi_{1}\left(  \alpha\right)  \left\langle
\zeta_{\alpha},\zeta_{\alpha}\right\rangle _{\kappa}$.
\end{proof}

The norm formula involves a \emph{hook-length product}. For $\alpha
\in\mathbb{N}_{0}^{d}$ let $\ell\left(  \alpha\right)  =\max\left\{
j:\alpha_{j}>0\right\}  $, the length of $\alpha$. For a point $\left(
i,j\right)  $ in the \emph{Ferrers diagram} $\left\{  \left(  k,l\right)
\in\mathbb{N}_{0}^{2}:1\leq k\leq\ell\left(  \alpha\right)  ,1\leq l\leq
\alpha_{k}\right\}  $ and $t\in\mathbb{Q}\left(  \kappa\right)  $ let%
\begin{align*}
h\left(  \alpha,t;i,j\right)   &  :=\alpha_{i}-j+t+\kappa\#\left\{
l:l>i,j\leq\alpha_{l}\leq\alpha_{i}\right\} \\
&  +\kappa\#\left\{  l:l<i,j\leq\alpha_{l}+1\leq\alpha_{i}\right\}  ,
\end{align*}
and let%
\[
h\left(  \alpha,t\right)  :=\prod_{i=1}^{\ell\left(  \alpha\right)  }%
\prod_{j=1}^{\alpha_{i}}h\left(  \alpha,t;i,j\right)  .
\]
For $\lambda\in\mathbb{N}_{0}^{d,+}$ let $\left(  t\right)  _{\lambda}%
:=\prod_{i=1}^{d}\left(  t-(i-1)\kappa\right)  _{\lambda_{i}}$, the
\emph{generalized Pochhammer symbol}.

\begin{theorem}
For $\alpha\in\mathbb{N}_{0}^{d}$,%
\begin{align*}
\left\langle \zeta_{\alpha},\zeta_{\alpha}\right\rangle _{\kappa}  &  =\left(
d\kappa+1\right)  _{\alpha^{+}}\frac{h\left(  \alpha,1\right)  }{h\left(
\alpha,\kappa+1\right)  },\\
\zeta_{\alpha}\left(  1,\ldots,1\right)   &  =\frac{\left(  d\kappa+1\right)
_{\alpha^{+}}}{h\left(  \alpha,\kappa+1\right)  }.
\end{align*}

\end{theorem}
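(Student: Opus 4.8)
The plan is to prove both formulas by induction, using the two structural tools developed above: the raising operator $\phi$ and the adjacent transposition $s_i$ acting on compositions. The base case is $\alpha=0$, where $\zeta_0=1$, $h(0,t)=1$ (empty product), $(\kappa d+1)_0=1$, so both sides are $1$ and $\zeta_0(1,\ldots,1)=1$. Every composition $\alpha\in\mathbb{N}_0^d$ can be reached from $0$ by a sequence of moves of two kinds: (i) a raising step $\alpha\mapsto\phi(\alpha)=(\alpha_2,\ldots,\alpha_d,\alpha_1+1)$, which increases $|\alpha|$ by $1$; and (ii) a swap step $\alpha\mapsto\alpha s_i$ when $\alpha_i<\alpha_{i+1}$, which permutes entries without changing $|\alpha|$ or $\alpha^+$. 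So I would show both quantities transform correctly under each move.

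For the norm: under the raising step, the Proposition just proved gives $\langle\zeta_{\phi(\alpha)},\zeta_{\phi(\alpha)}\rangle_\kappa=\bigl((d-w_\alpha(1))\kappa+\alpha_1+1\bigr)\langle\zeta_\alpha,\zeta_\alpha\rangle_\kappa$, so I must check that the right-hand side $(\kappa d+1)_{\alpha^+}\,h(\alpha,1)/h(\alpha,\kappa+1)$ picks up exactly the factor $(d-w_\alpha(1))\kappa+\alpha_1+1$ when $\alpha$ is replaced by $\phi(\alpha)$; this is a direct, if fiddly, comparison of the generalized Pochhammer symbols (since $\phi(\alpha)^+$ has one box added in the appropriate row of $\alpha^+$) against the hook-length products, where the new boxes contributed to $h(\phi(\alpha),\cdot)$ combine with the old ones. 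Under the swap step, from the displayed relations $s_i\zeta_\alpha=c\zeta_\alpha+\zeta_{\alpha s_i}$ with $c=\kappa/(\xi_i(\alpha)-\xi_{i+1}(\alpha))$ we have $\langle\zeta_{\alpha s_i},\zeta_{\alpha s_i}\rangle_\kappa=(1-c^2)\langle\zeta_\alpha,\zeta_\alpha\rangle_\kappa$, so I must verify $h(\alpha s_i,1)/h(\alpha s_i,\kappa+1)=(1-c^2)\,h(\alpha,1)/h(\alpha,\kappa+1)$ (the $(\kappa d+1)_{\alpha^+}$ factor is unchanged). Writing out $1-c^2=(\xi_i-\xi_{i+1}-\kappa)(\xi_i-\xi_{i+1}+\kappa)/(\xi_i-\xi_{i+1})^2$ and noting $\xi_i(\alpha)-\xi_{i+1}(\alpha)=(w_\alpha(i+1)-w_\alpha(i))\kappa+\alpha_i-\alpha_{i+1}$, one identifies each of the four linear factors with a ratio of the hook lengths that change when the rows of $\alpha$ corresponding to positions $i$ and $i+1$ are exchanged in the Ferrers diagram — only boxes in those two rows (or comparable rows in the interleaving counts) are affected.

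For the evaluation $\zeta_\alpha(1,\ldots,1)$: the vector $(1,\ldots,1)$ is $\mathcal{S}_d$-invariant, so $s_i\zeta_\alpha(1,\ldots,1)=\zeta_\alpha(1,\ldots,1)$; combined with $s_i\zeta_\alpha=c\zeta_\alpha+\zeta_{\alpha s_i}$ this gives $\zeta_{\alpha s_i}(1,\ldots,1)=(1-c)\zeta_\alpha(1,\ldots,1)$, and I check $h(\alpha s_i,\kappa+1)=(1-c)^{-1}h(\alpha,\kappa+1)$ (again $(\kappa d+1)_{\alpha^+}$ is fixed), using $1-c=(\xi_i-\xi_{i+1}-\kappa)/(\xi_i-\xi_{i+1})$. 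For the raising step, one needs a companion formula $\zeta_{\phi(\alpha)}(1,\ldots,1)=\zeta_\alpha(1,\ldots,1)$: since $\zeta_{\phi(\alpha)}=x_d\theta_d^{-1}\zeta_\alpha$ and both $x_d$ and the cyclic shift $\theta_d^{-1}$ act as the identity on $(1,\ldots,1)$, the evaluation is unchanged, while the right-hand side $(\kappa d+1)_{\phi(\alpha)^+}/h(\phi(\alpha),\kappa+1)$ must equal $(\kappa d+1)_{\alpha^+}/h(\alpha,\kappa+1)$ — i.e. the same Pochhammer-vs-hook comparison as in the norm case, specialized to $t=\kappa+1$. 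Finally, to know the induction actually reaches every $\alpha$ and that the answer is well-defined, I would invoke the standard fact that the moves $\phi$ and the swaps generate all of $\mathbb{N}_0^d$ from $0$, and that consistency of the two recursions is guaranteed because the eigenfunctions $\zeta_\alpha$ are uniquely determined.

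\textbf{Main obstacle.} The genuine work is entirely in the hook-length bookkeeping: matching the factor $(d-w_\alpha(1))\kappa+\alpha_1+1$ (raising step) and the factors of $1\pm c$ (swap step) against the changes in $h(\alpha,t)$ and in $(\kappa d+1)_{\alpha^+}$. The swap step is the more delicate one, because exchanging two rows of unequal length in the Ferrers diagram affects not only the arm/leg counts within those rows but also the $\kappa\#\{l:\ldots\}$ interleaving terms $h(\alpha,t;i,j)$ for $i$ strictly between the two swapped positions; one must see that all of these conspire to produce exactly the four linear factors coming from $1-c^2$ (resp. the one factor from $1-c$). I expect this to be a careful case analysis on whether boxes lie in columns $\le\alpha_i$, between $\alpha_i$ and $\alpha_{i+1}$, etc., rather than anything conceptually hard.
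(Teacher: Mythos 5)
Your proposal matches the paper's proof, which is stated in exactly these terms: induction from $\alpha=0$ using the raising step $\alpha\mapsto\phi(\alpha)$ (with the norm recursion $\langle\zeta_{\phi(\alpha)},\zeta_{\phi(\alpha)}\rangle_{\kappa}=\xi_{1}(\alpha)\langle\zeta_{\alpha},\zeta_{\alpha}\rangle_{\kappa}$) and the swap step $\alpha\mapsto\alpha s_{i}$ for $\alpha_{i}<\alpha_{i+1}$ (with the factors $1-c^{2}$ and $1-c$), reducing everything to the hook-length bookkeeping you describe. The only refinement the paper adds is the remark that the raising step need only be checked for partitions $\lambda\in\mathbb{N}_{0}^{d,+}$, since general compositions are then reached by swaps; otherwise your route is the paper's.
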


These formulae are proved by induction starting with $\alpha=0$ and using the
steps $\alpha\rightarrow\phi\left(  \alpha\right)  ,\alpha\rightarrow\alpha
s_{i}$ for $\alpha_{i}<\alpha_{i+1}$ (it suffices to use $\lambda
\rightarrow\phi\left(  \lambda\right)  $ with $\lambda\in\mathbb{N}_{0}^{d,+}$
in the computation). The number of such steps in any sequence linking $0$ to
$\alpha$ equals
\[
\left\vert \alpha\right\vert +\frac{1}{2}\sum\limits_{1\leq i<j\leq d}\left\{
\left\vert \alpha_{i}-\alpha_{j}\right\vert +\left\vert \alpha_{i}-\alpha
_{j}+1\right\vert -1\right\}  .
\]

There are two explicit results for $\mathcal{D}_{i}\zeta_{\alpha}$. Recall
$\theta_{m}=s_{1}s_{2}\ldots s_{m-1}$ for $m\leq d$.

\begin{proposition}
Suppose $\alpha\in\mathbb{N}_{0}^{d}$ and $\ell\left(  \alpha\right)  =m$; let
$\widetilde{\alpha}=\left(  \alpha_{m}-1,\alpha_{1},\ldots,\alpha
_{m-1},0,\ldots\right)  $ and $\beta_{m}=\alpha_{m}-w_{\alpha}\left(
m\right)  \kappa$ then%
\begin{align*}
\mathcal{D}_{i}\zeta_{\alpha}  &  =0,~m<i\leq d,\\
\mathcal{D}_{m}\zeta_{\alpha}  &  =\frac{\left(  m\kappa+\beta_{m}\right)
\left(  \left(  d+1\right)  \kappa+\beta_{m}\right)  }{\left(  m+1\right)
\kappa+\beta_{m}}\theta_{m}^{-1}\zeta_{\widetilde{\alpha}}.
\end{align*}

\end{proposition}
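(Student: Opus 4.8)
The plan is to use the commuting self‑adjoint operators $\mathcal{U}_i$ together with the separation property, which forces each $\zeta_\beta$ to be, up to a scalar, the unique joint eigenfunction of $\mathcal{U}_1,\dots,\mathcal{U}_d$ with eigenvalue tuple $\left(\xi_1(\beta),\dots,\xi_d(\beta)\right)$. Write $m=\ell(\alpha)$ and rewrite $\mathcal{U}_i=1+x_i\mathcal{D}_i+\kappa\sum_{j>i}(i,j)$, which follows from $\mathcal{U}_i=\mathcal{D}_ix_i-\kappa\sum_{j<i}(i,j)$ and $[\mathcal{D}_i,x_i]=1+\kappa\sum_{j\neq i}(i,j)$ in (\ref{DxixD}). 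For the first assertion, a direct count gives $w_\alpha(i)=i$ whenever $\alpha_i=\cdots=\alpha_d=0$, so $\xi_i(\alpha)=(d-i)\kappa+1$ for $m<i\le d$. I would first show $\zeta_\alpha$ is fixed by the parabolic subgroup $\mathcal{S}_{\{m+1,\dots,d\}}$: for $m<i<d$, Proposition \ref{c4propU}(3) together with $\xi_i(\alpha)-\xi_{i+1}(\alpha)=\kappa$ shows $s_i\zeta_\alpha-\zeta_\alpha$ is a joint $\mathcal{U}$‑eigenfunction whose eigenvalue tuple is that of $\zeta_\alpha$ with entries $i$ and $i+1$ interchanged; no composition $\gamma$ realizes that tuple (it would need $\gamma_i=\gamma_{i+1}=0$ yet $w_\gamma(i)=i+1>i=w_\gamma(i+1)$, contradicting that $\gamma_i=\gamma_{i+1}$ forces $w_\gamma(i)<w_\gamma(i+1)$), so by separation $s_i\zeta_\alpha=\zeta_\alpha$. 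Applying $\mathcal{U}_i=1+x_i\mathcal{D}_i+\kappa\sum_{j>i}(i,j)$ to $\zeta_\alpha$ for $m<i\le d$, every transposition $(i,j)$ that occurs has $j>i>m$ and hence fixes $\zeta_\alpha$, so $\xi_i(\alpha)\zeta_\alpha=\zeta_\alpha+x_i\mathcal{D}_i\zeta_\alpha+\kappa(d-i)\zeta_\alpha$; since $\xi_i(\alpha)=(d-i)\kappa+1$ this collapses to $x_i\mathcal{D}_i\zeta_\alpha=0$, whence $\mathcal{D}_i\zeta_\alpha=0$ ($\Pi^d$ being a domain), and then $\mathcal{D}_i\mathcal{D}_m\zeta_\alpha=0$ as well because the Dunkl operators commute (Theorem \ref{c4DD0}).

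For the second assertion note $\mathcal{D}_m\zeta_\alpha\neq0$, since $\langle\mathcal{D}_m\zeta_\alpha,p\rangle_\kappa=\langle\zeta_\alpha,x_mp\rangle_\kappa$ (Theorem \ref{c4kform}), $x^\alpha=x_mx^{\alpha-\varepsilon_m}$, and $\langle\zeta_\alpha,x^\alpha\rangle_\kappa=\langle\zeta_\alpha,\zeta_\alpha\rangle_\kappa\neq0$. The engine is the computation of the conjugates of the $\mathcal{U}_k$ by $\theta_m$: using $w\mathcal{D}_aw^{-1}=\mathcal{D}_{wa}$, $w^{-1}x_iw=x_{w^{-1}(i)}$, and $\theta_m^{-1}(1)=m$, $\theta_m^{-1}(k)=k-1$ for $2\le k\le m$, one gets $\theta_m^{-1}\mathcal{U}_1\theta_m=\mathcal{D}_mx_m$, $\theta_m^{-1}\mathcal{U}_k\theta_m=\mathcal{U}_{k-1}-\kappa(k-1,m)$ for $2\le k\le m$, and $\theta_m^{-1}\mathcal{U}_k\theta_m=\mathcal{U}_k$ for $k>m$. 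I then check that $\mathcal{D}_m\zeta_\alpha$ is a joint eigenfunction of these conjugated operators with the same eigenvalue tuple $\left(\xi_1(\widetilde\alpha),\dots,\xi_d(\widetilde\alpha)\right)$ as $\theta_m^{-1}\zeta_{\widetilde\alpha}$: for $k=1$, $\mathcal{D}_mx_m\mathcal{D}_m\zeta_\alpha=(\xi_m(\alpha)-1)\mathcal{D}_m\zeta_\alpha$ because $x_m\mathcal{D}_m\zeta_\alpha=(\xi_m(\alpha)-1)\zeta_\alpha-\kappa\sum_{j>m}(m,j)\zeta_\alpha$ and $\mathcal{D}_m(m,j)\zeta_\alpha=(m,j)\mathcal{D}_j\zeta_\alpha=0$ for $j>m$; for $2\le k\le m$, the $\kappa(m,k-1)$ term coming from $[\mathcal{D}_m,\mathcal{U}_{k-1}]$ in (\ref{DxixD}) cancels the $-\kappa(k-1,m)$ term, leaving $\xi_{k-1}(\alpha)\mathcal{D}_m\zeta_\alpha$; and for $k>m$, $[\mathcal{U}_k,\mathcal{D}_m]\zeta_\alpha$ is a multiple of $\mathcal{D}_k\zeta_\alpha=0$, leaving $\xi_k(\alpha)\mathcal{D}_m\zeta_\alpha$. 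It remains only to note the rank‑function identities $w_\alpha(m)=w_{\widetilde\alpha}(1)$, $w_\alpha(k-1)=w_{\widetilde\alpha}(k)$ for $2\le k\le m$, and $w_{\widetilde\alpha}(k)=k$ for $k>m$ (immediate from $\widetilde\alpha=(\alpha_m-1,\alpha_1,\dots,\alpha_{m-1},0,\dots)$), which give $\xi_m(\alpha)-1=\xi_1(\widetilde\alpha)$, $\xi_{k-1}(\alpha)=\xi_k(\widetilde\alpha)$, $\xi_k(\alpha)=\xi_k(\widetilde\alpha)=(d-k)\kappa+1$. By the separation property (transported along the bijection $v\mapsto\theta_mv$), $\theta_m\mathcal{D}_m\zeta_\alpha$ is a scalar multiple of $\zeta_{\widetilde\alpha}$, i.e. $\mathcal{D}_m\zeta_\alpha=c_\alpha\,\theta_m^{-1}\zeta_{\widetilde\alpha}$.

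The constant is then pinned down cheaply. Since $x^\alpha-\zeta_\alpha$ and $x^{\widetilde\alpha}-\zeta_{\widetilde\alpha}$ are combinations of strictly $\vartriangleright$‑lower $\zeta$'s, one has $\langle\zeta_\alpha,x^\alpha\rangle_\kappa=\langle\zeta_\alpha,\zeta_\alpha\rangle_\kappa$ and $\langle\zeta_{\widetilde\alpha},x^{\widetilde\alpha}\rangle_\kappa=\langle\zeta_{\widetilde\alpha},\zeta_{\widetilde\alpha}\rangle_\kappa$; using $\langle\mathcal{D}_mp,q\rangle_\kappa=\langle p,x_mq\rangle_\kappa$, $\langle wp,wq\rangle_\kappa=\langle p,q\rangle_\kappa$, $\theta_m(x^{\alpha-\varepsilon_m})=x^{(\alpha-\varepsilon_m)\theta_m^{-1}}$ and the identity $(\alpha-\varepsilon_m)\theta_m^{-1}=\widetilde\alpha$,
\[
\langle\zeta_\alpha,\zeta_\alpha\rangle_\kappa=\langle\zeta_\alpha,x_mx^{\alpha-\varepsilon_m}\rangle_\kappa=\langle\mathcal{D}_m\zeta_\alpha,x^{\alpha-\varepsilon_m}\rangle_\kappa=c_\alpha\langle\zeta_{\widetilde\alpha},\theta_mx^{\alpha-\varepsilon_m}\rangle_\kappa=c_\alpha\langle\zeta_{\widetilde\alpha},\zeta_{\widetilde\alpha}\rangle_\kappa,
\]
so $c_\alpha=\langle\zeta_\alpha,\zeta_\alpha\rangle_\kappa/\langle\zeta_{\widetilde\alpha},\zeta_{\widetilde\alpha}\rangle_\kappa$; substituting the hook‑length norm formula and simplifying (using $\widetilde\alpha^+=(\alpha-\varepsilon_m)^+$ and $\beta_m=\alpha_m-w_\alpha(m)\kappa$, the quotient telescopes) yields $\dfrac{(m\kappa+\beta_m)\big((d+1)\kappa+\beta_m\big)}{(m+1)\kappa+\beta_m}$. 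The step I expect to be the main obstacle is obtaining the conjugated Jucys--Murphy operators $\theta_m^{-1}\mathcal{U}_k\theta_m$ correctly and then matching eigenvalues: the non‑adjacent transpositions $(k-1,m)$ introduced by conjugation must cancel precisely against the commutator terms from (\ref{DxixD}), and the rank‑function identities, although elementary, are easy to get wrong because $\alpha$ is an arbitrary composition rather than a partition.
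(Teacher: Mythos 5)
The paper does not prove this proposition at all: it simply cites \cite[Proposition 3.17]{Du4}. So you have supplied an argument where the text has none, and the argument you give is, in all the parts that can be checked line by line, correct. The reduction $\mathcal{U}_i=1+x_i\mathcal{D}_i+\kappa\sum_{j>i}(i,j)$, the proof that $\zeta_\alpha$ is $\mathcal{S}_{\{m+1,\dots,d\}}$-invariant via the swapped eigenvalue tuple (which indeed cannot be realized by any composition), and the resulting vanishing $\mathcal{D}_i\zeta_\alpha=0$ for $i>m$ are all sound. The conjugation formulas $\theta_m^{-1}\mathcal{U}_1\theta_m=\mathcal{D}_mx_m$, $\theta_m^{-1}\mathcal{U}_k\theta_m=\mathcal{U}_{k-1}-\kappa(k-1,m)$ for $2\le k\le m$, and $\theta_m^{-1}\mathcal{U}_k\theta_m=\mathcal{U}_k$ for $k>m$ are correct, the commutator cancellations work exactly as you describe, and the rank-function identities $w_{\widetilde\alpha}(1)=w_\alpha(m)$, $w_{\widetilde\alpha}(k)=w_\alpha(k-1)$, $w_{\widetilde\alpha}(k)=k$ do hold (the middle one because $[\alpha_m-1\ge\alpha_{k-1}]=[\alpha_m>\alpha_{k-1}]$). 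The identification $c_\alpha=\langle\zeta_\alpha,\zeta_\alpha\rangle_\kappa/\langle\zeta_{\widetilde\alpha},\zeta_{\widetilde\alpha}\rangle_\kappa$ via $(\alpha-\varepsilon_m)\theta_m^{-1}=\widetilde\alpha$ is also right, and it checks against low-rank examples.

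The one place you have asserted rather than proved something is the final \textquotedblleft telescoping.\textquotedblright\ The Pochhammer part is clean: since $w_\alpha(m)=\max\{i:\alpha^+_i=\alpha_m\}$, passing from $\alpha^+$ to $\widetilde\alpha^+=(\alpha-\varepsilon_m)^+$ deletes the corner cell in row $w_\alpha(m)$, column $\alpha_m$, so $(d\kappa+1)_{\alpha^+}/(d\kappa+1)_{\widetilde\alpha^+}=(d+1-w_\alpha(m))\kappa+\alpha_m=(d+1)\kappa+\beta_m$, exactly one of the stated factors. What remains is the genuinely combinatorial identity
\[
\frac{h(\alpha,1)}{h(\alpha,\kappa+1)}\cdot\frac{h(\widetilde\alpha,\kappa+1)}{h(\widetilde\alpha,1)}=\frac{m\kappa+\beta_m}{(m+1)\kappa+\beta_m},
\]
which requires tracking how the arm and leg counts in $h(\cdot,t;i,j)$ change when row $m$ is shortened by one box and cycled to the front; the cells not in row $m$ of $\alpha$ (equivalently row $1$ of $\widetilde\alpha$) contribute cancelling factors, but showing this is real work, not a one-line observation. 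As written, your proof is complete except for this verification; since the constant is the entire content of the second displayed formula beyond proportionality, you should either carry out that bookkeeping or pin down $c_\alpha$ by an independent route before calling the argument finished.
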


This result is from \cite[Proposition 3.17]{Du4}.

Symmetric polynomials (that is, $\mathcal{S}_{d}$-invariant) have bases
labeled by partitions $\lambda\in\mathbb{N}_{0}^{d,+}$. We describe bases
whose elements are mutually orthogonal in $\left\langle \cdot,\cdot
\right\rangle _{\kappa}$. For a given $\lambda\in\mathbb{N}_{0}^{d,+}$ one can
consider $\sum_{w\in\mathcal{S}_{d}}w\zeta_{\lambda}$ or a sum $\sum
_{\alpha^{+}=\lambda}b_{\alpha}\zeta_{\alpha}$ with suitable coefficients
$\left\{  b_{\alpha}\right\}  $. Let $\lambda^{R}=\left(  \lambda_{d}%
,\lambda_{d-1},\ldots,\lambda_{1}\right)  $, thus $\lambda^{R}$ is the unique
$\vartriangleright$-minimum in $\left\{  \alpha:\alpha^{+}=\lambda\right\}  $.
No term $x^{\alpha}$ with $\alpha^{+}=\lambda$ except $\alpha=\lambda^{R}$
appears in $\zeta_{\lambda^{R}}$. Also let $n_{\lambda}=\#\left\{
w\in\mathcal{S}_{d}:\lambda w=\lambda\right\}  $.

\begin{definition}
For $\lambda\in\mathbb{N}_{0}^{d,+}$ let $j_{\lambda}:=h\left(  \lambda
,1\right)  \sum\limits_{\alpha^{+}=\lambda}\dfrac{\zeta_{\alpha}}{h\left(
\alpha,1\right)  }$.
\end{definition}

\begin{theorem}
For $\lambda\in\mathbb{N}_{0}^{d,+}$ $j_{\lambda}$ has the following
properties:\newline1) $j_{\lambda}$ is symmetric, and the coefficient of
$x^{\lambda}$ in $j_{\lambda}$ is $1$,\newline2) $j_{\lambda}=\dfrac{h\left(
\lambda,\kappa+1\right)  }{n_{\lambda}h\left(  \lambda^{R},\kappa+1\right)
}\sum\limits_{w\in\mathcal{S}_{d}}w\zeta_{\lambda}=\dfrac{1}{n_{\lambda}}%
\sum\limits_{w\in\mathcal{S}_{d}}w\zeta_{\lambda^{R}}$,\newline3)
$\left\langle j_{\lambda},j_{\lambda}\right\rangle _{\kappa}=\left(
d\kappa+1\right)  _{\lambda}\dfrac{d!h\left(  \lambda,1\right)  }{n_{\lambda
}h\left(  \lambda^{R},\kappa+1\right)  }$ and $j_{\lambda}\left(
1,1,\ldots,1\right)  =\dfrac{d!\left(  d\kappa+1\right)  _{\lambda}%
}{n_{\lambda}h\left(  \lambda^{R},\kappa+1\right)  }$.
\end{theorem}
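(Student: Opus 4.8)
The plan is to establish the three properties of $j_\lambda$ in a specific order, leveraging the symmetrization description and the known norm and evaluation formulas for the nonsymmetric Jack polynomials $\zeta_\alpha$.

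First I would prove part (2), the symmetrization identities, since parts (1) and (3) will follow from it together with earlier formulas. The key tool is the action of the simple transpositions $s_i$ on the $\zeta_\alpha$ recorded just before the statement: for $\alpha_i < \alpha_{i+1}$ one has $s_i \zeta_\alpha = c\,\zeta_\alpha + \zeta_{\alpha s_i}$ with $c = \kappa/(\xi_i(\alpha) - \xi_{i+1}(\alpha))$. I would first show that $\sum_{w\in\mathcal S_d} w\zeta_\alpha$ depends only on $\alpha^+ = \lambda$ up to a scalar: applying $s_i$ to the full symmetrizer-image fixes it, and using the two-term expansion one checks that $\sum_w w\zeta_\alpha$ and $\sum_w w\zeta_{\alpha s_i}$ are proportional with the ratio read off from $c$ and $1-c^2$. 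Iterating down a saturated chain from $\lambda$ to $\lambda^R$ (each step being an $s_i$ with $\alpha_i<\alpha_{i+1}$), the accumulated ratio telescopes; expressing the $c$-factors $\xi_i(\alpha)-\xi_{i+1}(\alpha)$ in terms of hook values and comparing with the definition of $h(\cdot,\kappa+1)$ yields the identity $\sum_w w\zeta_\lambda = \frac{n_\lambda h(\lambda^R,\kappa+1)}{h(\lambda,\kappa+1)}\, j_\lambda$ and likewise $\sum_w w\zeta_{\lambda^R} = n_\lambda\, j_\lambda$. The combinatorial identity matching the telescoped product of $c$-ratios to $h(\lambda,\kappa+1)/h(\lambda^R,\kappa+1)$ is the main obstacle; it is a bookkeeping argument about how the hook-length factors change under a single $s_i$ applied to a composition, and it must be done with care.

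For part (1), once (2) is in hand, symmetry of $j_\lambda$ is immediate since it is $1/n_\lambda$ times a full symmetrization. The normalization of the $x^\lambda$-coefficient follows from the first description $j_\lambda = h(\lambda,1)\sum_{\alpha^+=\lambda}\zeta_\alpha/h(\alpha,1)$: the monomial $x^\lambda$ occurs only in the term $\zeta_\lambda$ (all other $\zeta_\alpha$ with $\alpha^+=\lambda$, $\alpha\neq\lambda$, have $x^\lambda$ absent because $\lambda$ is the $\vartriangleright$-maximum in its orbit and $\zeta_\alpha = x^\alpha + \sum_{\alpha\vartriangleright\beta}A_{\beta\alpha}x^\beta$), and it appears there with coefficient $1$; since $w_\lambda$ is the identity, $h(\lambda,1;\,i,j)$ restricted appropriately gives the factor $h(\lambda,1)$ cancelling the prefactor, so the coefficient is $1$.

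For part (3), I would use orthogonality of the $\zeta_\alpha$ (distinct $\alpha$ in the same orbit are pairwise orthogonal because they have distinct $\xi$-tuples) to write $\langle j_\lambda, j_\lambda\rangle_\kappa = h(\lambda,1)^2\sum_{\alpha^+=\lambda} \langle\zeta_\alpha,\zeta_\alpha\rangle_\kappa / h(\alpha,1)^2$. Substituting the norm formula $\langle\zeta_\alpha,\zeta_\alpha\rangle_\kappa = (d\kappa+1)_{\alpha^+}\, h(\alpha,1)/h(\alpha,\kappa+1)$ turns the sum into $(d\kappa+1)_\lambda\, h(\lambda,1)^2 \sum_{\alpha^+=\lambda} 1/(h(\alpha,1)h(\alpha,\kappa+1))$; alternatively, using $j_\lambda = \frac1{n_\lambda}\sum_w w\zeta_{\lambda^R}$ and $G$-invariance of the form gives $\langle j_\lambda,j_\lambda\rangle_\kappa = \frac{1}{n_\lambda}\langle \zeta_{\lambda^R}, \sum_w w\zeta_{\lambda^R}\rangle_\kappa = \frac{d!}{n_\lambda}\langle\zeta_{\lambda^R},\zeta_{\lambda^R}\rangle_\kappa$ after collapsing the orbit sum against the fixed $\zeta_{\lambda^R}$ (only the $d!/n_\lambda$ distinct translates contribute, each with norm $\langle\zeta_{\lambda^R},\zeta_{\lambda^R}\rangle_\kappa$, but one must track that the pairing $\langle\zeta_{\lambda^R},w\zeta_{\lambda^R}\rangle_\kappa$ vanishes unless $w\zeta_{\lambda^R}$ has $\zeta_{\lambda^R}$ in its $\zeta$-expansion). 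Then plugging in $\langle\zeta_{\lambda^R},\zeta_{\lambda^R}\rangle_\kappa = (d\kappa+1)_\lambda\, h(\lambda^R,1)/h(\lambda^R,\kappa+1)$ and the symmetry $h(\lambda^R,1) = h(\lambda,1)$ (the hook product at $t=1$ is invariant under $\lambda\mapsto\lambda^R$, which I would verify directly from the definition of $h(\alpha,t;i,j)$) produces $\langle j_\lambda,j_\lambda\rangle_\kappa = (d\kappa+1)_\lambda\, d!\,h(\lambda,1)/(n_\lambda h(\lambda^R,\kappa+1))$. The evaluation $j_\lambda(1,\ldots,1)$ follows the same route: $j_\lambda(1,\ldots,1) = \frac1{n_\lambda}\sum_w (w\zeta_{\lambda^R})(1,\ldots,1) = \frac{d!}{n_\lambda}\zeta_{\lambda^R}(1,\ldots,1)$ since $(1,\ldots,1)$ is $\mathcal S_d$-fixed and there are $d!/n_\lambda$ distinct translates — wait, more precisely $\sum_w (w\zeta_{\lambda^R})(1,\dots,1) = \sum_w \zeta_{\lambda^R}(w^{-1}(1,\dots,1)) = d!\,\zeta_{\lambda^R}(1,\dots,1)$ directly — and then $\zeta_{\lambda^R}(1,\ldots,1) = (d\kappa+1)_{\lambda}/h(\lambda^R,\kappa+1)$ from the evaluation theorem gives the claimed value. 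The consistency check between the two descriptions of $j_\lambda$ in part (2) and the resulting formulas is where residual combinatorial identities about hook lengths reappear, but these are exactly the ones already needed for part (2).
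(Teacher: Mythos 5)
The paper does not actually prove this theorem; it states it and refers to Knop and Sahi for the norm formulas, so there is no in-text proof to compare against and I judge your plan on its own terms. Your overall architecture --- derive everything from the two-term relations $s_i\zeta_\alpha = c\,\zeta_\alpha+\zeta_{\alpha s_i}$ and $s_i\zeta_{\alpha s_i}=(1-c^2)\zeta_\alpha-c\,\zeta_{\alpha s_i}$ together with the single-$\zeta$ norm and evaluation formulas --- is the standard and viable route, and your treatments of part (1) and of $j_\lambda(1,\ldots,1)$ are correct.

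There are two genuine gaps. First, the entire content of part (2) is the pair of one-step hook-length identities you defer: for $\alpha_i<\alpha_{i+1}$ and $c=\kappa/(\xi_i(\alpha)-\xi_{i+1}(\alpha))$ one must prove $h(\alpha s_i,1)/h(\alpha,1)=1+c$ (this is exactly what makes the coefficients $1/h(\alpha,1)$ in the definition $s_i$-invariant, since $A\zeta_\alpha+B\zeta_{\alpha s_i}$ is $s_i$-fixed iff $A=(1+c)B$) and $h(\alpha,\kappa+1)/h(\alpha s_i,\kappa+1)=1-c$ (this is what converts the relation $\sum_w w\zeta_{\alpha s_i}=(1-c)\sum_w w\zeta_\alpha$ into the stated ratio $h(\lambda,\kappa+1)/h(\lambda^R,\kappa+1)$ after telescoping). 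You name this as ``the main obstacle'' but do not supply it; without it nothing in (2) is established. You also need the case $\alpha_i=\alpha_{i+1}$, where $s_i\zeta_\alpha=\zeta_\alpha$, to pair up the orbit sum --- this is where $n_\lambda$ enters.

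Second, part (3) as written rests on a false identity: $h(\lambda^R,1)=h(\lambda,1)$ fails already for $d=2$, $\lambda=(1,0)$, where $h(\lambda,1)=1$ but $h(\lambda^R,1)=1+\kappa$ (the cell $(2,1)$ of $\lambda^R=(0,1)$ picks up the contribution $\kappa\,\#\{l<2:\,1\le\alpha_l+1\le\alpha_2\}=\kappa$). The intermediate equality $\langle j_\lambda,j_\lambda\rangle_\kappa=\tfrac{d!}{n_\lambda}\langle\zeta_{\lambda^R},\zeta_{\lambda^R}\rangle_\kappa$ is equally false: in the same example $j_\lambda=x_1+x_2$ has $\langle j_\lambda,j_\lambda\rangle_\kappa=2$ while the right side is $2(1+\kappa)$. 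The two errors happen to cancel, which is why you land on the correct formula. The correct argument is: by symmetry of $j_\lambda$ and $G$-invariance of the form, $\langle j_\lambda,j_\lambda\rangle_\kappa=\tfrac{1}{n_\lambda}\sum_w\langle j_\lambda,w\zeta_{\lambda^R}\rangle_\kappa=\tfrac{d!}{n_\lambda}\langle j_\lambda,\zeta_{\lambda^R}\rangle_\kappa$; then by orthogonality of the $\zeta$'s and the definition of $j_\lambda$, $\langle j_\lambda,\zeta_{\lambda^R}\rangle_\kappa=\tfrac{h(\lambda,1)}{h(\lambda^R,1)}\langle\zeta_{\lambda^R},\zeta_{\lambda^R}\rangle_\kappa=(d\kappa+1)_\lambda\tfrac{h(\lambda,1)}{h(\lambda^R,\kappa+1)}$, the factor $h(\lambda^R,1)$ cancelling against the one in the norm formula, with no reversal-symmetry of $h(\cdot,1)$ required.
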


Knop and Sahi \cite{KS} proved the norm formulas and also showed that the
coefficient of each monomial in $h\left(  \alpha,\kappa+1\right)
\zeta_{\alpha}$ is a polynomial in $\kappa$ with nonnegative coefficients.

For $\lambda\in\mathbb{N}_{0}^{d,+}$ $j_{\lambda}$ is a scalar multiple of the
\emph{Jack polynomial} $J_{\lambda}\left(  x;\frac{1}{\kappa}\right)  $. For
more details on the nonsymmetric Jack polynomials and their applications to
Calogero-Moser-Sutherland systems and the groups of type $B$ see the monograph
\cite[Chapters 8,9]{DX}.

\end{document}